\renewcommand*{\backref}[1]{}
\renewcommand*{\backrefalt}[4]{({\tiny%
   \ifcase #1 Not cited.%
         \or Cited on page~#2.%
         \else Cited on pages #2.%
   \fi%
   })}
\numberwithin{equation}{section}
\newcommand\mtop{1in}
\newcommand\mbottom{1in}
\newcommand\mleft{1in}
\newcommand\mright{1in}
\newtheorem{thm}{Theorem}[section]
\newtheorem{example}[thm]{Example}
\newtheorem{prop}[thm]{Proposition}
\newtheorem{lemma}[thm]{Lemma}
\newtheorem{cor}[thm]{Corollary}
\theoremstyle{definition}
\newtheorem{defi}{Definition}
\newtheorem{rmk}{Remark}
\newcommand\reallywidehat[1]{%
\savestack{\tmpbox}{\stretchto{%
  \scaleto{%
    \scalerel*[\widthof{\ensuremath{#1}}]{\kern-.6pt\bigwedge\kern-.6pt}%
    {\rule[-\textheight/2]{1ex}{\textheight}}%WIDTH-LIMITED BIG WEDGE
  }{\textheight}% 
}{0.5ex}}%
\stackon[1pt]{#1}{\tmpbox}%
}
\DeclareSymbolFont{bbold}{U}{bbold}{m}{n}
\DeclareSymbolFontAlphabet{\mathbbold}{bbold}
\def\@tocline#1#2#3#4#5#6#7{\relax
  \ifnum #1>\c@tocdepth % then omit
  \else
    \par \addpenalty\@secpenalty\addvspace{#2}%
    \begingroup \hyphenpenalty\@M
    \@ifempty{#4}{%
      \@tempdima\csname r@tocindent\number#1\endcsname\relax
    }{%
      \@tempdima#4\relax
    }%
    \parindent\z@ \leftskip#3\relax \advance\leftskip\@tempdima\relax
    \rightskip\@pnumwidth plus4em \parfillskip-\@pnumwidth
    #5\leavevmode\hskip-\@tempdima
      \ifcase #1
       \or\or \hskip 1em \or \hskip 2em \else \hskip 3em \fi%
      #6\nobreak\relax
    \hfill\hbox to\@pnumwidth{\@tocpagenum{#7}}\par% <---- \dotfill -> \hfill
    \nobreak
    \endgroup
  \fi}
\newcommand{\fixed}[1]{\textcolor{green}{~\\ \textbf{\large #1\normalsize}}\\}
\newcommand{\subalign}[1]{%
  \vcenter{%
    \Let@ \restore@math@cr \default@tag
    \baselineskip\fontdimen10 \scriptfont\tw@
    \advance\baselineskip\fontdimen12 \scriptfont\tw@
    \lineskip\thr@@\fontdimen8 \scriptfont\thr@@
    \lineskiplimit\lineskip
    \ialign{\hfil$\m@th\scriptstyle##$&$\m@th\scriptstyle{}##$\hfil\crcr
      #1\crcr
    }%
  }%
}
\DeclarePairedDelimiter{\abs}{\lvert}{\rvert} %a reminder: you have to type \abs*{...} for the \left and \right to take effect and actually give you the size you want
\DeclarePairedDelimiter{\floor}{\lfloor}{\rfloor}
\newcommand{\R}{\mathbb{R}}
\newcommand{\Z}{\mathbb{Z}}
\newcommand{\Q}{\mathbb{Q}}
\newcommand{\N}{\mathbb{N}}
\newcommand{\C}{\mathbb{C}}
\newcommand{\F}{\mathbb{F}}
\newcommand{\D}{\mathbb{D}}
\newcommand{\T}{\mathbb{T}}
\newcommand{\E}{\mathbb{E}}
\newcommand{\mc}{\mathcal}
\newcommand{\bbone}{\mathbbold{1}}
\newcommand{\la}{\lambda}
\newcommand{\La}{\Lambda}
\newcommand{\eps}{\epsilon}
\renewcommand{\Re}[1]{\text{Re}(#1)}
\renewcommand{\Re}{\operatorname{Re}}
\renewcommand{\Im}{\text{Im}}
\newcommand{\pfrac}[2]{\left(\frac{#1}{#2}\right)}
\newcommand{\ot}{\otimes}
\newcommand{\lan}{\left\langle}
\newcommand{\ran}{\right\rangle}
\newcommand{\tth}{^{th}}
\newcommand{\tnu}{{\tilde{\nu}}}
\newcommand{\tl}{{\tilde{\lambda}}}
\newcommand{\tN}{\tilde{N}}
\renewcommand{\L}{\Lambda}
\newcommand{\tLL}{\tilde{L}}
\newcommand{\Y}{\mathbb{Y}}
\newcommand{\tf}{\tilde{f}}
\newcommand{\ba}{\mathbf{a}}
\newcommand{\bx}{\mathbf{x}}
\newcommand{\by}{\mathbf{y}}
\newcommand{\bi}{\mathbf{i}}
\newcommand{\cP}{\mathbb{Y}}
\renewcommand{\vec}[1]{\boldsymbol{#1}}
\DeclareMathOperator{\Geom}{Geom}
\DeclareMathOperator{\len}{len}
\DeclareMathOperator{\Res}{Res}
\DeclareMathOperator{\coker}{coker}
\DeclareMathOperator{\Sig}{Sig}
\DeclareMathOperator{\tG}{\tilde{\Gamma}}
\DeclareMathOperator{\const}{const}
\DeclareMathOperator{\SN}{SN}
\DeclareMathOperator{\diag}{diag}
\DeclareMathOperator{\Mat}{Mat}
\DeclareMathOperator{\val}{val}
\DeclareMathOperator{\corank}{corank}
\DeclareMathOperator{\rank}{rank}
\DeclareMathOperator{\Law}{Law}
\DeclareMathOperator{\Exp}{Exp}
\newcommand{\sqbinom}[2]{\begin{bmatrix}#1\\ #2\end{bmatrix}}
\newcommand{\Pois}{\mathcal{S}}
\newcommand{\bz}{\bar{z}}
\newcommand{\mbz}{\mathbf{z}}
\newcommand{\Sur}{\operatorname{Sur}}
\newcommand{\GL}{\mathrm{GL}}
\newcommand{\U}{\mathrm{U}}
\newcommand{\cL}{\mathcal{L}}
\newcommand{\tzeta}{\tilde{\zeta}}
\renewcommand{\l}{\lambda}
\renewcommand{\fixed}[1]{} %uncomment this to make the green fixed notes disappear
\title{Local limits in $p$-adic random matrix theory}
\author{Roger Van Peski}
\date{\today}
\begin{document}

\begin{abstract}
We study the distribution of singular numbers of products of certain classes of $p$-adic random matrices, as both the matrix size and number of products go to $\infty$ simultaneously. In this limit, we prove convergence of the local statistics to a new random point configuration on $\mathbb{Z}$, defined explicitly in terms of certain intricate mixed $q$-series/exponential sums. This object may be viewed as a nontrivial $p$-adic analogue of the interpolating distributions of Akemann-Burda-Kieburg \cite{akemann2019integrable}, which generalize the sine and Airy kernels and govern limits of complex matrix products. Our proof uses new Macdonald process computations and holds for matrices with iid additive Haar entries, corners of Haar matrices from $\mathrm{GL}_N(\mathbb{Z}_p)$, and the $p$-adic analogue of Dyson Brownian motion studied in \cite{van2023p}.
\end{abstract}

\maketitle

\tableofcontents

\section{Introduction}

\subsection{Preface.} This work concerns analogues for $p$-adic random matrices of local limit results in classical random matrix theory over $\R$ and $\C$. The singular values of an $N \times N$ random complex matrix $A$ are a random collection of points on $\R_{\geq 0}$ which may look something like \Cref{fig:cplx_svs}, and \emph{local limits} refer to asymptotics at the scale of individual singular values: examples include the limiting probability there is no singular value in a given interval, or the distribution of the spacing between a given pair of singular values. The study of such local limits for eigenvalues of real and complex random matrices goes back to nuclear physics and the work of Dyson, Gaudin, Mehta, Wigner and others \cite{dyson1962statistical,mehta1960density,wig1,wig3} and extends to modern work proving their universality, see the textbook by Erd{\H o}s-Yau \cite{erdHos2017dynamical} and references therein.

\begin{figure}[H] 
\begin{center}
\begin{tikzpicture}[
dot/.style = {circle, fill, minimum size=#1,
              inner sep=0pt, outer sep=0pt},
dot/.default = 3pt  % size of the circle diameter 
                    ] %dot commands taken from https://tex.stackexchange.com/questions/445946/how-set-tikz-circle-radius-in-nodecircle
% \draw[latex-latex] (0,-2) -> (0,2); %edit here for the y axis                    
\draw[latex-latex] (-1,0) -> (14.5,0); %edit here for the x axis
%\draw[latex-latex] (0,.2) -- (0,-.2);
\foreach \x in {-1,0,1,2,3,4,5,6,7,8,9,10,11,12,13,14} % edit here for the numbers
\node at (\x,-.5) {$\x$};
\node at (0,1) {Hard edge};
\node at (6,1) {Bulk};
\node at (13,1) {Soft edge};
\foreach \x in {13.6712,12.8271,12.4924,12.0374,11.5206,11.2559,11.1163,10.6883,10.2807,10.122,9.62816,9.24257,8.90799,8.4278,8.30592,7.90767,7.70532,7.4304,7.305,6.85874,6.75724,6.69621,6.31229,6.00649,5.601,5.31064,5.13248,4.8562,4.6758,4.50427,4.15579,4.01573,3.74676,3.58766,3.38101,3.21973,2.88521,2.7185,2.40246,2.17775,2.10557,1.92448,1.63954,1.50133,1.23,0.994764,0.807994,0.537315,0.334268,0.115778} 
\node[dot] at (\x,0) {};
\end{tikzpicture}
\end{center}
\caption{The singular values of a typical $50 \times 50$ matrix with standard complex Gaussian entries, generated on Mathematica.}\label{fig:cplx_svs}
\end{figure}

The limiting local statistics near the left and right edges were shown by Forrester \cite{forrester1993spectrum} to be governed by the \emph{Bessel kernel} and the \emph{Airy kernel} respectively. In the bulk (away from the edges) they are governed by the \emph{sine kernel}, found originally by Dyson \cite{dyson1962statistical} in the Hermitian setting. The left and right edges are usually called the \emph{hard edge} and \emph{soft edge}, since nonnegativity of singular values imposes a hard lower bound on the former. 

In the complex setup, a matrix $A \in \Mat_N(\C)$ has singular value decomposition $A=UDV$ where $U,V \in \U(N)$ and $D$ is diagonal with entries given by the singular values in decreasing order. In the setting of integer matrices, a matrix $A \in \Mat_N(\Z)$ similarly has Smith normal form $A=U\diag(a_1,\ldots,a_N)V$ where $ U,V \in \GL_N(\Z)$ and the $a_i$ are a weakly decreasing\footnote{In fact, $a_i|a_{i-1}$.} sequence of nonnegative integers which also parametrize the matrix's cokernel
\begin{equation}
\coker(A) := \Z^N/A\Z^N \cong \bigoplus_{i=1}^N \Z/a_i\Z,
\end{equation}
an abelian group. Many works such as Clancy-Kaplan-Leake-Payne-Wood \cite{clancy2015cohen}, Wood \cite{wood2015random,wood2017distribution,wood2018cohen}, M\'esz\'aros \cite{meszaros2020distribution,meszaros2023cohen}, Nguyen-Wood \cite{nguyen2022random,nguyen2022local}, and Lee \cite{lee2022universality,lee2023joint} study cokernels of random matrices over the integers\footnote{It is worth clarifying that in this literature, `local' typically refers to local fields such as $\Q_p$ as opposed to global fields such as $\Q$, and `local statistics' in \cite{nguyen2022local} refers to those which only depend on the localization of the cokernel at a finite collection of primes. Our usage of `local' in this work, which instead follows that of the real/complex random matrix theory literature, is unrelated.} $\Z$, motivated by problems in number theory, graph theory and topology where random or pseudorandom integral matrices appear.

It is in fact easier to work with random matrices over the $p$-adic integers\footnote{We encourage a reader less familiar with the $p$-adic setting to read the first few paragraphs of background in \Cref{sec:p-adic} before continuing.}, and computations in the $p$-adic setting are used to find the limit distributions which occur (after taking a product over multiple primes) in the integer setting. A matrix $A \in \Mat_N(\Q_p)$ similarly has Smith normal form $A=U\diag(p^{\SN(A)_1},\ldots,p^{\SN(A)_N})V$ for $U,V \in \GL_N(\Z_p)$ and \emph{singular numbers} $\SN(A)_1 \geq \ldots \geq \SN(A)_N \in \Z$. These are the analogues of the negative logarithms of singular values, because the $p$-adic norm $|p^m|_p = p^{-m}$ of a high power of $p$ is small.

The most classic $p$-adic random matrix ensemble, introduced by Friedman-Washington \cite{friedman-washington}, is given by $A \in \Mat_N(\Z_p)$ with iid entries distributed by the Haar probability measure on the additive group $\Z_p$. This measure is the analogue of the Gaussian in this context---see for instance \cite[Remark 2]{van2020limits}---and is characterized by the fact that $A \pmod{p^k}$ is a uniform element of the finite set $\Mat_N(\Z/p^k\Z)$ for any $k$.

Unlike the complex setting, most singular numbers are $0$ asymptotically. Representing the singular numbers $\SN(A)_1 \geq \SN(A)_2 \geq \ldots \geq \SN(A)_N$ of such a matrix graphically as a Young diagram, they might look like \Cref{fig:1matrix_sns}, and in fact the probability that those exact singular numbers appear is
\begin{equation}
\lim_{N \to \infty} \Pr(\SN(A) = (4,2,2,1,0,\ldots,0)) = \prod_{i \geq 1}(1-p^{-i}) \frac{p^{-27}}{(1-p^{-1})^3(1-p^{-2})}
\end{equation}
by \cite[Proposition 1]{friedman-washington}. This contrasts markedly with \Cref{fig:cplx_svs}, where the singular values are all distinct. In fact, \cite{friedman-washington} showed that the cokernel $\coker(A)$ converges as $N \to \infty$ to the so-called \emph{Cohen-Lenstra distribution} on finite abelian $p$-groups, observed slightly earlier in the study of distributions of class groups \cite{cohen-lenstra}. The answer found there was subsequently shown by Wood \cite{wood2015random} to be universal for matrices with iid entries from any sufficiently nondegenerate distribution. Hence \cite{friedman-washington} already contains complete information on the local limits of a single addtive Haar matrix. Convergence of the cokernel, and the corresponding fact that almost all singular numbers are $0$ in the limit, is not just a property of this example: subsequent work studied many different ensembles which likewise have limiting cokernel distributions. We mention Bhargava-Kane-Lenstra-Poonen-Rains \cite{bhargava2013modeling}, Kovaleva \cite{kovaleva2020note}, Lipnowski-Sawin-Tsimmerman \cite{lipnowski2020cohen}, Cheong-Huang \cite{cheong2021cohen} -Kaplan \cite{cheong2022generalizations} and -Yu \cite{cheong2023cokernel}, as well as the above works on integer matrices which yield corresponding results in the $p$-adic setting as well. 

\begin{figure}[H] 
\centering 
\includegraphics[scale=1.2]{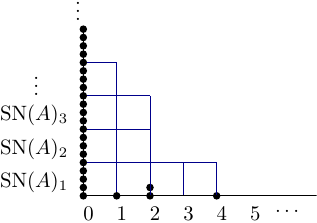}
\caption{A possible tuple $(4,2,2,1,0,\ldots,0)$ of singular numbers $(\SN(A)_1,\ldots,\SN(A)_N)$ for large $A \in \Mat_N(\Z_p)$ with additive Haar measure, represented as a Young diagram with row lengths corresponding to singular numbers, and as a collection of points on $\Z_{\geq 0}$ along the axis. Here we stack points that are at the same location, a situation which of course does not arise in pictures of spectra of generic Hermitian matrices such as \Cref{fig:cplx_svs}.}\label{fig:1matrix_sns}
\end{figure}

The limit at one edge is thus nontrivial and dependent on the matrix ensemble, but away from that edge all singular numbers are $0$ in these examples. This nontrivial edge corresponds to the hard edge because large powers of $p$ are small in the $p$-adic norm. Hence the bulk and soft edge local limits, the analogues of the sine and Airy kernel in the $p$-adic setting, are uninteresting: a deterministic collection of infinitely many points at $0$.

However, when one begins taking products of $p$-adic random matrices, the situation changes. For a product $A_\tau \cdots A_2 A_1$ of random matrices in $\Mat_N(\C)$, there are two natural parameters to vary in the limit: $N$, the matrix size, and $\tau$, the number of products. For matrices $A,B \in \Mat_N(\Z_p)$, the singular numbers of $AB$ are bounded below by those of $A$ (and of $B$), and multiplicativity of the determinant shows
\begin{equation}
\sum_{i=1}^N \SN(AB)_i = \sum_{i=1}^N \SN(A)_i + \sum_{i=1}^N \SN(B)_i.
\end{equation}
Hence the singular numbers of a product $A_\tau \cdots A_1$, with $A_i \in \Mat_N(\Z_p)$ additive Haar, will become larger as $\tau$ increases, looking like \Cref{fig:many_matrix_sns} instead of \Cref{fig:1matrix_sns}. For fixed $\tau$, an exact distribution was computed in \cite[Corollary 3.4]{van2020limits}, further simplified in \cite[Theorem 1.4]{vanpeski2021halllittlewood}, and shown to be universal in the $N \to \infty$ limit for matrices with generic iid entries in \cite{nguyen2022universality}. For fixed $N$ and $\tau \to \infty$, the singular numbers were shown in \cite{van2020limits} to have independent Gaussian fluctuations asymptotically. However, to our knowledge the singular numbers of integer or $p$-adic matrices have not been studied in the regime where both $N$ and $\tau$ go to $\infty$.

\begin{figure}[H]
\centering 
\includegraphics[scale=1.2]{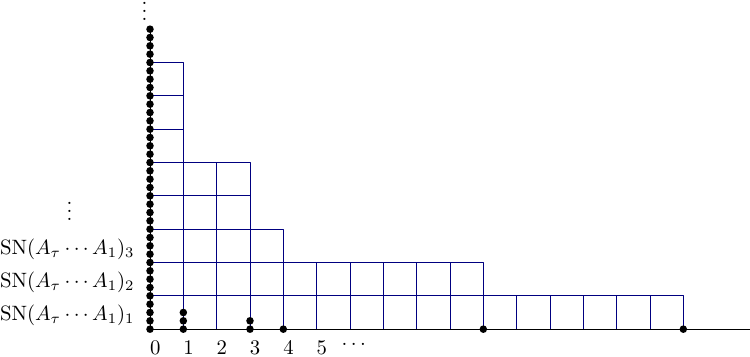}
\caption{A possible tuple of singular numbers for a product of additive Haar $A_i \in \Mat_N(\Z_p)$, represented as a Young diagram and as a collection of points on $\Z_{\geq 0}$ along the axis.} \label{fig:many_matrix_sns}
\end{figure}

For complex random matrices, by contrast, this regime is well studied and the singular values are known to exhibit interesting generalizations of single-matrix local statistics. Singular values of random matrix products have been studied since the 1950s, initially by Bellman \cite{bellman1954limit} and then by Furstenberg-Kesten \cite{furstenberg1960products}. They have since appeared in the study of disordered systems in statistical physics, dynamical systems, convolutional neural networks, and wireless communication channels; see for instance Crisanti-Paladin-Vulpiani \cite{crisanti2012products}, Cohen-Newman \cite{cohen1984stability}, Hanin-Nica \cite{hanin2020products}, and Akemann-Kieburg-Wei \cite{akemann2013singular} respectively, and the references therein. When $N$ is fixed and $\tau \to \infty$, the logarithms of the singular values exhibit independent Gaussian fluctuations in the limit, as shown for the largest singular value by Furstenberg-Kesten \cite{furstenberg1960products} and for all by Akemann-Burda-Kieburg \cite{akemann2019integrable,akemann2020universality} and Liu-Wang-Wang \cite{liu2018lyapunov}. In the opposite regime when $\tau$ is fixed and $N \to \infty$, Liu-Wang-Zhang \cite{liu2016bulk} showed that the singular values in the bulk are governed by the sine kernel as with eigenvalues of a single matrix. In the intermediate regime where $\tau,N \to \infty$ and $\tau/N$ converges to a constant, a new universal limit object appears. This distribution interpolates between the two aforementioned ones; it first appeared for matrices with iid Gaussian entries in work of Akemann-Burda-Kieburg \cite{akemann2019integrable,akemann2020universality} in the physics literature, and in the mathematics literature was shown by Liu-Wang-Wang \cite{liu2018lyapunov}. 

In this work, we treat the joint $N,\tau \to \infty$ regime in $p$-adic random matrix theory. We explicitly compute the limiting joint distribution of the number of singular numbers at $0,1,2,\ldots$, which completely characterizes the local statistics near $0$ since all singular numbers are nonnegative. The same limit appears in all of our examples, giving us strong reason to suspect universality. Unlike the trivial all-$0$ limit for the bulk/soft edge of a single matrix which was mentioned above, the limit distribution we find is highly nontrivial. We first state some consequences of these results for random matrices over finite fields, which even in this setting are new and which give a good sense of the general versions. 

\begin{rmk}\label{rmk:hard_edge}
It is also natural to ask about the limits of the largest singular numbers (the rows in \Cref{fig:many_matrix_sns}) as $N,\tau \to \infty$. For fixed $N$ it was shown in \cite{van2020limits} that these rows grow linearly in $\tau$ with Gaussian fluctuations of scale $\sqrt{\tau}$. We are very confident that this also holds when $N,\tau \to \infty$ and can be established by similar analysis to that of \cite{van2020limits}, though to our knowledge this has not appeared in the literature.
\end{rmk}

\subsection{Matrix products over $\F_p$.} For $A \in \Mat_N(\Z_p)$, the number of nonzero singular numbers of $A$ (the length of the first column in \Cref{fig:many_matrix_sns}) is exactly the corank of the reduction $A \pmod{p} \in \Mat_N(\Z/p\Z)$. Roughly, we find in \Cref{thm:F_p_intro} that for uniformly random iid $A_i^{(N)} \in \Mat_N(\F_p)$,
\begin{equation}
\corank(A_s^{(N)} A_{s-1}^{(N)} \cdots A_1^{(N)}) \approx \log_p s + \text{(finite-order fluctuations)}
\end{equation}
as $N,s \to \infty$ in such a way that the $\log_p s$ is far from both $0$ and $N$, as the corank must of course lie in $[0,N]$. The fluctuations have a limit, and because they are finite-order this limit is a $\Z$-valued random variable which we now define. Here and later, we use the $q$-Pochhammer notation $(a;t)_n = \prod_{i=1}^n (1-at^{i-1})$ for $n \in \Z_{\geq 0} \cup \{\infty\}$.

\begin{defi}\label{def:L1}
For any parameters $t \in (0,1)$ and $\chi \in \R_{>0}$, define the $\Z$-valued random variable $\cL_{t,\chi}^{(1)}$ by 
\begin{equation}\label{eq:L1}
\Pr(\cL_{t,\chi}^{(1)} = x) = \frac{1}{(t;t)_\infty} \sum_{m \geq 0} e^{-\chi t^{x-m}}  \frac{(-1)^m  t^{\binom{m}{2}}}{(t;t)_m}.
\end{equation}
\end{defi}

We are not aware of any appearance of $\cL_{t,\chi}^{(1)}$ in previous literature, so let us give some basic comments on the definition. The terms in the sum \eqref{eq:L1} go to $0$ quite rapidly as $m$ increases due to the $e^{-\chi t^{x-m}}$ factor, so it is in fact very easy to approximate the probabilities numerically. While this makes it easy to check numerically that the probabilities sum to $1$, actually proving this amounts to the quite nontrivial algebraic identity
\begin{equation}\label{eq:sum_to_1_identity}
\sum_{x \in \Z}  \sum_{m \geq 0} e^{-\chi t^{x-m}}  \frac{(-1)^m  t^{\binom{m}{2}}}{(t;t)_m} = (t;t)_\infty.
\end{equation}
Even the fact that the left hand side of \eqref{eq:sum_to_1_identity} is independent of $\chi$ seems quite unintuitive. It is however manifest by translating $x$ in \eqref{eq:L1} that
\begin{equation}\label{eq:obvious_invariance_k=1}
\cL_{t,t\chi}^{(1)} = \cL_{t,\chi}^{(1)} - 1
\end{equation}
in distribution. We explain the probabilistic origins of this invariance, and its generalization \eqref{eq:obvious_invariance} from $\F_p$ to $\Z_p$, in the companion paper \cite{van2023+dynamical}, but for now we content ourselves with the algebraic observation. %\fixed{fill in specific place once intro is written} 

For fixed $t$, the $\chi^{-1} t^\Z$-valued random variables $\chi^{-1} t^{-\cL_{t,\chi}^{(1)}}$ solve the indeterminate Stieltjes moment problem
\begin{equation}
\E[X^m] = \frac{t^{-\binom{m+1}{2}}(t;t)_m}{m!}, m = 0,1,2,\ldots
\end{equation}
for $X$ on $\R_{\geq 0}$ (see \Cref{thm:k=1_t_moments}). Their supports $\chi^{-1} t^\Z$ are distinct as $\chi$ ranges over e.g. $[t,1)$ and \eqref{eq:obvious_invariance_k=1} shows that 
\begin{equation}
\chi^{-1} t^{-\cL_{t,\chi}^{(1)}} = (t\chi)^{-1} t^{-\cL_{t,t\chi}^{(1)}}
\end{equation}
in distribution, so the random variables $\chi^{-1} t^{-\cL_{t,\chi}^{(1)}}$ are parametrized by the circle $\R/\Z$ through $\log_t \chi$. Such explicit solutions to indeterminate moment problems are quite rare; the first indeterminate moment problem to be solved completely was that of the continuous $q^{-1}$-Hermite polynomials, which was solved by Ismail and Masson \cite{ismail1994q} (see also \cite[Chapter 21]{ismail2005classical} for subsequent developments), and appeared later in an interacting particle system context in work of Borodin-Corwin \cite{borodin2020dynamic}. For us, $\cL_{t,\chi}^{(1)}$ appears naturally in random matrix theory through the following result, for which we first give some standard notation.

\begin{defi}\label{def:metric}
For two Borel measures $M_1,M_2$ on a set $S$ with discrete $\sigma$-algebra, we denote by 
\begin{equation}
D_\infty(M_1,M_2) := \sup_{x \in S} |M_1(\{x\}) - M_2(\{x\})|
\end{equation}
the $\ell_\infty$ distance between them. When $X_1,X_2$ are two $S$-valued random variables with laws $M_1,M_2$, we abuse notation and write $D_\infty(X_1,X_2) := D_\infty(M_1,M_2)$. Note that the set $S$ is implicit in the notation, and we will often use the same notation $D_\infty$ for different sets $S$.
\end{defi}

% \begin{defi}\label{def:nearest_integer}
% We define the nearest integer function
% \begin{equation}
% \near{x} = \operatorname{argmin}_{y \in \Z} |x-y|.
% \end{equation}
% It does not matter for any results what convention is chosen for half-integers.
% \end{defi}
We also use $\{x\}$ for the fractional part $x - \floor{x}$ when $x$ is real.

\begin{cor}\label{thm:F_p_intro}
Fix $p$ prime, and for each $N \in \mathbb{Z}_{\geq 1}$ let $A_i^{(N)}, i \geq 1$ be iid uniform elements of $\Mat_N(\F_p)$. Let $(s_{N})_{N \geq 1}$ be any sequence of natural numbers such that $s_{N}$ and $N-\log_p s_{N}$ both go to $\infty$ as $N \rightarrow \infty$. Then the sequence of $\Z$-valued random variables
\begin{equation}
\label{eq:shift_corank_def}
X_N^{(1)} := \corank(A_{s_N}^{(N)} \cdots A_2^{(N)}A_1^{(N)}) - \floor{\log_p s_N}, N=1,2,\ldots
\end{equation}
satisfies
\begin{equation}
\lim_{N \to \infty} D_\infty(X_N^{(1)},\cL_{p^{-1},p^{\{\log_p s_N\}}/(p-1)}^{(1)}) = 0.%\corank(A_{s_N}^{(N)} \cdots A_2^{(N)}A_1^{(N)}) - \floor{\log_p s_N},\cL_{p^{-1},p^{\{\log_p s_N\}}/(p-1)}^{(1)}) = 0.
\end{equation}
\end{cor}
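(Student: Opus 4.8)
The plan is to compute the exact distribution of $\corank(A_{s}^{(N)} \cdots A_1^{(N)})$ for finite $N$ and $s$, obtain a clean limit formula as $N \to \infty$, and then match that formula with $\cL^{(1)}$ after the shift by $\floor{\log_p s_N}$. First I would recall the basic fact that $\corank$ of a product of matrices over $\F_p$ equals $N$ minus the rank, and that the rank of a product of uniform matrices over $\F_p$ can be analyzed via a Markov chain: if $r_j$ denotes the rank of $A_j \cdots A_1$, then $r_j$ is a Markov chain where, given $r_{j-1} = k$, the new rank $r_j$ is the rank of a uniform $N \times k$ matrix composed with the situation, i.e. the rank of $A_j$ restricted to a fixed $k$-dimensional subspace, which has an explicit $q$-deformed hypergeometric distribution with $q = p^{-1}$. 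Equivalently, in the corank coordinate $c_j = N - r_j$, the chain is monotone nondecreasing and its transition probabilities are governed by the probability that a uniform $\F_p$-matrix of given dimensions has a given corank, a quantity with a well-known product formula.

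Second, I would take the $N \to \infty$ limit of this corank chain. In that limit the corank process becomes a Markov chain on $\Z_{\geq 0}$ whose transitions no longer depend on $N$, and the one-step distribution of the corank increment is given by the classical formula for the corank of a large uniform $\F_p$-matrix (this is essentially the Cohen–Lenstra–type computation already used implicitly in \cite{friedman-washington}). The key analytic point is that after $s$ steps the distribution of the corank concentrates around $\log_p s$ with $O(1)$ fluctuations; writing $s = p^{\floor{\log_p s}} \cdot p^{\{\log_p s\}}$ and tracking how the "residual" factor $\chi := p^{\{\log_p s_N\}}/(p-1)$ enters, one should get a closed-form expression for $\lim_N \Pr(\corank - \floor{\log_p s_N} = x)$ as a sum over $m \geq 0$ with the $e^{-\chi t^{x-m}}$ and $(-1)^m t^{\binom{m}{2}}/(t;t)_m$ factors appearing from the $q$-binomial theorem / $q$-exponential expansion. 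Concretely, the $e^{-\chi t^{x-m}}$ arises as the $N \to \infty$ limit of a product $\prod_{i}(1 - t^{i} \cdot (\text{something of size } s^{-1}))$ of the form $(a;t)_\infty$ with $a \to 0$ but $a/(1-t) \cdot s \to$ const, i.e. a Poissonization, and the alternating $q$-Pochhammer sum comes from inclusion–exclusion over which rows/columns drop rank. The identity \eqref{eq:sum_to_1_identity} guarantees this limit is a genuine probability distribution, matching \Cref{def:L1}.

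Third, I would upgrade convergence in distribution to convergence in $D_\infty$. Since all the measures live on $\Z$ and the limit has exponentially decaying tails (from the $e^{-\chi t^{x-m}}$ factor as $x \to +\infty$ and from super-exponential decay as $x \to -\infty$), pointwise convergence of probabilities plus a uniform tail bound gives $\sup_x |\Pr(X_N^{(1)} = x) - \Pr(\cL^{(1)} = x)| \to 0$; the tail bound on the $X_N^{(1)}$ side follows from monotonicity of the corank chain and a crude large-deviation estimate (the corank after $s$ steps is very unlikely to be far from $\log_p s$). One subtlety is that $\{\log_p s_N\}$ need not converge, so the target random variable $\cL^{(1)}_{p^{-1}, p^{\{\log_p s_N\}}/(p-1)}$ is itself moving with $N$; this is fine because the convergence is uniform over the compact range $\chi \in [1/(p-1), p/(p-1)]$ of the parameter, which I would need to check the error estimates are uniform in.

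**Main obstacle.** The hard part will be the second step: extracting the precise closed form of the $N \to \infty$ limiting corank distribution in the scaling window $s \sim p^{\floor{\log_p s}}$ and recognizing the resulting mixed $q$-series/exponential sum as exactly \eqref{eq:L1} with the stated parameter $\chi = p^{\{\log_p s_N\}}/(p-1)$ — in particular getting the exponent $x - m$ and the normalization $(p-1)$ right. This requires carefully composing the $s$-fold transition kernel of the corank chain and performing a saddle-point / generating-function analysis that converts the discrete product structure into the exponential $e^{-\chi t^{x-m}}$; the paper's general Macdonald process machinery is presumably what makes this tractable, and I would expect the $\F_p$ statement to be deduced as the specialization of the more general $\Z_p$ result rather than proved from scratch.
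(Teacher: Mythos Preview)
Your closing suspicion is exactly right: the paper does not prove this statement from scratch. It observes (equation \eqref{eq:sn_and_corank}) that for $A \in \Mat_N(\Z_p)$ one has $\SN(A)_1' = \corank(A \pmod p)$, so that if the $A_i^{(N)}$ are additive Haar on $\Z_p$ then their mod-$p$ reductions are iid uniform on $\Mat_N(\F_p)$ and $\corank(A_{s_N}^{(N)}\cdots A_1^{(N)} \pmod p) = \SN(A_{s_N}^{(N)}\cdots A_1^{(N)})_1'$. The corollary is then literally the $k=1$ case of \Cref{thm:matrix_product_bulk_metric_intro}, with no further work.

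Your direct approach via the corank Markov chain is genuinely different and more elementary in spirit. The chain you describe is correct: conditional on $\rank(A_{j-1}\cdots A_1)=r$, the next rank is that of a uniform $N\times r$ matrix over $\F_p$, whose law has an explicit $q$-product formula. What this buys you is that the argument stays entirely over $\F_p$ and avoids the Hall--Littlewood / Macdonald process machinery and the contour-integral asymptotics that the paper uses to prove \Cref{thm:matrix_product_bulk_metric_intro}. What it costs you is exactly the obstacle you flag: composing the $s$-step kernel and extracting the closed form \eqref{eq:L1} in the regime $s_N\to\infty$, $N-\log_p s_N\to\infty$ is the entire analytic content, and you have not indicated a concrete mechanism (generating function identity, saddle point, or otherwise) that produces the mixed $e^{-\chi t^{x-m}}$/$q$-series structure. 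In the paper this emerges from a contour-integral representation of a Hall--Littlewood measure followed by a residue expansion (\Cref{thm:hl_stat_dist}, \Cref{thm:hl_residue_formula}), which is not obviously reproducible by elementary Markov-chain manipulations. So your outline is a plausible alternative route for $k=1$, but as written it is a plan with its central computation missing rather than a proof.
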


% \begin{rmk}\label{rmk:preimage_doesn't_matter}
% Note that we did not specify which preimage $\zeta$ to choose, but choosing a different one simply translates the left hand side of \eqref{eq:F_p_limit_intro} by an integer and multiplies the parameter of $\cL_{1,t,\cdot}$ by an integer power of $t=p^{-1}$ on the right hand side, which in light of \eqref{eq:obvious_invariance_k=1} leaves \eqref{eq:F_p_limit_intro} invariant. However, it was necessary to choose a subsequence of the $s_{N}$ such that $\log _{t} s_{N_{j}}$ converges in $\mathbb{R} / \mathbb{Z}$, because $\corank(A_{s_{N_{j}}}^{(N_{j})} \cdots A_{1}^{(N_{j})})$ is an integer. 
% \end{rmk}
In other words, the shifted corank $X_N^{(1)}$ is asymptotically close to the random variables in the family $\cL^{(1)}_{p^{-1},\chi}, \chi \in \R_{>0}$, but the parameter $\chi$ must be chosen differently depending on $N$. \Cref{thm:F_p_intro} is an immediate corollary of {\Cref{thm:matrix_product_bulk_metric_intro}} below. Interestingly, the limit is truly nonunique and this is not just a technical feature: as $\chi$ varies, the probabilities $\Pr(\cL^{(1)}_{t,\chi} = x)$ given in \eqref{eq:L1} are not related to one another in any simple way apart from \eqref{eq:obvious_invariance_k=1}, though they do depend continuously on $\chi$. This nonuniqueness of the limit may be viewed as coming the fact that the fluctuations lie on a discrete lattice but the shift $\log_p s_N$ is in general not an integer. The only way of obtaining a traditional convergence statement seems to be by passing to subsequences for which $\{\log_p s_N\}$ converges, as is done in \Cref{thm:matrix_product_bulk}. 

The complexity of the limit object $\cL_{t,\chi}^{(1)}$ in \Cref{thm:F_p_intro}, even in the seemingly simple problem over $\F_p$, suggests that the local statistics over $\Z_p$ (the other columns of \Cref{fig:many_matrix_sns}) may yield interesting distributions if they can be computed at all. This computation is the main result of this paper, and we carry it out not only for additive Haar matrices (\Cref{thm:matrix_product_bulk_metric_intro}) but also for $N \times N$ corners matrices distributed by the Haar measure on $\GL_{N+D_N}(\Z_p)$ (\Cref{thm:corner_product_bulk_metric_intro}), and for an interacting particle system argued in \cite{van2023p} to be the natural $p$-adic analogue of Dyson Brownian motion (\Cref{thm:matrix_stat_dist}). The limit object is the same in all of these cases and hence most likely universal.

\subsection{Matrix product local limits over $\Z_p$.} For any weakly decreasing tuple of integers $\la$, we write $\la_j' := \#\{i: \la_i \geq j\}$. Hence $\la_1'$ is the number positive parts $\la_i$, and for any $A \in \Mat_N(\Z_p)$ we have 
\begin{equation}\label{eq:sn_and_corank}
\SN(A)_1' = \#\{i:\SN(A)_i \geq 1\} = \rank(\coker(A)/p\coker(A)) = \corank(A \pmod{p}).
\end{equation}
The other conjugate parts $\SN(A)_2',\SN(A)_3',\ldots$, which correspond to the lengths of the second, third, etc. columns in \Cref{fig:1matrix_sns}, are the ranks of the $\F_p$-vector spaces
\begin{equation}
\SN(A)_i' = \rank(p^{i-1}\coker(A)/p^i\coker(A)).
\end{equation}
In light of \eqref{eq:sn_and_corank}, we have already seen the limiting distribution of $\SN(A)_1'$ in \Cref{thm:F_p_intro}, and the result below generalizes this to the joint distribution of $\SN(A)_1',\SN(A)_2',\ldots$. Because 
\begin{equation}
\#\{i:\SN(A)_i = 0\} = N - \SN(A)_1'
\end{equation}
and
\begin{equation}
\#\{i: \SN(A)_i = j\} = \SN(A)_j' - \SN(A)_{j+1}',
\end{equation}
the joint distribution of $\SN(A)_1',\ldots,\SN(A)_k'$ is equivalent to the joint distribution of the number of singular numbers at $0,1,\ldots,k-1$. Hence \Cref{thm:matrix_product_bulk_metric_intro} below gives the complete local statistics of the singular numbers near $0$.

Below we use the notation 
\begin{equation}\label{eq:def_sig}
\Sig_k := \{(x_1,\ldots,x_k) \in \Z^k: x_1 \geq \ldots \geq x_k\}
\end{equation}
for the set of integer signatures of length $k$, where we allow $k=\infty$. The limit object in our theorems is a random element 
\begin{equation}
\cL_{t,\chi} = (\cL^{(1)}_{t,\chi},\cL^{(2)}_{t,\chi},\ldots)
\end{equation}
of $\Sig_\infty$, where the first coordinate $\cL^{(1)}_{t,\chi}$ is the random integer which appeared above. We define $\cL_{t,\chi}$ explicitly in \Cref{thm:stat_dist_1pt} by specifying the distribution of its $\Sig_k$-valued marginals $(\cL^{(i)}_{t,\chi})_{1 \leq i \leq k} $ for every $k$ through an explicit formula for $\Pr(\cL_{k,t,\chi}=\vec{L})$ valid for any $\vec{L} \in \Sig_k$. These formulas require enough symmetric function notation from \Cref{sec:macdonald_background} that we will not give them in full here, but rather postpone them to \Cref{thm:stat_dist_1pt}---to which the interested reader is invited to skip---and continue to the random matrix limit results.

%Whenever we speak of convergence in distribution of $\Sig_\infty$-valued random variables, we mean with respect to the product topology on $\Z^\infty$ where each $\Z$ factor has the discrete topology, or equivalently convergence in distribution of all finite collections of coordinates.

\begin{thm}\label{thm:matrix_product_bulk_metric_intro}
Fix $p$ prime, and for each $N \in \mathbb{Z}_{\geq 1}$ let $A_{i}^{(N)}, i \geq 1$ be iid matrices with iid entries distributed by the additive Haar measure on $\mathbb{Z}_{p}$. Let $(s_{N})_{N \geq 1}$ be a sequence of natural numbers such that $s_{N}$ and $N-\log _{p} s_{N}$ both go to $\infty$ as $N \rightarrow \infty$. Then the random integers
\begin{equation}
X_N^{(j)} := \mathrm{SN}(A_{s_{N}}^{(N)} \cdots A_{1}^{(N)})_{j}'-\floor{\log _{p}(s_N)}
\end{equation}
are asymptotically close to the ones $\cL_{p^{-1},p^{\{\log_p s_N\}}/(p-1)}^{(j)}$ in the sense that 
\begin{equation}\label{eq:haar_metric_cvg}
\lim_{N \to \infty} D_\infty((X_N^{(j)})_{1 \leq j \leq k}, (\cL^{(j)}_{p^{-1},p^{\{\log_p s_N\}}/(p-1)})_{1 \leq j \leq k}) = 0
\end{equation}
for every $k \in \Z_{\geq 1}$. Here $D_\infty$ is the metric of \Cref{def:metric} on the set $\Sig_k$, and $\cL^{(j)}_{t,\chi}$ is as defined in \Cref{thm:stat_dist_1pt}.
\end{thm}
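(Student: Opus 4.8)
The plan is to realize the singular numbers of the matrix product as a marginal of a suitable Macdonald process at Hall--Littlewood parameter $q=0$, $t=p^{-1}$, and then push the asymptotic analysis through at the level of the (explicit) correlation kernel or transition formula. Concretely, the first step is to recall from the $p$-adic random matrix literature (e.g. \cite{van2020limits,vanpeski2021halllittlewood}) that for $A_i^{(N)}$ iid additive Haar, the law of $\SN(A_{s}^{(N)}\cdots A_1^{(N)})$ is given by a Hall--Littlewood process: the transition operator for multiplying by one more additive Haar matrix is a Hall--Littlewood analogue of the Plancherel/geometric specialization, and iterating $s$ times produces a Hall--Littlewood process whose $k$th ``column-type'' coordinates $\SN(\cdot)_1',\ldots,\SN(\cdot)_k'$ are what we must control. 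I would then invoke whatever exact contour-integral or ``nice'' formula \Cref{thm:stat_dist_1pt} is built on — presumably a formula for $\Pr(\SN(\text{product})_i' = \ell_i,\ 1\le i\le k)$ as a sum/integral over the relevant symmetric function data — and isolate its $N,s\to\infty$ behavior.

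Second, I would carry out the asymptotic analysis. The key scaling input is that $\corank$ concentrates around $\log_p s$ with $O(1)$ fluctuations, which is why we subtract $\floor{\log_p s_N}$ and why the residual parameter $\chi = p^{\{\log_p s_N\}}/(p-1)$ appears: the fractional part of $\log_p s_N$ is exactly the ``phase'' that survives. So I would substitute $s_N = p^{\floor{\log_p s_N}+\{\log_p s_N\}}$, shift the summation/integration variables by $\floor{\log_p s_N}$, and show that in the resulting expression every factor either converges to its counterpart in the formula defining $\cL_{t,\chi}^{(j)}$ or is negligible. Two limits must be taken in the right order: $N\to\infty$ first (killing the finite-$N$ corrections, using $N-\log_p s_N\to\infty$ so that we are genuinely in the ``bulk'' away from the top of the Young diagram), and the geometric/$q$-Pochhammer factors $(t;t)_n$ saturate to $(t;t)_\infty$. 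The identity \eqref{eq:sum_to_1_identity} (or its $k$-variable generalization underlying \Cref{thm:stat_dist_1pt}) guarantees the limiting object is an honest probability distribution, so one does not separately need a tightness argument once pointwise convergence of probabilities on the discrete set $\Sig_k$ is established — and pointwise convergence of probabilities on a discrete set, together with the limit being a probability measure, is exactly convergence in $D_\infty$ by Scheff\'e's lemma.

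Third, I would handle the uniformity in the phase $\{\log_p s_N\}$. Because the limit parameter $\chi$ depends on $N$ through $\{\log_p s_N\}$, the statement is not ``convergence to a fixed law'' but ``$D_\infty$ to a moving target goes to $0$''; I would prove this by establishing the pointwise convergence estimates with error bounds that are uniform over $\chi$ in a compact set (here $\chi$ ranges over $[1/(p-1), p/(p-1))$, a compact interval), using the rapid decay of the $e^{-\chi t^{x-m}}$ tails noted after \Cref{def:L1} to truncate the $m$-sum uniformly. Equivalently — and this is cleaner to write — one can argue by contradiction through subsequences: if $D_\infty$ did not go to $0$, pass to a subsequence along which $\{\log_p s_N\}\to c$, apply the fixed-target convergence (the content of \Cref{thm:matrix_product_bulk}, essentially), and use continuity of $\chi\mapsto \Pr(\cL_{k,t,\chi}=\vec L)$ to reach a contradiction.

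The main obstacle I expect is the asymptotic analysis of the exact symmetric-function formula in the regime where the number of parts grows like $\log_p s_N$: one is looking at a Hall--Littlewood process where the relevant signature has $\Theta(\log s)$ nonzero parts, and extracting the $O(1)$ fluctuation of $\lambda_1'$ (and the joint behavior of $\lambda_1',\ldots,\lambda_k'$) requires a delicate steepest-descent or direct-estimate argument on the contour integrals / sums — controlling both the ``bulk'' contribution that produces the $\log_p s$ shift and the finer structure that produces the mixed $q$-series/exponential sum. The bookkeeping of which terms survive after the shift by $\floor{\log_p s_N}$, and showing the $q$-Pochhammers genuinely stabilize to $(t;t)_\infty$ rather than contributing spurious factors, is where the real work lies; the reduction to Macdonald processes and the final $D_\infty$/Scheff\'e packaging are comparatively routine.
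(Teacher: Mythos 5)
Your high-level strategy matches the paper's: reduce to a Hall--Littlewood process via the $p$-adic Macdonald process connection, carry out a contour-integral asymptotic analysis for the bulk fluctuations, and then pass from a fixed-target convergence statement to the $D_\infty$ statement by a subsequence/contradiction argument using continuity in $\chi$. Your third paragraph, in particular, is essentially identical to how the paper deduces \Cref{thm:matrix_product_bulk_metric_intro} from its subsequential version (\Cref{thm:matrix_product_bulk}), and the ``pointwise convergence of probabilities on a discrete set plus the limit being a probability measure gives $D_\infty\to 0$'' packaging is correct.

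The genuine gap is in your second step, specifically the sentence ``Two limits must be taken in the right order: $N\to\infty$ first \ldots''. The asymptotic analysis in the paper (Theorems~\ref{thm:hl_stat_dist} and~\ref{thm:alpha_hl_stat_dist}) is indeed done for the Hall--Littlewood measure with the \emph{infinite} principal specialization $(1,t,t^2,\ldots)$, i.e.\ the $N=\infty$ process. But the theorem you are proving involves a \emph{joint} limit $N,s_N\to\infty$, and the finite-$N$ matrix product corresponds to the truncated specialization $(1,t,\ldots,t^{N-1})$. Passing from the iterated limit ($N\to\infty$, then $s\to\infty$) to the joint limit is not a matter of ``$(t;t)_n$ saturating to $(t;t)_\infty$'' at the level of formulas --- that argument would silently require $N$ to be sent to $\infty$ with $s$ held fixed, which is not the regime of the theorem. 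What is actually needed, and what the paper does, is a probabilistic coupling: both the finite-$N$ and the infinite-$N$ Hall--Littlewood processes admit an explicit sampling algorithm via the insertion map $\iota$ of \Cref{def:interacting_insertion} and \Cref{thm:alpha_sampling_alg}, and one defines stopping times $T_N^{(N)}$, $T_N$, $\Xi_N$ (first time the first column reaches height $N$, and first time a clock with index $>N$ fires) to show that under $N-\log_p s_N\to\infty$ the two coupled processes agree up to time $s_N$ with probability tending to $1$. This is a real argument, not a formality; you correctly identify that $N-\log_p s_N\to\infty$ is the relevant hypothesis, but you do not supply the mechanism that converts it into agreement of the two processes. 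Without this coupling step the proposal does not close.

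A smaller point: the claim that one does not need a tightness argument because of identity~\eqref{eq:sum_to_1_identity} is slightly circular as written, since the paper's proof that the limiting weights sum to $1$ (and hence that $\cL_{k,t,\chi}$ is a bona fide random variable, \Cref{thm:stat_dist_1pt}) itself goes through tightness (\Cref{thm:tightness}) and Prokhorov. If you take \Cref{thm:stat_dist_1pt} as given, as you seem to intend, then your Scheff\'e-type conclusion is fine.
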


\Cref{thm:F_p_intro} is simply the $k=1$ case of the above result, and we use the same notation $\{x\}:= x-\floor{x}$ defined there. Another natural probability measure on $p$-adic matrices is the Haar probability measure on the compact group $\GL_N(\Z_p)$. Its singular numbers are all $0$, but those of an $m \times m$ submatrix are nontrivial. This measure was studied in \cite{van2020limits}, and the following result generalizes \Cref{thm:matrix_product_bulk_metric_intro} to this case as well.

\begin{thm}\label{thm:corner_product_bulk_metric_intro}
Fix $p$ prime, and for each $N \in \mathbb{Z}_{\geq 1}$ let $D_N \in \Z_{\geq 1}$ be an integer and $A_{i}^{(N)}, i \geq 1$ be the top-left $N \times N$ corners of iid matrices distributed by the Haar probability measure on $\GL_{N+D_N}(\Z_p)$. Let $(s_{N})_{N \geq 1}$ be a sequence of natural numbers such that $s_{N}$ and $N-\log _{p} s_{N}$ both go to $\infty$ as $N \rightarrow \infty$. Then the random integers%Let $(s_{N_{j}})_{j \geq 1}$ be any subsequence for which $-\log _{p} ((1-p^{-D_{N_j}})s_{N_{j}})$ converges in $\mathbb{R} / \mathbb{Z}$, and let $\zeta$ be any preimage in $\mathbb{R}$ of this limit. Then
\begin{equation}
X_N^{(j)} :=\mathrm{SN}(A_{s_{N}}^{(N)} \cdots A_{1}^{(N)})_{j}'-\floor{\log_{p}((1-p^{-D_{N}})s_{N})}
\end{equation}
satisfy
\begin{equation}
\lim_{N \to \infty} D_\infty((X_N^{(j)})_{1 \leq j \leq k}, (\cL^{(j)}_{p^{-1},p^{\{\log_p ((1-p^{-D_N})s_N)\}}/(p-1)})_{1 \leq j \leq k}) = 0
\end{equation}
for every $k \in \Z_{\geq 1}$. Here $D_\infty$ is the metric of \Cref{def:metric} on the set $\Sig_k$, and $\cL^{(j)}_{t,\chi}$ is as defined in \Cref{thm:stat_dist_1pt}.
\end{thm}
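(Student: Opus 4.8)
The plan is to reduce the corners case to the same Macdonald process computation that yields Theorem \ref{thm:matrix_product_bulk_metric_intro}, with only a shift in the effective number of products. First I would recall from \cite{van2020limits} the exact description of the law of the singular numbers of an $N\times N$ corner of a Haar matrix from $\GL_{N+D}(\Z_p)$: it is a Hall--Littlewood measure (equivalently, a Macdonald measure at $q=0$, $t=p^{-1}$) with principal-specialization parameters reflecting the $N$ rows and the $N+D$ columns. Concretely, the cokernel of such a corner has the same distribution as that of a product against a suitable Hall--Littlewood-distributed matrix, and the Markov chain on signatures induced by multiplying successive corners is again a Hall--Littlewood process, just as in the additive Haar case but with the single-matrix transition kernel replaced by the corner kernel. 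The key structural input is that $s$ multiplications of additive Haar matrices and $s$ multiplications of $\GL_{N+D}(\Z_p)$-corners differ only through the specialization data fed into the Macdonald/Hall--Littlewood process, and that this difference is captured asymptotically by replacing the ``time'' $s_N$ with $(1-p^{-D_N})s_N$.

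The main steps I would carry out are as follows. (1) Write the joint law of $(\SN(A_{s_N}^{(N)}\cdots A_1^{(N)})_j')_{j\ge1}$ as a Hall--Littlewood process and extract, via the standard Macdonald-process machinery of \Cref{sec:macdonald_background}, a contour-integral or $q$-moment formula for the observables $\SN(\cdot)_j'$ exactly as is done in the proof of \Cref{thm:matrix_product_bulk_metric_intro}. (2) Identify the scalar parameter in that formula that plays the role of ``number of products'': in the additive Haar case it is essentially $s_N$, and in the corner case the corner specialization contributes an extra geometric factor, so the relevant parameter becomes $(1-p^{-D_N})s_N$ (with a negligible correction coming from the $\log_p$ of a quantity that is bounded away from $0$ and $\infty$ as long as $D_N\ge1$, which holds by hypothesis). (3) Perform the asymptotic analysis of this formula in the regime $s_N\to\infty$, $N-\log_p s_N\to\infty$: this is verbatim the saddle-point / termwise-limit argument already used for Theorem \ref{thm:matrix_product_bulk_metric_intro}, since after the substitution $s_N\rightsquigarrow(1-p^{-D_N})s_N$ the integrand is the same. (4) Conclude that the shifted conjugate parts converge in $D_\infty$ on $\Sig_k$ to $\cL^{(j)}_{p^{-1},\,p^{\{\log_p((1-p^{-D_N})s_N)\}}/(p-1)}$, matching the claimed centering $\floor{\log_p((1-p^{-D_N})s_N)}$ and the claimed parameter $\chi_N = p^{\{\log_p((1-p^{-D_N})s_N)\}}/(p-1)$.

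The step I expect to be the real obstacle is (1)--(2): making precise that the corner construction, iterated $s$ times, is \emph{exactly} a Hall--Littlewood process whose parameters differ from the additive Haar process only by the insertion of the factor $(1-p^{-D_N})$ into the time variable. One must be careful that $D_N$ is allowed to vary with $N$ (even grow), so the specialization is genuinely $N$-dependent; one has to check that the only asymptotically relevant effect of $D_N$ is through $1-p^{-D_N}\in[1-p^{-1},1)$, i.e. that any further $D_N$-dependence is swallowed by the error terms in the $D_\infty$ estimate. Once this bookkeeping is done, the analytic heart of the argument is identical to that of \Cref{thm:matrix_product_bulk_metric_intro} and can be cited rather than repeated; indeed I would phrase the proof so that Theorem \ref{thm:matrix_product_bulk_metric_intro} is the special case $D_N\equiv\infty$ (interpreting $1-p^{-\infty}=1$), and present Theorem \ref{thm:corner_product_bulk_metric_intro} as a uniform-in-$D_N$ strengthening, so that essentially one asymptotic analysis serves both. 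A secondary technical point is verifying that $\log_p((1-p^{-D_N})s_N)$ still tends to $\infty$ and that $N$ minus it still tends to $\infty$, which is immediate from the hypotheses on $s_N$ and $N$ since $\log_p(1-p^{-D_N})$ is bounded.
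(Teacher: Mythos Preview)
Your proposal is essentially correct and matches the paper's approach: identify the corner products as a Hall--Littlewood process via \Cref{thm:hl_meas_matrices} with the finite alpha specialization $t,\ldots,t^{D_N}$, observe that the only effect on the asymptotics is through $p_1(t,\ldots,t^{D_N}) = t(1-t^{D_N})/(1-t)$, and then reuse the analysis from the additive Haar case. The paper even isolates a version of the alpha asymptotics (\Cref{thm:alpha_hl_stat_dist_gen}) that allows the specialization to vary with $N$, precisely to accommodate the $N$-dependence of $D_N$ you flag in your step (2).

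One structural point you gloss over in step (4): the paper does \emph{not} obtain the $D_\infty$ statement directly from the contour integral asymptotics. The asymptotic analysis only yields convergence along subsequences where $\{\log_p((1-p^{-D_{N_j}})s_{N_j})\}$ converges to some fixed $\zeta$, so that the target $\cL_{k,p^{-1},\chi}$ is fixed (this is \Cref{thm:corner_product_bulk}). The $D_\infty$ statement with the moving target $\chi_N$ is then deduced by a separate, purely soft argument: assume failure, pass to a subsequence where the fractional parts converge by compactness of $[0,1]$, apply the subsequential result, and use that $\Pr(\cL_{k,t,\chi}=\vec{L})$ is continuous in $\chi$ (read off from the integral formula and the bound \Cref{thm:f_bound}) to derive a contradiction. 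Your sketch would benefit from separating these two layers explicitly, since ``converge in $D_\infty$ to a varying limit'' is not something the termwise asymptotics give you directly.
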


In addition to \Cref{thm:matrix_product_bulk_metric_intro} and \Cref{thm:corner_product_bulk_metric_intro}, we find in \Cref{thm:matrix_stat_dist} that $\cL_{t,\chi}$ is also a limit of another stochastic process. This process $\Pois^{(N)}(\tau)$ is in continuous time $\tau$, and may be explicitly described as a reflected Poisson random walk on a Weyl chamber. It was further realized in \cite{van2023p} as coming from `Poissonized' $p$-adic matrix products in continuous time, and argued to be a natural $p$-adic analogue of multiplicative Dyson Brownian motion on $\GL_N(\C)$.

% \begin{rmk}\label{rmk:L_parameter}
% The fact that the final parameter in $\cL_{t,\cdot}$ is $p^{-\zeta}/(p-1)$ in both \Cref{thm:matrix_product_bulk} and \Cref{thm:corner_product_bulk} above (and in \Cref{thm:matrix_stat_dist_metric} below) is simply a choice of normalization in the limit; by choosing different values of $\zeta$ one may obtain $\cL_{p^{-1},\chi}$ for any $\chi \in \R_{>0}$.
% \end{rmk}

\begin{rmk}\label{rmk:dynamical}
It is also natural to consider dynamical limits of $\SN(A^{(N)}_s \cdots A^{(N)}_1), s \geq 0$ across multiple times, beyond the $1$-point distribution computed above. We do this in a companion work \cite{van2023+dynamical}.
\end{rmk}

Let us give some comments on $\cL_{t,\chi}$ in lieu of the full details in \Cref{thm:stat_dist_1pt}. First, its coordinates $\cL_{t,\chi}^{(j)}$ satisfy the natural generalization of \eqref{eq:obvious_invariance_k=1}, namely 
\begin{equation}\label{eq:obvious_invariance}
\cL_{t, t \chi}^{(j)} = \cL_{t, \chi}^{(j)}-1% - (\underbrace{1, \ldots, 1}_{k \text { times }})
\end{equation}
in joint distribution, for any finite collection of $j$'s. For general $k$, the law of the marginals $\cL_{k,t,\chi} := (\cL_{t,\chi}^{(j)})_{1 \leq j \leq k}$ is defined by a series expansion formula \eqref{eq:limit_rv_res_formula} generalizing \eqref{eq:L1}, which in general takes the form
\begin{equation}
\Pr(\cL_{k,t,\chi} = (L_1,\ldots,L_k)) =\frac{1}{(t;t)_\infty} \sum_{m \geq 0} e^{-\chi t^{L_k - m}} C_{m;L_1,\ldots,L_k}(t,\chi)
\end{equation}
for $C_{m;L_1,\ldots,L_k}(t,\chi)$ which are polynomials in $\chi$ with coefficients in $\Q(t)$. For example, when $k=2$ we have the following formula, where we use notation 
\begin{equation}
\sqbinom{a}{b}_q := \frac{(q;q)_a}{(q;q)_b (q;q)_{a-b}}
\end{equation}
for the $q$-binomial coefficient.

\begin{cor}\label{thm:k=2_case}
In the notation of \Cref{thm:stat_dist_1pt}, for any $L \in \Z$ and $x \in \Z_{\geq 0}$,
\begin{multline}
\Pr(\cL_{2,t,\chi} = (L+x,L)) =\frac{1}{(t;t)_\infty}  \sum_{m \geq 0} e^{-\chi t^{L-m}} (-1)^m t^{m^2+(x-1)m+\binom{x}{2}} \\ 
\times \sum_{i=0}^x \frac{(-1)^{x-i}}{(t;t)_{x-i}} \sqbinom{m+i}{i}_t \left(\frac{(t^{L-m}\chi)^{i+m}}{(i+m)!} + \frac{(t^{L-m}\chi)^{i+m-1}\bbone(i+m \geq 1)}{(i+m-1)!}\right)
\end{multline}
\end{cor}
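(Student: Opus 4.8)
The plan is to derive \Cref{thm:k=2_case} by specializing to $k=2$ the general series formula \eqref{eq:limit_rv_res_formula} that \emph{defines} $\cL_{k,t,\chi}$ in \Cref{thm:stat_dist_1pt}, and then simplifying the resulting expression by hand. In the form recorded in the excerpt, that formula writes $\Pr(\cL_{k,t,\chi}=(L_1,\dots,L_k))$ as $\tfrac{1}{(t;t)_\infty}$ times $\sum_{m\ge 0} e^{-\chi t^{L_k-m}} C_{m;L_1,\dots,L_k}(t,\chi)$, where the coefficient $C_{m;\vec L}$ is built from (a) the depth-one Hall--Littlewood branching datum attaching $\vec L\in\Sig_k$ to a signature of length $k-1$, (b) a principal specialization of a Hall--Littlewood polynomial, and (c) a $\chi$-polynomial boundary piece which, for $k=1$, degenerates into the single factor producing $\tfrac{(-1)^m t^{\binom m2}}{(t;t)_m}$ in \eqref{eq:L1}. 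For $k=2$ with $\vec L=(L+x,L)$, $x\in\Z_{\ge 0}$, the only internal degree of freedom beyond $m$ is a single interlacing integer between the two parts; writing it as $L+i$ with $0\le i\le x$ turns the specialized formula into an explicit double sum over $m\ge 0$ and $0\le i\le x$.

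The second step is to substitute the explicit Hall--Littlewood data (Macdonald polynomials at $q=0$ with parameter $t=p^{-1}$, in the notation of \Cref{sec:macdonald_background}). The branching coefficient for $\Sig_2\to\Sig_1$ is an explicit ratio of $t$-Pochhammer symbols, and the principal specialization of the relevant one-row Hall--Littlewood polynomial contributes a monomial $t^{(\cdots)}$ times another such ratio. After the substitution one separates the $m$-dependence from the $i$-dependence: the two $t$-Pochhammers with arguments $(t;t)_m$ and $(t;t)_{m+i}$ are merged via $(t;t)_{m+i}=(t;t)_m\,(t^{m+1};t)_i$, which is exactly what produces the $q$-binomial $\sqbinom{m+i}{i}_t$; the several Gaussian-type exponents $t^{\binom m2}$, $t^{\binom i2}$ together with the specialization monomial should combine, after the shift $j=L+i$, into the single prefactor $(-1)^m t^{m^2+(x-1)m+\binom x2}$ and the alternating factor $(-1)^{x-i}/(t;t)_{x-i}$; and the boundary piece (c) of \Cref{thm:stat_dist_1pt}, which for $k=1$ yielded only $e^{-\chi t^{x-m}}$, should for $k=2$ evaluate to $e^{-\chi t^{L-m}}$ times the two-term bracket $\tfrac{(t^{L-m}\chi)^{i+m}}{(i+m)!}+\tfrac{(t^{L-m}\chi)^{i+m-1}\bbone(i+m\ge 1)}{(i+m-1)!}$. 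Interchanging the finite $i$-sum with the $m$-sum is harmless because of the $e^{-\chi t^{L-m}}$ decay.

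The main obstacle is the bookkeeping in this last simplification: matching, on the nose, every power of $t$ and every rational prefactor after the reindexing $j\mapsto L+i$ and after merging the Pochhammer ratios, so that the exponent comes out to be $m^2+(x-1)m+\binom x2$ and not a neighbouring one, and so that it is genuinely the two-term bracket (rather than a single factorial term) that is forced by the $k=2$ boundary piece. I expect to need one nontrivial finite $q$-series summation for the $i$-sum --- most plausibly a form of the $q$-binomial theorem or the $q$-Chu--Vandermonde identity --- to bring it into the displayed shape.

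As consistency checks I would verify that $x=0$ gives $\Pr(\cL_{2,t,\chi}=(L,L))$ with no surprises from the $\bbone(i+m\ge 1)$ term; that summing over $L\le y$ recovers $\Pr(\cL^{(1)}_{t,\chi}=y)$ from \eqref{eq:L1} (the marginal statement); that the invariance \eqref{eq:obvious_invariance} is manifest by translating $L\mapsto L-1$ while sending $\chi\mapsto t\chi$; and finally that a handful of probabilities sum numerically to $1$.
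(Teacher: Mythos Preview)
Your overall strategy is correct and matches the paper's: specialize \eqref{eq:limit_rv_res_formula} of \Cref{thm:stat_dist_1pt} to $k=2$ with $(L_1,L_2)=(L+x,L)$, change variables $d\mapsto m=L-d$, parametrize the single interlacing coordinate as $\mu_1=L+i$ with $0\le i\le x$, and simplify. However, two points in your description are off.

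First, the two-term bracket is not produced by a separate ``boundary piece''; it comes from evaluating the factor
\[
Q_{(\mu-(d[1]))'}\bigl(\gamma((1-t)t^{L-m}\chi),\alpha(1);0,t\bigr)=Q_{(1[m+i])}\bigl(\gamma(g),\alpha(1);0,t\bigr),
\qquad g=(1-t)t^{L-m}\chi.
\]
The paper applies the branching rule \eqref{eq:specialization_branch} to split off the single $\alpha$-variable: since $(1[m+i])$ is a column, the only partitions interlacing below it are $(1[m+i])$ and $(1[m+i-1])$, so exactly two terms appear. Each remaining pure-Plancherel piece is then evaluated by the closed form $Q_{(1[a])}(\gamma(c(1-t));0,t)=\frac{(t;t)_a}{a!}c^a$ (a direct consequence of \Cref{thm:specialize_mac_poly} and the one-variable branching rule), and this yields precisely the bracket $\frac{(t^{L-m}\chi)^{i+m}}{(i+m)!}+\frac{(t^{L-m}\chi)^{i+m-1}\bbone(i+m\ge1)}{(i+m-1)!}$ together with an overall factor $(t;t)_{m+i}$.

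Second, you will not need any $q$-series identity for the $i$-sum: the displayed formula retains $\sum_{i=0}^x$ explicitly, and after the step above everything matches term by term. The $(t;t)_{m+i}$ just produced combines with $\frac{1}{(t;t)_m(t;t)_x}\sqbinom{x}{i}_t$ from the prefactor to give $\frac{1}{(t;t)_{x-i}}\sqbinom{m+i}{i}_t$; the power $t^{\binom{m+x}{2}+\binom{m}{2}}$ equals $t^{m^2+(x-1)m+\binom{x}{2}}$ via $\binom{m+x}{2}=\binom{m}{2}+\binom{x}{2}+mx$; and the sign $(-1)^{|\vec L|-|\mu|-d}=(-1)^{x-i+m}$ factors as $(-1)^m(-1)^{x-i}$. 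No $q$-Chu--Vandermonde or similar is involved.
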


\Cref{thm:k=2_case} is deduced straightforwardly from the general formula \Cref{thm:stat_dist_1pt} in \Cref{sec:examples}. In general, the coefficients $C_{m;L_1,\ldots,L_k}(t,\chi)$ are defined in terms of Hall-Littlewood polynomials with specializations featuring both $\alpha$ and Plancherel parameters, for which we again refer to \Cref{sec:macdonald_background}, and may be computed in terms of standard Young tableaux, see \Cref{sec:examples} for a worked example when $k=3$. An equivalent formula for $\cL_{k,t,\chi}$, given in \eqref{eq:limit_rv_int_formula}, is a certain multiple contour integral.

\subsection{Techniques.} Of the works on complex matrix products mentioned above, the vast majority use the determinantal point process structure of matrix products. We are not aware of any such structure for $p$-adic matrices. However, Borodin-Gorin-Strahov \cite{borodin2018product} found that singular values of complex matrix products are also a degeneration of the Macdonald processes introduced by Borodin-Corwin \cite{borodin2014macdonald}. Many techniques for analyzing such processes exist, which were applied to complex matrix products in subsequent work of Ahn \cite{ahn2019fluctuations}, Ahn-Strahov \cite{ahn2020product}, Ahn and the author \cite{ahn2023lyapunov}, and Gorin-Sun \cite{gorin2018gaussian}. The work \cite{van2020limits} found analogous relations between $p$-adic matrix products and Macdonald processes. These convert our matrix asymptotics into statements about Hall-Littlewood measures, which are probability measures defined in terms of Hall-Littlewood polynomials. Explicitly these are symmetric polynomials 
\begin{equation}\label{eq:hl_explicit_intro}
P_\lambda(x_1,\ldots,x_n;0,t) :=  \frac{1}{v_\lambda(t)} \sum_{\sigma \in S_n} \sigma\left(x_1^{\lambda_1} \cdots x_n^{\lambda_n} \prod_{1 \leq i < j \leq n} \frac{x_i-tx_j}{x_i-x_j}\right),
\end{equation}
where $\sigma$ acts by permuting the variables and $v_\lambda(t)$ is the normalizing constant such that the $x_1^{\lambda_1} \cdots x_n^{\lambda_n}$ term has coefficient $1$. By \cite[Corollary 3.4]{van2020limits}, we have for iid additive Haar $A_i \in \Mat_N(\Z_p)$ that
\begin{equation}\label{eq:uniform_hl_intro}
\Pr(\SN(A_s^{(N)} \cdots A_1^{(N)}) = \la) = \frac{P_\la(1,t,\ldots,t^{N-1};0,t)Q_\la(\overbrace{t, \ldots, t}^{s \text { times }},\overbrace{t^2, \ldots, t^2}^{s \text { times }},\ldots)}{\Pi_{0,t}(1,t,t^2,\ldots;\underbrace{t, \ldots, t}_{s \text { times }},\underbrace{t^2, \ldots, t^2}_{s \text { times }},\ldots)}
\end{equation}
with $t=1/p$, where $Q_\la$ is a certain constant multiple of $P_\la$ and $\Pi_{0,t}(\cdots)$ is a normalizing constant, both defined in \Cref{sec:macdonald_background}. The same result \cite[Corollary 3.4]{van2020limits} implies a similar expression for the $\GL_N(\Z_p)$ corners in \Cref{thm:corner_product_bulk_metric_intro}, and a different one \cite[Proposition 3.4]{van2022q} yields such an expression for $\Pois^{(N)}$, see \Cref{thm:hl_meas_matrices} and \Cref{thm:HL_poisson}.

The resulting Hall-Littlewood process asymptotics are still highly nontrivial. For the Gaussian limits at fixed matrix size shown in \cite{van2020limits}, explicit formulas for Hall-Littlewood polynomials gave an explicit sampling algorithm for the singular numbers of matrix products which was then amenable to direct probabilistic analysis. In the present work, by contrast, proving (even guessing) the limit required nontrivial work at the level of symmetric functions.

The approach we take is based on expectations of the form 
\begin{equation}\label{eq:skew_observable_intro}
\E_\la\left[\frac{P_{\la/\mu}(1,t,\ldots;0,t)}{P_\la(1,t,\ldots;0,t)}\right],
\end{equation}
where $\la$ is distributed by a Hall-Littlewood measure such as \eqref{eq:uniform_hl_intro} and $P_{\la/\mu}$ is a skew Hall-Littlewood polynomial as defined in \eqref{eq:def_skewP}. For symmetric functions associated to vertex models, of which the Hall-Littlewood polynomials are a limit, such observables have been used in works such as Borodin-Wheeler \cite[(4.14)]{borodin2020observables}.

For fixed $\eta$, one may express the prelimit probabilities
\begin{equation}
\Pr((\la_1',\ldots,\la_k') = (\eta_1,\ldots,\eta_k))
\end{equation}
as a certain linear combination over different $\mu$ of the expectations \eqref{eq:skew_observable_intro}, see \Cref{thm:prob_from_observables}. The fact that reasonable expressions exist for these probabilities is in general related to the Markovian projection property of Hall-Littlewood processes, see e.g. Borodin-Bufetov-Wheeler \cite{borodin2016between}, and in the random matrix setting comes because these probabilities only depend on the random matrix modulo $p^k$. We use these formulas to obtain contour integral formulas for these probabilities, given in \Cref{thm:use_torus_product}, and then take asymptotics of this integral to obtain convergence of the resulting probabilities directly. These prelimit formulas hold for a slightly different Hall-Littlewood measure than in \eqref{eq:uniform_hl_intro}, which is given by the $N \to \infty$ limit of \eqref{eq:uniform_hl_intro}; at the level of random matrices, this should be thought of as first sending the matrix size to infinity, then the number of products. The asymptotic analysis is done in \Cref{sec:alpha}, following an argument in \Cref{sec:stationary_law_hl} for a slightly simpler Hall-Littlewood measure where the varying specialization of \eqref{eq:uniform_hl_intro} is replaced by a so-called Plancherel specialization. These computations are the main technical component of the paper. 

Deducing the limit \Cref{thm:matrix_product_bulk_metric_intro}, where $N$ is sent to infinity in tandem with the number $s$ of products, requires additional probabilistic arguments. In the case of \Cref{thm:matrix_product_bulk_metric_intro} and \Cref{thm:corner_product_bulk_metric_intro} these use properties of the Hall-Littlewood process sampling algorithm given in \cite[Proposition 5.3]{van2020limits}. The proofs of \Cref{thm:matrix_product_bulk_metric_intro} and \Cref{thm:corner_product_bulk_metric_intro}, like the statements, are essentially the same. The proof of the related result for $\Pois^{(N)}$, \Cref{thm:matrix_stat_dist}, is structurally exactly the same as those but some must be done in parallel due to slightly different formulas. Since these are actually slightly easier to work with, we give most arguments first for the cases relevant to \Cref{thm:matrix_stat_dist}.

\begin{rmk}\label{rmk:relation_to_sawin_wood}
When $\la$ is distributed as the singular numbers of a random matrix $A \in \Mat_N(\Z_p)$, it is equivalent to the cokernel
\begin{equation}
\coker(A) := \Z_p^N/A\Z_p^N \cong \bigoplus_{i=1}^N \Z/p^{\la_i}\Z.
\end{equation}
We noticed later that in this setting, the expectation \eqref{eq:skew_observable_intro} is exactly the \emph{$p$-group moment}, the expected number of surjective group homomorphisms
\begin{equation}
\E\left[\#\Sur\left(\coker(A),\bigoplus_i \Z/p^{\mu_i}\Z\right)\right]
\end{equation}
(up to a constant $Q_\mu(t,t^2,\ldots;0,t)^{-1}$ depending only on $\mu$, see \cite[Proposition 6.2]{nguyen2022universality}). Such moments of abelian groups are the main tool used to study the cokernels of integer and $p$-adic matrices in the works mentioned in the Preface. In fact, in retrospect our computations provide an alternative proof, via symmetric function theory, of the formulas for probabilities in terms of moments given recently by Sawin-Wood \cite{sawin2022moment} in the case of finite abelian groups; the details will appear in a forthcoming paper \cite{van2023+symmetric}. 
\end{rmk}

\begin{rmk}\label{rmk:triangular matrices}
The techniques we develop here also apply, with some additional work, in the problem of the limiting distribution of nilpotent Jordan blocks (equivalently, ranks of powers) of a random $n \times n$ strictly upper-triangular matrix $A$ over $\F_q$, studied by Kirillov \cite{kirillov1995variations} and Borodin \cite{borodin1995limit,borodin1999lln}. The same random variable $\cL_{t,\chi}$, with $t=q^{-1}$ and $\chi$ depending on $n$, also appears in these limits \cite{van2023+rank}.
\end{rmk}

\begin{rmk}
Though we have stated them over $\Z_p$, our results hold with $\Z_p$ replaced by the ring of integers of any non-Archimedean local field with finite residue field of size $q$, and the parameter $t=p^{-1}$ in formulas replaced by $q^{-1}$. This is because the only result we need in order to translate to Hall-Littlewood measures is \Cref{thm:hl_meas_matrices}, which holds in this generality, see \cite[Remark 4]{van2020limits}.
\end{rmk}

\subsection{Problems with the moment problem.} We conclude by mentioning issues with another plausible approach to our convergence results. Most previous literature on Macdonald process asymptotics proceeds by analyzing contour integral formulas for certain `$q$-moment' observables of the random partition $\la$. In the Hall-Littlewood case, these are the so-called joint $t$-moments 
\begin{equation}
\E[t^{-\kappa_1 \la_1' - \kappa_2 \la_2' - \ldots - \kappa_k \la_k'}]
\end{equation}
of Hall-Littlewood measures such as \eqref{eq:uniform_hl_intro}, where $\kappa_1 \geq \ldots \geq \kappa_k \geq 0$ are integers. These contour integral formulas were given in the Hall-Littlewood case by Bufetov-Matveev \cite{bufetov2018hall}, following similar formulas of Borodin-Corwin \cite{borodin2014macdonald} for a different case of Macdonald processes. Such contour integral formulas are amenable to asymptotics: in \cite{borodin2014macdonald} and many subsequent works---for example, Borodin-Gorin \cite{borodin2015general}, Borodin-Corwin-Ferrari \cite{borodin2014free,borodin2018anisotropic}, Dimitrov \cite{dimitrov2018kpz}---convergence of these moments is enough to deduce convergence in distribution and identify the limit object. 

In our setting, unfortunately, the resulting moment problem is indeterminate, as we saw above for $\cL_{t,\chi}^{(1)}$. This was discussed at the end of \cite[Section 5]{van2022q}, where the limiting $t$-moments 
\begin{equation}
\lim_{\substack{\tau \to \infty \\ \tau \in t^{\Z+\zeta}}} \E[t^{m \cdot (\Pois^{(\infty)}(\tau)_1' - \log_{t^{-1}}\tau)}]
\end{equation}
of the particle system $\Pois^{(\infty)}$ (see \Cref{def:poisson_walks}) were computed, and similarly for joint moments of $\Pois^{(\infty)}(\tau)_1',\ldots,\Pois^{(\infty)}(\tau)_k'$. These moments were found to be independent of $\zeta \in \R$, but a limiting random variable
\begin{equation}\label{eq:limit_pois_infty}
\lim_{\substack{\tau \to \infty \\ \tau \in t^{\Z+\zeta}}} \Pois^{(\infty)}(\tau)_1' - \log_{t^{-1}}\tau
\end{equation}
must take values in $\Z+\zeta$. Hence the limiting moments, which agree for different $\zeta$, cannot determine a unique random variable. Without this, it was not clear how to establish the convergence \eqref{eq:limit_pois_infty}, and we left this and the corresponding joint statement for $\Pois^{(\infty)}(\tau)_1',\ldots,\Pois^{(\infty)}(\tau)_k'$ as a conjecture. This paper settles the conjecture, see \Cref{sec:indet_gen_k} for further discussion. We do not use the $t$-moment observables at all, and other than the shared starting point of the observables \eqref{eq:skew_observable_intro}, the asymptotic analysis outlined in the previous subsection seems fairly different from other works of which we are aware (as does the discrete limit $\cL_{t,\chi}$ itself).

\subsection{Outline.} We give background on $p$-adic numbers and matrices over them in \Cref{sec:p-adic}, and on symmetric functions and Macdonald polynomials in \Cref{sec:macdonald_background}. In \Cref{sec:stationary_law_hl} we compute limiting probabilities for a certain Hall-Littlewood measure, yielding the integral formulas for the distribution of $\cL_{k,t,\chi}$. We further derive the summation formulas in \Cref{sec:residues}, and show these formulas define a valid random variable in \Cref{sec:tightness_and_limit}---see \Cref{thm:stat_dist_1pt}. In \Cref{sec:indeterminate} we discuss the indeterminate Stieltjes moment problem associated with $\cL_{t,\chi}^{(1)}$, and the joint version. We compute the $k=1,2$ cases of the residue formula explicitly in \Cref{sec:examples}, as well as an example for $k=3$. \Cref{sec:alpha} gives analogues of these results for limits from another class of Hall-Littlewood processes, those with one infinite principal and one repeated alpha specialization, which is needed for the matrix product results in \Cref{thm:matrix_product_bulk_metric_intro} and \Cref{thm:corner_product_bulk_metric_intro}. Finally, in \Cref{sec:pois_mat_prod} we deduce those theorems from the previously established asymptotics, by the probabilistic sampling algorithm mentioned above. In \Cref{appendix:existing_work} we give additional detail on the existing work on complex random matrix products and the analogies between it and the present work. In \Cref{sec:dbm_pois_appendix} we give an alternate proof of a result of \cite{van2023p} using Hall-Littlewood machinery.

\textbf{Acknowledgments.} I am deeply grateful to Alexei Borodin for invaluable conversations, encouragement, and advice throughout this project, and for feedback on the exposition. I also wish to thank Amol Aggarwal, Andrew Ahn, Gernot Akemann, Ivan Corwin, Evgeni Dimitrov, Maurice Duits, Vadim Gorin, and Melanie Matchett Wood for various helpful discussions, and the anonymous referees for helpful comments which improved the exposition (and in particular led to cleaner statements of the main theorems). This paper is based on PhD thesis work \cite{van2023asymptotics} which was supported by an NSF Graduate Research Fellowship under Grant No. 1745302, and the paper itself was written (and some results added) while supported by the European Research Council (ERC), Grant Agreement No. 101002013.

\section{$p$-adic matrix background} \label{sec:p-adic}

We begin with a few paragraphs of background which are essentially quoted from \cite{van2020limits}, and are a condensed version of the exposition in \cite[Section 2]{evans2002elementary}, to which we refer any reader desiring a more detailed introduction to $p$-adic numbers geared toward a probabilistic viewpoint. Fix a prime $p$. Any nonzero rational number $r \in \Q^\times$ may be written as $r=p^k (a/b)$ with $k \in \Z$ and $a,b$ coprime to $p$. Define $|\cdot|: \Q \to \R$ by setting $|r|_p = p^{-k}$ for $r$ as before, and $|0|_p=0$. Then $|\cdot|_p$ defines a norm on $\Q$ and $d_p(x,y) :=|x-y|_p$ defines a metric. We additionally define $\val_p(r)=k$ for $r$ as above and $\val_p(0) = \infty$, so $|r|_p = p^{-\val_p(r)}$. We define the \emph{field of $p$-adic numbers} $\Q_p$ to be the completion of $\Q$ with respect to this metric, and the \emph{$p$-adic integers} $\Z_p$ to be the unit ball $\{x \in \Q_p : |x|_p \leq 1\}$. It is not hard to check that $\Z_p$ is a subring of $\Q_p$. We remark that $\Z_p$ may be alternatively defined as the inverse limit of the system $\ldots \to \Z/p^{n+1}\Z \to \Z/p^n \Z \to \cdots \to \Z/p\Z \to 0$, and that $\Z$ naturally includes into $\Z_p$. 

$\Q_p$ is noncompact but is equipped with a left- and right-invariant (additive) Haar measure; this measure is unique if we normalize so that the compact subgroup $\Z_p$ has measure $1$. The restriction of this measure to $\Z_p$ is the unique Haar probability measure on $\Z_p$, and is explicitly characterized by the fact that its pushforward under any map $r_n:\Z_p \to \Z/p^n\Z$ is the uniform probability measure. For concreteness, it is often useful to view elements of $\Z_p$ as `power series in $p$' $a_0 + a_1 p + a_2 p^2 + \ldots$, with $a_i \in \{0,\ldots,p-1\}$; clearly, these specify a coherent sequence of elements of $\Z/p^n\Z$ for each $n$. The Haar probability measure then has the alternate explicit description that each $a_i$ is iid uniformly random from $\{0,\ldots,p-1\}$. Additionally, $\Q_p$ is isomorphic to the ring of Laurent series in $p$, defined in exactly the same way.

Similarly, $\GL_N(\Q_p)$ has a unique left- and right-invariant measure for which the total mass of the compact subgroup $\GL_N(\Z_p)$ is $1$. The restriction of this measure to $\GL_N(\Z_p)$, which we denote by $M_{Haar}(\GL_N(\Z_p))$, pushes forward to $\GL_N(\Z/p^n\Z)$ and is the uniform measure on the finite group $\GL_N(\Z/p^n\Z)$. This gives an alternative characterization of the Haar measure. 

The following standard result, \Cref{prop:smith}, is sometimes known either as Smith normal form or Cartan decomposition.

\begin{prop}\label{prop:smith}
Let $n \leq m$. For any nonsingular $A \in \Mat_{n \times m}(\Q_p)$, there exist $U \in \GL_n(\Z_p), V \in \GL_m(\Z_p)$ such that $UAV = \diag_{n \times m}(p^{\la_1},\ldots,p^{\la_n})$ where $\la \in \Sig_n$ is an integer signature (see \eqref{eq:def_sig} for the definition). Furthermore, there is a unique such $n$-tuple $\la$. The parts $\la_i$ are known as the \emph{singular numbers} of $A$, and we denote them by $\SN(A) = \la = (\la_1,\ldots,\la_n)$.
\end{prop}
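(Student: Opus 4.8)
The plan is to deduce this from the classical Smith normal form argument over a discrete valuation ring, using crucially that $\Z_p$ is such a ring: its only ideals are $(0)$ and $(p^k)$ for $k \geq 0$, and for nonzero $a,b \in \Z_p$ one has $a \mid b$ in $\Z_p$ if and only if $\val_p(a) \leq \val_p(b)$.

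First I would reduce to the case $A \in \Mat_{n\times m}(\Z_p)$: since every entry lies in $\Q_p = \Z_p[1/p]$, some $p^k A$ has all entries in $\Z_p$, and establishing the statement for $p^k A$ with signature $\mu$ gives it for $A$ with $\la = \mu - (k,\dots,k)$ (with uniqueness transferring as well). Nonsingularity, i.e.\ $\rank A = n$, then ensures all parts of $\la$ are finite. For existence I would induct on $n$. Since $A \neq 0$, choose an entry of minimal $p$-adic valuation and bring it to position $(1,1)$ by permutation matrices, which lie in $\GL_n(\Z_p)$ and $\GL_m(\Z_p)$. Because $a_{11}$ has minimal valuation it divides every entry of $A$ in $\Z_p$, so subtracting $\Z_p$-multiples of the first row and column from the others --- an operation realized by left and right multiplication by transvections over $\Z_p$ --- clears the rest of the first row and column, leaving $\begin{pmatrix} p^{\la_1} & 0 \\ 0 & A' \end{pmatrix}$ with $A'$ an $(n-1)\times(m-1)$ matrix of rank $n-1$. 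Applying the inductive hypothesis to $A'$, left- and right-multiplying the block matrix by $\diag(1,U')$ and $\diag(1,V')$, and finally reordering the diagonal exponents into weakly decreasing order by further permutation matrices, produces the desired $U \in \GL_n(\Z_p)$, $V \in \GL_m(\Z_p)$ and $\la \in \Sig_n$.

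For uniqueness I would use the elementary-divisor / minor-ideal invariant. For $1 \leq k \leq n$ let $\mathfrak{d}_k(A) \subseteq \Z_p$ be the ideal generated by all $k\times k$ minors of $A$. By the rectangular form of the Cauchy--Binet formula, each $k\times k$ minor of $UAV$ is a $\Z_p$-linear combination of $k\times k$ minors of $A$, and since $U,V$ are invertible over $\Z_p$ the reverse inclusion holds too; hence $\mathfrak{d}_k(A) = \mathfrak{d}_k(\diag_{n\times m}(p^{\la_1},\dots,p^{\la_n}))$, which is generated by the smallest of the products $p^{\la_{i_1}+\cdots+\la_{i_k}}$ over $k$-subsets, namely $p^{e_k}$ with $e_k := \la_{n-k+1}+\cdots+\la_n$ since $\la$ is weakly decreasing. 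Thus $e_k = \val_p(\mathfrak{d}_k(A))$ is an invariant of $A$, and then $\la_n = e_1$ and $\la_{n-j} = e_{j+1}-e_j$ for $1 \leq j \leq n-1$ recover $\la$ uniquely; the same computation applies over $\Q_p$ by tracking $\val_p$ of the minors in $\Z$.

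I do not expect a genuine obstacle here --- the content is entirely that $\Z_p$ is a PID with a single prime. The only thing requiring care is bookkeeping: checking at each step that the elementary operations are multiplication by honest elements of $\GL_n(\Z_p)$ or $\GL_m(\Z_p)$, rather than merely of $\GL$ over $\Q_p$, and handling the rectangular shape and the weakly-decreasing ordering convention cleanly at the end.
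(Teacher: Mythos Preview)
Your argument is correct: the reduction to $\Z_p$-entries, the inductive row/column clearing using an entry of minimal valuation, and the uniqueness via the $p$-adic valuations of the $k\times k$ minor ideals are all valid, and you have been careful to keep the elementary operations inside $\GL_n(\Z_p)$ and $\GL_m(\Z_p)$. The paper itself does not prove this proposition at all---it is stated as a standard result (Smith normal form / Cartan decomposition) without proof---so there is nothing to compare against, and your write-up supplies exactly the classical argument one would expect.
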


We will often write $\diag_{n \times N}(p^\la)$ for $\diag_{n \times N}(p^{\la_1},\ldots,p^{\la_n})$, and also omit the dimensions $n \times N$ when they are clear from context. We note also that for any $\la \in \Sig_N$, the orbit $\GL_N(\Z_p) \diag(p^\la) \GL_N(\Z_p)$ is compact. The restriction of the Haar measure on $\GL_N(\Q_p)$ to such a double coset, normalized to be a probability measure, is the unique $\GL_N(\Z_p) \times \GL_N(\Z_p)$-invariant probability measure on $\GL_N(\Q_p)$ with singular numbers $\la$, and all $\GL_N(\Z_p) \times \GL_N(\Z_p)$-probability measures are convex combinations of these for different $\la$. These measures may be equivalently described as $U \diag(p^{\la_1},\ldots,p^{\la_N}) V$ where $U,V$ are independently distributed by the Haar probability measure on $\GL_N(\Z_p)$. More generally, if $n \leq m$ and $U \in \GL_n(\Z_p), V \in \GL_m(\Z_p)$ are Haar distributed and $\mu \in \Sig_n$, then $U \diag_{n \times m}(p^\mu) V$ is invariant under $\GL_n(\Z_p) \times \GL_m(\Z_p)$ acting on the left and right, and is the unique such bi-invariant measure with singular numbers given by $\mu$.

\section{Symmetric function background}\label{sec:macdonald_background}

We denote by $\cP$ the set of all integer partitions $(\la_1,\la_2,\ldots)$, i.e. sequences of nonnegative integers $\la_1 \geq \la_2 \geq \cdots$ which are eventually $0$. These may be represented by Ferrers diagrams as in \Cref{fig:arms_and_legs}.

We call the integers $\la_i$ the \emph{parts} of $\la$, set $\la_i' = \#\{j: \la_j \geq i\}$, and write $m_i(\la) = \#\{j: \la_j = i\} = \la_i'-\la_{i+1}'$. We write $\len(\la)$ for the number of nonzero parts, and denote the set of partitions of length $\leq n$ by $\cP_n$. We write $\mu \prec \la$ or $\la \succ \mu$ if $\la_1 \geq \mu_1 \geq \la_2 \geq \mu_2 \geq \cdots$, and refer to this condition as \emph{interlacing}. A stronger partial order is defined by containment of Ferrers diagrams, which we write as $\mu \subset \la$, meaning $\mu_i \leq \la_i$ for all $i$. Finally, we denote the partition with all parts equal to zero by $\emptyset$. 

We denote by $\La_n$ the ring $\C[x_1,\ldots,x_n]^{S_n}$ of symmetric polynomials in $n$ variables $x_1,\ldots,x_n$. For a symmetric polynomial $f$, we will often write $f(\bx)$ for $f(x_1,\ldots,x_n)$ when the number of variables is clear from context. We will also use the shorthand $\bx^\la:= x_1^{\la_1} x_2^{\la_2} \cdots x_n^{\la_n}$ for $\la \in \cP_n$. A simple $\C$-basis for $\La_n$ is given by the \emph{monomial symmetric polynomials} $\{m_\la(\bx): \la \in \Y_n\}$ defined by 
\[
m_\la(\bx) = \sum_{\sigma \in S_n/\text{Stab}(\la)} \sigma(\bx^\la)
\]
where $\sigma$ acts by permuting the variables, and $\text{Stab}(\la)$ is the subgroup of permutations such that $\sigma(\bx^\la) = \bx^\la$. It is also a very classical fact that the power sum symmetric polynomials 
\[p_k(\bx) = \sum_{i=1}^n x_i^k, k =1,\ldots,n\]
are algebraically independent and algebraically generate $\La_n$, and so by defining 
\begin{equation*}
    p_\la(\bx) := \prod_{i \geq 1} p_{\la_i}(\bx)
\end{equation*}
for $\la \in \Y$ with $\la_1 \leq n$, we have that $\{p_\la(\bx): \la_1 \leq n\}$ forms another basis for $\La_n$. 

Another special basis for $\La_n$ is given by the \emph{Macdonald polynomials} $P_\la(\bx;q,t)$, which depend on two additional parameters $q$ and $t$ which may in general be complex numbers, though in probabilistic contexts we take $q,t \in (-1,1)$. Our first definition of them requires a certain scalar product on $\La_n$.

\begin{defi}[{\cite[Chapter VI, (9.10)]{mac}}]\label{def:torus_product}
For polynomials $f,g \in \La_n$, define
\begin{equation}
\label{eq:torus_product}
\lan f, g \ran_{q,t;n}' := \frac{1}{n! (2 \pi \bi)^n} \int_{\T^n} f(z_1,\ldots,z_n) \overline{g(z_1,\ldots,z_n)} \prod_{1 \leq i \neq j \leq n} \frac{(z_iz_j^{-1};q)_\infty}{(tz_iz_j^{-1};q)_\infty} \prod_{i=1}^n \frac{dz_i}{z_i},
\end{equation}
where $\T$ denotes the unit circle with usual counterclockwise orientation, and to avoid confusion we clarify that the product is over $\{(i,j) \in \Z: 1 \leq i,j \leq n, i \neq j\}$.
\end{defi}

\begin{defi}\label{def:mac_poly_torus}
The \emph{Macdonald symmetric polynomials} $P_\la(x_1,\ldots,x_n;q,t), \la \in \Y_n$ are defined by the following two properties:
\begin{enumerate}
\item They are `monic' and upper-triangular with respect to the $m_\la(\bx)$ basis, in the sense that they expand as 
\begin{equation}
P_\la(\bx;q,t) = m_\la(\bx) + \sum_{\mu < \la} R_{\la \mu}(q,t) m_\mu(\bx)
\end{equation}
where $<$ denotes the lexicographic order on partitions.
\item They are orthogonal with respect to $\lan \cdot, \cdot \ran_{q,t;n}'$. 
\end{enumerate}
\end{defi}

These conditions \emph{a priori} overdetermine the set $\{P_\la(\bx;q,t): \la \in \Y_n\}$, and it is a theorem which follows from \cite[VI (4.7)]{mac} that the Macdonald symmetric polynomials do indeed exist. It is then also clear that they form a basis for $\Y_n$. We note that this paper will only ever consider the two special cases of Macdonald polynomials when either $q$ or $t$ is set to $0$, which are called \emph{Hall-Littlewood polynomials} and \emph{$q$-Whittaker polynomials} respectively.

\begin{defi}\label{def:Q}
For $\la \in \Y_n$, the \emph{dual Macdonald polynomial} $Q_\la$ is given by 
\begin{equation}
Q_\la(x_1,\ldots,x_n;q,t) := b_\la(q,t) P_\la(x_1,\ldots,x_n;q,t)
\end{equation}
where $b_\la$ is defined as follows (see \cite[p339, (6.19)]{mac}). We associate to $\la$ its Ferrers diagram as in \Cref{fig:arms_and_legs}. The boxes in the diagram correspond to pairs $(i,j)$ with $1 \leq i \leq \la_j', 1 \leq j \leq \la_i$. For such a box $s=(i,j)$, we define its \emph{arm-length} $a(s)$ and \emph{leg-length} $\ell(s)$ by the horizontal (resp. vertical) distance from $s$ to the edge of the diagram as in \Cref{fig:arms_and_legs}, explicitly
\begin{align}
a(s) &= \la_i - j\\ 
\ell(s) &= \la_j' - i.
\end{align}
The formula is then
\begin{equation}
\label{eq:blam_formula}
b_\la(q,t) = \prod_{s \in \la} \frac{1-q^{a(s)}t^{\ell(s)+1}}{1-q^{a(s)+1}t^{\ell(s)}}
\end{equation}
where the product is over boxes $s$ inside the diagram of $\lambda$.
\end{defi}

\begin{figure}
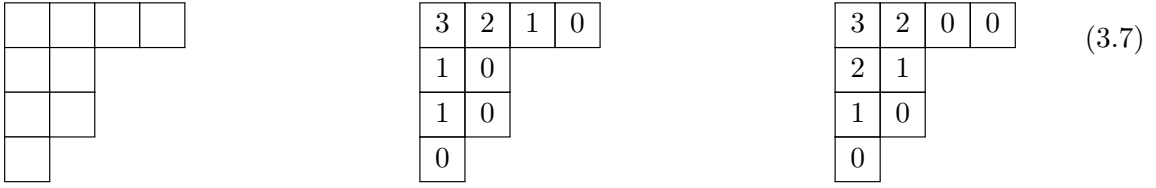

\begin{equation}
\begin{ytableau} *(white) & *(white) & *(white) & *(white)  \\ *(white) & *(white) \\ *(white) & *(white) \\ *(white)  \end{ytableau} \quad \quad \quad \quad \quad \quad \quad \quad \begin{ytableau} 3 & 2 & 1 & 0  \\ 1 & 0\\ 1 & 0 \\ 0  \end{ytableau}  \quad \quad \quad \quad \quad \quad \quad \quad \begin{ytableau} 3 & 2 & 0 & 0  \\ 2 & 1 \\ 1 & 0 \\ 0  \end{ytableau} 
\end{equation}
\caption{The Ferrers diagram of $\la = (4,2,2,1)$ (left), with $a(s)$ and $\ell(s)$ listed for each box $s$ (middle and right respectively).}\label{fig:arms_and_legs}
\end{figure}

The constant multiples $b_\la(q,t)$ are chosen so that the $Q_\la(\bx;q,t)$ form a dual basis to the $P_\la(\bx;q,t)$ with respect to a different scalar product which is related to (a renormalized version of) $\lan \cdot, \cdot \ran_{q,t;n}'$ in the $n \to \infty$ limit, see \cite[Chapter VI, (9.9)]{mac}. Because that scalar product is not necessary for our arguments while the explicit formula for $b_\la(q,t)$ is, we have given the rather unmotivated definition of $Q_\la$ above. While on the subject, we record a computation needed later.

\begin{lemma}\label{thm:blam_computation}
In the $q$-Whittaker specialization,
\begin{equation}
b_\la(q,0) = \prod_i \frac{1}{(q;q)_{\la_i - \la_{i+1}}}.
\end{equation}
\end{lemma}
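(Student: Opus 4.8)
The plan is to evaluate the product formula \eqref{eq:blam_formula} for $b_\lambda(q,t)$ at $t=0$ and group the boxes column by column. Setting $t=0$ in \eqref{eq:blam_formula} kills every factor $q^{a(s)}t^{\ell(s)+1}$ in the numerator, and in the denominator kills $q^{a(s)+1}t^{\ell(s)}$ unless $\ell(s)=0$. So $b_\lambda(q,0) = \prod_{s \in \lambda} (1 - [\ell(s)=0]\, q^{a(s)+1})^{-1}$, i.e. only the boxes $s$ with leg-length zero — the bottom box of each column — contribute a nontrivial factor, and that factor is $1/(1-q^{a(s)+1})$.

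Next I would identify, for each column $j$ with $1 \leq j \leq \lambda_1$, the bottom box of that column. Recall $\ell(s) = \lambda_j' - i$ for $s=(i,j)$, so $\ell(s)=0$ exactly when $i = \lambda_j'$; the arm-length there is $a(s) = \lambda_{\lambda_j'} - j$. Thus
\begin{equation}
b_\lambda(q,0) = \prod_{j=1}^{\lambda_1} \frac{1}{1 - q^{\,\lambda_{\lambda_j'} - j + 1}}.
\end{equation}
Now I would reorganize this product over columns into a product over parts. The columns $j$ with $\lambda_j' = k$ (equivalently, the $k$-th row is the last row reaching out to column $j$) are exactly those $j$ with $\lambda_{k+1} < j \leq \lambda_k$, and there are $\lambda_k - \lambda_{k+1} = m_k$-type many of them (more precisely $\lambda_k - \lambda_{k+1}$ of them, for each $k$ with $\lambda_k > \lambda_{k+1}$). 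For such $j$ we have $\lambda_{\lambda_j'} = \lambda_k$, so the factor is $1/(1-q^{\lambda_k - j + 1})$, and as $j$ runs over $\lambda_{k+1}+1, \ldots, \lambda_k$ the exponent $\lambda_k - j + 1$ runs over $1, 2, \ldots, \lambda_k - \lambda_{k+1}$. Hence the contribution of the $k$-th group of columns is $\prod_{r=1}^{\lambda_k-\lambda_{k+1}}(1-q^r)^{-1} = 1/(q;q)_{\lambda_k - \lambda_{k+1}}$, and multiplying over all $k$ gives the claimed formula $b_\lambda(q,0) = \prod_i 1/(q;q)_{\lambda_i - \lambda_{i+1}}$ (where the factors with $\lambda_i = \lambda_{i+1}$ are harmlessly $1/(q;q)_0 = 1$).

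The only mild subtlety — and the step I would be most careful with — is the bookkeeping in the column-to-part reindexing: correctly matching which columns have $\lambda_j' = k$ and checking the arm-lengths range over a contiguous block $1,\ldots,\lambda_k-\lambda_{k+1}$ without gaps or overlaps. This is purely combinatorial and follows from the definitions of $\lambda_j'$ and conjugation, so there is no real analytic or algebraic obstacle; a sentence or two tracking indices, perhaps with reference to \Cref{fig:arms_and_legs}, suffices.
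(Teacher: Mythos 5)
Your proof is correct and takes essentially the same approach as the paper's: set $t=0$ so only leg-length-zero boxes contribute a factor $1/(1-q^{a(s)+1})$, then regroup over columns to obtain the $q$-Pochhammer product. The paper states this in one line and leaves the regrouping implicit; you have simply filled in the column-to-part bookkeeping explicitly, which is a fine thing to do but not a different argument.
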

\begin{proof}
Because $\ell(s) \geq 0$, the numerator of \eqref{eq:blam_formula} is always $1$ when $t=0$, so 
\begin{equation}
b_\la(q,0) = \prod_{\substack{s \in \la \\ \ell(s) = 0}} \frac{1}{1-q^{a(s)+1}} = \prod_i \frac{1}{(q;q)_{\la_i - \la_{i+1}}}.
\end{equation}
\end{proof}

Because the $P_\la$ form a basis for the vector space of symmetric polynomials in $n$ variables, there exist symmetric polynomials $P_{\la/\mu}(x_1,\ldots,x_{n-k};q,t) \in \La_{n-k}$ indexed by $\la \in \Y_{n+k}, \mu \in \Y_n$ which are defined by
\begin{equation}\label{eq:def_skewP}
    P_\la(x_1,\ldots,x_{n+k};q,t) = \sum_{\mu \in \Y_n} P_{\la/\mu}(x_{n+1},\ldots,x_{n+k};q,t) P_\mu(x_1,\ldots,x_n;q,t).
\end{equation}
It follows easily from \eqref{eq:def_skewP} that for any $1 \leq d \leq k-1$,
\begin{equation}\label{eq:gen_branch}
    P_{\la/\mu}(x_1,\ldots,x_k;q,t) = \sum_{\nu \in \Y_d} P_{\la/\nu}(x_{d+1},\ldots,x_k;q,t) P_{\nu/\mu}(x_1,\ldots,x_d;q,t).
\end{equation}
We define $Q_{\la/\mu}$ by \eqref{eq:def_skewP} with $Q$ in place of $P$, and it is similarly clear that \eqref{eq:gen_branch} holds for $Q$. An important property of (skew) Macdonald polynomials for probabilistic purposes is the \emph{Cauchy identity} below.

\begin{prop}\label{thm:finite_cauchy}
Let $\nu, \mu \in \Y$. Then
\begin{multline}\label{eq:finite_cauchy}
    \sum_{\kappa \in \Y} P_{\kappa/\nu}(x_1,\ldots,x_n;q,t)Q_{\kappa/\mu}(y_1,\ldots,y_m;q,t) \\
    = \prod_{\substack{1 \leq i \leq n \\ 1 \leq j \leq m}} \frac{(tx_iy_j;q)_\infty }{(x_iy_j;q)_\infty} \sum_{\la \in \Y} Q_{\nu/\la}(y_1,\ldots,y_m;q,t) P_{\mu/\la}(x_1,\ldots,x_n;q,t).
\end{multline}
\end{prop}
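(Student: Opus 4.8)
The plan is to reduce the skew identity to the non-skew Cauchy identity by a standard "iterate the branching rule" argument, using only the coproduct-type relations \eqref{eq:def_skewP} and \eqref{eq:gen_branch} together with the base case $\nu=\mu=\emptyset$. First I would recall (or prove, if one insists on self-containedness) the non-skew Cauchy identity
\begin{equation*}
\sum_{\kappa \in \Y} P_\kappa(x_1,\ldots,x_n;q,t) Q_\kappa(y_1,\ldots,y_m;q,t) = \prod_{\substack{1 \leq i \leq n \\ 1 \leq j \leq m}} \frac{(tx_iy_j;q)_\infty}{(x_iy_j;q)_\infty},
\end{equation*}
which is \cite[Chapter VI, (4.13)]{mac}; this is the $\nu=\mu=\emptyset$ case of \eqref{eq:finite_cauchy}. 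The heart of the matter is then a purely algebraic manipulation with skew functions, valid because the $P_\la$ (resp. $Q_\la$) form a basis and the skew functions are defined by the expansion of $P_\la$ in products of $P_\mu$'s.

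The key step is to establish the \emph{skew branching / adjoint} relation: for partitions $\nu,\mu$ and variable sets $\bx = (x_1,\ldots,x_n)$, $\by=(y_1,\ldots,y_m)$,
\begin{equation*}
\sum_{\kappa} P_{\kappa/\nu}(\bx) Q_{\kappa/\mu}(\by) = \Pi(\bx;\by) \sum_{\la} Q_{\nu/\la}(\by) P_{\mu/\la}(\bx),
\end{equation*}
where $\Pi(\bx;\by) = \prod_{i,j}(tx_iy_j;q)_\infty/(x_iy_j;q)_\infty$. I would prove this by induction on the number of variables, or — more cleanly — by working in the ring of symmetric functions in infinitely many variables and using the fact that $\langle P_\la, Q_\mu\rangle = \delta_{\la\mu}$ for the Hall-Macdonald inner product, under which skew multiplication and the product $\Pi$ are adjoint. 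Concretely: the operator "multiply by $\Pi(\bx;\by)$ followed by expansion" has matrix coefficients $\langle \Pi(\cdot;\by) P_\mu, Q_\kappa\rangle$, and by the definition of skew functions these coefficients equal $Q_{\kappa/\mu}(\by)$-type sums; the identity is then the statement that two bilinear forms on $\La \otimes \La$ agree, which one checks on the basis $\{P_\la \otimes Q_\mu\}$ using $\langle P_{\kappa/\nu},Q_\mu\rangle = \langle P_\kappa, Q_\nu Q_\mu\rangle$ and the corresponding identity for the right-hand side. This is exactly the argument in \cite[Chapter VI, \S7, Example 6]{mac}, specialized to finitely many variables (which is harmless since both sides are symmetric polynomials in $\bx$ and in $\by$, and stabilize).

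The main obstacle — really the only subtlety — is bookkeeping the passage between finitely many variables and infinitely many, and making sure the sums converge / are well-defined as formal series in the symmetric-function ring before specializing. Once one works with $P_{\la/\mu}$, $Q_{\la/\mu}$ as elements of $\La$ (symmetric functions, not polynomials) and uses the duality $\langle P_{\la/\mu}, g\rangle = \langle P_\la, Q_\mu g\rangle$, the identity is a one-line consequence of associativity of the inner product:
\begin{equation*}
\Big\langle \sum_\kappa P_{\kappa/\nu}\, u_\kappa,\ v\Big\rangle \quad\text{versus}\quad \Big\langle \Pi \sum_\la Q_{\nu/\la} p_\la,\ v\Big\rangle
\end{equation*}
both unfold, via $\langle P_{\kappa/\nu},-\rangle = \langle P_\kappa, Q_\nu\, -\rangle$ and the non-skew Cauchy identity $\sum_\kappa P_\kappa \otimes Q_\kappa = \Pi$, to the same expression. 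Restricting the resulting identity in $\La_\bx \otimes \La_\by$ to $n$ $x$-variables and $m$ $y$-variables (i.e. applying the projection $\La \to \La_n$, which is a ring homomorphism compatible with skew functions) yields \eqref{eq:finite_cauchy} as stated. I expect no genuine difficulty beyond this, since all needed facts — existence of $P_\la$, the form $b_\la$, the skew functions, and the $\nu=\mu=\emptyset$ Cauchy identity — are either in the excerpt or standard from \cite{mac}.
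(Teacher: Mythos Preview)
The paper does not actually supply its own proof of this proposition: it is stated as standard background from \cite{mac}, with only the remark that both sides are to be interpreted as formal power series. Your proposal is essentially the classical argument from \cite[Chapter VI, \S7, Example 6]{mac}, which is precisely what the paper is implicitly invoking, so there is nothing to compare and your approach is correct.
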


The above identity should be interpreted as an identity of formal power series in the variables, after expanding the $1/(1-q^\ell x_iy_j)$ factors as geometric series. It may be seen as partial motivation for the definition of the $Q$ polynomials earlier: the constant factors there are the ones needed for such an identity to hold. For later convenience we set
\begin{equation}\label{eq:def_cauchy_kernel}
     \Pi_{q,t}(\bx;\by) := \prod_{\substack{1 \leq i \leq n \\ 1 \leq j \leq m}} \frac{(tx_iy_j;q)_\infty }{(x_iy_j;q)_\infty} = \exp\left(\sum_{\ell = 1}^\infty \frac{1-t^\ell}{1-q^\ell}\frac{1}{\ell}p_\ell(\bx)p_\ell(\by)\right),
\end{equation}
where the second equality in \eqref{eq:def_cauchy_kernel} is not immediate but is shown in \cite{mac}. 

The skew Macdonald polynomials may also be made explicit, which is needed for later computations.

\begin{defi}\label{def:psi_varphi}
For $\la, \mu \in \Y$ with $\mu \subset \la$, let $f(u) := (tu;q)_\infty/(qu;q)_\infty$ and define 
\begin{equation}\label{eq:pbranch}
    \psi_{\la/\mu}(q,t) := \bbone(\mu \prec \la) \prod_{1 \leq i \leq j \leq \len(\la) } \frac{f(t^{j-i}q^{\mu_i-\mu_j})f(t^{j-i}q^{\la_i-\la_{j+1}})}{f(t^{j-i}q^{\la_i-\mu_j})f(t^{j-i}q^{\mu_i-\la_{j+1}})}
\end{equation}
and
\begin{equation}\label{eq:qbranch}
    \varphi_{\la/\mu}(q,t) := \bbone(\mu \prec \la) \prod_{1 \leq i \leq j \leq \len(\mu)} \frac{f(t^{j-i}q^{\la_i-\la_j})f(t^{j-i}q^{\mu_i-\mu_{j+1}})}{f(t^{j-i}q^{\la_i-\mu_j})f(t^{j-i}q^{\mu_i-\la_{j+1}})}.
\end{equation}
When $q$ and $t$ are clear from context we will often write $\psi_{\la/\mu}$ and $\varphi_{\la/\mu}$ without arguments.
\end{defi}

\begin{prop}[{\cite[VI.7, (7.13) and (7.13')]{mac}}] \label{thm:branching_formulas}
For $\la \in \Y_n, \mu \in \Y_{n-k}$, we have
\begin{equation}\label{eq:skewP_branch_formula}
    P_{\la/\mu}(x_1,\ldots,x_k;q,t) = \sum_{\mu = \la^{(0)} \prec \la^{(1)} \prec \cdots \prec \la^{(k)}= \la} \prod_{i=1}^{k-1} x_i^{|\la^{(i+1)}|-|\la^{(i)}|}\psi_{\la^{(i+1)}/\la^{(i)}}
\end{equation}
and
\begin{equation}\label{eq:skewQ_branch_formula}
    Q_{\la/\mu}(x_1,\ldots,x_k;q,t) = \sum_{\mu = \la^{(0)} \prec \la^{(1)} \prec \cdots \prec \la^{(k)}=\la} \prod_{i=1}^{k-1} x_i^{|\la^{(i+1)}|-|\la^{(i)}|}\varphi_{\la^{(i+1)}/\la^{(i)}}.
\end{equation}
\end{prop}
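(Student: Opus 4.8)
The plan is to reduce the $k$-variable branching formula to the single-variable identity
\begin{equation*}
P_{\la/\mu}(x;q,t) = \psi_{\la/\mu}(q,t)\, x^{|\la|-|\mu|}, \qquad Q_{\la/\mu}(x;q,t) = \varphi_{\la/\mu}(q,t)\, x^{|\la|-|\mu|}
\end{equation*}
(with $\psi_{\la/\mu},\varphi_{\la/\mu}$ as in \eqref{eq:pbranch}--\eqref{eq:qbranch}, so that both sides vanish unless $\mu \prec \la$) and then iterate. Since this single-variable identity is \cite[VI.7, (7.13)--(7.13')]{mac}, the point of the sketch is to record the reduction to the Pieri rule \cite[VI.6]{mac}, not to reprove that rule.

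First I would establish the one-variable formula for $Q_{\la/\mu}$. A homogeneity-and-degree count in the defining relation $Q_\la(\bx,y) = \sum_\mu Q_{\la/\mu}(y) Q_\mu(\bx)$ forces $Q_{\la/\mu}(x;q,t)$, being a symmetric polynomial in one variable of degree $|\la|-|\mu|$, to be a single monomial $c_{\la\mu}\, x^{|\la|-|\mu|}$. To identify $c_{\la\mu}$, I would apply the Cauchy identity \Cref{thm:finite_cauchy} with $\nu = \emptyset$ and a single $y$-variable: since $Q_{\emptyset/\la} = \bbone(\la = \emptyset)$, its right-hand side collapses to $\Pi_{q,t}(\bx;y)\, P_\mu(\bx)$, giving
\begin{equation*}
\sum_{\kappa} c_{\kappa\mu}\, y^{|\kappa|-|\mu|}\, P_\kappa(\bx) = P_\mu(\bx) \sum_{r \geq 0} g_r(\bx;q,t)\, y^r,
\end{equation*}
where $g_r$ is defined by $\Pi_{q,t}(\bx;y) = \sum_{r \geq 0} g_r(\bx;q,t)\, y^r$ through \eqref{eq:def_cauchy_kernel} (equivalently $g_r = Q_{(r)}$). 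Matching coefficients of $y^r$ recovers the Macdonald--Pieri rule $P_\mu\, g_r = \sum_\kappa c_{\kappa\mu}\, P_\kappa$, and Macdonald's evaluation of its coefficients \cite[VI.6]{mac} gives $c_{\kappa\mu} = \varphi_{\kappa/\mu}(q,t)$, which in particular vanishes unless $\kappa/\mu$ is a horizontal strip, i.e.\ $\mu \prec \kappa$. The $P$-version then follows from $Q_{\la/\mu} = (b_\la/b_\mu)\, P_{\la/\mu}$ --- obtained by comparing \eqref{eq:def_skewP}, its $Q$-analogue, and $Q_\la = b_\la P_\la$ --- together with the elementary identity $\psi_{\la/\mu} = (b_\mu/b_\la)\, \varphi_{\la/\mu}$, which one checks by a telescoping rearrangement of the products in \eqref{eq:pbranch}, \eqref{eq:qbranch} and \eqref{eq:blam_formula}.

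Given the base case, the full statement follows by applying \eqref{eq:gen_branch} repeatedly with $d = 1$, peeling off one variable at a time, so that
\begin{equation*}
P_{\la/\mu}(x_1,\ldots,x_k;q,t) = \sum_{\mu\, =\, \la^{(0)},\, \la^{(1)},\, \ldots,\, \la^{(k)}\, =\, \la}\; \prod_{i=1}^k P_{\la^{(i)}/\la^{(i-1)}}(x_i;q,t);
\end{equation*}
substituting the one-variable formula turns each summand into $\prod_{i=1}^k \psi_{\la^{(i)}/\la^{(i-1)}}(q,t)\, x_i^{|\la^{(i)}|-|\la^{(i-1)}|}$, and the vanishing of $\psi_{\nu/\rho}$ unless $\rho \prec \nu$ automatically restricts the sum to interlacing chains, which is exactly \eqref{eq:skewP_branch_formula} (up to reindexing the chain). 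The argument for $Q$ is identical with $\varphi$ in place of $\psi$.

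The main obstacle is the base case, and within it the identification of the one-variable skew coefficient with $\varphi_{\la/\mu}$: this is precisely the Macdonald--Pieri rule, whose proof in \cite{mac} runs through the scalar product of \Cref{def:torus_product} and a nontrivial induction on partitions (alternatively via principal specializations). Everything else --- the collapse of the Cauchy identity, the homogeneity argument, the passage between the $P$- and $Q$-skew functions, and the iteration --- is routine, so in the actual write-up I would cite \cite[Chapter VI]{mac} for the base case and spell out the reduction above only as far as self-containedness requires.
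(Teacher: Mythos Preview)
Your sketch is correct and is essentially the standard argument from Macdonald, to which the paper simply defers: the proposition is stated with the citation \cite[VI.7, (7.13) and (7.13')]{mac} and given no in-text proof. So there is nothing to compare against beyond the reference itself, and your reduction (one-variable skew $\Rightarrow$ Pieri rule $\Rightarrow$ iterate via \eqref{eq:gen_branch}) is exactly how Macdonald obtains (7.13) and (7.13'). One incidental observation: your product $\prod_{i=1}^k$ is the right range; the $\prod_{i=1}^{k-1}$ in the displayed statement is a typo in the paper, since a chain $\la^{(0)}\prec\cdots\prec\la^{(k)}$ has $k$ steps and $k$ variables.
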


The explicit forms of the Hall-Littlewood and $q$-Whittaker special cases of the formulas in \Cref{def:psi_varphi} will be useful, and are easy to establish by direct computation from \Cref{def:psi_varphi}. 

\begin{lemma}\label{thm:hl_qw_branch_formulas}
Let $\la,\mu \in \Y$ with $\mu \prec \la$. In the Hall-Littlewood case $q=0$ the formulas of \Cref{def:psi_varphi} specialize to
\begin{align}\label{eq:HL_branch}
\begin{split}
\psi_{\la/\mu}(0,t) &= \prod_{\substack{i > 0\\ m_i(\mu) = m_i(\la)+1}} (1-t^{m_i(\mu)})\\ 
\varphi_{\la/\mu}(0,t) &= \prod_{\substack{i > 0\\ m_i(\la) = m_i(\mu)+1}} (1-t^{m_i(\la)}).
\end{split}
\end{align}
In the $q$-Whittaker case $t=0$ they specialize to 
\begin{align}\label{eq:qw_branch}
\begin{split}
\psi_{\la/\mu}(q,0) &= \prod_{i=1}^{\len(\mu)} \sqbinom{\la_i-\la_{i+1}}{\la_i-\mu_i}_q\\ 
\varphi_{\la/\mu}(q,0) &= \frac{1}{(q;q)_{\la_1-\mu_1} }\prod_{i=1}^{\len(\la)-1} \sqbinom{\mu_i-\mu_{i+1}}{\mu_i-\la_{i+1}}_q.
\end{split}
\end{align}
For pairs $\mu,\la$ with $\mu \not \prec \la$, all of the above are $0$.
\end{lemma}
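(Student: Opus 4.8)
The plan is to derive both specializations of \Cref{def:psi_varphi} by directly substituting $q=0$ (respectively $t=0$) into the products defining $\psi_{\la/\mu}$ and $\varphi_{\la/\mu}$, and simplifying. Throughout I assume $\mu \prec \la$ (the case $\mu \not\prec \la$ being immediate from the indicator $\bbone(\mu \prec \la)$), and recall that $f(u) = (tu;q)_\infty/(qu;q)_\infty$.

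\emph{Hall-Littlewood case $q=0$.} Here $f(u) = (tu;0)_\infty/(0;0)_\infty = 1-tu$, so each factor $f(t^{j-i}q^{a})$ becomes $1 - t^{j-i+1}$ if the exponent $a$ on $q$ is zero, and becomes simply $1$ if $a > 0$ (since $q^a = 0$). Thus in the product defining $\psi_{\la/\mu}(0,t)$, the factor indexed by $(i,j)$ contributes a nontrivial term $1-t^{j-i+1}$ in the numerator exactly when $\mu_i = \mu_j$ or $\la_i = \la_{j+1}$, and in the denominator exactly when $\la_i = \mu_j$ or $\mu_i = \la_{j+1}$. The bulk of the work is a careful bookkeeping argument: using the interlacing $\la_1 \ge \mu_1 \ge \la_2 \ge \cdots$, one tracks for each value $v \ge 1$ which pairs $(i,j)$ see an equality among the relevant parts, and counts the net power of $t$ contributed. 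The telescoping collapses the four families of equalities into a single product over those $v$ with $m_v(\mu) = m_v(\la)+1$, each contributing a factor $1-t^{m_v(\mu)}$; the computation for $\varphi$ is entirely parallel with the roles of $\la$ and $\mu$ interchanged, giving the condition $m_v(\la) = m_v(\mu)+1$ and factor $1-t^{m_v(\la)}$. (Alternatively, one can simply cite \cite[Chapter III, (5.8) and (5.8$'$)]{mac}, where exactly these Hall-Littlewood branching coefficients appear, and note that they agree with \Cref{def:psi_varphi} at $q=0$; I would mention this as the cleaner route and relegate the direct verification to a remark.)

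\emph{$q$-Whittaker case $t=0$.} Here $f(u) = (0;q)_\infty/(qu;q)_\infty = 1/(qu;q)_\infty$, and since only the $i=j$ terms survive the $t^{j-i}$ prefactor (all $j>i$ terms have $t^{j-i}=0$ and hence $f(0)=1$), the quadruple product over $1 \le i \le j \le \len(\la)$ collapses to a product over $i$ alone of $\dfrac{f(q^{\mu_i - \mu_i})\,f(q^{\la_i - \la_{i+1}})}{f(q^{\la_i - \mu_i})\,f(q^{\mu_i - \la_{i+1}})} = \dfrac{(q^{\la_i-\mu_i};q)_\infty\,(q^{\mu_i-\la_{i+1}};q)_\infty}{(0;q)_\infty\,(q^{\la_i-\la_{i+1}};q)_\infty}$. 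Using $(q^a;q)_\infty/(q^b;q)_\infty = (q^{a};q)_{b-a}$-type identities — concretely, $(q^a;q)_\infty = (q;q)_\infty/(q;q)_{a-1}$ for $a \ge 1$ — and the interlacing inequalities $\la_i \ge \mu_i \ge \la_{i+1}$ which guarantee all these exponents are nonnegative, each factor rearranges into the Gaussian binomial $\sqbinom{\la_i - \la_{i+1}}{\la_i - \mu_i}_q$, yielding the claimed formula for $\psi_{\la/\mu}(q,0)$. The computation for $\varphi_{\la/\mu}(q,0)$ is the same, now with the product running over $1 \le i \le j \le \len(\mu)$; the surviving $i=j$ terms produce $\prod_i \sqbinom{\mu_i-\mu_{i+1}}{\mu_i-\la_{i+1}}_q$, except that the range of $i$ for $\mu$ differs slightly from that for $\la$, which is precisely what accounts for the extra boundary factor $1/(q;q)_{\la_1-\mu_1}$ once one is careful about the top of the diagram.

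The main obstacle is the Hall-Littlewood bookkeeping: making the telescoping of the four families of $(1-t^{\bullet})$ factors fully rigorous requires a genuinely careful case analysis of which pairs $(i,j)$ contribute for each multiplicity value, and it is easy to be off by a factor or mishandle the boundary indices $i=1$ and $j = \len(\la)$. For this reason I would, in the actual write-up, lean on the known Hall-Littlewood branching formula from \cite{mac} rather than redo the telescoping from scratch; the $q$-Whittaker case, by contrast, is a short and essentially mechanical specialization once one observes the $t^{j-i}$ collapse, and can safely be left as "direct computation."
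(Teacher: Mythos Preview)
Your proposal is correct and follows exactly the approach the paper indicates: the paper gives no detailed proof, stating only that the formulas ``are easy to establish by direct computation from \Cref{def:psi_varphi},'' and your plan carries out precisely that computation (with the option of citing \cite{mac} for the Hall--Littlewood case). One cosmetic slip: in your $q$-Whittaker display the denominator entry $(0;q)_\infty$ should be $(q;q)_\infty$ since $f(1)=1/(q;q)_\infty$, but your subsequent simplification to the $q$-binomial is correct regardless.
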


We will require two, in a sense orthogonal, extensions of Macdonald polynomials: symmetric Laurent polynomials in finitely many variables, and symmetric functions---informally, symmetric polynomials in infinitely many variables. 

\subsection{Symmetric Laurent polynomials.} We wish to extend the indices of Macdonald polynomials in $n$ variables from the set $\Y_n$ of partitions of length at most $n$ to the set $\Sig_n$ of signatures of length $n$, where we recall $\Sig_n := \{(x_1,\ldots,x_n) \in \Z^n: x_1 \geq \ldots \geq x_n\}$. Recall that for signatures $\mu \in \operatorname{Sig}_{k-1}, \lambda \in \operatorname{Sig}_{k}$ we write
$$
\mu \prec \lambda \Longleftrightarrow \lambda_{1} \geq \mu_{1} \geq \lambda_{2} \geq \ldots \geq \mu_{k-1} \geq \lambda_{k},
$$
$|\mu|=\sum_{i} \mu_{i}$, and $\mu-(d[k-1])=\left(\mu_{1}-d, \ldots, \mu_{k-1}-d\right)$.

\begin{lemma}\label{thm:P_shift_property}
Let $\la \in \Y_n, \mu \in \Y_{n-k}$, and $d \in \Z_{\geq 0}$. Then 
\begin{equation}
P_{(\la+(d[n]))/(\mu+(d[n-k]))}(x_1,\ldots,x_k;q,t) = (x_1 \cdots x_k)^d P_{\la/\mu}(x_1,\ldots,x_k;q,t).
\end{equation}
\end{lemma}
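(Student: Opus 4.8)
The plan is to reduce this to the branching formula \eqref{eq:skewP_branch_formula}, which expresses $P_{\la/\mu}$ as a sum over chains $\mu = \la^{(0)} \prec \la^{(1)} \prec \cdots \prec \la^{(k)} = \la$ of a product of monomials in the variables times the coefficients $\psi_{\la^{(i+1)}/\la^{(i)}}$. The key observation is that adding the constant vector $d[m]$ to every signature in such a chain is a bijection between chains from $\mu$ to $\la$ and chains from $\mu + (d[n-k])$ to $\la + (d[n])$, since the interlacing condition $\nu \prec \kappa$ depends only on the successive differences $\kappa_1 \geq \nu_1 \geq \kappa_2 \geq \cdots$, which are unchanged under a simultaneous constant shift. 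So first I would set up this bijection explicitly: a chain $\mu + (d[n-k]) = \kappa^{(0)} \prec \cdots \prec \kappa^{(k)} = \la + (d[n])$ corresponds to the chain $\la^{(i)} := \kappa^{(i)} - (d[n-k+i])$.

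Next I would check that the $\psi$ coefficients are invariant under this shift, i.e. $\psi_{\kappa^{(i+1)}/\kappa^{(i)}}(q,t) = \psi_{\la^{(i+1)}/\la^{(i)}}(q,t)$. In the Hall-Littlewood and $q$-Whittaker specializations this is transparent from \Cref{thm:hl_qw_branch_formulas}, since those formulas depend only on the multiplicities $m_i(\cdot)$ of the \emph{positive} parts and on successive differences $\la_i - \la_{i+1}$ — but here we need the general statement. For general $q,t$ one sees from \Cref{def:psi_varphi} that $\psi_{\la/\mu}(q,t)$ is built from factors $f(t^{j-i} q^{\mu_i - \mu_j})$, $f(t^{j-i} q^{\la_i - \la_{j+1}})$, etc., each of which depends only on differences of parts within $\la$ or within $\mu$ and on the relative offset $j - i$; replacing $\la \mapsto \la + (d[n])$, $\mu \mapsto \mu + (d[n-k])$ leaves all these differences and offsets unchanged, provided one is careful that $\len(\la)$ (the range of the product) is unaffected — which is true for $d \geq 0$ since all parts only grow. (The reader may worry about the $d=0$ degenerate rows; for $d \geq 0$ and $\la, \mu$ honest partitions, $\len(\la + (d[n])) = n$ once $d \geq 1$, but the product over $1 \leq i \leq j \leq \len(\la)$ of factors that are $1$ whenever the arguments exceed the diagram is what matters, and this is unchanged. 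Alternatively, one avoids this subtlety entirely by noting that $\psi$ is already known to be shift-invariant via the Hall-Littlewood/$q$-Whittaker special cases and a density/continuity argument in $q,t$, but the direct check is cleaner.)

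Finally I would assemble the two pieces: in \eqref{eq:skewP_branch_formula} for $P_{(\la+(d[n]))/(\mu+(d[n-k]))}$, the monomial weight attached to the shifted chain is $\prod_{i=1}^{k-1} x_i^{|\kappa^{(i+1)}| - |\kappa^{(i)}|}$, and since $|\kappa^{(i)}| = |\la^{(i)}| + d(n-k+i)$ we get $|\kappa^{(i+1)}| - |\kappa^{(i)}| = |\la^{(i+1)}| - |\la^{(i)}| + d$. Hence each monomial picks up an extra factor $\prod_{i=1}^{k-1} x_i^{d}$, and summing over the (bijectively identified) chains with the (equal) $\psi$-weights gives
\begin{equation}
P_{(\la+(d[n]))/(\mu+(d[n-k]))}(x_1,\ldots,x_k;q,t) = \left(\prod_{i=1}^{k-1} x_i^d\right) P_{\la/\mu}(x_1,\ldots,x_k;q,t).
\end{equation}
This is \emph{almost} the claim, except the stated identity has the factor $(x_1 \cdots x_k)^d$, i.e. including $x_k^d$. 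The discrepancy is resolved by re-examining the convention in \eqref{eq:skewP_branch_formula}: the product of monomial exponents runs $i = 1$ to $k-1$ there, which is a typo or an unconventional normalization — with the standard convention the product runs to $k$ and the weight of the top step $\la^{(k-1)} \prec \la^{(k)}$ also contributes $x_k^{|\la|-|\la^{(k-1)}|}$; under the shift this step contributes an additional $x_k^d$, producing exactly $(x_1 \cdots x_k)^d$. So the main obstacle — beyond bookkeeping — is simply making sure the branching-formula convention is applied consistently; once that is pinned down, the shift-invariance of $\psi$ and the chain bijection finish the proof immediately. An even slicker alternative, which I would mention as a remark, is to derive the statement for $\mu = \emptyset$ first from the classical fact that $P_{\la + (d[n])}(\bx;q,t) = (x_1 \cdots x_n)^d P_\la(\bx;q,t)$ (itself immediate from \Cref{def:mac_poly_torus}, since multiplying by the symmetric Laurent monomial $(x_1 \cdots x_n)^d$ preserves both the triangularity and the orthogonality relative to the weight, which is translation-invariant), and then bootstrap to general $\mu$ by applying this in $n$ and in $n-k$ variables inside the defining relation \eqref{eq:def_skewP} and matching coefficients.
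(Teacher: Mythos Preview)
Your approach is essentially identical to the paper's: both use the branching formula \eqref{eq:skewP_branch_formula} together with the shift-invariance of $\psi_{\la/\mu}$ read off from the explicit formula \eqref{eq:pbranch}. You have also correctly spotted the indexing typo in \eqref{eq:skewP_branch_formula} (the product should run over all $k$ steps, not $k-1$) and addressed the $\len(\la)$ bookkeeping, which the paper's one-line proof glosses over.
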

\begin{proof}
The claim follows from \eqref{eq:skewP_branch_formula} together with the observation from the explicit formula \eqref{eq:pbranch} that for $\la^{(i)}$ as in \eqref{eq:skewP_branch_formula},
\begin{equation}\label{eq:psi_invariant}
\psi_{\la^{(i+1)}/\la^{(i)}}(q,t) = \psi_{(\la^{(i+1)}+(d[n-k+i+1]))/(\la^{(i)}+(d[n-k+i]))}.
\end{equation}
\end{proof}

This motivates the following.

\begin{defi}\label{def:mac_laurent}
For any $\la \in \Sig_n, \nu \in \Sig_{n-1}$ we define
\begin{equation}\label{eq:pbranch_sig}
    \psi_{\la/\nu}(q,t) :=  \prod_{1 \leq i \leq j \leq n-1} \frac{f(t^{j-i}q^{\nu_i-\nu_j})f(t^{j-i}q^{\la_i-\la_{j+1}})}{f(t^{j-i}q^{\la_i-\nu_j})f(t^{j-i}q^{\nu_i-\la_{j+1}})}
\end{equation}
and for $\la \in \Sig_n, \mu \in \Sig_{n-k}$ define 
\begin{equation}\label{eq:skewP_branch_formula_sig}
    P_{\la/\mu}(x_1,\ldots,x_k;q,t) := \sum_{\mu = \la^{(0)} \prec \la^{(1)} \prec \cdots \prec \la^{(k)}= \la} \prod_{i=1}^{k-1} x_i^{|\la^{(i+1)}|-|\la^{(i)}|}\psi_{\la^{(i+1)}/\la^{(i)}}.
\end{equation}
\end{defi}

\begin{cor}\label{thm:signature_shift}
Let $\la \in \Sig_n, \mu \in \Sig_{n-k}$, and $d \in \Z$. Then 
\begin{equation}
P_{(\la+(d[n]))/(\mu+(d[n-k]))}(x_1,\ldots,x_k;q,t) = (x_1 \cdots x_k)^d P_{\la/\mu}(x_1,\ldots,x_k;q,t).
\end{equation}
\end{cor}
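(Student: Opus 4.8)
The plan is to adapt the proof of \Cref{thm:P_shift_property} to the combinatorial definition of skew Macdonald--Laurent polynomials in \Cref{def:mac_laurent}, removing along the way the restriction $d \geq 0$. The three ingredients are: compatibility of the translation $\kappa \mapsto \kappa + (d[\,\cdot\,])$ with the interlacing relation $\prec$, invariance of the branching coefficients $\psi$ under a simultaneous translation of both signatures, and the effect of the translation on the monomial weights. I would first record that if $\kappa \in \Sig_m$, $\kappa' \in \Sig_{m+1}$ and $\kappa \prec \kappa'$, then $\kappa + (d[m]) \prec \kappa' + (d[m+1])$, simply because $\prec$ is a conjunction of inequalities between individual entries and both sides of each inequality are shifted by the same $d$. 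Hence $(\la^{(0)},\ldots,\la^{(k)}) \mapsto (\la^{(0)} + (d[n-k]),\ldots,\la^{(k)} + (d[n]))$ is a bijection from the chains $\mu = \la^{(0)} \prec \cdots \prec \la^{(k)} = \la$ indexing the sum \eqref{eq:skewP_branch_formula_sig} for $P_{\la/\mu}$ onto the chains $\mu + (d[n-k]) = \la^{(0)} \prec \cdots \prec \la^{(k)} = \la + (d[n])$ indexing the sum for $P_{(\la+(d[n]))/(\mu+(d[n-k]))}$. For $d<0$ these are still genuine signatures and genuine chains, which is the only place where allowing negative $d$ matters.

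Next I would check that each summand picks up the overall factor $(x_1\cdots x_k)^d$. For the branching coefficients, formula \eqref{eq:pbranch_sig} expresses $\psi_{\kappa'/\kappa}(q,t)$ as a product of values of $f$ at arguments $t^{j-i}q^{e}$ where every exponent $e$ is a \emph{difference} of two coordinates drawn from $\kappa$ and $\kappa'$; such a difference is unchanged when $(d[\,\cdot\,])$ is added to both $\kappa$ and $\kappa'$, so each $\psi_{\la^{(i+1)}/\la^{(i)}}$ is invariant under the shift, the signature analogue of \eqref{eq:psi_invariant}. For the monomial weight, the exponent of a given variable in a term is a difference $|\la^{(i)}| - |\la^{(i-1)}|$ of consecutive total sizes along the chain; since $\la^{(i)}$ has $n-k+i$ coordinates, after the shift this becomes $|\la^{(i)}| + d(n-k+i) - |\la^{(i-1)}| - d(n-k+i-1) = |\la^{(i)}| - |\la^{(i-1)}| + d$, i.e.\ it increases by exactly $d$, for each of the $k$ variables. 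Hence the monomial attached to a shifted chain equals $(x_1\cdots x_k)^d$ times the monomial attached to the corresponding original chain.

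Finally, combining the chain bijection with the two invariances, the defining sum \eqref{eq:skewP_branch_formula_sig} for $P_{(\la+(d[n]))/(\mu+(d[n-k]))}$ becomes the sum over the original chains of $(x_1\cdots x_k)^d$ times the original summand; pulling $(x_1\cdots x_k)^d$ out of the sum gives the claim. I do not expect a genuine obstacle here: the argument is pure bookkeeping, and the only subtlety is tracking how many coordinates each $\la^{(i)}$ carries, so that the per-variable exponent shift works out to exactly $d$ rather than to a length-dependent quantity.
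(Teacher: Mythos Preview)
Your proposal is correct and follows essentially the same approach as the paper, which simply says ``Same as \Cref{thm:P_shift_property}'': use the branching formula \eqref{eq:skewP_branch_formula_sig}, the shift-invariance of $\psi$ (the analogue of \eqref{eq:psi_invariant}), and the resulting factor $(x_1\cdots x_k)^d$ on the monomial weights. You have simply spelled out the bookkeeping in more detail than the paper does.
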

\begin{proof}
Same as \Cref{thm:P_shift_property}.
\end{proof}

\begin{rmk}\label{rmk:don't_sig_Q}
We have not stated \Cref{thm:P_shift_property} and \Cref{def:mac_laurent} for the dual Macdonald polynomials, for the simple reasons that (1) the naive versions do not hold, and (2) we do not need this for our proofs. In fact, $\varphi_{\la/\nu}(q,t)$ and the skew polynomials $Q_{\la/\mu}(\bx;q,t)$ must be reinterpreted in a more nontrivial way in order to make such statements true, see \cite[Section 2.1]{van2020limits}.
\end{rmk}

\begin{rmk}
Note that for $\la \in \Sig_n, \mu \in \Sig_{n-1}$ the formula for $\psi_{\la/\mu}(q,0)$ in \Cref{thm:hl_qw_branch_formulas} continues to makes sense, while for the Hall-Littlewood case one should instead interpret
\begin{equation}\label{eq:hl_sig_pbranch}
\psi_{\la/\mu}(0,t) = \prod_{\substack{m_i(\mu) = m_i(\la)+1}} (1-t^{m_i(\mu)})
\end{equation}
(without the $i > 0$ restriction in the product) in order for the translation-invariance property \eqref{eq:psi_invariant} to hold. If $\la \in \Sig_n^+$ (and hence $\mu \in \Sig_{n-1}^+$ by interlacing) then both \eqref{eq:HL_branch} and \eqref{eq:hl_sig_pbranch} give the same result.
\end{rmk}

The defining orthogonality property of Macdonald polynomials also extends readily to their Laurent versions.

\begin{prop}\label{thm:laurent_orthogonality}
If $\la,\mu \in \Sig_n$ and $\la \neq \mu$, then 
\begin{equation}\label{eq:laurent_orthogonality_P}
\lan P_\la(\mbz;q,t), P_\mu(\mathbf{z};q,t) \ran'_{q,t;n} = 0.
\end{equation}
\end{prop}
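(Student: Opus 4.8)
The plan is to reduce the claim for signatures to the already-known orthogonality of Macdonald polynomials indexed by partitions, using the translation-invariance property that both sides of the pairing enjoy. First I would observe that, given $\la,\mu \in \Sig_n$, one may choose $d \in \Z_{\geq 0}$ large enough that $\la + (d[n])$ and $\mu + (d[n])$ both lie in $\Y_n$, i.e. have all parts nonnegative. By \Cref{thm:signature_shift} (with $k=n$, $\mu = \emptyset$, though more directly by the definition \eqref{eq:skewP_branch_formula_sig} applied to the full signature), we have $P_{\la+(d[n])}(\mbz;q,t) = (z_1 \cdots z_n)^d P_\la(\mbz;q,t)$ and similarly for $\mu$. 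So on the torus $\T^n$, where $|z_i| = 1$, we get
\begin{equation*}
\overline{(z_1 \cdots z_n)^d} = (z_1 \cdots z_n)^{-d},
\end{equation*}
and hence
\begin{equation*}
\lan P_{\la+(d[n])}(\mbz;q,t), P_{\mu+(d[n])}(\mbz;q,t) \ran'_{q,t;n} = \lan P_\la(\mbz;q,t), P_\mu(\mbz;q,t) \ran'_{q,t;n},
\end{equation*}
because the extra factor $(z_1 \cdots z_n)^d (z_1 \cdots z_n)^{-d} = 1$ drops out and the weight $\prod_{i \neq j}(z_iz_j^{-1};q)_\infty/(tz_iz_j^{-1};q)_\infty$ is unchanged.

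Next, since $\la \neq \mu$ implies $\la + (d[n]) \neq \mu + (d[n])$, and these are now genuine partitions in $\Y_n$, the orthogonality in \Cref{def:mac_poly_torus}(2) gives $\lan P_{\la+(d[n])}, P_{\mu+(d[n])} \ran'_{q,t;n} = 0$. Combining with the displayed identity yields $\lan P_\la(\mbz;q,t), P_\mu(\mbz;q,t)\ran'_{q,t;n} = 0$, which is \eqref{eq:laurent_orthogonality_P}.

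The only point requiring a little care—and the step I expect to be the main (minor) obstacle—is justifying that $P_{\la+(d[n])}(\mbz;q,t) = (z_1\cdots z_n)^d P_\la(\mbz;q,t)$ for the \emph{non-skew} Laurent Macdonald polynomial as it is implicitly defined here. The excerpt states \Cref{thm:P_shift_property} and \Cref{thm:signature_shift} for skew polynomials $P_{\la/\mu}$; to get the non-skew statement one should either invoke the $\mu=\emptyset \in \Sig_0$ case (noting $\Sig_0$ has a unique, empty, element and $P_{\la/\emptyset} = P_\la$), or run the same argument directly: expand $P_\la(\mbz;q,t)$ via the branching rule \eqref{eq:skewP_branch_formula_sig} down a full flag $\emptyset = \la^{(0)} \prec \la^{(1)} \prec \cdots \prec \la^{(n)} = \la$, and use the translation-invariance \eqref{eq:psi_invariant} of each $\psi$-factor together with the fact that shifting every $\la^{(i)}$ by $d$ multiplies the monomial $\prod_i z_i^{|\la^{(i)}| - |\la^{(i-1)}|}$ by exactly $(z_1 \cdots z_n)^d$. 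Once this is in hand, the rest is immediate. (One should also note the pairing $\lan \cdot,\cdot\ran'_{q,t;n}$ makes sense on Laurent polynomials verbatim, since the integrand is still a well-defined function on $\T^n$.)
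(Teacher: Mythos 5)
Your proposal is correct and takes essentially the same approach as the paper: shift both signatures by $d[n]$ to land in $\Y_n$, note that the $(z_1\cdots z_n)^{\pm d}$ factors cancel on the torus so the pairing is unchanged, and invoke orthogonality for partition-indexed Macdonald polynomials. The extra remark you add about justifying the non-skew shift identity $P_{\la+(d[n])} = (z_1\cdots z_n)^d P_\la$ is a reasonable point of care, but the paper treats it as immediate from \Cref{thm:signature_shift}, which matches your suggested resolution.
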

\begin{proof}
Let $D \in \Z$ be such that $\la + (D[n])$ and $\mu+(D[n])$ both lie in $\Sig_n^+$. Then 
\begin{equation}
\lan P_{\la + (D[n])}(\mbz;q,t), P_{\mu+(D[n])}(\mbz;q,t) \ran'_{q,t;n} = 0
\end{equation} 
by the defining orthogonality property of Macdonald polynomials. However,
\begin{align}
\begin{split}
P_{\la + (D[n])}(\mbz;q,t) \overline{P_{\mu + (D[n])}(\mbz)} &= (z_1 \cdots z_n)^D P_\la(\mbz;q,t) \overline{(z_1 \cdots z_n)^D P_\mu(\mbz;q,t)} \\ 
&= P_\la(\mbz;q,t)\overline{P_\mu(\mbz;q,t)}
\end{split}
\end{align}
for any $z_1,\ldots,z_n \in \T$, so 
\begin{equation}
\lan P_{\la + (D[n])}(\mbz;q,t), P_{\mu+(D[n])}(\mbz;q,t) \ran'_{q,t;n} = \lan P_\la(\mbz;q,t), P_\mu(\mbz;q,t) \ran'_{q,t;n},
\end{equation} 
which completes the proof.
\end{proof}

We record also the following naive bound, useful for later analysis.

\begin{lemma}\label{thm:complex_hl_bound}
Let $\la \in \Sig_n$, $q,t \in (-1,1)$, and $z_1,\ldots,z_n \in \C^\times$. Then
\begin{equation}
|P_\la(z_1,\ldots,z_n;q,t)| \leq P_\la(|z_1|,\ldots,|z_n|;q,t).
\end{equation}
\end{lemma}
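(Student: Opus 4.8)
\textbf{Proof plan for Lemma \ref{thm:complex_hl_bound}.} The plan is to reduce the bound to a statement about the monomial expansion of $P_\la$, using the key structural fact that the coefficients of a Hall--Littlewood or $q$-Whittaker polynomial in the monomial basis are nonnegative when $q,t \in (-1,1)$ — in fact, when $q,t \in [0,1)$ as is the relevant regime here, though one must be slightly careful since the statement allows negative $q,t$. First I would handle the case $\la \in \Y_n$ (genuine partition): here $P_\la(\bx;q,t) = m_\la(\bx) + \sum_{\mu < \la} R_{\la\mu}(q,t) m_\mu(\bx)$, and the branching formula \eqref{eq:skewP_branch_formula} (with $\mu = \emptyset$) exhibits each monomial coefficient as a sum of products of the quantities $\psi_{\la^{(i+1)}/\la^{(i)}}$. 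In the Hall--Littlewood case $q=0$, formula \eqref{eq:HL_branch} shows $\psi_{\la/\mu}(0,t) = \prod (1-t^{m_i(\mu)})$, a product of factors $1-t^j$ with $j \geq 1$; in the $q$-Whittaker case $t=0$, formula \eqref{eq:qw_branch} shows $\psi_{\la/\mu}(q,0)$ is a product of $q$-binomial coefficients $\sqbinom{a}{b}_q$. For $q,t \in [0,1)$ these are all manifestly nonnegative, so every monomial coefficient $R_{\la\mu}(q,t)$ is nonnegative. Then by the triangle inequality applied to the monomial expansion,
\begin{equation}
|P_\la(z_1,\ldots,z_n;q,t)| \leq \sum_\mu R_{\la\mu}(q,t)\, |m_\mu(z_1,\ldots,z_n)| \leq \sum_\mu R_{\la\mu}(q,t)\, m_\mu(|z_1|,\ldots,|z_n|) = P_\la(|z_1|,\ldots,|z_n|;q,t),
\end{equation}
where the middle inequality uses $|m_\mu(z_1,\ldots,z_n)| = |\sum_{\sigma} \sigma(\bz^\mu)| \leq \sum_\sigma |\sigma(\bz^\mu)| = m_\mu(|z_1|,\ldots,|z_n|)$ since $|z_i^{a}| = |z_i|^{a}$ for integer exponents.

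To pass from partitions to general signatures $\la \in \Sig_n$, I would use the shift property of Corollary \ref{thm:signature_shift} (the $\mu = \emptyset$, $k=n$ case, i.e. $P_{\la+(d[n])}(\bx;q,t) = (x_1\cdots x_n)^d P_\la(\bx;q,t)$): choose $D \in \Z_{\geq 0}$ large enough that $\la + (D[n]) \in \Y_n$ is a genuine partition. Applying the already-established bound to $\la + (D[n])$ and the point $(z_1,\ldots,z_n)$ gives
\begin{equation}
|z_1 \cdots z_n|^D\, |P_\la(z_1,\ldots,z_n;q,t)| = |P_{\la+(D[n])}(z_1,\ldots,z_n;q,t)| \leq P_{\la+(D[n])}(|z_1|,\ldots,|z_n|;q,t) = |z_1 \cdots z_n|^D\, P_\la(|z_1|,\ldots,|z_n|;q,t),
\end{equation}
and dividing by $|z_1 \cdots z_n|^D \neq 0$ (here we use $z_i \in \C^\times$) yields the claim.

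The one genuinely delicate point — and the part I would flag as the main obstacle — is whether the lemma is really intended for the full range $q,t \in (-1,1)$ including negative values, since for $q<0$ a factor like $1-q^{a+1}$ or a $q$-binomial coefficient need not be nonnegative, and the monomial coefficients $R_{\la\mu}(q,t)$ can genuinely have either sign. If the lemma is only applied downstream in the Hall--Littlewood specialization with $q=0$ and $t=p^{-1} \in (0,1)$ — which appears to be the case given the paper's focus — then the nonnegativity argument above is clean and complete, and one should simply state the lemma for $q \in \{0\}$, $t \in [0,1)$ (and dually). If one truly needs all of $(-1,1)^2$, the positivity of monomial coefficients fails and a different argument is needed; I do not see a way to salvage the naive bound in that generality, so I would restrict the hypotheses to the range actually used. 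Assuming the intended range is $q=0, t\in[0,1)$ (resp. $t=0,q\in[0,1)$), the proof is exactly the three-step reduction above: (i) nonnegativity of monomial coefficients via the explicit branching formulas of Lemma \ref{thm:hl_qw_branch_formulas}; (ii) the triangle inequality on the monomial expansion; (iii) the shift property of Corollary \ref{thm:signature_shift} to reach arbitrary signatures.
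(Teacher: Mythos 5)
Your argument matches the paper's proof in essence: expand $P_\la$ via the branching rule, observe the monomial coefficients are nonnegative, and apply the triangle inequality. (The paper applies the branching rule of Definition \ref{def:mac_laurent} directly to signatures rather than shifting to partitions first, but this is cosmetic.)

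However, the concern you raise in your final paragraph — that nonnegativity of the monomial coefficients fails for $q < 0$ or $t < 0$ — is unfounded, and the lemma is correct as stated for the full range $q,t \in (-1,1)$. The point you are missing is that every factor appearing in $\psi_{\la/\mu}(q,t)$ (in Macdonald's arm/leg form, or equivalently in $b_\la(q,t)$ from \eqref{eq:blam_formula}) is of the form $1 - q^a t^b$ with $a,b \geq 0$ and $a + b \geq 1$. Since $|q|,|t| < 1$ and at least one exponent is $\geq 1$, we have $|q^a t^b| < 1$, hence $1 - q^a t^b > 0$ regardless of sign. The same reasoning handles the $q$-binomial coefficient: $\sqbinom{a}{b}_q$ is a ratio of products of factors $1 - q^i$ with $i \geq 1$, each strictly positive for $q \in (-1,1)$. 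So there is no need to restrict to $q = 0, t \in [0,1)$; your three-step reduction works verbatim on all of $(-1,1)^2$, which is what the paper claims and uses.
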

\begin{proof}
Follows by expanding $P_\la$ via the branching rule \Cref{def:mac_laurent}, noting that the coefficient of each monomial is nonnegative since $q,t \in (-1,1)$, and applying the triangle inequality. Note that if $z_1,\ldots,z_n$ were allowed to be $0$ and $\la$ were a signature with negative parts, the Macdonald Laurent polynomials would not be well-defined.
\end{proof}

\subsection{Symmetric functions.} It is often convenient to consider symmetric polynomials in an arbitrarily large or infinite number of variables, which we formalize as follows, heavily borrowing from the introductory material in \cite{van2022q}. One has a chain of maps
\[
\cdots \to \La_{n+1} \to \La_n \to \La_{n-1} \to \cdots \to 0
\]
where the map $\La_{n+1} \to \La_n$ is given by setting $x_{n+1}$ to $0$.
In fact, writing $\La_n^{(d)}$ for symmetric polynomials in $n$ variables of total degree $d$, one has 
\[
\cdots \to \La_{n+1}^{(d)} \to \La_n^{(d)} \to \La_{n-1}^{(d)} \to \cdots \to 0
\]
with the same maps. The inverse limit $\La^{(d)}$ of these systems may be viewed as symmetric polynomials of degree $d$ in infinitely many variables. From the ring structure on each $\La_n$ one gets a natural ring structure on $\La := \bigoplus_{d \geq 0} \La^{(d)}$, and we call this the \emph{ring of symmetric functions}. Because $p_k(x_1,\ldots,x_{n+1}) \mapsto p_k(x_1,\ldots,x_n)$ and $m_\la(x_1,\ldots,x_{n+1}) \mapsto m_\la(x_1,\ldots,x_n)$ (for $n \geq \len(\la)$) under the natural map $\La_{n+1} \to \La_n$, these families of symmetric polynomials define symmetric functions $p_k, m_\la \in \La$. An equivalent definition of $\La$ is $\Lambda := \C[p_1,p_2,\ldots]$ where $p_i$ are indeterminates; under the natural map $\Lambda \to \Lambda_n$ one has $p_i \mapsto p_i(x_1,\ldots,x_n)$.

The Macdonald polynomials satisfy a consistency property 
\begin{equation}
P_\la(x_1,\ldots,x_n,0;q,t) = P_\la(x_1,\ldots,x_n;q,t)
\end{equation}
for any $\la \in \Y$ (and similarly for the dual and skew polynomials). Hence here exist \emph{Macdonald symmetric functions}, denoted $P_\la,Q_\la$ as well, such that $P_\la \mapsto P_\la(\bx;q,t)$ under the natural map $\Lambda \to \Lambda_n$. Macdonald symmetric functions satisfy the skew Cauchy identity
\begin{multline}\label{eq:infinite_cauchy}
    \sum_{\kappa \in \Y} P_{\kappa/\nu}(\bx;q,t)Q_{\kappa/\mu}(\by;q,t) \\
    = \exp\left(\sum_{\ell = 1}^\infty \frac{1-t^\ell}{1-q^\ell} \frac{1}{\ell}p_\ell(\bx)p_\ell(\by)\right) \sum_{\la \in \Y} Q_{\nu/\la}(\by;q,t) P_{\mu/\la}(\bx;q,t).
\end{multline}
Here $P_{\kappa/\nu}(\bx;q,t)$ is an element of $\La$, a polynomial in $p_1(\bx),p_2(\bx),\ldots \in \La$, and summands such as $P_{\kappa/\nu}(\bx;q,t)Q_{\kappa/\mu}(\by;q,t)$ are interpreted as elements of a ring $\La \ot \La$ and both sides interpreted as elements of a completion thereof.

To get a probability measure on $\cP$ from the skew Cauchy identity, we would like homomorphisms $\phi: \La \to \C$ which take $P_\la$ and $Q_\la$ to $\R_{\geq 0}$---here we recall that we take $q,t \in (-1,1)$. Simply plugging in nonnegative real numbers for the variables in \eqref{eq:finite_cauchy} works, but does not yield all of them. However, a full classification of such homomorphisms, called \emph{Macdonald nonnegative specializations} of $\La$, was conjectured by Kerov \cite{kerov1992generalized} and proven by Matveev \cite{matveev2019macdonald}. We describe them now: they are associated to triples of $\{\alpha_n\}_{n \geq 1}, \{\beta_n\}_{n \geq 1},\tau$ (the Plancherel parameter) such that $\tau \geq 0$, $0 \leq \alpha_n,\beta_n $ for all $ n \geq 1$, and $\sum_n \alpha_n, \sum_n \beta_n < \infty$. These are typically called usual (or alpha) parameters, dual (or beta) parameters, and the Plancherel parameter\footnote{The terminology `Plancherel' comes from the fact that in the case $q=t$ where the Macdonald polynomials reduce to Schur polynomials, this specialization is related to (the poissonization of) the Plancherel measure on irreducible representations of the symmetric group $S_N$, see \cite{borodin2017representations}.} respectively. Given such a triple, the corresponding specialization is defined by 
\begin{align}\label{eq:p_specs}
\begin{split}
    p_1 &\mapsto \sum_{n \geq 1} \alpha_n + \frac{1-q}{1-t}\left(\tau + \sum_{n \geq 1} \beta_n\right) \\
    p_k &\mapsto \sum_{n \geq 1} \alpha_n^k + (-1)^{k-1}\frac{1-q^k}{1-t^k}\sum_{n \geq 1} \beta_n^k \quad \quad \text{ for all }k \geq 2.
\end{split}
\end{align}
Note that the above formula defines a specialization for arbitrary tuples of reals $\alpha_n,\beta_n$ and $\tau$ satisfying convergence conditions, but it will not in general be nonnegative.

\begin{defi}\label{def:spec_notation}
For the specialization $\theta$ defined by the triple $\{\alpha_n\}_{n \geq 1}, \{\beta_n\}_{n \geq 1}, \tau$, we write
\begin{equation}\label{eq:spec_argument_notation}
P_\la(\alpha(\alpha_1,\alpha_2,\ldots),\beta(\beta_1,\beta_2,\ldots),\gamma(\tau);q,t) := P_\la(\theta;q,t) := \theta(P_\la)
\end{equation}
and similarly for skew and dual Macdonald polynomials. Likewise, for any other specialization $\phi$ defined by parameters $\{\alpha'_n\}_{n \geq 1}, \{\beta'_n\}_{n \geq 1}, \tau'$, we let
\begin{align}\label{eq:spec_cauchy_kernel}
\begin{split}
   \Pi_{q,t}(\alpha(\alpha_1,\ldots),\beta(\beta_1,\ldots),\gamma(\tau);\alpha(\alpha'_1,\ldots),\beta(\beta'_1,\ldots),\gamma(\tau')) &:= \Pi_{q,t}(\theta;\phi) \\ 
   &:= \exp\left(\sum_{\ell = 1}^\infty \frac{1-t^\ell}{1-q^\ell}\frac{1}{\ell}\theta(p_\ell)\phi(p_\ell)\right). 
\end{split}
\end{align}
We will omit the $\alpha(\cdots)$ in notation if all alpha parameters are zero for the given specialization, and similarly for $\beta$ and $\gamma$.
\end{defi}

We refer to a specialization as 
\begin{itemize}
    \item \emph{pure alpha} if $\tau$ and all $\beta_n, n \geq 1$ are $0$.
    \item \emph{pure beta} if $\tau$ and all $\alpha_n, n \geq 1$ are $0$.
    \item \emph{Plancherel} if all $\alpha_n,\beta_n, n \geq 1$ are $0$.
\end{itemize}

On Macdonald polynomials these act as follows. 

\begin{prop}\label{thm:specialize_mac_poly}
Let $\la,\mu \in Y$ and $c_1,\ldots,c_n \in \R_{\geq 0}$. Then
\begin{align}\label{eq:spec_mac_pol}
\begin{split}
P_\la(\alpha(c_1,\ldots,c_n);q,t) &= P_\la(c_1,\ldots,c_n;q,t) \\ 
Q_\la(\alpha(c_1,\ldots,c_n);q,t) &= Q_\la(c_1,\ldots,c_n;q,t) \\ 
P_\la(\beta(c_1,\ldots,c_n);q,t) &= Q_{\la'}(c_1,\ldots,c_n;t,q) \\ 
Q_\la(\beta(c_1,\ldots,c_n);q,t) &= P_{\la'}(c_1,\ldots,c_n;t,q),
\end{split}
\end{align}
where in each case the left hand side is a specialized Macdonald symmetric function while the right hand side is a Macdonald polynomial with real numbers plugged in for the variables. Furthermore,
\begin{equation}\label{eq:alpha_gamma_limit}
P_\la(\gamma(\tau);q,t) = \lim_{D \to \infty} P_\la\left(\tau \cdot \frac{1-q}{1-t} \frac{1}{D}[D];q,t\right)
\end{equation}
and similarly for $Q$.
\end{prop}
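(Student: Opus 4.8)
The plan is to read off each identity from the definition \eqref{eq:p_specs} of a nonnegative specialization together with the two structural facts about Macdonald symmetric functions recalled in the excerpt: that $P_\la \mapsto P_\la(x_1,\ldots,x_n;q,t)$ under the natural map $\La \to \La_n$, and that $Q_\la = b_\la(q,t) P_\la$ with $b_\la(q,t)$ a scalar (\Cref{def:Q}). First I would handle the pure alpha case: the specialization $\theta$ attached to $(c_1,\ldots,c_n)$ (all $\beta_m$, $\tau$ zero) sends $p_k \mapsto \sum_{i=1}^n c_i^k$ by \eqref{eq:p_specs}, which is exactly the image of $p_k$ under the composite of the natural map $\La \to \La_n$ with the evaluation $x_i \mapsto c_i$; since a homomorphism out of $\La = \C[p_1,p_2,\ldots]$ is determined by the images of the $p_k$, this composite equals $\theta$, and applying it to $P_\la$ gives $P_\la(c_1,\ldots,c_n;q,t)$. (The $0$-consistency $P_\la(x_1,\ldots,x_n,0;q,t)=P_\la(x_1,\ldots,x_n;q,t)$ stated just above guarantees this is well defined independently of the chosen $n$.) The second identity in \eqref{eq:spec_mac_pol} is then immediate: $\theta(Q_\la) = b_\la(q,t)\theta(P_\la) = b_\la(q,t) P_\la(c_1,\ldots,c_n;q,t) = Q_\la(c_1,\ldots,c_n;q,t)$.

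For the two pure beta cases I would bring in the Macdonald involution $\omega_{q,t}$ on $\La_{\Q(q,t)}$, the $\C$-algebra automorphism determined by $\omega_{q,t}(p_k) = (-1)^{k-1}\tfrac{1-q^k}{1-t^k}\,p_k$, which satisfies Macdonald's duality $\omega_{q,t} P_\la(\,\cdot\,;q,t) = Q_{\la'}(\,\cdot\,;t,q)$ and $\omega_{q,t} Q_\la(\,\cdot\,;q,t) = P_{\la'}(\,\cdot\,;t,q)$ (see \cite[Chapter VI, (5.1)]{mac}). Comparing with \eqref{eq:p_specs}, the pure beta specialization $\psi$ at $(c_1,\ldots,c_n)$ sends $p_k \mapsto (-1)^{k-1}\tfrac{1-q^k}{1-t^k}\sum_i c_i^k$, so it factors as $\psi = \mathrm{ev}_{(c_1,\ldots,c_n)}\circ\,\omega_{q,t}$, where $\mathrm{ev}_{(c_1,\ldots,c_n)}$ is the pure alpha specialization treated above. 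Hence $P_\la(\beta(c_1,\ldots,c_n);q,t) = \mathrm{ev}_{(c_1,\ldots,c_n)}\big(Q_{\la'}(\,\cdot\,;t,q)\big) = Q_{\la'}(c_1,\ldots,c_n;t,q)$, and similarly $Q_\la(\beta(c_1,\ldots,c_n);q,t) = P_{\la'}(c_1,\ldots,c_n;t,q)$.

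For \eqref{eq:alpha_gamma_limit}, let $\theta_D$ be the pure alpha specialization at the point $\tau\tfrac{1-q}{1-t}\tfrac1D[D]$, i.e.\ $D$ equal coordinates each equal to $\tfrac{\tau(1-q)}{D(1-t)}$. Then $\theta_D(p_k) = D\big(\tfrac{\tau(1-q)}{D(1-t)}\big)^k = \big(\tfrac{\tau(1-q)}{1-t}\big)^k D^{1-k}$, which tends to $\tfrac{(1-q)\tau}{1-t}$ for $k=1$ and to $0$ for $k\geq 2$; by \eqref{eq:p_specs} these limits are precisely the values $\gamma(\tau)(p_k)$. Since for fixed $\la$ the symmetric function $P_\la$ is a polynomial in the finitely many power sums $p_1,\ldots,p_{|\la|}$, convergence $\theta_D(p_k)\to\gamma(\tau)(p_k)$ for each $k$ forces $\theta_D(P_\la)\to\gamma(\tau)(P_\la)$; combining with the first part gives $P_\la(\gamma(\tau);q,t) = \lim_{D\to\infty}\theta_D(P_\la) = \lim_{D\to\infty}P_\la\big(\tau\tfrac{1-q}{1-t}\tfrac1D[D];q,t\big)$, and the $Q$-version follows by multiplying through by $b_\la(q,t)$.

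The only step with genuine content is the reduction of the pure beta specialization to $\mathrm{ev}_{(c_1,\ldots,c_n)}\circ\,\omega_{q,t}$ together with the invocation of Macdonald's duality $\omega_{q,t}P_\la(q,t)=Q_{\la'}(t,q)$ (with $q$ and $t$ interchanged and $\la$ transposed on the right-hand side); everything else is bookkeeping against \eqref{eq:p_specs}, the identity $Q_\la=b_\la(q,t)P_\la$, and the consistency property, while the limit in \eqref{eq:alpha_gamma_limit} reduces to the observation that $P_\la$ involves only finitely many $p_k$ so that evaluation is continuous in those variables.
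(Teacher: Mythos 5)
Your proposal is correct and matches the approach indicated in the paper: the paper states (immediately after the proposition) that the alpha case and the Plancherel limit follow from \eqref{eq:p_specs}, and that the beta case follows from the Macdonald involution on $\La$ (citing Macdonald, Chapter VI), which is exactly what you carry out in detail.
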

The alpha case of \eqref{eq:spec_mac_pol}, and \eqref{eq:alpha_gamma_limit}, are straightforward from \eqref{eq:p_specs}. The $\beta$ case follows from properties of a certain involution on $\La$, see \cite[Chapter VI]{mac}, and explains the terminology `dual parameter'.

The pure alpha specialization $\alpha(u,ut,\ldots,ut^{n-1})$, often referred as a \emph{principal specialization}, produces simple factorized expressions for Macdonald polynomials. For brevity we give only the Hall-Littlewood case which is needed later. Let
\begin{equation}
    n(\la) := \sum_{i=1}^n (i-1)\la_i = \sum_{x \geq 1} \binom{\la_x'}{2}.
\end{equation}
The following formula is standard and may be easily derived from \eqref{eq:hl_explicit_intro}, and also follows directly from \cite[Ch. III.2, Ex. 1]{mac}.

\begin{prop}[Principal specialization formula]\label{thm:hl_principal_formulas}
For $\la \in \Y_n$,
\begin{align*}
    P_\la(u,ut,\ldots,ut^{n-1};0,t) &= u^{|\la|} t^{n(\la)} \frac{(t;t)_n}{(t;t)_{n-\len(\la)}\prod_{i \geq 1} (t;t)_{m_i(\la)}}.
\end{align*}
\end{prop}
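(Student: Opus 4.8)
The plan is to derive the formula from the explicit sum \eqref{eq:hl_explicit_intro} by direct computation, following the standard argument (as in \cite[Ch. III.2, Ex. 1]{mac} in the Hall-Littlewood case). First I would substitute $x_i = u t^{i-1}$ into \eqref{eq:hl_explicit_intro}, which gives
\[
P_\la(u,ut,\ldots,ut^{n-1};0,t) = \frac{1}{v_\la(t)} \sum_{\sigma \in S_n} \sigma\left( (ut^{0})^{\la_1} \cdots (ut^{n-1})^{\la_n} \prod_{1 \leq i < j \leq n} \frac{ut^{i-1} - t \cdot ut^{j-1}}{ut^{i-1} - ut^{j-1}}\right),
\]
and pull out the overall monomial factor: every term contributes $u^{\sum_i \la_i} = u^{|\la|}$, so $u^{|\la|}$ factors out cleanly. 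The remaining sum over $S_n$ is a known evaluation; I would recognize it as essentially the principal-specialization sum for Hall-Littlewood polynomials, whose value is $t^{n(\la)}$ times a product of $q$-Pochhammer symbols.

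The key computational step is to evaluate $\sum_{\sigma} \sigma(\cdots)$ at $x_i = t^{i-1}$. The standard way is to note that when the exponents are $(0,1,\ldots,n-1)$-graded by the geometric specialization, only the permutations $\sigma$ that sort the variables in a way compatible with the repeated parts of $\la$ contribute nontrivially after cancellation of the $\prod (x_i - t x_j)/(x_i - x_j)$ factors: whenever $\la_i = \la_{i+1}$, the transposition swapping those two slots produces a relation, and the net effect is that the sum collapses to a single dominant term (coming from the identity permutation or its analogue) weighted by the product of the vanishing factors. Carrying this out, one finds the $x_1^{\la_1}\cdots x_n^{\la_n}$ coefficient picks up $t^{\sum_i (i-1)\la_i} = t^{n(\la)}$, and the Vandermonde-type product $\prod_{i<j}(t^{i-1} - t\cdot t^{j-1})/(t^{i-1}-t^{j-1}) = \prod_{i<j} (1 - t^{j-i+1})/(1 - t^{j-i})$ telescopes to $(t;t)_n / \prod_{k}(t;t)_{\text{(block sizes)}}$. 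Matching the block structure of equal parts of $\la$ to the multiplicities $m_i(\la)$ (including the block of zero parts, of size $n - \len(\la)$) yields the denominator $(t;t)_{n-\len(\la)} \prod_{i \geq 1} (t;t)_{m_i(\la)}$, while the normalizing constant $v_\la(t)$ — defined so the leading monomial has coefficient $1$ — accounts for the one surviving summand's internal symmetry factor.

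The main obstacle is the careful bookkeeping of which permutations survive: one must track how the factors $(x_i - t x_j)$ vanish or cancel against $(x_i - x_j)$ when $x_i = t^{i-1}$ and how equal parts of $\la$ force grouping of summands. An alternative, cleaner route that avoids the $S_n$-sum entirely is to instead use the branching rule \Cref{thm:branching_formulas} with the Hall-Littlewood specialization \eqref{eq:HL_branch}: expand $P_\la(u, ut, \ldots, ut^{n-1}; 0, t)$ as a sum over chains $\emptyset = \la^{(0)} \prec \cdots \prec \la^{(n)} = \la$ with weights $\prod_i (ut^{i-1})^{|\la^{(i)}| - |\la^{(i-1)}|} \psi_{\la^{(i)}/\la^{(i-1)}}(0,t)$; one then shows by induction on $n$ that the only chain contributing is the one obtained by removing the last column (or by a greedy construction), again recovering $u^{|\la|} t^{n(\la)}$ and the product of $(t;t)_{m_i(\la)}$ factors from the telescoping of the $\psi$ weights. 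Either way the computation is routine once the combinatorial structure is identified, and I would present whichever is shorter; given that \Cref{thm:hl_qw_branch_formulas} is already available in the paper, the branching-rule approach is likely cleanest and self-contained.
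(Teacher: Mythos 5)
The paper does not actually supply a proof here; it simply remarks that the formula is standard, citing \cite[Ch.\ III.2, Ex.\ 1]{mac}, and that it ``may be easily derived from \eqref{eq:hl_explicit_intro}.'' Your first approach is thus exactly the intended derivation, and it does work. However, your account of the bookkeeping contains two genuine errors, and your proposed alternative route via the branching rule would not go through as you have described it.

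First, the mechanism by which the $S_n$-sum collapses has nothing to do with repeated parts of $\la$. Substituting $x_i = u t^{i-1}$, the summand indexed by $\sigma$ carries the factor
\[
\prod_{1 \le i < j \le n} \bigl( t^{\sigma(i)-1} - t^{\sigma(j)} \bigr)
\]
in its numerator. This vanishes as soon as $\sigma(i) = \sigma(j)+1$ for some $i<j$, and a short induction shows that the only $\sigma \in S_n$ avoiding every such pair is the identity: if $k$ is minimal with $\sigma(k)\neq k$ then $\sigma(k) > k$ and one finds $\sigma^{-1}(k) < \sigma^{-1}(k+1) < \cdots$, too many distinct values $> k$. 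So exactly one term survives, for \emph{every} $\la$; the repeated parts of $\la$ never enter this step. Second, your claim that the surviving Vandermonde-type product telescopes to $(t;t)_n/\prod_k(t;t)_{\text{block sizes}}$ is incorrect. It evaluates to
\[
\prod_{1 \le i<j\le n}\frac{1-t^{j-i+1}}{1-t^{j-i}} = \prod_{k=1}^{n-1}\left(\frac{1-t^{k+1}}{1-t^{k}}\right)^{\!n-k} = \frac{(t;t)_n}{(1-t)^n},
\]
with no dependence on $\la$. The factors $(t;t)_{m_i(\la)}$ in the denominator of the claimed formula come entirely from the normalization $v_\la(t) = \prod_{i \ge 0}(t;t)_{m_i(\la)}/(1-t)^n$ (with $m_0(\la) = n - \len(\la)$): writing $1/v_\la(t) = (1-t)^n / \prod_{i\ge 0}(t;t)_{m_i(\la)}$ and combining with the Vandermonde evaluation above, the two powers of $(1-t)^n$ cancel and the stated product formula drops out. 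Your write-up conflates these two sources.

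Your second proposed route is a genuine gap. The branching expansion of $P_\la(u,ut,\ldots,ut^{n-1};0,t)$ over interlacing chains $\emptyset = \la^{(0)} \prec \cdots \prec \la^{(n)} = \la$ does \emph{not} reduce to a single chain: all chains contribute, with positive weight. A proof along those lines is possible but requires actually summing the chain weights, e.g.\ by an inductive use of the $q$-binomial theorem, and is not shorter than the direct computation. As written, the claim that ``only one chain survives'' would fail already for $\la = (1)$ and $n = 2$.
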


When the principal specialization is infinite, nice formulas for the principally specialized skew Hall-Littlewood polynomials were shown in \cite{kirillov1998new}. The phrasing below follows {\cite[Theorem 3.3]{vanpeski2021halllittlewood}}.

\begin{prop}\label{thm:skew_hl_principal}
For any $\la,\mu \in \Y$, 
\begin{equation}
P_{\la/\mu}(u,ut,\ldots;0,t) = \bbone(\mu \subset \la) u^{|\la|-|\mu|} \prod_{x \geq 1} t^{\binom{\la_x'-\mu_x'}{2}} \frac{(t;t)_{\mu_x' - \la_{x+1}'}}{(t;t)_{\mu_x'-\la_x'} (t;t)_{\mu_x'-\mu_{x+1}'}}.
\end{equation}
\end{prop}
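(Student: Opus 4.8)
\textbf{Proof proposal for \Cref{thm:skew_hl_principal}.}

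The plan is to prove the formula by induction on the number of variables, reducing the infinite principal specialization to a limit of single-variable steps. First I would recall that by the consistency of skew Hall-Littlewood polynomials under adjoining a zero variable, together with \eqref{eq:alpha_gamma_limit}-type reasoning (here for the pure alpha specialization, so just $P_{\la/\mu}(u,ut,\ldots,ut^{n-1};0,t)$ with $n \to \infty$), it suffices to establish the claimed product formula for the finite principal specialization $P_{\la/\mu}(u,ut,\ldots,ut^{n-1};0,t)$ for all $n \geq \len(\la)$ and then take $n \to \infty$; for $n$ large the factors in the claimed infinite product stabilize, since $\mu_x' = \la_x' = 0$ for $x$ large and the $(t;t)_{\mu_x'-\la_{x+1}'}$ etc. become $(t;t)_0 = 1$. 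So the real content is a formula for the finite principal specialization, which I would extract either directly from the Hall-Littlewood branching rule \eqref{eq:HL_branch}--\eqref{eq:skewP_branch_formula} or by citing the known result of Kirillov \cite{kirillov1998new}; since the problem says I may assume earlier results, and the phrasing is attributed to \cite[Theorem 3.3]{vanpeski2021halllittlewood}, the cleanest route is to reduce to that citation, but let me sketch the self-contained argument.

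For the self-contained route, I would iterate the single-variable branching rule: by \eqref{eq:skewP_branch_formula} applied $k$ times and then letting $k \to \infty$ with the specialization $x_i = ut^{i-1}$,
\begin{equation}
P_{\la/\mu}(u,ut,ut^2,\ldots;0,t) = \sum_{\mu = \nu^{(0)} \prec \nu^{(1)} \prec \nu^{(2)} \prec \cdots \to \la} \prod_{i \geq 1} (ut^{i-1})^{|\nu^{(i)}|-|\nu^{(i-1)}|} \psi_{\nu^{(i)}/\nu^{(i-1)}}(0,t),
\end{equation}
where the sum is over infinite interlacing chains stabilizing at $\la$. The $u$-power is $u^{|\la|-|\mu|}$ regardless of the chain, so it factors out. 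The remaining sum over chains, weighted by $\prod_i t^{(i-1)(|\nu^{(i)}|-|\nu^{(i-1)}|)} \psi_{\nu^{(i)}/\nu^{(i-1)}}(0,t)$, must be shown to equal the claimed product over columns $x$. The key combinatorial observation is that an interlacing chain from $\mu$ to $\la$ is equivalent to, column by column, a chain of the conjugate parts, and the Hall-Littlewood $\psi$ factors \eqref{eq:HL_branch} depend only on multiplicities $m_i$, i.e. on consecutive differences of conjugate parts. This lets one organize the sum as a product over columns $x \geq 1$, with the $x$-th factor a sum over the possible "intermediate" values the conjugate coordinates pass through in column $x$; each such sum is a finite $t$-geometric-type sum that evaluates to the ratio $t^{\binom{\la_x'-\mu_x'}{2}} (t;t)_{\mu_x'-\la_{x+1}'} / \big((t;t)_{\mu_x'-\la_x'}(t;t)_{\mu_x'-\mu_{x+1}'}\big)$ via a standard $q$-binomial summation (essentially the $q$-Vandermonde / $q$-Chu identity). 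The indicator $\bbone(\mu \subset \la)$ appears because the chain exists iff $\mu_x' \leq \la_x'$ for all $x$ (note interlacing of the $\nu^{(i)}$ forces, in the limit, exactly containment $\mu \subset \la$, not the stronger interlacing).

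The main obstacle I anticipate is the bookkeeping in that column-by-column factorization: verifying that the weights $t^{(i-1)(|\nu^{(i)}|-|\nu^{(i-1)}|)}$ distribute correctly across columns so that each column's internal sum is exactly a $q$-binomial sum with no leftover cross-terms, and pinning down the exponent $\binom{\la_x'-\mu_x'}{2}$. Concretely this comes down to a careful choice of how to index the intermediate states — tracking, for each column $x$, the sequence of conjugate-coordinate values and using that $\sum_i (i-1)(\text{increment at step }i)$ telescopes appropriately — and then recognizing the resulting single-column sum as $\sum_{j} t^{\binom{j}{2}+\cdots} \sqbinom{\cdot}{j}_t (\cdots)$ which collapses by $q$-Chu--Vandermonde. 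If this accounting becomes unwieldy, the fallback is simply to invoke \cite[Theorem 3.3]{vanpeski2021halllittlewood} (equivalently \cite{kirillov1998new}) directly, since the statement as given is quoted verbatim from there; in the write-up I would likely do exactly that and relegate the branching-rule derivation to a remark.
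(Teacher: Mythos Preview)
The paper does not give a proof of this proposition at all: it is stated with attribution to \cite{kirillov1998new} and \cite[Theorem 3.3]{vanpeski2021halllittlewood} and used as a black box. Your proposal ultimately recommends exactly this --- citing the result directly --- so it matches the paper's treatment; your sketched self-contained derivation via iterated branching and column-wise $q$-summation is a reasonable outline of how the cited references establish the formula, but is not needed here.
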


We note that for any nonnegative specializations $\theta,\phi$ with 
\begin{equation}\label{eq:finiteness_cauchy}
    \sum_{\la \in \Y}P_\la(\theta;q,t)Q_\la(\phi;q,t) < \infty,
\end{equation}
the specialized Cauchy identity
\begin{equation}\label{eq:specialized_cauchy}
    \sum_{\kappa \in \Y} P_{\kappa/\nu}(\theta;q,t)Q_{\kappa/\mu}(\phi;q,t) \\
    =\Pi_{q,t}(\theta;\psi) \sum_{\la \in \Y} Q_{\nu/\la}(\phi;q,t) P_{\mu/\la}(\theta;q,t).
\end{equation}
holds by applying $\theta \ot \phi$ to \eqref{eq:infinite_cauchy}. Similarly, we have a version of the branching rule for specializations: for $\la,\mu \in \Y$, 
\begin{equation}\label{eq:specialization_branch}
P_{\la/\mu}(\phi,\phi';q,t) = \sum_{\nu \in \Y: \mu \subset \nu \subset \la} P_{\la/\nu}(\phi;q,t)P_{\nu/\mu}(\phi';q,t),
\end{equation}
see e.g. \cite[(2.24)]{borodin2014macdonald}; the same holds with $Q$ in place of $P$. Here by $P_{\la/\mu}(\phi,\phi';q,t)$ we simply mean $\phi(P_{\la/\mu}) + \phi'(P_{\la/\mu})$. We will often refer to \eqref{eq:specialization_branch}, for either $P$ or $Q$ functions, as the branching rule.

\subsection{Macdonald processes.} One obtains probability measures on sequences of partitions using \eqref{eq:specialized_cauchy} as follows.

\begin{prop}\label{thm:specs_cancel}
Let $u \in \R$ and $\theta$ be the (not a priori nonnegative) specialization defined by $\alpha$ parameters $u,ut,ut^2,\ldots$ and $\beta$ parameter $-u,-uq,-uq^2,\ldots$. Then $\theta$ is the zero specialization, i.e. for any $\la,\mu \in \Y$
\begin{equation}
P_{\la/\mu}(\theta;q,t) = \bbone(\la=\mu).
\end{equation}
\end{prop}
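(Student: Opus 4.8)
The plan is to show that the specialization $\theta$ with $\alpha$-parameters $(u, ut, ut^2, \ldots)$ and $\beta$-parameters $(-u, -uq, -uq^2, \ldots)$ annihilates all power sums $p_k$ for $k \geq 1$, from which $P_{\la/\mu}(\theta;q,t) = \bbone(\la = \mu)$ follows. Indeed, once $\theta(p_k) = 0$ for every $k \geq 1$, the Cauchy kernel $\Pi_{q,t}(\theta;\phi) = \exp\left(\sum_{\ell \geq 1} \frac{1-t^\ell}{1-q^\ell}\frac{1}{\ell}\theta(p_\ell)\phi(p_\ell)\right) = 1$ for every specialization $\phi$; applying $\theta \ot \phi$ to the skew Cauchy identity \eqref{eq:infinite_cauchy} with $\nu = \mu$ (and $\mu$ there renamed) and comparing the two sides yields that $\theta$ acts on the subspace spanned by skew functions exactly as the counit, i.e.\ $P_{\la/\mu}(\theta;q,t) = \bbone(\la=\mu)$. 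Alternatively and more directly: a nonnegative (or formal) specialization is determined by its values on $p_1, p_2, \ldots$, and the zero specialization (the one sending every $p_k \mapsto 0$) sends $P_{\la/\mu} \mapsto \bbone(\la = \mu)$, since $P_{\la/\mu}$ has constant term $\bbone(\la=\mu)$ when expanded as a polynomial in the $p_k$'s (this is immediate from the branching rule \eqref{eq:specialization_branch} and the fact that $P_{\la/\la} = 1$ while $P_{\la/\mu}$ for $\mu \subsetneq \la$ lies in the augmentation ideal).

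So the real content is the computation $\theta(p_k) = 0$ for all $k \geq 1$. I would plug directly into the defining formula \eqref{eq:p_specs}. For $k \geq 2$ one gets
\begin{equation*}
\theta(p_k) = \sum_{n \geq 0} (ut^n)^k + (-1)^{k-1}\frac{1-q^k}{1-t^k}\sum_{n \geq 0}(-uq^n)^k = \frac{u^k}{1-t^k} + (-1)^{k-1}\frac{1-q^k}{1-t^k}\cdot \frac{(-1)^k u^k}{1-q^k} = \frac{u^k}{1-t^k} - \frac{u^k}{1-t^k} = 0,
\end{equation*}
using $\sum_{n\geq 0}(ut^n)^k = u^k/(1-t^k)$ and similarly for the $q$-sum, and $(-1)^{k-1}(-1)^k = -1$. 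For $k = 1$ one uses instead the first line of \eqref{eq:p_specs}:
\begin{equation*}
\theta(p_1) = \sum_{n \geq 0} ut^n + \frac{1-q}{1-t}\left(0 + \sum_{n \geq 0}(-uq^n)\right) = \frac{u}{1-t} - \frac{1-q}{1-t}\cdot\frac{u}{1-q} = \frac{u}{1-t} - \frac{u}{1-t} = 0.
\end{equation*}
The convergence conditions $\sum_n |ut^n|, \sum_n |uq^n| < \infty$ hold since $q,t \in (-1,1)$, so all these geometric series are absolutely convergent and the manipulations are legitimate; note $\theta$ is being used here as a (not a priori nonnegative) specialization, exactly as the statement allows, so I do not need the Kerov--Matveev classification.

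There is essentially no obstacle here — the argument is a short direct calculation plus one structural remark. The only point requiring a little care is the reduction from "$\theta$ kills all $p_k$" to "$\theta(P_{\la/\mu}) = \bbone(\la=\mu)$": I would make this precise by noting that $\La = \C[p_1,p_2,\ldots]$, that a specialization is an algebra homomorphism $\La \to \C$ determined by its values on the $p_k$, and that the homomorphism sending every $p_k$ to $0$ is evaluation at the empty variable set, under which $P_{\la/\mu}(\emptyset;q,t) = \bbone(\la=\mu)$ by \eqref{eq:def_skewP} (or \eqref{eq:specialized_cauchy}) restricted to zero variables. Since $\theta$ and this zero specialization agree on all generators $p_k$, they agree on all of $\La$, and in particular on every $P_{\la/\mu}$ and $Q_{\la/\mu}$. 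This completes the proof.
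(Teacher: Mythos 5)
Your proof is correct and takes essentially the same approach as the paper: a direct computation from \eqref{eq:p_specs} showing $\theta(p_k)=0$ for all $k\geq 1$, combined with the observation that $P_{\la/\mu}$ is a polynomial in the $p_k$ with constant term $\bbone(\la=\mu)$. The paper's proof is just a one-line version of this same argument (it leaves the geometric-series computation implicit); your write-up spells it out explicitly, which is fine.
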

\begin{proof}
By \eqref{eq:p_specs}, $\theta(p_i) = 0$ for all $i \geq 1$, and $P_{\la/\mu}$ is a polynomial in the $p_i$ with no constant term unless $\la=\mu$.
\end{proof}

\subsection{Hall-Littlewood processes and random matrices.} One obtains probability measures on sequences of partitions using \eqref{eq:specialized_cauchy} as follows.

\begin{defi}\label{def:mac_proc}
Let $\theta$ and $\phi_1,\ldots,\phi_k$ be Macdonald-nonnegative specializations such that each pair $\theta,\phi_i$ satisfies \eqref{eq:finiteness_cauchy}. The associated \emph{ascending Macdonald process} is the probability measure on sequences $\la^{(1)},\ldots,\la^{(k)}$ given by 
\[
\Pr(\la^{(1)},\ldots,\la^{(k)}) = \frac{Q_{\la^{(1)}}(\phi_1;q,t) Q_{\la^{(2)}/\la^{(1)}}(\phi_2;q,t) \cdots Q_{\la^{(k)}/\la^{(k-1)}}(\phi_k;q,t) P_{\la^{(k)}}(\theta;q,t)}{\prod_{i=1}^k \Pi_{q,t}(\phi_i;\theta)}.
\]
\end{defi}

The $k=1$ case of \Cref{def:mac_proc} is a measure on partitions, referred to as a \emph{Macdonald measure}. Instead of defining joint distributions all at once as above, one can define Markov transition kernels on $\cP$.

\begin{defi}\label{def:cauchy_dynamics}
Let $\theta,\phi$ be Macdonald nonnegative specializations satisfying \eqref{eq:finiteness_cauchy} and $\la$ be such that $P_\la(\theta) \neq 0$. The associated \emph{Cauchy Markov kernel} is defined by 
\begin{equation}\label{eq:def_HL_cauchy_dynamics}
    \Pr(\la \to \nu) = Q_{\nu/\la}(\phi) \frac{P_\nu(\theta)}{P_\la(\theta) \Pi(\phi; \theta)}.
\end{equation}
\end{defi}

It is clear that the ascending Macdonald process above is nothing more than the joint distribution of $k$ steps of a Cauchy Markov kernel with specializations $\phi_i,\theta$ at the $i\tth$ step and initial condition $\emptyset$. In this work we will only refer to the $q=0$ case, where the Macdonald polynomials are Hall-Littlewood polynomials and the corresponding measure (resp. process) is called a Hall-Littlewood measure (resp. process). The following special case is relevant, so we give it its own notation.

\begin{defi}\label{def:lambda_hl_planch}
For any $n \in \N \cup \{\infty\}$, we denote by $\la^{(n)}(\tau), \tau \geq 0$ the stochastic process on $\cP_n$ in continuous time $\tau$ with finite-dimensional marginals given by the Hall-Littlewood process
\begin{equation}\label{eq:hlproc_planch_def}
\Pr(\la^{(n)}(\tau_i) = \la(i) \text{ for all }i=1,\ldots,k) =  \frac{\left(\prod_{j=1}^k Q_{\la(j)/\la(j-1)}(\gamma(\tau_j-\tau_{j-1});0,t) \right) P_{\la(k)}(1,t,\ldots,t^{n-1};0,t)}{\exp\left(\frac{\tau_k(1-t^n)}{1-t}\right)}
\end{equation}
for each sequence of times $0 \leq \tau_1 \leq \tau_2 \leq \cdots \leq \tau_k$ and $\la(1),\ldots,\la(k)\in \cP_n$, where in the product we take the convention $\tau_0=0$ and $\la(0)$ is the zero partition. More generally, for $\nu \in \Sig_n$ we denote by $\la^{(n,\nu)}(\tau)$ the same process started at initial condition $\nu$, i.e. with marginals defined by $1/P_\nu(1,\ldots,t^{n-1};0,t)$ times the expression in \eqref{eq:hlproc_planch_def} where we instead take $\la(0) = \nu$. 
\end{defi}

Though we use the notation $\la^{(n,\nu)}$ in proofs to avoid dealing with extra normalizations, this is the same `$p$-adic Dyson Brownian motion' mentioned in the Introduction, and has an explicit description which we give now.
 
\begin{defi}
    \label{def:poisson_walks}
    For $n \in \N \cup \{\infty\}$ and $\mu \in \Sig_{n}$ and $t \in (0,1)$, we define the stochastic process $\Pois^{\mu,n}(\tau) = (\Pois^{\mu,n}_1(\tau),\ldots,\Pois^{\mu,n}_n(\tau))$ on $\Sig_n$ as follows. For each $1 \leq i \leq n$, $\Pois^{\mu,n}_i$ has an exponential clock of rate $t^i$, and when $\Pois^{\mu,n}_i$'s clock rings, $\Pois^{\mu,n}_i$ increases by $1$ if the resulting $n$-tuple is still weakly decreasing. If not, then $\Pois^{\mu,n}_{i-d}$ increases by $1$ instead and $\Pois^{\mu,n}_i$ remains the same, where $d \geq 0$ is the smallest index so that the resulting tuple is weakly decreasing. In the case of trivial initial condition we will often write $\Pois^{(n)}$ for $\Pois^{(0[n]),n}$. 
\end{defi}

\begin{figure}[htbp]
\begin{center}
\includegraphics[scale=1]{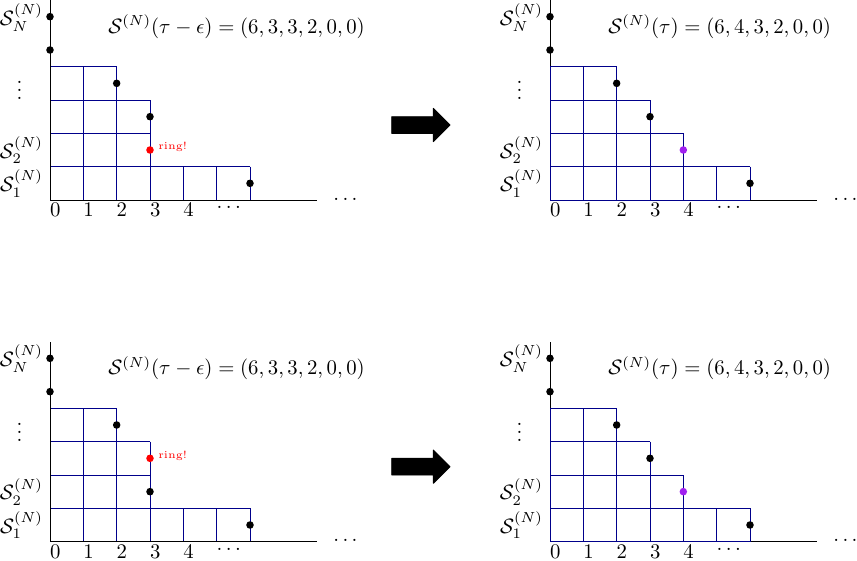}
\end{center}
\caption{A depiction of the process $\Pois^{(N)}(\tau) = (\Pois_1^{(N)}(\tau),\ldots,\Pois_N^{(N)}(\tau))$ as a Young diagram in English notation when $N=6$. If the clock associated to $\Pois^{(N)}_2 = 2$ rings at time $\tau$ and the process was previously in state $(6,3,3,2,0,0)$ (i.e. $\Pois^{(N)}(\tau-\eps) = (6,3,3,2,0,0)$ for all sufficiently small $\eps > 0$), then $\Pois^{(\infty)}_2 $ increases by $1$ and so $\Pois^{(N)}(\tau) = (6,4,3,2,0,0)$ (above). If instead the clock associated to $\Pois^{(N)}_3 = 2$ rings at time $\tau$ and the process was previously in state $(6,3,3,2,0,0)$, then $\Pois^{(\infty)}_2 $ increases instead by $1$ and so still $\Pois^{(N)}(\tau) = (6,4,3,2,0,0)$ (below), demonstrating the reflection condition.}\label{fig:poisson_walks_horiz}
\end{figure}

When $t=1/p$, the process $\Pois^{(N)}$ arises as the singular numbers of certain $N \times N$ $p$-adic matrix products with exponential waiting times between them \cite[Theorem 1.2]{van2023p}. It was further argued in that work that $\Pois^{(N)}$ is the natural $p$-adic analogue of the \emph{multiplicative Dyson Brownian motion}, which governs the singular values of a matrix $A \in \GL_N(\C)$ evolving under multiplicative Brownian motion. See \cite{van2023p} (from which \Cref{def:poisson_walks} and \Cref{fig:poisson_walks_horiz} are essentially reproduced) for details. It is the same as the process of \Cref{def:poisson_walks} from the Introduction up to time change:

\begin{prop}\label{thm:HL_poisson}
Let $n \in \N \cup \{\infty\}$ and $\nu \in \Sig_n$, and let $\la^{(n,\nu)},\Pois^{\nu,n}$ be as in \Cref{def:lambda_hl_planch} and \Cref{def:poisson_walks} respectively. Then
\begin{equation}
\la^{(n,\nu)}(\tau) = \Pois^{\nu,n}\left(\frac{1-t^n}{t}\tau\right)
\end{equation}
in distribution as stochastic processes.
\end{prop}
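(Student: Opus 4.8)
The plan is to realize both $\la^{(n,\nu)}(\tau)$ and $\Pois^{\nu,n}(\tau)$ as time-homogeneous continuous-time Markov chains on $\Sig_n$ started from the deterministic state $\nu$, and to show that their infinitesimal generators coincide after the asserted linear rescaling of time. The clocks in \Cref{def:poisson_walks} have total rate $\sum_i t^i<\infty$, and the Cauchy Markov kernel produces transitions with a matching bound (see below), so both chains are non-explosive; hence agreement of the generators together with the common initial condition forces agreement of the laws as stochastic processes, with the time change carried along. For $n=\infty$ this last step is the standard non-explosion argument for interacting particle systems with summable jump rates.

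First I would check that the finite-dimensional marginals in \Cref{def:lambda_hl_planch} genuinely arise from iterating the Cauchy Markov kernels of \Cref{def:cauchy_dynamics} with Plancherel parameter $\gamma$ and principal specialization $\theta=(1,t,\ldots,t^{n-1})$, i.e.\ that the construction is a time-homogeneous Markov semigroup. This is the usual consistency property of Hall--Littlewood processes: $\gamma(\tau_1)$ and $\gamma(\tau_2)$ together take the same value on every power sum $p_\ell$ as $\gamma(\tau_1+\tau_2)$ does (the only nonzero value being $p_1\mapsto\tau/(1-t)$, which is additive in $\tau$), so by the branching rule \eqref{eq:specialization_branch} and the skew Cauchy identity \eqref{eq:specialized_cauchy}, composing the kernels for $\gamma(\tau_1)$ and $\gamma(\tau_2)$ gives the kernel for $\gamma(\tau_1+\tau_2)$, using also $\Pi_{0,t}(\gamma(\tau_1);\theta)\Pi_{0,t}(\gamma(\tau_2);\theta)=\Pi_{0,t}(\gamma(\tau_1+\tau_2);\theta)$. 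Thus $\la^{(n,\nu)}$ is a Markov jump process whose law is determined by its generator.

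Next I would compute the jump rates of $\la^{(n,\nu)}$. Expanding $\Pr(\la\to\nu')=Q_{\nu'/\la}(\gamma(\tau);0,t)\,P_{\nu'}(\theta;0,t)\big/\big(P_\la(\theta;0,t)\,\Pi_{0,t}(\gamma(\tau);\theta)\big)$ for small $\tau$: since $Q_{\nu'/\la}$ is homogeneous of degree $|\nu'|-|\la|$ while $\gamma(\tau)$ scales like $\tau$, only the transitions with $|\nu'|=|\la|+1$ contribute to first order, and those require $\la\prec\nu'$, i.e.\ exactly that $\nu'$ is $\la$ with one box added in a row $a$ which is the top of a horizontal run of $\la$ --- precisely the transitions appearing in \Cref{def:poisson_walks}. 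Using $\Pi_{0,t}(\gamma(\tau);\theta)\to1$ and, from \Cref{thm:hl_qw_branch_formulas}, that the coefficient of $p_1$ in $Q_{\nu'/\la}$ equals $\varphi_{\nu'/\la}(0,t)$ (interpreted via \eqref{eq:hl_sig_pbranch} for general $\nu\in\Sig_n$), the rate comes out to $\varphi_{\nu'/\la}(0,t)/(1-t)$ times $P_{\nu'}(\theta;0,t)/P_\la(\theta;0,t)$. Substituting the explicit formulas \eqref{eq:HL_branch} and \Cref{thm:hl_principal_formulas}, the multiplicity factors $(t;t)_{m_i(\cdot)}$ telescope and the rate of $\la\to\nu'$ becomes an explicit elementary expression in $t$ depending only on $a$ and on the length of the run being extended (with a boundary contribution from $(t;t)_{n-\len}$ when $a$ extends the run of zeros).

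Finally I would read off the jump rates of $\Pois^{\nu,n}$ directly from \Cref{def:poisson_walks}: whether or not it is blocked, the clock of coordinate $i$, of rate $t^i$, always results in incrementing the top of the run of equal coordinates containing $i$, so the rate of a given transition $\la\to\nu'$ equals $\sum_i t^i$ summed over the indices $i$ in that run. Comparing this with the expression from the previous paragraph --- an elementary identity of finite geometric sums, equivalently of $q$-Pochhammer symbols --- shows the two generators are proportional, and tracking the proportionality constant gives the time change in the statement. The main obstacle is the bookkeeping in the third step: correctly isolating which skew terms $Q_{\nu'/\la}$ are first order, handling the case where the new box starts a new row (where both the factor $(t;t)_{n-\len}$ in \Cref{thm:hl_principal_formulas} and the signature-versus-partition conventions around \eqref{eq:hl_sig_pbranch} enter), and verifying that the $n=\infty$ case goes through verbatim with the same formulas after the summability argument for non-explosion.
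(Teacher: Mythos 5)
Your approach is exactly the one the paper takes: it does not reprove the statement, but cites \cite[Proposition~3.4]{van2022q}, where the result is proved for trivial initial condition by verifying equality of Markov generators, with the observation (which you also make) that the generator argument handles arbitrary initial conditions for free. Your reconstruction of that argument --- semigroup property of the Plancherel Cauchy kernel, first-order expansion in $\tau$ to isolate single-box transitions, matching rates against the reflected Poisson clocks, and the summability/non-explosion point for $n=\infty$ --- is the right plan.

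One caution on your final step: you write that ``tracking the proportionality constant gives the time change in the statement,'' but when you actually carry out the bookkeeping the constant does not obviously come out to $\tfrac{1-t^n}{t}$. Concretely, incrementing the top of a run of length $m$ starting at index $a$ has Hall--Littlewood rate $\tfrac{\varphi_{\nu'/\la}(0,t)}{1-t}\tfrac{P_{\nu'}(\theta)}{P_\la(\theta)} = t^{a-1}\tfrac{1-t^m}{1-t}$ (the factor $1-t^{m_{v+1}(\nu')}$ from $\varphi_{\nu'/\la}$ cancels against the same factor in the principal-specialization ratio), while the reflected-Poisson rate is $t^a+\cdots+t^{a+m-1}=t^a\tfrac{1-t^m}{1-t}$. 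The ratio is $1/t$ uniformly over $a$, $m$, and $n$; equivalently, the total exit rates are $\tfrac{1-t^n}{1-t}$ (consistent with Lemma~\ref{thm:hl_gen_computation}) and $\sum_{i=1}^n t^i = t\tfrac{1-t^n}{1-t}$. This yields $\la^{(n,\nu)}(\tau)=\Pois^{\nu,n}(\tau/t)$, which agrees with the stated factor only when $n=\infty$ (where $1-t^n=1$). So before treating the displayed constant $\tfrac{1-t^n}{t}$ as the target, you should double-check it against the cited source and the way the relation is invoked in the paper's later proofs; you may find that the factor $1-t^n$ is spurious and the correct time change is simply $\tau/t$.
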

\begin{proof}
The result is stated in \cite[Proposition 3.4]{van2022q} in the case of trivial initial condition and proven by verifying equality of Markov generators, which also shows the case of general initial condition automatically.
\end{proof}

Hall-Littlewood processes also relate to matrix products as follows:

\begin{prop}\label{thm:hl_meas_matrices}
Let $D > N > 0$ be integers and $A_i,i \geq 1$ be $N \times N$ corners of iid Haar-distributed elements of $\GL_D(\Z_p)$. Then $\SN(A_s A_{s-1} \cdots A_1)$ is a Hall-Littlewood process given by 
\begin{equation}
\Pr(\SN(A_{s+1} \cdots A_1) = \nu | \SN(A_s \cdots A_1) = \la) =  \frac{Q_{\nu/\la}(t,\ldots,t^{D-N};0,t)P_\nu(1,\ldots,t^{N-1};0,t)}{P_\la(1,\ldots,t^{N-1};0,t)\Pi_{0,t}(t,\ldots,t^{D-N};1,\ldots,t^{N-1})}
\end{equation}
for $s \in \Z_{\geq 0}$. If instead the $A_i$ are iid with iid entries drawn from the additive Haar measure on $\Z_p$, then 
\begin{equation}
\Pr(\SN(A_{s+1} \cdots A_1) = \nu | \SN(A_s \cdots A_1) = \la) =  \frac{Q_{\nu/\la}(t,t^2,\ldots;0,t)P_\nu(1,\ldots,t^{N-1};0,t)}{P_\la(1,\ldots,t^{N-1};0,t)\Pi_{0,t}(t,t^2,\ldots;1,\ldots,t^{N-1})}.
\end{equation}
\end{prop}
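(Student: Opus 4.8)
The plan is to reduce both cases to the general theory of Hall-Littlewood (more precisely, Macdonald at $q=0$) processes by identifying the correct nonnegative specializations that describe the effect of multiplying by one more random matrix factor. The key external input is \Cref{thm:hl_meas_matrices}'s intellectual predecessor, namely the decomposition of the relevant Haar measures into double cosets recorded after \Cref{prop:smith}: if $A$ is Haar on $\GL_{N+D}(\Z_p)$ and $B$ has iid additive Haar entries, then the conditional law of $\SN(BA')$ (where $A'$ is the $N\times N$ corner, respectively $B$ itself applied to a fixed matrix) given $\SN$ of the previous product depends only on that previous value, because everything is bi-$\GL_N(\Z_p)$-invariant. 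So first I would establish that $\SN(A_s\cdots A_1)$ is a Markov chain on $\cP_N$ and identify its transition kernel with a Cauchy Markov kernel in the sense of \Cref{def:cauchy_dynamics}.

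The substance is then to pin down which specialization $\phi$ appears at each step, and that the bulk specialization $\theta$ is the principal specialization $(1,t,\ldots,t^{N-1})$. For the corners case, the relevant distributional identity is that if $U \in \GL_{N+D}(\Z_p)$ is Haar and $C$ is its $N\times N$ corner, then for $A$ with $\SN(A)=\la$ the singular numbers of $CA$ are distributed so that $\Pr(\SN(CA)=\nu \mid \SN(A)=\la)$ has exactly the claimed form; this is precisely \cite[Corollary 3.4]{van2020limits} (cited in the Introduction as \eqref{eq:uniform_hl_intro}), and one checks that the single-step specialization is the pure alpha specialization $\alpha(t,t^2,\ldots,t^{D-N})$, a finite principal specialization, consistent with \Cref{thm:specialize_mac_poly} and \Cref{thm:hl_principal_formulas}. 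For the additive Haar case, the analogous statement (also from \cite[Corollary 3.4]{van2020limits}, or \eqref{eq:uniform_hl_intro} directly) gives the infinite principal specialization $\alpha(t,t^2,t^3,\ldots)$, which is legitimate since $\sum_i t^i < \infty$ for $t\in(0,1)$. In both cases the denominator $\Pi_{0,t}(\phi;1,\ldots,t^{N-1})$ is exactly the Cauchy normalization constant from \eqref{eq:def_cauchy_kernel}, computed via \eqref{eq:spec_cauchy_kernel}: $\Pi_{0,t}(t,\ldots,t^{D-N};1,\ldots,t^{N-1}) = \prod_{i=1}^{D-N}\prod_{j=0}^{N-1}\frac{(t\cdot t^i t^j;0)_\infty}{(t^i t^j;0)_\infty} = \prod_{i,j}\frac{1-t^{i+j+1}}{1-t^{i+j}}$, and similarly (as a convergent infinite product) in the additive Haar case.

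Concretely I would proceed as follows. Step one: recall from \Cref{prop:smith} and the surrounding discussion that $U\diag(p^\mu)V$ with $U,V$ independent Haar on $\GL_N(\Z_p)$ (and the rectangular analogue) is the unique bi-invariant measure with prescribed singular numbers, and deduce that $\SN(A_{s+1}\cdots A_1)$ given $\SN(A_s\cdots A_1)=\la$ depends only on $\la$. Step two: invoke \cite[Corollary 3.4]{van2020limits} to get the explicit one-step transition probability as a ratio of Hall-Littlewood polynomials; in the additive-Haar case this is literally the $N\to\infty$-free statement behind \eqref{eq:uniform_hl_intro} unwound one factor at a time, and in the corners case it is the $\GL_D$ analogue from the same corollary. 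Step three: match the resulting formula term-by-term with \Cref{def:cauchy_dynamics}, reading off $\theta = \alpha(1,t,\ldots,t^{N-1})$ and $\phi$ equal to the finite (resp. infinite) principal alpha specialization, and confirm the $\Pi_{0,t}$ factor agrees using \Cref{thm:specialize_mac_poly} to reinterpret $Q_{\nu/\la}(t,\ldots,t^{D-N};0,t)$ as $Q_{\nu/\la}$ of that specialization. The main obstacle, such as it is, is bookkeeping: ensuring the rectangular Smith normal form bi-invariance is invoked correctly for the corners (the factor $A_{s+1}$ is $N\times N$ but arises as a corner of an $(N+D)\times(N+D)$ matrix, so one must be careful that the conditional law genuinely only sees the $N\times N$ singular numbers of the previous product, not more refined data), and that the infinite-variable specialization in the additive-Haar case satisfies the summability condition \eqref{eq:finiteness_cauchy} so that the Cauchy identity and hence \Cref{def:cauchy_dynamics} apply — both of which are checked in \cite{van2020limits} and require only that $t\in(0,1)$.
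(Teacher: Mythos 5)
Your proposal reduces to exactly the paper's proof: the paper's entire argument is the one-line appeal to \cite[Corollary 3.4]{van2020limits} as the iid special case, and your steps (Markov property via bi-invariance, read off the finite/infinite principal alpha specialization, match with \Cref{def:cauchy_dynamics}) are just the unwinding of that citation. One small notational slip to fix: you write "Haar on $\GL_{N+D}(\Z_p)$" and "corner of an $(N+D)\times(N+D)$ matrix" in a couple of places, but in this proposition the ambient group is $\GL_D(\Z_p)$ with $D>N$ (so the codimension parameter is $D-N$, matching your stated specialization $\alpha(t,\ldots,t^{D-N})$); the $\GL_{N+D_N}$ convention is used only in \Cref{thm:corner_product_bulk_metric_intro}.
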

\begin{proof}
This is the special case of \cite[Corollary 3.4]{van2020limits} corresponding to iid matrices.
\end{proof}

\section{The limit of the Plancherel/principal Hall-Littlewood measure}\label{sec:stationary_law_hl}

In this section we compute explicit contour integral formulas, given in \Cref{thm:hl_stat_dist}, for the limiting distribution of conjugate parts of the half-infinite Poisson walk $\Pois^{\nu,\infty}(\tau)$. These formulas are valid for suitably small initial conditions $\nu$, which we hope to make use of in future work. Because our methods come from Macdonald processes, we will state things in terms of the Hall-Littlewood process $\la^{(\infty,\nu)}(\tau)$ of \Cref{def:lambda_hl_planch}, but this is the same as $\Pois^{\nu,\infty}(\tau)$ up to time change by \Cref{thm:HL_poisson}. 

Let us briefly outline the proof before giving details. We define a family of observables $f_\mu, \mu \in \Y_k$ of a random partition $\kappa$ by
\begin{equation}
f_\mu(\kappa) = \frac{P_{\kappa/\mu'}(1,t,\ldots; 0,t)}{P_\kappa(1,t,\ldots;0,t)}.
\end{equation}
These have nice expectations when $\kappa$ is distributed by the Hall-Littlewood measure we are interested in, but also may be explicitly inverted to yield a different form, given in \Cref{thm:prob_from_observables}, for the prelimit probability we wish to take asymptotics of. Though this expression may look \emph{a priori} more complicated, it is suitable for converting to contour integrals (\Cref{thm:use_torus_product}), and hence for asymptotic analysis. 

\subsection{The main theorem.} In this subsection, we prove the following result, modulo some analytic lemmas which we defer to the next subsection.

\begin{thm}\label{thm:hl_stat_dist}
Let $\la(\tau) = \la^{(\infty,\emptyset)}(\tau)$ as defined in \Cref{def:lambda_hl_planch}, and fix $k \in \Z_{\geq 2}$ and $\zeta \in \R$. Then for any integers $L_1 \geq \ldots \geq L_k$,
\begin{multline}
\label{eq:explicit_hl_stat_dist_formula}
\lim_{\substack{\tau \to \infty \\ \log_{t^{-1}}(\tau) + \zeta \in \Z}} \Pr(\la_i'(\tau) - \log_{t^{-1}}(\tau) = L_i +\zeta \text{ for all }1 \leq i \leq k) \\ 
=  \frac{(t;t)_\infty^{k-1}}{k! (2 \pi \bi)^k} \prod_{i=1}^{k-1} \frac{t^{\binom{L_i-L_k}{2}}}{(t;t)_{L_i-L_{i+1}}} \int_{\tG^k} e^{\frac{t^{L_k+\zeta}}{1-t}(w_1+\ldots+w_k)} \frac{\prod_{1 \leq i \neq j \leq k} (w_i/w_j;t)_\infty}{\prod_{i=1}^k (-w_i^{-1};t)_\infty (-tw_i;t)_{\infty}} \\ 
\times \sum_{j=0}^{L_{k-1}-L_k} t^{\binom{j+1}{2}} \sqbinom{L_{k-1}-L_k}{j}_t  P_{(L_1-L_k,\ldots,L_{k-1}-L_k,j)}(w_1^{-1},\ldots,w_k^{-1};t,0)  \prod_{i=1}^k \frac{dw_i}{w_i}
\end{multline}
with contour 
\begin{equation}
\tG := \{x + \bi: x \leq 0 \} \cup \{x - \bi:  x \leq 0\} \cup \{x+\bi y: x^2+y^2=1, x > 0\}
\end{equation}
in usual counterclockwise orientation. When $k=1$, the equality \eqref{eq:explicit_hl_stat_dist_formula} holds with the sum over $j$ replaced by
\begin{equation}\label{eq:k=1_interp}
\sum_{j=0}^\infty t^{\binom{j+1}{2}} \frac{1}{(t;t)_j}  P_{(j)}(w_1^{-1};t,0).
\end{equation}
Furthermore, if $\nu(\tau), \tau \in t^{\Z+\zeta}$ is a sequence of partitions such that 
\begin{equation}\label{eq:technical_restriction}
 \nu_1'(\tau) \leq \log_{t^{-1}} \tau - 2\pfrac{k}{\log t^{-1}}^2(\log \log \tau)^2
\end{equation}
for all sufficiently large $\tau$, then the same result (in either the $k=1$ or $k \geq 2$ case) holds with $\la(\tau) = \la^{(\infty,\nu(\tau))}(\tau)$. 
\end{thm}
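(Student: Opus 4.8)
The plan is to set up the observables $f_\mu(\kappa) = P_{\kappa/\mu'}(1,t,\ldots;0,t)/P_\kappa(1,t,\ldots;0,t)$, compute their expectations under the Hall--Littlewood measure with principal specialization $\theta = (1,t,t^2,\ldots)$ and Plancherel specialization $\gamma(\tau)$, then invert the resulting linear system to extract the probabilities of the event $\{\la_i'(\tau) = L_i + \log_{t^{-1}}\tau + \zeta\}$. First I would use the branching rule \eqref{eq:specialization_branch} together with the specialized Cauchy identity \eqref{eq:specialized_cauchy} to see that $\E_\la[f_\mu(\la)]$ equals $Q_\mu$ (or a simple multiple thereof) evaluated at the Plancherel specialization --- this is the ``nice expectation'' promised in the outline, and is essentially the skew-observable computation of the type appearing in \cite{borodin2020observables}. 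Since the distribution of the conjugate parts $(\la_1',\ldots,\la_k')$ only depends on $\la$ through finitely much data, the system relating $\{\E[f_\mu]\}_{\mu \in \Y_k}$ to $\{\Pr((\la_1',\ldots,\la_k') = \eta)\}$ is (block) triangular and can be inverted explicitly; I would record the inverted form as a preliminary lemma (this is the content of ``\Cref{thm:prob_from_observables}'' referenced in the text). Concretely, the prelimit probability becomes a finite alternating sum over $\mu$ of products of $\E[f_\mu] = Q_\mu(\gamma(\tau))$ against explicit Hall--Littlewood structure constants, and the principal-specialization formulas \Cref{thm:hl_principal_formulas} and \Cref{thm:skew_hl_principal} make every ingredient fully explicit.

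Next I would convert this sum into a contour integral. The key device is the torus scalar product $\lan\cdot,\cdot\ran'_{0,t;k}$ of \Cref{def:torus_product} together with \Cref{thm:laurent_orthogonality}: the combinatorial sum over $\mu$ (equivalently over interlacing chains, via the branching rule \Cref{def:mac_laurent}) can be rewritten as $\lan \,\cdot\,, P_\la(\mbz;0,t)\ran'$ for an appropriate generating symmetric Laurent polynomial, so that $Q_\mu(\gamma(\tau))$ gets packaged into the exponential kernel $\Pi_{0,t}$ using \eqref{eq:def_cauchy_kernel} and \eqref{eq:p_specs}. After the substitution $z_i = w_i^{-1}$ this produces exactly an integral of the shape on the right side of \eqref{eq:explicit_hl_stat_dist_formula} but over the torus $\T^k$; I would state this prelimit identity separately (the ``\Cref{thm:use_torus_product}'' of the text). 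The factor $\prod_{1\le i\ne j\le k}(w_i/w_j;t)_\infty / \prod_i (-w_i^{-1};t)_\infty(-tw_i;t)_\infty$ is precisely the Hall--Littlewood weight combined with the image of the principal specialization's Cauchy kernel, and the $P_{(L_1-L_k,\ldots,j)}(w^{-1};t,0)$ together with the $q$-binomial sum over $j$ come from the explicit skew principal-specialization formula restricted to the last coordinate $\la_k'$.

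The asymptotic step is the heart of the argument. Writing $\tau = t^{-(n+\zeta)}$ with $n \to \infty$ along integers and substituting $L_i + \log_{t^{-1}}\tau + \zeta$ for $\la_i'$, the factors $t^{n(\la)}$, $(t;t)_\bullet$, etc. produce powers $t^{\binom{n}{2}}$-type normalizations that need to be matched against the growth of $Q_\mu(\gamma(\tau))$; after cancellation one is left with an integral over $\T^k$ whose integrand, as $n \to \infty$, develops a pole structure forcing a contour deformation. I would deform each $\T$ to the contour $\tG$ --- the semicircular-plus-two-rays contour --- chosen so that (i) it separates the poles of $1/(-w_i^{-1};t)_\infty$ (near $w_i \in -t^{-\Z_{\ge 0}}$, i.e. outside) from those of $1/(-tw_i;t)_\infty$ (near $w_i \in -t^{-\Z_{\ge 1}}$... here one must be careful about which poles are crossed) and (ii) along $\tG$ the kernel $(w_i/w_j;t)_\infty$ stays bounded and the exponential $e^{t^{L_k+\zeta}(w_1+\cdots+w_k)/(1-t)}$ gives decay. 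The analytic lemmas deferred to the next subsection are exactly the estimates justifying this deformation and the passage to the limit under the integral (dominated convergence on $\tG^k$, using \Cref{thm:complex_hl_bound} to bound the Hall--Littlewood factors on the non-compact rays). The main obstacle I anticipate is precisely this contour manipulation and the bookkeeping of which residues survive: the prelimit integrand is a genuine Laurent polynomial times the torus weight, so no poles are enclosed, but after the $n \to \infty$ substitution infinitely many poles migrate toward the contour, and one must show that deforming to $\tG$ --- rather than picking up an infinite residue series --- correctly captures the limit. This requires uniform (in $n$) control of the integrand in an annular neighborhood of $\tG$, which is where the bulk of the technical work in the next subsection will go.

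Finally, for the general initial condition $\nu(\tau)$ I would run the same argument starting from $\la^{(\infty,\nu(\tau))}(\tau)$, whose marginals are $1/P_{\nu(\tau)}(1,t,\ldots;0,t)$ times the Plancherel-evolved expression; the only change is that the skew structure constants $Q_{\kappa/\nu(\tau)}$ replace $Q_\kappa$, and the observable expectations acquire a factor $P_{\mu/\la}(\cdots)$ as in \eqref{eq:specialized_cauchy}. The hypothesis \eqref{eq:technical_restriction}, $\nu_1'(\tau) \le \log_{t^{-1}}\tau - 2(k/\log t^{-1})^2(\log\log\tau)^2$, is exactly what is needed to ensure the $\nu(\tau)$-dependent factors are negligible in the limit: it forces $P_{\nu(\tau)}(1,t,\ldots;0,t)$ and the skew corrections to contribute only lower-order terms, so the same contour integral emerges. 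I would isolate this as a short comparison lemma at the end, bounding the relative error by something like $t^{c(\log_{t^{-1}}\tau - \nu_1'(\tau))}$ against $(\log\tau)$-type factors coming from the number of terms, and checking that \eqref{eq:technical_restriction} makes this $\to 0$.
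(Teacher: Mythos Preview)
Your high-level architecture matches the paper's: observables $f_\mu$, the inversion of \Cref{thm:prob_from_observables}, a torus-integral representation \`a la \Cref{thm:use_torus_product}, and then asymptotics via contour deformation. But several of the concrete mechanisms you describe are not what actually works, and at least two of them are genuine obstacles.

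First, the torus scalar product you want is $\lan\cdot,\cdot\ran'_{t,0;k}$, the $q$-Whittaker one, not $\lan\cdot,\cdot\ran'_{0,t;k}$. The reason is the duality $Q_\la(\beta(z_1,\ldots,z_k);0,t)=P_{\la'}(z_1,\ldots,z_k;t,0)$: the Cauchy identity packaging $Q_{\mu'/\kappa}(\gamma(\tau);0,t)$ into an exponential kernel (namely \eqref{eq:cauchy_for_toral}) expands in $q$-Whittaker polynomials $Q_\mu(\mathbf{z};t,0)$, so you must extract coefficients with the $(t,0)$ orthogonality. Using the Hall--Littlewood product would not isolate the right terms. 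Relatedly, the sum you must evaluate is over $\mu\in\Y_k$ rather than $\mu\in\Y$, so the Cauchy identity does not apply directly; the paper handles this via the ad hoc identity \eqref{eq:update_qw}, which you should not expect to fall out of general machinery.

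Second, your description of the contour step is off in a way that would stall the argument. There is no substitution $z_i=w_i^{-1}$ and no pole-separation; the actual change of variables is the rescaling $w_i=t^{-\eta_k(\tau)}z_i$, which sends $c\T$ to a circle of radius $ct^{-\eta_k(\tau)}$. One then deforms this large circle to the finite closed contour $\Gamma(\tau)$ (the box-plus-arc of \Cref{fig:Gamma1_decomp}), crossing no poles since every pole $w_i\in -t^{\Z}$ of the prelimit integrand already sits inside both. The limit $\tau\to\infty$ simultaneously sends $\Gamma(\tau)\to\tG$ and $(-tw_i;t)_{\eta_k(\tau)}\to(-tw_i;t)_\infty$, and the analytic work is splitting $\Gamma(\tau)=\Gamma_1(\tau)\cup\Gamma_2(\tau)$ at scale $t^{-\xi(\tau)}$ with $\xi(\tau)\asymp\log\log\tau$ and showing the three error integrals \eqref{eq:split_integral1}--\eqref{eq:split_integral3} vanish. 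All poles $-t^{\Z}$ lie \emph{inside} $\tG$; there is no separation of the two pole families.

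Third, the initial condition $\nu(\tau)$ is not dispatched by a comparison lemma at the end. It enters the integrand of \Cref{thm:use_torus_product} as the multiplicative factor $g_\tau(w_1,\ldots,w_k)=P_{\tnu}(\beta(t^{\eta_k(\tau)}w_1,\ldots),1,t,\ldots;0,t)/P_{\tnu}(1,t,\ldots;0,t)$, and controlling it requires two separate estimates: $g_\tau=1+O(t^{\eta_k(\tau)-\nu_1'(\tau)-\xi(\tau)})$ on $\Gamma_1(\tau)$ (\Cref{thm:g_bound_small_w}) and a crude $O(t^{-(k+1)\nu_1'(\tau)^2})$ bound on all of $\Gamma(\tau)$ (\Cref{thm:g_bound_large_w}). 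The hypothesis \eqref{eq:technical_restriction} is exactly what balances the latter against the exponential decay on $\Gamma_2(\tau)$; your proposed bound $t^{c(\log_{t^{-1}}\tau-\nu_1'(\tau))}$ is too optimistic and would not by itself yield the $(\log\log\tau)^2$ threshold.
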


\begin{figure}[htbp]
\begin{center}
\begin{tikzpicture}[scale=1.5]
  % Draw the axes
  \draw[<->] (0,-2) -- (0,2) node[above] {$\Im(w_i)$};
  \draw[<->] (-6,0) -- (2,0) node[above] {$\Re(w_i)$};

  % Draw the half circle
  \draw[thick] (0,-1) arc (-90:90:1);

  % Draw the horizontal lines
  \draw[thick] (-6,1) node[left] {$\cdots$} -- (0,1); %node[right,yshift=3mm] {$$};
  \draw[thick] (-6,-1) node[left] {$\cdots$} -- (0,-1); % node[right,yshift=-3mm] {(0,-1)};

\end{tikzpicture}
\caption{The contour $\tG$ in $\C$.
}\label{fig:tG}
\end{center}
\end{figure}

\begin{rmk}\label{rmk:k=1_interp}
The sum in \eqref{eq:k=1_interp} is equal to $(-tw_1^{-1};t)_\infty$ by the $q$-binomial theorem. One should view this sum as what one naively obtains in the general $k\geq 2$ form \eqref{eq:explicit_hl_stat_dist_formula} by taking $k=1$ and substituting $L_{k-1} = \infty$ and 
\begin{equation}
\sqbinom{\infty}{j}_t := \frac{1}{(t;t)_j}.
\end{equation}
\end{rmk}

\begin{rmk}
The reason for the parameter $\zeta$ is that for general $\tau$, $\la_i'(\tau) - \log_{t^{-1}}(\tau)$ will not be an integer but rather lie on some shift of the integer lattice, and it is necessary to consider a sequence of $\tau$ where $\la_i'(\tau) - \log_{t^{-1}}(\tau)$ all lie on the same shift $\Z+\zeta$ of the integer lattice in order to have any hope of a $T \to \infty$ limiting distribution. Though we have shifted by $\zeta$ to obtain a $\Z$-valued random variable, one might just as well not do so and define the limit as a $\Z+\zeta$-valued random variable. 
\end{rmk}

\begin{rmk}\label{rmk:hypotheses_suboptimal}
We do not use the case of nontrivial initial condition $\nu$ in \Cref{thm:hl_stat_dist} at all in subsequent results in this paper, but chose to prove it for potential future applications. The restriction \eqref{eq:technical_restriction} intuitively ensures that the initial condition $\nu$ is sufficiently far from the observation point at $\approx \log_{t^{-1}} \tau$ that there is time for the process to relax to its stationary distribution. We believe that \Cref{thm:hl_stat_dist} continues to hold under the weaker condition that $\log_{t^{-1}} \tau - \nu_1'(\tau) \to \infty$ at any rate, and it is easy to see that this condition is necessary to obtain a limit distribution supported on $\Z$ rather than some bounded-below interval $\Z_{\geq c}$. The bound $(\log \log \tau)^2$ is nonetheless quite good, and arises as a technical condition in certain error bounds on contour integrals in the proof. We discuss in more detail why it is technically necessary for our arguments in \Cref{rmk:why_hypotheses_suboptimal}.
\end{rmk}

It is also clear that the formula on the right hand side of \eqref{eq:explicit_hl_stat_dist_formula} is invariant under replacing $(L_1,\ldots,L_k) \mapsto (L_1+1,\ldots,L_k+1)$ and $\zeta \mapsto \zeta-1$, which it should be since the left hand side obviously has this invariance.

\begin{rmk}\label{rmk:cplx_tau}
For \Cref{thm:prob_from_observables} and \Cref{thm:use_torus_product}, we allow any $\tau \in \C$, in which case the distribution of $\la(\tau)$ is a complex-valued measure on $\Y$ rather than a positive probability measure. We write expectations and probabilities with respect to this measure without comment, e.g.
\begin{equation}
\Pr((\lambda_1'(\tau),\ldots,\lambda_k'(\tau)) = \eta) = \frac{1}{\Pi_{0,t}(1,t,\ldots;\gamma(\tau))}\sum_{\substack{\la \in \Y: \\ (\la_1',\ldots,\la_k') = \eta}} P_\la(1,t,\ldots;0,t)Q_\la(\gamma(\tau);0,t),
\end{equation}
which will in general not be a nonnegative real number. We will not need the case of general complex $\tau$ in this work, but show it because it will be useful in a subsequent one \cite{van2023+rank} and the proof is essentially the same as in the case of positive real $\tau$ when the above expression is a bona fide probability.
\end{rmk}

\begin{lemma}\label{thm:prob_from_observables}
Fix $\nu \in \Y$ and set $\la(\tau) = \la^{(\infty,\nu)}(\tau)$ as defined in \Cref{def:lambda_hl_planch}. Then for any $k \in \Z_{\geq 1}$ and $\eta \in \Y_k$,
\begin{multline}\label{eq:prob_inv_formula}
\Pr((\lambda_1'(\tau),\ldots,\lambda_k'(\tau)) = \eta) \\ 
= P_{\eta'}(1,t,\ldots;0,t)\sum_{\mu \in \Y_k} \sum_{\kappa \in \Y_k} \frac{P_{\nu/\kappa}(1,t,\ldots;0,t)}{P_\nu(1,t,\ldots;0,t)} Q_{\mu'/\kappa}(\gamma(\tau);0,t) P_{\mu'/\eta'}(\beta(-1);0,t).
\end{multline}
\end{lemma}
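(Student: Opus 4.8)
The plan is to invert a linear system relating the probabilities $\Pr((\lambda_1'(\tau),\ldots,\lambda_k'(\tau)) = \eta)$ to the expectations of the observables $f_\mu(\kappa) := P_{\kappa/\mu'}(1,t,\ldots;0,t)/P_\kappa(1,t,\ldots;0,t)$. First I would write out $\E[f_\mu(\la(\tau))]$ for $\la(\tau) = \la^{(\infty,\nu)}(\tau)$ using \Cref{def:lambda_hl_planch}: since the Hall--Littlewood measure assigns $\la$ the weight $\frac{1}{\Pi_{0,t}(1,t,\ldots;\gamma(\tau))} \cdot \frac{P_{\la/\nu}(1,t,\ldots;0,t)}{P_\nu(1,t,\ldots;0,t)} \cdot \text{(something)}$---more precisely, using that $\la^{(\infty,\nu)}(\tau)$ has law $\Pr(\la(\tau) = \la) = \frac{1}{P_\nu(1,t,\ldots;0,t)}\cdot\frac{Q_{\la/\nu}(\gamma(\tau);0,t)P_\la(1,t,\ldots;0,t)}{\Pi_{0,t}(1,t,\ldots;\gamma(\tau))}$---the $P_\la(1,t,\ldots;0,t)$ in the numerator of $f_\mu$ cancels the denominator, and summing over $\la$ via the skew Cauchy identity \eqref{eq:specialized_cauchy} (with $\theta$ the principal specialization $(1,t,t^2,\ldots)$ and $\phi = \gamma(\tau)$, and inserting $P_{\la/\mu'}$) collapses the $\la$-sum into $\sum_{\kappa} Q_{\nu/\kappa}(\gamma(\tau);0,t)P_{\mu'/\kappa}(1,t,\ldots;0,t)$ up to the Cauchy kernel factors, which cancel against $\Pi_{0,t}(1,t,\ldots;\gamma(\tau))$. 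This should give a clean closed form for $\E[f_\mu(\la(\tau))]$ as a finite sum over $\kappa$.

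Next I would observe that $f_\mu$ depends on $\kappa$ only through $(\kappa_1',\ldots,\kappa_k')$ when $\mu \in \Y_k$: this is because $P_{\kappa/\mu'}(1,t,\ldots;0,t)$ has the explicit product formula of \Cref{thm:skew_hl_principal}, which involves only the conjugate parts $\kappa_x', \mu_x'$, and $P_\kappa(1,t,\ldots;0,t)$ likewise by \Cref{thm:skew_hl_principal} with $\mu = \emptyset$, so the ratio only sees $\kappa_1',\ldots,\kappa_k'$ once $\mu$ has at most $k$ columns (note $\mu' \subset \kappa$ forces $\mu_x' = 0$ for $x > k$, while $\kappa_x'$ for $x > k$ cancels between numerator and denominator). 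Hence $f_\mu$ descends to a function $\tilde f_\mu$ on $\Y_k$ (thought of as the set of possible tuples $(\kappa_1',\ldots,\kappa_k')$), and the matrix $\big(\tilde f_\mu(\eta)\big)_{\mu,\eta \in \Y_k}$ is triangular with respect to a suitable partial order (e.g.\ dominance or containment of the truncated diagrams), with explicitly invertible diagonal: indeed $\tilde f_{\eta'}(\eta) = P_{\eta'/\eta'}(\ldots)/P_{\eta'}(\ldots)$-type leading term is nonzero, and $\tilde f_\mu(\eta) = 0$ unless $\mu' \subset \eta'$ in the appropriate sense. Inverting this triangular system expresses each indicator $\bbone((\kappa_1',\ldots,\kappa_k') = \eta)$ as a linear combination of the $f_\mu$, and I expect the inversion coefficients to be exactly $P_{\eta'}(1,t,\ldots;0,t) \cdot P_{\mu'/\eta'}(\beta(-1);0,t)$---the appearance of the $\beta(-1)$ specialization being the symmetric-function shadow of Möbius inversion on the Young lattice, via the identity $\sum_\mu P_{\mu/\eta}(x)P_{\kappa/\mu}(\beta(-1)) = \bbone(\kappa = \eta)$ type relations coming from \Cref{thm:specs_cancel}. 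Taking the expectation of this expansion against the law of $\la(\tau)$ and substituting the closed form for $\E[f_\mu(\la(\tau))]$ from the previous step yields \eqref{eq:prob_inv_formula}.

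The main obstacle I anticipate is pinning down the exact inversion, i.e.\ verifying that the dual/beta-specialized skew polynomials $P_{\mu'/\eta'}(\beta(-1);0,t)$ are precisely the coefficients that invert the matrix $(\tilde f_\mu(\eta))$. The cleanest route is probably not to compute the inverse matrix by hand but to verify \eqref{eq:prob_inv_formula} directly: expand the right-hand side, swap the order of summation to do the $\mu$-sum first, and recognize $\sum_{\mu \in \Y_k} Q_{\mu'/\kappa}(\gamma(\tau);0,t)P_{\mu'/\eta'}(\beta(-1);0,t)$ as something that, via the skew Cauchy identity \eqref{eq:specialized_cauchy} applied with one specialization being $\gamma(\tau)$ and the other the "negative" specialization $\beta(-1)$, together with \Cref{thm:specs_cancel} (the cancellation $\alpha(u,ut,\ldots)$ against $\beta(-u,-uq,\ldots)$, here degenerating appropriately at $q=0$), telescopes down to enforce $\kappa = \eta'$-compatible constraints and reproduce the Hall--Littlewood weight of $\eta$. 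One has to be careful that the sums are over $\Y_k$ (signatures/partitions of bounded length) rather than all of $\Y$, so the Cauchy identity must be applied in a truncated form or one must check that the extra terms vanish due to the column-length constraints imposed by $\eta \in \Y_k$; this bookkeeping with conjugate partitions and lengths is where the genuine work lies, but it is routine given \Cref{thm:skew_hl_principal} and the branching rule \eqref{eq:specialization_branch}.
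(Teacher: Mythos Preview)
Your proposal is essentially the paper's proof: compute $\E[f_\mu(\la(\tau))]$ via the skew Cauchy identity, observe $f_\mu$ depends only on $(\la_1',\ldots,\la_k')$, and invert using the $\beta(-1)$ specialization and \Cref{thm:specs_cancel}. One caveat on execution: your ``cleanest route'' of summing $\sum_{\mu\in\Y_k} Q_{\mu'/\kappa}(\gamma(\tau)) P_{\mu'/\eta'}(\beta(-1))$ directly is exactly the sum \eqref{eq:almost_cauchy} which the paper notes does \emph{not} collapse by Cauchy, since the sum is over $\Y_k$ rather than $\Y$. The paper instead performs the $\mu$-sum at the level of the observables themselves (before taking expectation): replacing $\la$ by its truncation $\tl=(\la_1',\ldots,\la_k')'$ in $f_\mu(\la)$ is allowed since $f_\mu$ only sees the first $k$ conjugate parts, and then every intermediate $\mu'$ in the branching sum $\sum_{\mu'} P_{\tl/\mu'}(1,t,\ldots)P_{\mu'/\eta'}(\beta(-1))$ automatically satisfies $\mu'\subset\tl$, hence $\mu\in\Y_k$, so the branching rule and \Cref{thm:specs_cancel} apply cleanly to give $\bbone(\tl=\eta')/P_{\eta'}(1,t,\ldots)$. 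After that, taking expectation and justifying the interchange of sum and expectation (as formal power series in $\tau$) finishes the proof.
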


At first glance it might seem that we could simplify the expression in \eqref{eq:prob_inv_formula} still further by applying the skew Cauchy identity to 
\begin{equation}
\sum_{\mu \in \Y_k} Q_{\mu'/\kappa}(\gamma(\tau);0,t) P_{\mu'/\eta'}(\beta(-1);0,t)
\label{eq:almost_cauchy}
\end{equation}
to obtain a finite sum. However, this is slightly false: the Cauchy identity would only apply if the sum were over $\mu \in \Y$ rather than $\Y_k$. The fact that our sum is written over $\Y_k$ rather than $\Y$ is not purely cosmetic: if $\eta'$ has length $k$ then there will be $\mu \in \Y_{k+1}$ with $\mu' \succ \kappa, \mu' \succ \eta'$ (for $\kappa$ of length $\leq k$) and consequently nonzero values of $Q_{\mu'/\kappa}(\gamma(\tau);0,t) P_{\mu'/\eta'}(\beta(-1);0,t)$ for such $\mu \in \Y \setminus \Y_k$. Hence our sum is different from the one appearing the Cauchy identity, and in our view this fact is largely responsible for the fact that both the computations and the final formula in this section do not bear much resemblance to the previously studied asymptotics of Macdonald processes of which we are aware. Understanding the sum \eqref{eq:almost_cauchy} was a key difficulty in the computations, as it is not dominated by one or a small collection of terms. While $P_{\mu'/\eta'}(\beta(-1);0,t)$ is explicit by the branching rule \Cref{thm:hl_qw_branch_formulas}, the branching rule yields a much more complicated sum formula for $Q_{\mu'/\kappa}(\gamma(\tau);0,t)$, the number of terms of which grows superexponentially in $|\mu|$ with no clear way to separate into a main term and subleading terms as $T \to \infty$. It turns out, however, that after reexpressing $Q_{\mu'/\kappa}(\gamma(\tau);0,t)$ using the torus scalar product (\Cref{def:torus_product}) there are surprising simplifications, yielding an expression which is finally suitable to asymptotics and is given in the next lemma.

\begin{lemma}\label{thm:use_torus_product}
Keep the notation of \Cref{thm:prob_from_observables}, and let $\tnu = (\max(\nu_1,k),\max(\nu_2,k),\ldots) = (\nu_1',\ldots,\nu_k',0,\ldots)'$. Then
\begin{align}
\begin{split}
&\Pr((\lambda_1'(\tau),\ldots,\lambda_k'(\tau)) = \eta) = \frac{(t;t)_\infty^{k-1}}{k! (2 \pi \bi)^k  \prod_{i=1}^{k-1} (t;t)_{\eta_i - \eta_{i+1}}} \int_{c\T^k} e^{\frac{ \tau}{1-t}(z_1+\ldots+z_k)}t^{\sum_{i=1}^k \binom{\eta_i}{2}} \\ 
&  \times\frac{P_\tnu(\beta(z_1,\ldots,z_k), 1,t,\ldots;0,t)}{P_\tnu(1,t,\ldots;0,t)} \prod_{1 \leq i \neq j \leq k} (z_i/z_j;t)_\infty \\ 
&\times \sum_{j=0}^{\eta_{k-1}-\eta_k} t^{j(\eta_k+1)} t^{\binom{j}{2}} \sqbinom{\eta_{k-1}-\eta_k}{j}_t \frac{P_{\eta+j \vec{e_k}}(z_1^{-1},\ldots,z_k^{-1};t,0)}{\prod_{i=1}^k (-z_i^{-1};t)_\infty}  \prod_{i=1}^k \frac{dz_i}{z_i},
\end{split}
\end{align}
where $\T$ denotes the unit circle with counterclockwise orientation, $c \in \R_{>1}$ is arbitrary, and if $k=1$ we interpret the expression as in \Cref{thm:hl_stat_dist}.
\end{lemma}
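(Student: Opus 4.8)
The plan is to start from the formula in \Cref{thm:prob_from_observables} and process the inner double sum over $\mu \in \Y_k$ and $\kappa \in \Y_k$ by rewriting the $Q$-polynomial factor using the torus scalar product. First I would recall that $Q_{\mu'/\kappa}(\gamma(\tau);0,t)$ can be extracted as a coefficient in the Cauchy kernel: by \Cref{thm:finite_cauchy} (or its specialized version \eqref{eq:specialized_cauchy}), pairing $\gamma(\tau)$ against a set of $k$ auxiliary variables $z_1,\ldots,z_k$ and using orthogonality of Hall-Littlewood polynomials under $\lan\cdot,\cdot\ran'_{0,t;k}$ (\Cref{thm:laurent_orthogonality}), one can write $Q_{\mu'/\kappa}(\gamma(\tau);0,t)$ — or rather the whole sum $\sum_\kappa \frac{P_{\nu/\kappa}}{P_\nu} Q_{\mu'/\kappa}(\gamma(\tau))$ — as a contour integral over $z_i \in c\T$ of $e^{\frac{\tau}{1-t}(z_1+\cdots+z_k)}$ times a ratio of principally specialized skew Hall-Littlewood polynomials, times the torus weight $\prod_{1\le i\ne j\le k}(z_i/z_j;t)_\infty / \prod_i (-z_i^{-1};t)_\infty$. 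The point of moving to $\tnu := (\nu_1',\ldots,\nu_k',0,\ldots)'$ is precisely that, because only $\nu_1',\ldots,\nu_k'$ enter through interlacing constraints with partitions of length $\le k$, one may replace $\nu$ by $\tnu$ without changing the sum; the factor $P_\tnu(\beta(z_1,\ldots,z_k),1,t,\ldots;0,t)/P_\tnu(1,t,\ldots;0,t)$ then appears from the branching rule \eqref{eq:specialization_branch} applied to the combined specialization $(\beta(z_1,\ldots,z_k),1,t,t^2,\ldots)$ together with the skew Cauchy identity run in the other direction.

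Next I would handle the $\mu$ and $\eta'$ part. The factor $P_{\mu'/\eta'}(\beta(-1);0,t)$ is completely explicit: by \Cref{thm:specialize_mac_poly} it equals $Q_{\mu/\eta}(\alpha(-1);t,0)$, i.e. a single-variable skew $q$-Whittaker $Q$ evaluated at $-1$, which is nonzero only when $\mu/\eta$ is a horizontal strip and then is (up to sign and a power of $t$) a product of $q$-binomials; carrying out the sum over such $\mu$ against the $z_i^{-1}$-variable Hall-Littlewood $P_{\mu}(z_1^{-1},\ldots,z_k^{-1};t,0)$ collapses — via the dual Pieri/branching rule in the $t=0$ (i.e. $q$-Whittaker) case — to the factor $\sum_{j=0}^{\eta_{k-1}-\eta_k} t^{j(\eta_k+1)+\binom{j}{2}}\sqbinom{\eta_{k-1}-\eta_k}{j}_t P_{\eta+j\vec{e_k}}(z_1^{-1},\ldots,z_k^{-1};t,0)$ that appears in the statement, with the extra index $j$ recording how much of the horizontal strip lands in the last coordinate. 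The prefactor $P_{\eta'}(1,t,\ldots;0,t) t^{\sum\binom{\eta_i}{2}}$ and the denominator $\prod_{i=1}^{k-1}(t;t)_{\eta_i-\eta_{i+1}}$ come from combining \Cref{thm:skew_hl_principal} (for the principal specialization of $P_{\eta'}$, after transposing) with the explicit $q$-binomial bookkeeping; the $(t;t)_\infty^{k-1}/(k!(2\pi\bi)^k)$ normalization is exactly the constant in front of $\lan\cdot,\cdot\ran'_{0,t;k}$ together with the $k!$ from symmetrizing the $z_i$. Throughout, absolute convergence (needed to interchange the sum over $\mu,\kappa\in\Y_k$ with the contour integral, and to apply the Cauchy identity termwise) is guaranteed by the decay of $e^{\frac{\tau}{1-t}z_i}$ on $c\T$ together with \Cref{thm:complex_hl_bound} bounding the Hall-Littlewood factors by their values at $|z_i|$; for complex $\tau$ (as in \Cref{rmk:cplx_tau}) the same estimates go through since we only used $|e^{\frac{\tau}{1-t}z_i}|$ is bounded on the compact contour.

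The main obstacle I anticipate is the bookkeeping in the second step: correctly matching the sum over $\mu \in \Y_k$ — crucially \emph{not} over all of $\Y$, as emphasized in the discussion after \Cref{thm:prob_from_observables} — against the skew $q$-Whittaker Pieri rule, so that the truncation to length $k$ produces exactly the finite sum over $j$ and the shift $\eta \mapsto \eta + j\vec{e_k}$ rather than an uncontrolled tail. Getting the powers of $t$ (the $t^{j(\eta_k+1)}$, the $t^{\binom{j}{2}}$, and the overall $t^{\sum\binom{\eta_i}{2}}$) and the $q$-binomial $\sqbinom{\eta_{k-1}-\eta_k}{j}_t$ to come out on the nose will require careful use of \Cref{thm:hl_principal_formulas}, \Cref{thm:skew_hl_principal}, and \Cref{thm:hl_qw_branch_formulas}, and is where the bulk of the computation lies; the torus-integral reformulation in the first step is conceptually the key move but is comparatively routine once orthogonality (\Cref{thm:laurent_orthogonality}) is in hand. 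Finally, the $k=1$ case is degenerate because there is no $\eta_{k-1}$; as in \Cref{rmk:k=1_interp} one checks directly that the sum over $j$ should be read as $\sum_{j\ge0} t^{\binom{j+1}{2}}\frac{1}{(t;t)_j}P_{(j)}(z_1^{-1};t,0)$, which is handled separately but by the same manipulations with the $L_{k-1}=\infty$ convention $\sqbinom{\infty}{j}_t = 1/(t;t)_j$.
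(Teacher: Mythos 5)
Your plan follows essentially the same architecture as the paper's proof (torus scalar product to reexpress $Q_{\mu'/\kappa}(\gamma(\tau);0,t)$, absorbing the $\kappa$-sum into $P_\tnu(\beta(z),1,t,\ldots;0,t)/P_\tnu(1,t,\ldots;0,t)$ via the branching rule, Fubini checked via \Cref{thm:complex_hl_bound} on a contour of radius $c>1$). But the description of the $\mu$-sum step has a concrete gap that would block you from completing the calculation.

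The issue is the normalization of the scalar product. You assert that ``the $(t;t)_\infty^{k-1}/(k!(2\pi\bi)^k)$ normalization is exactly the constant in front of $\lan\cdot,\cdot\ran'$.'' This is not so: from \eqref{eq:bla_special_case} and \eqref{eq:prod_PP} one gets
\[
\lan Q_\mu(\mbz;t,0),P_\mu(\mbz;t,0)\ran'_{t,0;k} = \frac{1}{(t;t)_{\mu_k}(t;t)_\infty^{k-1}},
\]
whose $(t;t)_{\mu_k}$ factor depends on $\mu$. This is the whole point: after substituting into \eqref{eq:int_for_planch_Q} and combining with the factor $(t;t)_{\eta_k}^{-1}$ coming from the principal specialization $P_{\eta'}(1,t,\ldots;0,t)$, what one actually has to sum over $\mu$ is
\[
\sum_{\mu\in\Y_k}\frac{(t;t)_{\mu_k}}{(t;t)_{\eta_k}}\,Q_{\mu/\eta}(-1;t,0)\,P_\mu(z_1^{-1},\ldots,z_k^{-1};t,0),
\]
and the presence of the ratio $(t;t)_{\mu_k}/(t;t)_{\eta_k}$ is precisely what prevents this from being a straight Cauchy identity. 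The sum over $j$ in the statement does \emph{not} arise from a Pieri-rule truncation or from ``how much of the horizontal strip lands in the last coordinate'': it arises from the $q$-binomial expansion $(t;t)_{\mu_k}/(t;t)_{\eta_k}=\sum_{j\ge 0}(-t^{\eta_k+1})^j t^{\binom j2}\sqbinom{\mu_k-\eta_k}{j}_t$, and the crucial move thereafter is the identity
\[
\sqbinom{\mu_k-\eta_k}{j}_t\,Q_{\mu/\eta}(-1;t,0) \;=\; (-1)^j\sqbinom{\eta_{k-1}-\eta_k}{j}_t\,Q_{\mu/(\eta+j\vec{e_k})}(-1;t,0)
\]
(\eqref{eq:update_qw} in the paper), which trades a $\mu$-dependent $q$-binomial for an $\eta$-dependent one so that, for each fixed $j$, the $\mu$-sum becomes the honest Cauchy identity $\sum_{\mu}Q_{\mu/(\eta+j\vec{e_k})}(-1;t,0)P_\mu(z^{-1};t,0)=\Pi_{t,0}(-1;z^{-1})P_{\eta+j\vec{e_k}}(z^{-1};t,0)$. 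This identity is not a standard ``dual Pieri/branching rule'' — the paper explicitly notes it was found experimentally and has no known symmetric-function-theoretic explanation — and it is proved from scratch via the explicit $q$-Whittaker branching coefficients \eqref{eq:use_qw_branch_1}, \eqref{eq:use_qw_branch_2}, \eqref{eq:trade_qbinom}. Without noticing the $\mu_k$-dependent normalization and without this identity, the $j$-sum and the shift $\eta\mapsto\eta+j\vec{e_k}$ do not materialize from the ingredients you list. (Also, a smaller point: the scalar product in play is $\lan\cdot,\cdot\ran'_{t,0;k}$, the $q$-Whittaker one — not $\lan\cdot,\cdot\ran'_{0,t;k}$ — since the auxiliary polynomials are $P_\mu(\mbz;t,0)$.)
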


The rest of the section consists of proofs of the above statements.

\begin{proof}[Proof of \Cref{thm:prob_from_observables}]
By the explicit formulas \Cref{thm:hl_principal_formulas} and \Cref{thm:skew_hl_principal}, for any $\kappa \in \Y, \mu \in \Y_k$ we have
\begin{equation}
\label{eq:hl_quotient}
\frac{P_{\kappa/\mu'}(1,t,\ldots; 0,t)}{P_\kappa(1,t,\ldots;0,t)} = \prod_{i=1}^k t^{\binom{\kappa_i' - \mu_i}{2} - \binom{\kappa_i'}{2}} (t^{1+\kappa_i'-\mu_i};t)_{\mu_i - \mu_{i+1}}.
\end{equation}
By the skew Cauchy identity \eqref{eq:specialized_cauchy} we also have
\begin{align}
\label{eq:E_observable}
\begin{split}
\E\left[\frac{P_{\la(\tau)/\mu'}(1,t,\ldots; 0,t)}{P_{\la(\tau)}(1,t,\ldots;0,t)}\right] &= \frac{\sum_{\la \in \Y} Q_{\la/\nu}(\gamma(\tau);0,t)P_{\la/\mu'}(1,t,\ldots;0,t) }{\Pi_{0,t}(1,t,\ldots;\gamma(\tau)) P_\nu(1,t,\ldots;0,t)}\\ 
&= \sum_{\kappa \in \Y_k} \frac{P_{\nu/\kappa}(1,t,\ldots;0,t)}{P_\nu(1,t,\ldots;0,t)} Q_{\mu'/\kappa}(\gamma(\tau);0,t).
\end{split}
\end{align}
However, an important property of the observable \eqref{eq:hl_quotient} is that it depends on the first $k$ parts of $\kappa'$ but not on the others. In other words, setting $\tl = (\la_1',\ldots,\la_k')' \in \Y$, we have 
\begin{equation}
\frac{P_{\la/\mu'}(1,t,\ldots; 0,t)}{P_\la(1,t,\ldots;0,t)}  = \frac{P_{\tl/\mu'}(1,t,\ldots; 0,t)}{P_{\tl}(1,t,\ldots;0,t)}.
\end{equation}
Hence by the branching rule \eqref{eq:specialization_branch} and \Cref{thm:specs_cancel}, 
\begin{align}\label{eq:obs_to_indicator}
\begin{split}
\sum_{\mu \in \Y_k} \frac{P_{\la/\mu'}(1,t,\ldots;0,t)}{P_\la(1,t,\ldots;0,t)} P_{\mu'/\eta'}(\beta(-1);0,t) &= \sum_{\mu \in \Y_k} \frac{P_{\tl/\mu'}(1,t,\ldots;0,t)}{P_{\tl}(1,t,\ldots;0,t)} P_{\mu'/\eta'}(\beta(-1);0,t) \\ 
&=\frac{P_{\tl/\eta'}(\beta(-1),1,t,\ldots;0,t)}{P_\tl(1,t,\ldots;0,t)} \\ 
& =  \frac{\bbone(\lambda_1'(\tau),\ldots,\lambda_k'(\tau)) = \eta)}{P_{\eta'}(1,t,\ldots;0,t)},
\end{split}
\end{align}
where we are using the fact that because $\tl' \in \Y_k$, each $\mu' \prec \tl'$ appearing in the branching rule automatically satisfies $\mu_1' \leq \tl_1'$ so $\mu \in \Y_k$.

Hence by taking expectations on both sides of \eqref{eq:obs_to_indicator}, we obtain
\begin{equation}\label{eq:prob_to_hl}
\text{LHS\eqref{eq:prob_inv_formula}} = P_{\eta'}(1,t,\ldots;0,t) \E\left[\sum_{\mu \in \Y_k} \frac{P_{\la(\tau)/\mu'}(1,t,\ldots;0,t)}{P_{\la(\tau)}(1,t,\ldots;0,t)} P_{\mu'/\eta'}(\beta(-1);0,t)\right].
\end{equation}
To apply \eqref{eq:E_observable} we wish to commute the expectation and the sum, i.e. we wish to show
\begin{multline}\label{eq:sum_exp_commute}
\frac{P_{\eta'}(1,t,\ldots;0,t)}{\Pi_{0,t}(1,t,\ldots;\gamma(\tau))} \sum_{\la \supset \mu'} \sum_{\mu \in \Y_k} P_{\la/\mu'}(1,t,\ldots;0,t) Q_{\la}(\gamma(\tau);0,t) P_{\mu'/\eta'}(\beta(-1);0,t) \\ 
= \frac{P_{\eta'}(1,t,\ldots;0,t)}{\Pi_{0,t}(1,t,\ldots;\gamma(\tau))} \sum_{\mu \in \Y_k} \sum_{\la \supset \mu'} P_{\la/\mu'}(1,t,\ldots;0,t) Q_{\la}(\gamma(\tau);0,t) P_{\mu'/\eta'}(\beta(-1);0,t)
\end{multline}
where the left hand side of \eqref{eq:sum_exp_commute} is just \eqref{eq:prob_to_hl} written out, and we have restricted the summation over $\la$ to $\la \supset \mu'$, since these are exactly the partitions for which $P_{\la/\mu'}(1,t,\ldots;0,t)$ is nonzero. Viewing both sides of \eqref{eq:sum_exp_commute} as formal power series in $\tau$ for the moment, and recalling that $Q_\la(\gamma(\tau);0,t)$ is a monomial of degree $|\la|$ in $\tau$, the restriction that $\la \supset \mu'$ guarantees that the coefficient of each power $\tau^\ell$ is a sum of finitely many terms, so both sides define the same formal power series. Because this formal power series is actually convergent for any $\tau \in \C$ by \eqref{eq:prob_to_hl}, this shows \eqref{eq:sum_exp_commute} for complex $\tau$. Hence
\begin{align}
\begin{split}
\text{RHS\eqref{eq:prob_to_hl}} &= P_{\eta'}(1,t,\ldots;0,t)\sum_{\mu \in \Y_k} P_{\mu'/\eta'}(\beta(-1);0,t)\E\left[\frac{P_{\la(\tau)/\mu'}(1,t,\ldots;0,t)}{P_{\la(\tau)}(1,t,\ldots;0,t)} \right] \\
&= P_{\eta'}(1,t,\ldots;0,t)\sum_{\mu \in \Y_k} \sum_{\kappa \in \Y_k} \frac{P_{\nu/\kappa}(1,t,\ldots;0,t)}{P_\nu(1,t,\ldots;0,t)} Q_{\mu'/\kappa}(\gamma(\tau);0,t) P_{\mu'/\eta'}(\beta(-1);0,t) \\ 
\end{split}
\end{align}
(the two sums commute because the sum over $\kappa$ is actually over $\kappa \subset \nu$ and hence finite). This completes the proof.
\end{proof}

\begin{proof}[Proof of \Cref{thm:use_torus_product}]
Our starting point is \Cref{thm:prob_from_observables}, which yields
\begin{multline}\label{eq:use_observable_inversion}
\Pr((\lambda_1'(\tau),\ldots,\lambda_k'(\tau)) = \eta') \\ 
= P_{\eta'}(1,t,\ldots;0,t)\sum_{\mu, \kappa \in \Y_k} \frac{P_{\nu/\kappa}(1,t,\ldots;0,t)}{P_\nu(1,t,\ldots;0,t)} Q_{\mu'/\kappa}(\gamma(\tau);0,t) P_{\mu'/\eta'}(\beta(-1);0,t).
\end{multline}
The only place $\nu$ appears above is
\begin{equation}
\frac{P_{\nu/\kappa}(1,t,\ldots;0,t)}{P_\nu(1,t,\ldots;0,t)},
\end{equation}
which is independent of $\nu_{k+1}',\nu_{k+2}',\ldots$ by \eqref{eq:hl_quotient} (with $\nu,\kappa$ substituted for $\kappa,\mu'$). This independence also follows from the fact that the projection of the stochastic process $\la(\tau)$ to $(\lambda_1'(\tau),\ldots,\lambda_k'(\tau))$ is Markovian. Hence the right hand side of \eqref{eq:use_observable_inversion} is the same upon replacing $\nu$ by $\tnu$, and to keep notation sanitary we use $\nu$ below and simply assume without loss of generality that $\nu_{k+1}'=0$. 

We now reexpress $Q_{\mu'/\kappa}(\gamma(\tau);0,t)$ using the torus scalar product. The specific case of the skew Cauchy identity with specializations $\gamma(\tau), \beta(z_1,\ldots,z_k)$ is
\begin{align}
\label{eq:cauchy_for_toral}
\begin{split}
\sum_{\mu \in \Y} Q_{\mu}(z_1,\ldots,z_k; t,0) Q_{\mu'/\kappa}(\gamma(\tau);0,t) &= \Pi_{0,t}(\gamma(\tau); \beta(z_1,\ldots,z_k))Q_{\kappa'}(z_1,\ldots,z_k;t,0) \\ 
&= e^{\frac{ \tau}{1-t}(z_1+\ldots+z_k)}Q_{\kappa'}(z_1,\ldots,z_k;t,0).
\end{split}
\end{align}
Since the polynomials $P_\la(z_1,\ldots,z_k;t,0)$ are orthogonal with respect to $\lan \cdot, \cdot \ran'_{t,0;k}$ and the $Q_\la$ are proportional to them by \Cref{def:Q}, \eqref{eq:cauchy_for_toral} together with the defining orthogonality property of Macdonald polynomials yields
\begin{equation}
\label{eq:int_for_planch_Q}
Q_{\mu'/\kappa}(\gamma(\tau);0,t) = \frac{\lan e^{\frac{ \tau}{1-t}(z_1+\ldots+z_k)}Q_{\kappa'}(z_1,\ldots,z_k;t,0), P_\mu(z_1,\ldots,z_k; t,0) \ran'_{t,0;k}}{\lan Q_\mu(z_1,\ldots,z_k; t,0), P_\mu(z_1,\ldots,z_k; t,0) \ran'_{t,0;k}}.
\end{equation}
By the definition of the proportionality constants $b_\la(q,t)$, 
\begin{multline}
\lan Q_\mu(z_1,\ldots,z_k; t,0), P_\mu(z_1,\ldots,z_k; t,0) \ran'_{t,0;k} \\ 
= b_\mu(t,0) \lan P_\mu(z_1,\ldots,z_k; t,0), P_\mu(z_1,\ldots,z_k; t,0) \ran'_{t,0;k}.
\end{multline}
By \Cref{thm:blam_computation},
\begin{equation}
\label{eq:bla_special_case}
b_\mu(t,0) = \prod_{i=1}^k \frac{1}{(t;t)_{\mu_i - \mu_{i+1}}}.
\end{equation}
By substituting $(t,0)$ for $(q,t)$ in \cite[(2.8)]{borodin2014macdonald}, we have 
\begin{equation}
\label{eq:prod_PP}
\lan P_\mu(z_1,\ldots,z_k; t,0), P_\mu(z_1,\ldots,z_k; t,0) \ran'_{t,0;k} = \prod_{i=1}^{k-1} \frac{(t;t)_{\mu_i - \mu_{i+1}}}{(t;t)_{\infty}}.
\end{equation}
Putting these together, the denominator in \eqref{eq:int_for_planch_Q} is 
\begin{equation}\label{eq:QP_IP}
\lan Q_\mu(z_1,\ldots,z_k; t,0), P_\mu(z_1,\ldots,z_k; t,0) \ran'_{t,0;k} = \frac{1}{(t;t)_{\mu_k} (t;t)_\infty^{k-1}}.
\end{equation}
Expressing the $Q_{\mu'/\kappa}(\gamma(\tau);0,t)$ in \eqref{eq:use_observable_inversion} via \eqref{eq:int_for_planch_Q} and substituting the definition of the torus scalar product for the numerator and \eqref{eq:QP_IP} for the denominator in \eqref{eq:int_for_planch_Q}, we obtain
\begin{multline}
\label{eq:prelimit_prob_int}
\Pr((\lambda_1'(\tau),\ldots,\lambda_k'(\tau)) = \eta) \\ 
=\frac{(t;t)_\infty^{k-1}}{k! (2 \pi \bi)^k} P_{\eta'}(1,t,\ldots;0,t) \sum_{\mu, \kappa \in \Y_k} \frac{P_{\nu/\kappa}(1,t,\ldots;0,t)}{P_\nu(1,t,\ldots;0,t)} (t;t)_{\mu_k} Q_{\mu/\eta}(-1;t,0) \\ 
\times \int_{\T^k} e^{\frac{ \tau}{1-t}(z_1+\ldots+z_k)} Q_{\kappa'}(z_1,\ldots,z_k;t,0) P_\mu(\bz_1,\ldots,\bz_k;t,0) \prod_{1 \leq i \neq j \leq k} (z_i/z_j;t)_\infty \prod_{i=1}^k \frac{dz_i}{z_i}.
\end{multline}
We wish to commute the sum and integral in \eqref{eq:prelimit_prob_int}, so we must check that Fubini's theorem applies. We first use the fact that $\bz = z^{-1}$ on $\T$ to rewrite the integrand as function analytic away from $0$ and $\infty$ and then expand the contours to $c\T$ to obtain
\begin{multline}
\int_{\T^k} e^{\frac{ \tau}{1-t}(z_1+\ldots+z_k)} P_\mu(\bz_1,\ldots,\bz_k;t,0) \prod_{1 \leq i \neq j \leq k} (z_i/z_j;t)_\infty \prod_{i=1}^k \frac{dz_i}{z_i} \\ 
= \int_{c\T^k} e^{\frac{ \tau}{1-t}(z_1+\ldots+z_k)} P_\mu(z_1^{-1},\ldots,z_k^{-1};t,0) \prod_{1 \leq i \neq j \leq k} (z_i/z_j;t)_\infty \prod_{i=1}^k \frac{dz_i}{z_i}
\end{multline}
where $c > 1$ may be arbitrary. Now 
\begin{align}\label{eq:sum_int_fubini}
\begin{split}
& \sum_{\mu,\kappa \in \Y_k} (t;t)_{\mu_k} P_{\nu/\kappa}(1,t,\ldots;0,t) |Q_{\mu/\eta}(-1;t,0)|\\ 
& \times \left|\int_{c\T^k} \prod_{1 \leq i \neq j \leq k} (z_i/z_j;t)_\infty   e^{\frac{ \tau}{1-t}(z_1+\ldots+z_k)}  Q_{\kappa'}(z_1,\ldots,z_k;t,0)  P_\mu(z_1^{-1},\ldots,z_k^{-1};t,0) \prod_{i=1}^k \frac{dz_i}{z_i}\right| \\ 
&\leq \sum_{\mu,\kappa \in \Y_k} (t;t)_{\mu_k} P_{\nu/\kappa}(1,t,\ldots;0,t)Q_{\mu/\eta}(1;t,0) \left( (2\pi)^k Q_{\kappa'}(c[k];t,0)  e^{\frac{|\tau|k c}{1-t}}P_\mu(c^{-1}[k];t,0) (-1;t)_\infty^{k^2-k}\right) \\ 
&\leq (2\pi)^k e^{\frac{|\tau|k c}{1-t}}(-1;t)_\infty^{k^2-k}  \left(\sum_{\mu \in \Y} Q_{\mu/\eta}(1;t,0) P_\mu(c^{-1}[k];t,0) \right) \left(\sum_{\kappa \in \Y} P_{\nu/\kappa}(1,t,\ldots;0,t) P_\kappa(\beta(c[k]);0,t) \right) \\ 
&= (2 \pi)^k (-1;t)_\infty^{k^2-k} e^{\frac{|\tau|k c}{1-t}} \Pi_{t,0}(1;c^{-1}[k]) P_\nu(\beta(c[k]),1,t,\ldots;0,t) < \infty.
\end{split}
\end{align}
Here we have applied \Cref{thm:complex_hl_bound} and trivial bounds to the integrand, then used the branching rule and Cauchy identity; note it is important that we have expanded the contours, as the sum in the Cauchy identity would diverge if the variables of $P_\mu$ were $1$ rather than $c^{-1} < 1$. By \eqref{eq:sum_int_fubini}, Fubini's theorem applies to \eqref{eq:prelimit_prob_int} (note that we must apply multiple times as we additionally split the sum over $\mu,\kappa$ into two sums below), hence
\begin{align}\label{eq:sum_inside_int}
\begin{split}
&\text{RHS\eqref{eq:prelimit_prob_int}} = \frac{(t;t)_\infty^{k-1}}{k! (2 \pi \bi)^k} P_{\eta'}(1,t,\ldots;0,t) \int_{c\T^k} \left(\sum_{\kappa \in \Y_k}  \frac{P_{\nu/\kappa}(1,t,\ldots;0,t)}{P_\nu(1,t,\ldots;0,t)}Q_{\kappa'}(z_1,\ldots,z_k;t,0)\right) \\ 
&\times  \left(\sum_{\mu \in \Y_k} (t;t)_{\mu_k} Q_{\mu/\eta}(-1;t,0)  P_\mu(z_1^{-1},\ldots,z_k^{-1};t,0)\right) e^{\frac{ \tau}{1-t}(z_1+\ldots+z_k)} \prod_{1 \leq i \neq j \leq k} (z_i/z_j;t)_\infty \prod_{i=1}^k \frac{dz_i}{z_i}  
\end{split}
\end{align}
Since $\len(\nu) \leq k$, by the branching rule
\begin{equation}\label{eq:nu_kappa_branch}
\sum_{\kappa \in \Y_k}  \frac{P_{\nu/\kappa}(1,t,\ldots;0,t)}{P_\nu(1,t,\ldots;0,t)}Q_{\kappa'}(z_1,\ldots,z_k;t,0) = \frac{P_\nu(\beta(z_1,\ldots,z_k), 1,t,\ldots;0,t)}{P_\nu(1,t,\ldots;0,t)}.
\end{equation}
By \Cref{thm:hl_principal_formulas} and the definition of $\eta = \eta(\tau)$ in terms of the $L_i$,
\begin{equation}
\label{eq:princ_for_integral}
P_{\eta'}(1,t,\ldots;0,t) = \frac{t^{\sum_{i=1}^k \binom{\eta_i}{2}}}{(t;t)_{\eta_k} \prod_{i=1}^{k-1} (t;t)_{L_i - L_{i+1}}}.
\end{equation}
Let us bring the $(t;t)_{\eta_k}^{-1}$ factor inside the sum in \eqref{eq:sum_inside_int} and evaluate the resulting sum
\begin{equation}\label{eq:not_quite_cauchy}
\sum_{\mu \in \Y_k} \frac{(t;t)_{\mu_k}}{(t;t)_{\eta_k}} Q_{\mu/\eta}(-1;t,0) P_\mu(z_1^{-1},\ldots,z_k^{-1};t,0).
\end{equation}
By the $q$-binomial theorem, 
\begin{equation}
\label{eq:qbinom_for_int}
\frac{(t;t)_{\mu_k}}{(t;t)_{\eta_k}} = (1-t^{\eta_k+1}) \cdots (1-t^{\mu_k}) = \sum_{j=0}^{\mu_k - \eta_k} (-t^{\eta_k+1})^j t^{\binom{j}{2}} \sqbinom{\mu_k - \eta_k}{j}_t,
\end{equation}
and we note that we can replace the sum by one over all $j \geq 0$ since the $q$-binomial coefficient will be $0$ when $j > \mu_k - \eta_k$. We will use the identity\footnote{We observed \eqref{eq:update_qw} through explicit examples and are not aware of any broader context for it in symmetric function theory, though this would certainly be interesting if it exists.} 
\begin{equation}
\label{eq:update_qw}
\sqbinom{\mu_k - \eta_k}{j}_t Q_{\mu/\eta}(-1;t,0) = \sqbinom{\eta_{k-1} - \eta_k}{j}_t (-1)^j Q_{\mu/(\eta+j \vec{e_k})}(-1;t,0)
\end{equation}
to simplify \eqref{eq:not_quite_cauchy}, but first we prove \eqref{eq:update_qw}. As before, in the case $k=1$ we interpret \eqref{eq:update_qw} by taking $\eta_{k-1}=\infty$ and 
\begin{equation}
\sqbinom{\infty}{j}_t = \frac{1}{(t;t)_j}
\end{equation}
to obtain
\begin{equation}
\label{eq:update_qw_k=1}
\sqbinom{\mu_1 - \eta_1}{j}_t Q_{(\mu_1)/(\eta_1)}(-1;t,0) = \frac{ (-1)^j}{(t;t)_j} Q_{(\mu_1)/(\eta_1+j)}(-1;t,0),
\end{equation}
which follows immediately from the branching rule \Cref{thm:hl_qw_branch_formulas}, so we will prove the $k\geq 2$ case. By the explicit branching rule \Cref{thm:hl_qw_branch_formulas},
\begin{equation}
\label{eq:use_qw_branch_1}
Q_{\mu/\eta}(-1;t,0) = (-1)^{|\mu/\eta|} \frac{1}{(t;t)_{\mu_1 - \eta_1}} \sqbinom{\eta_1 - \eta_{2}}{\eta_1 - \mu_{2}}_t \cdots \sqbinom{\eta_{k-2} - \eta_{k-1}}{\eta_{k-2} - \mu_{k-1}}_t \sqbinom{\eta_{k-1} - \eta_{k}}{\eta_{k-1} - \mu_{k}}_t
\end{equation}
while for any $j$ such that $\eta+j\vec{e_k} = (\eta_1,\ldots,\eta_{k-1},\eta_k+j) \prec \mu$ we have
\begin{equation}
\label{eq:use_qw_branch_2}
Q_{\mu/(\eta+j\vec{e_k})}(-1;t,0) = (-1)^{|\mu/\eta|-j} \frac{1}{(t;t)_{\mu_1 - \eta_1}} \sqbinom{\eta_1 - \eta_{2}}{\eta_1 - \mu_{2}}_t \cdots \sqbinom{\eta_{k-2} - \eta_{k-1}}{\eta_{k-2} - \mu_{k-1}}_t \sqbinom{\eta_{k-1} - \eta_{k}-j}{\eta_{k-1} - \mu_{k}}_t.
\end{equation}
By writing out the $q$-factorials on both sides and cancelling a pair it is elementary to check that
\begin{equation}
\label{eq:trade_qbinom}
\sqbinom{\mu_k - \eta_k}{j}_t \sqbinom{\eta_{k-1}- \eta_k}{\eta_{k-1} - \mu_k}_t = \sqbinom{\eta_{k-1} - \eta_k - j}{\eta_{k-1} - \mu_k}_t \sqbinom{\eta_{k-1} - \eta_k}{j}_t.
\end{equation}
Now \eqref{eq:update_qw} follows by combining \eqref{eq:use_qw_branch_1}, \eqref{eq:use_qw_branch_2} and \eqref{eq:trade_qbinom}. 

By \eqref{eq:update_qw} (or \eqref{eq:update_qw_k=1}, if $k=1$) and the Cauchy identity, 
\begin{align}\label{eq:cauchy_by_force}
\begin{split}
&\text{\eqref{eq:not_quite_cauchy}} = \sum_{j=0}^{\infty} (-t^{\eta_k+1})^j t^{\binom{j}{2}} \sum_{\mu \in \Y_k} \sqbinom{\mu_k - \eta_k}{j}_t Q_{\mu/\eta}(-1;t,0) P_\mu(z_1^{-1},\ldots,z_k^{-1};t,0) \\ 
&= \sum_{j=0}^{\infty} (-t^{\eta_k+1})^j t^{\binom{j}{2}} \sqbinom{\eta_{k-1}-\eta_k}{j}_t (-1)^j \sum_{\mu \in \Y_k} Q_{\mu/(\eta+j\vec{e_k})}(-1;t,0) P_\mu(z_1^{-1},\ldots,z_k^{-1};t,0) \\ 
&= \sum_{j=0}^{\eta_{k-1}-\eta_k} t^{j(\eta_k+1)} t^{\binom{j}{2}} \sqbinom{\eta_{k-1}-\eta_k}{j}_t P_{\eta+j \vec{e_k}}(z_1^{-1},\ldots,z_k^{-1};t,0) \Pi_{t,0}(-1;z_1^{-1},\ldots,z_k^{-1}).
\end{split}
\end{align}
Substituting \eqref{eq:nu_kappa_branch}, \eqref{eq:princ_for_integral} and \eqref{eq:cauchy_by_force} into \eqref{eq:sum_inside_int} and replacing $\Pi_{t,0}$ by its explicit product formula \eqref{eq:spec_cauchy_kernel} yields 
\begin{align}
\label{eq:int_after_cauchy}
\begin{split}
&\text{RHS\eqref{eq:sum_inside_int}} = \frac{(t;t)_\infty^{k-1}}{k! (2 \pi \bi)^k  \prod_{i=1}^{k-1} (t;t)_{\eta_i - \eta_{i+1}}} \int_{c\T^k} e^{\frac{ \tau}{1-t}(z_1+\ldots+z_k)}t^{\sum_{i=1}^k \binom{\eta_i}{2}} \prod_{1 \leq i \neq j \leq k} (z_i/z_j;t)_\infty \\ 
&\times  \frac{P_\nu(\beta(z_1,\ldots,z_k), 1,t,\ldots;0,t)}{P_\nu(1,t,\ldots;0,t)}\sum_{j=0}^{\eta_{k-1}-\eta_k} t^{j(\eta_k+1)} t^{\binom{j}{2}} \sqbinom{\eta_{k-1}-\eta_k}{j}_t \frac{P_{\eta+j \vec{e_k}}(z_1^{-1},\ldots,z_k^{-1};t,0)}{\prod_{i=1}^k (-z_i^{-1};t)_\infty}  \prod_{i=1}^k \frac{dz_i}{z_i},
\end{split}
\end{align}
where if $k=1$ we interpret as in the theorem statement. This completes the proof.
\end{proof}

In the below proof we will assume the same modification as before to the sum over $j$ in the $k=1$ case without comment.

\begin{proof}[Proof of \Cref{thm:hl_stat_dist}]
Write $\eta(\tau) = (L_1 + \log_{t^{-1}}(\tau) + \zeta, \ldots, L_k + \log_{t^{-1}}(\tau) + \zeta)$. Then 
\begin{equation}\label{eq:eta_difference}
\eta_{i}-\eta_j = L_{i}-L_j
\end{equation}
for each $i,j$, so by \Cref{thm:use_torus_product}
\begin{align}
\label{eq:int_after_cauchy2}
\begin{split}
&\Pr(\la_i'(\tau) = L_i + \log_{t^{-1}}(\tau) + \zeta \text{ for all }1 \leq i \leq k) = \frac{(t;t)_\infty^{k-1}}{k! (2 \pi \bi)^k  \prod_{i=1}^{k-1} (t;t)_{L_i - L_{i+1}}}  \\
&\times \int_{c\T^k} e^{\frac{ \tau}{1-t}(z_1+\ldots+z_k)}t^{\sum_{i=1}^k \binom{\eta_i(\tau)}{2}} \prod_{1 \leq i \neq j \leq k} (z_i/z_j;t)_\infty \frac{P_{\nu(\tau)}(\beta(z_1,\ldots,z_k), 1,t,\ldots;0,t)}{P_{\nu(\tau)}(1,t,\ldots;0,t)} \\ 
&\times  \sum_{j=0}^{L_{k-1}-L_k} t^{j(\eta_k(\tau)+1)} t^{\binom{j}{2}} \sqbinom{L_{k-1}-L_k}{j}_t \frac{P_{\eta(\tau)+j \vec{e_k}}(z_1^{-1},\ldots,z_k^{-1};t,0)}{\prod_{i=1}^k (-z_i^{-1};t)_\infty}  \prod_{i=1}^k \frac{dz_i}{z_i}.
\end{split}
\end{align}
(Technically the above requires that $\tau$ is large enough so $\eta(\tau) \in \Y_k$, which is true as long as $L_k + \log_{t^{-1}}(\tau) + \zeta \geq 0$). We wish to take a limit as $\tau \to \infty$ of the above expression, so to remove the $\tau$-dependence inside the exponential we make a change of variables to $w_i = t^{-\eta_k(\tau)}z_i = \tau t^{-L_k-\zeta} z_i$. For later convenience we also set 
%An alternate idea for how to get a better bound, which one might want to write somewhere or sic an REU student on: residue-expand the integral at this step, and then you have a product of qW polynomials $P_{\nu'}$ and $P_{\eta(\tau)+j \vec{e_k}}$ integrated against the Plancherel (with some additional beta from the residue stuff) Cauchy kernel. Just take that inner product. Now one wants to argue that the Plancherel specialization in $P_\nu$ helps us by giving something that's very small for exactly the terms in the branching rule expansion where the bound that we used is bad. Idea is that a $k \times n$ rectangle or something close to it, with $n \gg k$, has planch spec given by roughly $T^{nk} \frac{(n/k)!^k}{n!}$ by the hook length formula, and the term with factorials is very small.
\begin{align}
\label{eq:g_quot}
\begin{split}
g_{ \tau}(w_1,\ldots,w_k) &:= \frac{P_{\nu(\tau)}(\beta(t^{\eta_k(\tau)}w_1,\ldots,t^{\eta_k(\tau)}w_k), 1,t,\ldots;0,t)}{P_{\nu(\tau)}(1,t,\ldots;0,t)}.
\end{split}
\end{align}
This yields
\begin{align}
\label{eq:int_var_change}
\begin{split}
&\text{RHS\eqref{eq:int_after_cauchy2}} = \frac{(t;t)_\infty^{k-1}}{k! (2 \pi \bi)^k  \prod_{i=1}^{k-1} (t;t)_{L_i - L_{i+1}}} \int_{ct^{-\eta_k(\tau)}\T^k} e^{\frac{t^{L_k+\zeta}}{1-t}(w_1+\ldots+w_k)}g_{ \tau}(w_1,\ldots,w_k)  \\  
&\times \sum_{j=0}^{L_{k-1}-L_k} t^{j(\eta_k(\tau)+1)+\binom{j}{2}} \sqbinom{L_{k-1}-L_k}{j}_t (t^{-\eta_k(\tau)})^{|\eta(\tau)|+j} P_{\eta(\tau)+j \vec{e_k}}(w_1^{-1},\ldots,w_k^{-1};t,0) \\ 
&\times  \frac{\prod_{1 \leq i \neq j \leq k} (w_i/w_j;t)_\infty \prod_{i=1}^k t^{\binom{\eta_i(\tau)}{2}}}{{\prod_{i=1}^k (-t^{-\eta_k(\tau)}w_i^{-1};t)_\infty}}  \prod_{i=1}^k \frac{dw_i}{w_i},
\end{split}
\end{align}
where we have used that $P_{\eta+j\vec{e_k}}$ is homogeneous of degree $|\eta|+j$.

By the elementary identity 
\begin{equation}
\label{eq:favorite_binomial_split}
\binom{a+b}{2} = \binom{a}{2} + \binom{b}{2} + ab
\end{equation}
and \eqref{eq:eta_difference} we have 
\begin{equation}\label{eq:eta_i_to_k}
\binom{\eta_i}{2} = \binom{\eta_k}{2} + \binom{L_i-L_k}{2} + (\eta_i-\eta_k)\eta_k.
\end{equation}
Additionally, by \Cref{thm:signature_shift} and \eqref{eq:eta_difference}, 
\begin{equation}
\label{eq:shift_by_tau}
P_{\eta+j \vec{e_k}}(w_1^{-1},\ldots,w_k^{-1};t,0) = (w_1 \cdots w_k)^{-\eta_k}P_{(L_1-L_k,\ldots,L_{k-1}-L_k,j)}(w_1^{-1},\ldots,w_k^{-1};t,0).
\end{equation}
Substituting \eqref{eq:eta_i_to_k} for $1 \leq i \leq k$ and \eqref{eq:shift_by_tau} into \eqref{eq:int_var_change} yields%typo from v1: "and similarly simplifying the $t$ exponent in the $\kappa$ term yields"
\begin{align}
\label{eq:int_w_2}
\begin{split}
&\text{RHS\eqref{eq:int_var_change}} = \frac{(t;t)_\infty^{k-1}}{k! (2 \pi \bi)^k} \prod_{i=1}^{k-1} \frac{t^{\binom{L_i-L_k}{2}}}{(t;t)_{L_i-L_{i+1}}} \int_{ct^{-\eta_k(\tau)}\T^k} e^{\frac{t^{L_k+\zeta}}{1-t}(w_1+\ldots+w_k)} \prod_{1 \leq i \neq j \leq k} (w_i/w_j;t)_\infty \\ 
&\times \prod_{i=1}^k \frac{w_i^{-\eta_k(\tau)}t^{\binom{\eta_k(\tau)}{2}+(\eta_i(\tau)-\eta_k(\tau))\eta_k(\tau)}t^{-\eta_k(\tau)\eta_i(\tau)}}{{(-t^{-\eta_k(\tau)}w_i^{-1};t)_\infty}}g_{ \tau}(w_1,\ldots,w_k) \\ 
&\times \sum_{j=0}^{L_{k-1}-L_k} t^{j+\binom{j}{2}} \sqbinom{L_{k-1}-L_k}{j}_t P_{(L_1-L_k,\ldots,L_{k-1}-L_k,j)}(w_1^{-1},\ldots,w_k^{-1};t,0)  \prod_{i=1}^k \frac{dw_i}{w_i}.
\end{split}
\end{align}
Noting that 
\begin{equation}
\frac{w_i^{-\eta_k(\tau)}t^{\binom{\eta_k(\tau)}{2}+(\eta_i(\tau)-\eta_k(\tau))\eta_k(\tau)}t^{-\eta_k(\tau)\eta_i(\tau)}}{{(-t^{-\eta_k(\tau)}w_i^{-1};t)_\infty}} = \frac{1}{(-w_i^{-1};t)_\infty (-tw_i;t)_{\eta_k(\tau)}}
\end{equation}
and shifting contours yields
\begin{align}
\label{eq:int_w_3}
\begin{split}
&\text{RHS\eqref{eq:int_w_2}} = \frac{(t;t)_\infty^{k-1}}{k! (2 \pi \bi)^k} \prod_{i=1}^{k-1} \frac{t^{\binom{L_i-L_k}{2}}}{(t;t)_{L_i-L_{i+1}}} \int_{\Gamma(\tau)^k} e^{\frac{t^{L_k+\zeta}}{1-t}(w_1+\ldots+w_k)} \frac{\prod_{1 \leq i \neq j \leq k} (w_i/w_j;t)_\infty}{\prod_{i=1}^k (-w_i^{-1};t)_\infty (-tw_i;t)_{\eta_k(\tau)}} \\ 
&\times g_{ \tau}(w_1,\ldots,w_k)\sum_{j=0}^{L_{k-1}-L_k} t^{\binom{j+1}{2}} \sqbinom{L_{k-1}-L_k}{j}_t  P_{(L_1-L_k,\ldots,L_{k-1}-L_k,j)}(w_1^{-1},\ldots,w_k^{-1};t,0)  \prod_{i=1}^k \frac{dw_i}{w_i}
\end{split}
\end{align}
where 
\begin{multline}
\Gamma(\tau) := \{x+\bi y: x^2+y^2=1, x > 0\}  \cup \{x + \bi: -t^{-\eta_k(\tau)-1/2} < x \leq 0 \} \\ \cup \{x - \bi: -t^{-\eta_k(\tau)-1/2} < x \leq 0 \} \cup \{-t^{-\eta_k(\tau)-1/2} + \bi y: -1 \leq y \leq 1\},
\end{multline}
see \Cref{fig:Gamma1_decomp}. For the asymptotics, we will decompose the integration contour into a main term contour $\Gamma_1(\tau)$ and error term contour $\Gamma_2(\tau)$. First define 
\begin{equation}\label{eq:def_xi}
\xi(\tau) := \pfrac{2+\tfrac{1}{k}}{\log t^{-1}} \log \log \tau.
\end{equation}
Here we have chosen the constant in front of $\log \log \tau$ in \eqref{eq:def_xi} somewhat arbitrarily so that as $\tau \to \infty$ the limits
\begin{equation}\label{eq:xi_small}
2\pfrac{k}{\log t^{-1}}^2 (\log \log \tau)^2 - \frac{k^2-k}{2}\xi(\tau)^2 - \text{const} \cdot \xi(\tau) \to \infty 
\end{equation}
and 
\begin{equation}\label{eq:xi_large}
\xi(\tau) - \pfrac{2}{\log t^{-1}} \log \log \tau \to \infty 
\end{equation}
hold, as these are needed to control certain error terms in the proof below. We then decompose $\Gamma(\tau)$ as
\begin{align}\label{eq:contours}
\begin{split}
\Gamma(\tau) &= \Gamma_1(\tau) \cup \Gamma_2(\tau) \\ 
\Gamma_1(\tau) &= \{x + \bi: -t^{-\xi(\tau)} < x \leq 0 \} \cup \{x - \bi: -t^{-\xi(\tau)}  < x \leq 0\} \cup \{x+\bi y: x^2+y^2=1, x > 0\} \\ 
\Gamma_2(\tau) &= \{-t^{-\eta_k(\tau)-1/2} + \bi y: -1 \leq y \leq 1\} \cup \{x + \bi: -t^{-\eta_k(\tau)-1/2}< x \leq  -t^{-\xi(\tau)}  \} \\ 
&  \cup \{x - \bi: -t^{-\eta_k(\tau)-1/2}< x \leq  -t^{-\xi(\tau)} \}.
\end{split}
\end{align}

\begin{figure}[htbp]
\begin{center}
\begin{tikzpicture}[scale=1.5]
\def\b{2};
\def\t{.5};
\def\d{-5.656}; %this is -t^{-2.5} but i was having trouble with pow(...) for some reason...
  % Draw the axes
  \draw[<->] (0,-2) -- (0,2) node[above] {$\Im(w_i)$};
  \draw[<->] (-6,0) -- (2,0) node[above] {$\Re(w_i)$};

  % Draw the half circle
  \draw[thick,blue] (0,-1) arc (-90:90:1);

  % Draw the horizontal lines
  \draw[thick,blue] (-\b,1) node[left,black,yshift=3mm,xshift=10mm] {$-t^{-\xi(\tau)}+\bi$} -- (0,1) ;
  \draw[thick,blue] (-\b,-1) node[left,black,yshift=-3mm,xshift=10mm] {$-t^{-\xi(\tau)}-\bi$} -- (0,-1) ;

    \draw[thick,red] (\d,1)  -- (-\b,1);% node[right,yshift=3mm] {(0,1)};
    \draw[thick,red] (\d,-1)  -- (-\b,-1);% node[right,yshift=-3mm] {(0,-1)};

    \draw[thick,red] (\d,1) node[above,black] {$-t^{-\eta_k(\tau)-1/2}+\bi$} -- (\d,-1) node[below,black] {$-t^{-\eta_k(\tau)-1/2}-\bi$};

\fill (-\b,1) circle (1pt);
\fill (-\b,-1) circle (1pt);

 % to draw poles, loop through powers of t
  \foreach \n in {-2,...,20} {
    % Calculate x-coordinate
    \pgfmathsetmacro\x{-pow(\t,\n)}
    % Draw a dot at coordinate (\x,0)
    \fill (\x,0) circle (.5pt);
  }

\end{tikzpicture}
\caption{The contour $\Gamma(\tau)$ decomposed as in \eqref{eq:contours}, with $\Gamma_1(\tau)$ in blue and $\Gamma_2(\tau)$ in red, and the poles of the integrand at $w_i = -t^\Z$ shown.
}\label{fig:Gamma1_decomp}
\end{center}
\end{figure}
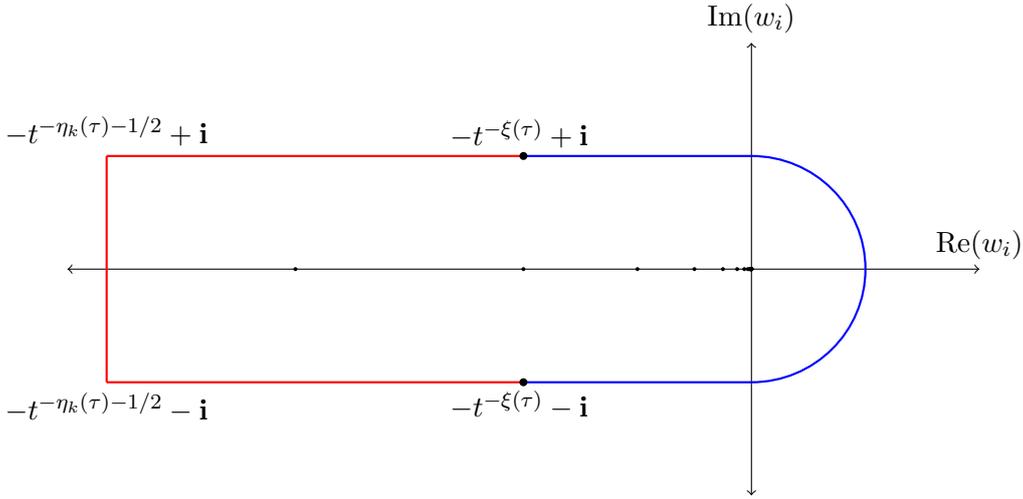

We further define another error term contour
\begin{equation}
\Gamma_3(\tau) = \{x + \bi:x \leq  -t^{- \xi(\tau)}\} \cup \{x - \bi: x \leq -t^{- \xi(\tau)}  \},
\end{equation}
so that 
\begin{equation}\label{eq:tG_decomp}
\Gamma_1(\tau) \cup \Gamma_3(\tau) = \{x + \bi: x \leq 0 \} \cup \{x - \bi:  x \leq 0\} \cup \{x+\bi y: x^2+y^2=1, x > 0\} = \tG
\end{equation}
\begin{figure}[htbp]
\begin{center}
\begin{tikzpicture}[scale=1.5]
\def\b{2}
  % Draw the axes
  \draw[<->] (0,-2) -- (0,2) node[above] {$\Im(w_i)$};
  \draw[<->] (-6,0) -- (2,0) node[above] {$\Re(w_i)$};

  % Draw the half circle
  \draw[thick,blue] (0,-1) arc (-90:90:1);

  % Draw the horizontal lines
  \draw[thick,blue] (-\b,1) node[left,black,yshift=3mm,xshift=10mm] {$-t^{-\xi(\tau)}+\bi$} -- (0,1) ;
  \draw[thick,blue] (-\b,-1) node[left,black,yshift=-3mm,xshift=10mm] {$-t^{-\xi(\tau)}-\bi$} -- (0,-1) ;

    \draw[thick,green] (-6,1) node[left,black] {$\cdots$} -- (-\b,1);% node[right,yshift=3mm] {(0,1)};
    \draw[thick,green] (-6,-1) node[left,black] {$\cdots$} -- (-\b,-1);% node[right,yshift=-3mm] {(0,-1)};

\fill (-\b,1) circle (1pt);
\fill (-\b,-1) circle (1pt);

\end{tikzpicture}
\caption{The contour $\tG$ decomposed as in \eqref{eq:tG_decomp}, with $\Gamma_1(\tau)$ in blue and $\Gamma_3(\tau)$ in green.
}\label{fig:tG_decomp}
\end{center}
\end{figure}
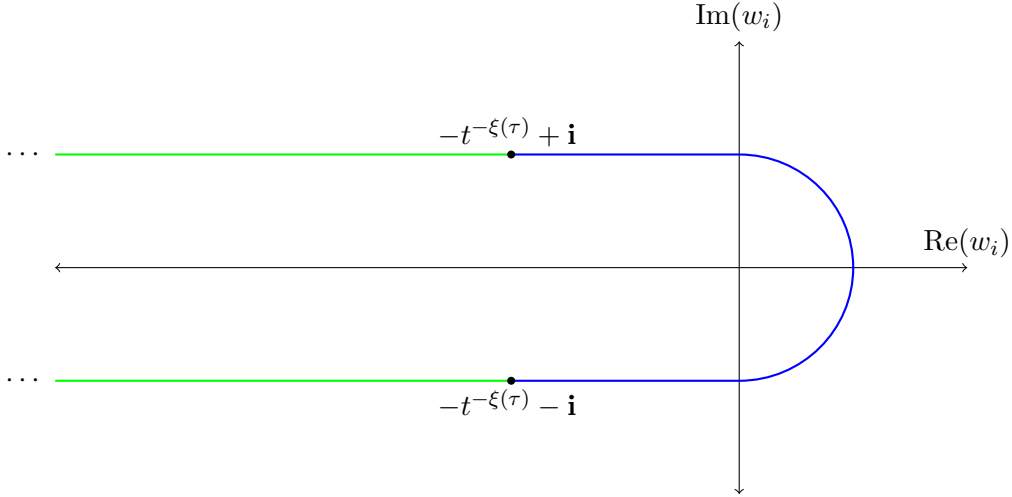
is independent of $\tau$. To complete the proof, we must show that
\begin{multline}
\label{eq:cont_integral_limit}
\lim_{\substack{\tau \to \infty \\ \log_t(\tau) \equiv \zeta }} \text{RHS\eqref{eq:int_w_3}} =  \frac{(t;t)_\infty^{k-1}}{k! (2 \pi \bi)^k} \prod_{i=1}^{k-1} \frac{t^{\binom{L_i-L_k}{2}}}{(t;t)_{L_i-L_{i+1}}} \int_{\tG^k} e^{\frac{t^{L_k+\zeta}}{1-t}(w_1+\ldots+w_k)} \frac{\prod_{1 \leq i \neq j \leq k} (w_i/w_j;t)_\infty}{\prod_{i=1}^k (-w_i^{-1};t)_\infty (-tw_i;t)_{\infty}} \\ 
\times \sum_{j=0}^{L_{k-1}-L_k} t^{\binom{j+1}{2}} \sqbinom{L_{k-1}-L_k}{j}_t  P_{(L_1-L_k,\ldots,L_{k-1}-L_k,j)}(w_1^{-1},\ldots,w_k^{-1};t,0)  \prod_{i=1}^k \frac{dw_i}{w_i}.
\end{multline}
To compress notation we abbreviate the $\tau$-independent part of the integrand as
\begin{multline}
\label{eq:integrand_junk}
f_{\frac{t^{L_k+\zeta}}{1-t}}(w_1,\ldots,w_k) := \frac{(t;t)_\infty^{k-1}}{k!} \prod_{i=1}^{k-1} \frac{t^{\binom{L_i-L_k}{2}}}{(t;t)_{L_i-L_{i+1}}} e^{\frac{t^{L_k+\zeta}}{1-t}(w_1+\ldots+w_k)}\frac{\prod_{1 \leq i \neq j \leq k} (w_i/w_j;t)_\infty}{\prod_{i=1}^k w_i(-w_i^{-1};t)_\infty }  \\ 
\times \sum_{j=0}^{L_{k-1}-L_k} t^{\binom{j+1}{2}} \sqbinom{L_{k-1}-L_k}{j}_t  P_{(L_1-L_k,\ldots,L_{k-1}-L_k,j)}(w_1^{-1},\ldots,w_k^{-1};t,0).
\end{multline}
Since $\zeta$ and $L_k$ are fixed throughout this proof, we will suppress the subscript of $f$ everywhere below. With this notation, we may rewrite the equality \eqref{eq:cont_integral_limit} which we want to show as
\begin{align}
\label{eq:split_integral1}
&\lim_{\substack{\tau \to \infty \\ \log_t(\tau) \equiv \zeta }} \frac{1}{(2 \pi \bi)^k} \int_{\Gamma_1(\tau)^k} f(w_1,\ldots,w_k)\left(\frac{g_{ \tau}(w_1,\ldots,w_k)}{\prod_{i=1}^k(-tw_i;t)_{\eta_k(\tau)}} - \frac{1}{\prod_{i=1}^k(-tw_i;t)_{\infty}}  \right) \prod_{i=1}^k dw_i \\  
&+ \lim_{\substack{\tau \to \infty \\ \log_t(\tau) \equiv \zeta }}\frac{1}{(2 \pi \bi)^k} \int_{\Gamma(\tau)^k \setminus \Gamma_1(\tau)^k} \frac{f(w_1,\ldots,w_k)g_{ \tau}(w_1,\ldots,w_k)}{\prod_{i=1}^k(-tw_i;t)_{\eta_k(\tau)}} \prod_{i=1}^k dw_i \label{eq:split_integral2} \\
&-\lim_{\substack{\tau \to \infty \\ \log_t(\tau) \equiv \zeta }} \frac{1}{(2 \pi \bi)^k} \int_{\tG^k \setminus \Gamma_1(\tau)^k} \frac{f(w_1,\ldots,w_k)}{\prod_{i=1}^k(-tw_i;t)_{\infty}}\prod_{i=1}^k dw_i \label{eq:split_integral3}  =0.
\end{align}
We will show each of the three lines \eqref{eq:split_integral1}, \eqref{eq:split_integral2} and \eqref{eq:split_integral3} above are $0$ separately. For this, we first state several needed asymptotics for the functions $f$ and $g_{\tau}$, the proofs of which we defer to later in the section.

\begin{lemma}\label{thm:f_bound}
For any $w_1,\ldots,w_k$ in $\tG$ or in $\Gamma(\tau)$ for $\tau$ sufficiently large (recall these contours are defined in \Cref{fig:tG_decomp}),
\begin{equation}
\label{eq:f_bound}
|f(w_1,\ldots,w_k)| \leq C \prod_{i=1}^k e^{c_1\Re(w_i) + \frac{k-1}{2}(\log t^{-1}) \floor{\log_t |w_i|}^2 + c_2 \floor{\log_t |w_i|}}
\end{equation}
for some positive constants $C,c_1,c_2$ independent of $\tau$. Here $f$ is as in \eqref{eq:integrand_junk}.
\end{lemma}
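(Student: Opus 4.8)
The plan is to bound $|f|$ factor by factor from the explicit expression \eqref{eq:integrand_junk}. Three of the factors are harmless: the $w$-independent constant $\tfrac{(t;t)_\infty^{k-1}}{k!}\prod_{i=1}^{k-1}\tfrac{t^{\binom{L_i-L_k}{2}}}{(t;t)_{L_i-L_{i+1}}}$ is absorbed into $C$; the exponential contributes exactly $\prod_i e^{c_1\Re(w_i)}$ with $c_1=t^{L_k+\zeta}/(1-t)>0$; and the sum $\sum_{j}t^{\binom{j+1}{2}}\sqbinom{L_{k-1}-L_k}{j}_t P_{(L_1-L_k,\ldots,L_{k-1}-L_k,j)}(w_1^{-1},\ldots,w_k^{-1};t,0)$ has a fixed finite number of terms with constant coefficients, each bounded by \Cref{thm:complex_hl_bound} by $P_\mu(|w_1|^{-1},\ldots,|w_k|^{-1};t,0)\le P_\mu(1,\ldots,1;t,0)$, a constant, since $|w_i|\ge 1$ on both $\tG$ and $\Gamma(\tau)$ (so $|w_i|^{-1}\le 1$) and $P_\mu(\,\cdot\,;t,0)$ has nonnegative coefficients in the monomial basis for $t\in(0,1)$.

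For the denominator $\prod_i \big(w_i(-w_i^{-1};t)_\infty\big)^{-1}$ I would first check, by inspecting the pieces of $\tG$ in \eqref{eq:tG_decomp} and of $\Gamma(\tau)$ in \eqref{eq:contours}, that every point of either contour (for $\tau$ large) lies at distance at least a fixed $\delta>0$ from the pole set $\{0\}\cup\{-t^m:m\ge0\}$: the horizontal parts have $|\Im|=1$, the arc lies in $\Re>0$, and the left vertical edge $\{-t^{-\eta_k(\tau)-1/2}+\bi y\}$ of $\Gamma(\tau)$ sits strictly between two consecutive poles at distance $\asymp t^{-\eta_k(\tau)}$. Since $|w_i(-w_i^{-1};t)_\infty|=|w_i+1|\prod_{m\ge1}|1+w_i^{-1}t^m|\ge \delta\,(t;t)_\infty$ for $|w_i|\ge1$ (the tail product is $\ge\prod_{m\ge1}(1-t^m)=(t;t)_\infty$) and grows like $|w_i|$ as $|w_i|\to\infty$, this gives $\big(w_i(-w_i^{-1};t)_\infty\big)^{-1}\le C(1+|w_i|)^{-1}\le C'e^{(\log t^{-1})\floor{\log_t|w_i|}}$, using $-\log|w_i|=(\log t^{-1})\log_t|w_i|\le(\log t^{-1})(\floor{\log_t|w_i|}+1)$.

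The real content is the numerator $\prod_{1\le i\ne j\le k}(w_i/w_j;t)_\infty$. The basic input is the elementary estimate that for $|a|\ge 1$, $|(a;t)_\infty|\le(-1;t)_\infty^2\, e^{\frac12(\log t^{-1})(n^2+n)}$ with $n=\floor{\log_{t^{-1}}|a|}$, while $|(a;t)_\infty|\le(-1;t)_\infty$ for $|a|\le 1$; this follows by writing $\prod_{m<N}(1+|a|t^m)=|a|^N t^{\binom N2}\prod_{m<N}(1+(|a|t^m)^{-1})$ with $N$ the number of $m$ with $|a|t^m\ge1$, bounding the last product and the tail $\prod_{m\ge N}$ by $(-1;t)_\infty$, and using $|a|<t^{-N}$, $N=n+1$. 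Writing $\rho_i:=\log_{t^{-1}}|w_i|\ge0$, for each unordered pair $\{i,j\}$ exactly one of the two factors has modulus $\ge1$ (the one with the larger $|w_\cdot|$ on top), contributing the above bound with $n=\floor{\rho_i-\rho_j}\le\rho_i-\rho_j$, and the other contributes at most $(-1;t)_\infty$. Summing the leading parts over all pairs gives $\tfrac12(\log t^{-1})\sum_{i<j}(\rho_i-\rho_j)^2$, and here the decisive step is the identity $\sum_{i<j}(\rho_i-\rho_j)^2=k\sum_i\rho_i^2-\big(\sum_i\rho_i\big)^2\le(k-1)\sum_i\rho_i^2$, valid precisely because $\rho_i\ge0$ forces $\big(\sum_i\rho_i\big)^2\ge\sum_i\rho_i^2$; since $\rho_i^2\le\floor{\log_t|w_i|}^2$ this yields exactly the coefficient $\tfrac{k-1}{2}(\log t^{-1})$. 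The remaining linear-in-$\rho_i$ corrections from the Pochhammer estimate, together with the $(1+|w_i|)^{-1}$ from the denominator, are all $O(|\floor{\log_t|w_i|}|)$ and are collected into the $c_2\floor{\log_t|w_i|}$ term; note that on the unbounded horizontal parts of the contours $\Re(w_i)\asymp-|w_i|$, so $e^{c_1\Re(w_i)}$ dominates all of the subexponential $\floor{\log_t|w_i|}$-growth anyway. Multiplying the factor bounds and absorbing universal constants into $C$ gives \eqref{eq:f_bound}, with $C,c_1,c_2$ independent of $\tau$ since nothing above depends on $\tau$ except the range of the contour and the (uniform for large $\tau$) pole-avoidance estimate.

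The step I expect to be the main obstacle is this last combinatorial bookkeeping for the Pochhammer product: the naive argument, in which one notes that the largest $|w_i|$ appears in $k-1$ pairs and estimates each such factor by $e^{\frac12(\log t^{-1})\rho_i^2}$, produces the coefficient $\tfrac{k(k-1)}{2}(\log t^{-1})$ rather than the claimed $\tfrac{k-1}{2}(\log t^{-1})$, which would be too weak for the later contour analysis; getting the sharp constant is exactly what the nonnegativity identity for the $\rho_i$ is for. A secondary technical nuisance is verifying the pole-avoidance bound uniformly in $\tau$ along the moving left edge of $\Gamma(\tau)$.
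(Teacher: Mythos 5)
Your proof is correct and its overall factor-by-factor structure matches the paper's, which similarly separates the constant, the exponential, the polynomial sum (bounded via \Cref{thm:complex_hl_bound} and $|w_i|^{-1}\le 1$), and the denominator product; only $\prod_{1\le i\ne j\le k}(w_i/w_j;t)_\infty$ contributes the quadratic growth in $\floor{\log_t|w_i|}$. But for that last product you go a genuinely different --- and unnecessarily intricate --- route. Writing $\rho_i=\log_{t^{-1}}|w_i|\ge0$, you estimate each unordered pair via $\floor{\rho_i-\rho_j}$ and then invoke the identity $\sum_{i<j}(\rho_i-\rho_j)^2=k\sum_i\rho_i^2-\big(\sum_i\rho_i\big)^2\le(k-1)\sum_i\rho_i^2$, valid because $\rho_i\ge0$, to extract the coefficient $\tfrac{k-1}{2}$. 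The paper instead bounds each ordered factor by $|(w_i/w_j;t)_\infty|\le(-|w_i|/|w_j|;t)_\infty\le(-|w_i|;t)_\infty$, using only $|w_j|\ge1$; since each index $i$ sits as the numerator in exactly $k-1$ ordered pairs, the whole product is $\le\prod_i(-|w_i|;t)_\infty^{k-1}$, and the estimate $(-t^{-b};t)_\infty\le\mathrm{const}\cdot t^{-\binom{b+1}{2}}$ then produces the per-factor coefficient $\tfrac{k-1}{2}$ immediately. In other words, the ``naive argument'' you worried would only give $\tfrac{k(k-1)}{2}$ is a strawman in which every factor is estimated in terms of the single largest $|w_\cdot|$; the obvious estimate using the modulus of the numerator $w_i$ already has the right coefficient and needs no compensating identity. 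Your version does give a sharper intermediate bound --- it is exact when all the $|w_i|$ coincide, where the Pochhammer product is $O(1)$ while the paper's bound is large --- but this sharpness is not used downstream, where only the growth rate of the quadratic term and the exponential decay from $\Re(w_i)$ enter. Similarly, the refinement $(1+|w_i|)^{-1}\le C'e^{(\log t^{-1})\floor{\log_t|w_i|}}$ you impose on the denominator is correct but superfluous: the paper just bounds $|1/w_i|$ and $|1/(-w_i^{-1};t)_\infty|$ by constants, using that both contours stay at a fixed positive distance from $-1$ and inside $\{|w|\ge1\}$ once $\tau$ is large.
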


\begin{lemma}\label{thm:g_bound_small_w}
In the notation of the above proof,
\begin{equation}
g_{\tau}(w_1,\ldots,w_k) = 1 + O(t^{\eta_k(\tau) - \nu_1'(\tau) - \xi(\tau)})
\end{equation}
as $\tau \to \infty$, with implied constant uniform over $w_1,\ldots,w_k \in \Gamma_1(\tau)$.
\end{lemma}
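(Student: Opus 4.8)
The plan is to analyze $g_\tau(w_1,\ldots,w_k)$ directly from its definition \eqref{eq:g_quot} using the branching rule for the beta specialization. By \eqref{eq:specialization_branch} applied to the $P$ polynomial with the two specializations $\beta(t^{\eta_k(\tau)}w_1,\ldots,t^{\eta_k(\tau)}w_k)$ and $(1,t,t^2,\ldots)$, we may write
\begin{equation*}
P_{\nu(\tau)}(\beta(t^{\eta_k(\tau)}w_1,\ldots,t^{\eta_k(\tau)}w_k),1,t,\ldots;0,t) = \sum_{\sigma \subset \nu(\tau)} P_{\nu(\tau)/\sigma}(1,t,\ldots;0,t)\, P_\sigma(\beta(t^{\eta_k(\tau)}w_1,\ldots,t^{\eta_k(\tau)}w_k);0,t).
\end{equation*}
Dividing by $P_{\nu(\tau)}(1,t,\ldots;0,t)$ and recognizing the $\sigma=\emptyset$ term as $1$, we get $g_\tau = 1 + \sum_{\sigma \subset \nu(\tau),\ \sigma \neq \emptyset} \frac{P_{\nu(\tau)/\sigma}(1,t,\ldots;0,t)}{P_{\nu(\tau)}(1,t,\ldots;0,t)} P_\sigma(\beta(\ldots);0,t)$. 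The quotient of principal specializations is given explicitly by \eqref{eq:hl_quotient} (with $\nu,\sigma$ in place of $\kappa,\mu'$), and is bounded by a constant depending only on $t$ for all $\sigma \subset \nu(\tau)$; in particular $P_{\nu(\tau)/\sigma}/P_{\nu(\tau)} \leq C(t)^{\len(\sigma)} \leq C(t)^k$ since $\len(\sigma) \le \len(\nu(\tau)') = \nu_1'(\tau)$ — but more usefully, because $\sigma \subset \nu(\tau)$ and both have at most $k$ columns... actually the cleanest route is: the quotient is a product over $i$ of terms each in $(0,1]$ times a bounded factor, hence uniformly $O(1)$.

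The key point is the smallness coming from the beta-specialized factor. By \Cref{thm:specialize_mac_poly}, $P_\sigma(\beta(c_1,\ldots,c_k);0,t) = Q_{\sigma'}(c_1,\ldots,c_k;t,0)$, and by \Cref{thm:complex_hl_bound} together with $|t^{\eta_k(\tau)}w_i| = t^{\eta_k(\tau)}|w_i| \leq t^{\eta_k(\tau)}\cdot t^{-\xi(\tau)}$ for $w_i \in \Gamma_1(\tau)$ (since $|w_i| \le t^{-\xi(\tau)}$ on that contour, the largest value being attained on the horizontal segments), we bound $|P_\sigma(\beta(t^{\eta_k(\tau)}w_1,\ldots);0,t)| \leq Q_{\sigma'}(t^{\eta_k(\tau)-\xi(\tau)}[k];t,0)$. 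Since $Q$ polynomials in the Hall--Littlewood-dual normalization have nonnegative coefficients and $Q_{\sigma'}$ is homogeneous of degree $|\sigma|$, this is $t^{(\eta_k(\tau)-\xi(\tau))|\sigma|}$ times a constant. As $\sigma \neq \emptyset$ forces $|\sigma| \ge 1$, summing over all $\sigma \subset \nu(\tau)$ — using the Cauchy identity $\sum_{\sigma} (P_{\nu(\tau)/\sigma}/P_{\nu(\tau)}) Q_{\sigma'}(y[k];t,0) = P_{\nu(\tau)}(\beta(y[k]),1,t,\ldots;0,t)/P_{\nu(\tau)}(1,t,\ldots;0,t)$ evaluated at $y = t^{\eta_k(\tau)-\xi(\tau)}$, which by \Cref{thm:hl_principal_formulas}-type estimates is $1 + O(t^{\eta_k(\tau)-\xi(\tau)} \cdot \nu_1'(\tau))$ type bound — but we also need the $\nu_1'(\tau)$ dependence to come out as a factor $t^{-\nu_1'(\tau)}$ as claimed. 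This is where the hypothesis $\nu(\tau) \subset \tnu$ and the relation between $\len$ and the first row enter: the leading correction comes from $\sigma$ a single box, giving $P_{\nu(\tau)/(1)}/P_{\nu(\tau)} \cdot Q_{(1')}(\ldots)$, and $P_{\nu(\tau)/(1)}/P_{\nu(\tau)}$ via \eqref{eq:hl_quotient} contributes a factor on the order of $t^{-\nu_1'(\tau)}$ (from the $t^{\binom{\kappa_i'-\mu_i}{2}-\binom{\kappa_i'}{2}}$ terms with $\kappa_i' \approx \nu_1'(\tau)$).

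\textbf{Main obstacle.} The delicate part is tracking the exact exponent $\eta_k(\tau) - \nu_1'(\tau) - \xi(\tau)$, rather than a cruder bound like $\eta_k(\tau) - \xi(\tau)$. One must show that the $\nu$-dependence of the quotient \eqref{eq:hl_quotient} over $\sigma \subset \nu(\tau)$, summed against the small beta-factors, produces exactly a multiplicative $t^{-\nu_1'(\tau)}$ and no worse; this requires care because a priori the sum over all $2^{|\nu(\tau)|}$-many subdiagrams $\sigma$ could conspire, but in fact the Cauchy identity resums this cleanly into $P_{\nu(\tau)}(\beta(t^{\eta_k(\tau)-\xi(\tau)}[k]),1,t,\ldots;0,t)/P_{\nu(\tau)}(1,t,\ldots;0,t)$, and then one expands this ratio to first order using the explicit principal specialization formula \Cref{thm:hl_principal_formulas}, with the dominant correction being controlled by the number of columns of $\nu(\tau)$ of each height, weighted by the appropriate power of the small parameter $t^{\eta_k(\tau)-\xi(\tau)}$, times $t^{-(\text{height})}$ factors that are maximized at height $\nu_1'(\tau)$. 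Everything else — uniformity in $w_i \in \Gamma_1(\tau)$, absolute convergence of the subdiagram sum — follows from the already-established \Cref{thm:complex_hl_bound} and finiteness of the Cauchy kernel, since $t^{\eta_k(\tau)-\xi(\tau)} \to 0$.
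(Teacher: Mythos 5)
Your starting point is the same as the paper's: expand $g_\tau$ via the branching rule over subdiagrams $\kappa \subset \nu(\tau)$ and use the explicit quotient formula \eqref{eq:hl_quotient}. But from there the argument has a genuine gap, and the "cleanest route" you settle on is incorrect.

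You first assert that $P_{\nu(\tau)/\sigma}/P_{\nu(\tau)}$ is "a product over $i$ of terms each in $(0,1]$ times a bounded factor, hence uniformly $O(1)$." This is false: the $t$-power $t^{\binom{\nu_i'-\sigma_i'}{2}-\binom{\nu_i'}{2}}$ in \eqref{eq:hl_quotient} has a \emph{negative} exponent and can be as large as roughly $t^{-\nu_1'(\tau)|\sigma|}$. You later contradict this and correctly note the quotient contributes $t^{-\nu_1'(\tau)}$-type factors, but you never resolve the tension. Your proposed fix — resum via the Cauchy identity into $P_{\nu(\tau)}(\beta(t^{\eta_k(\tau)-\xi(\tau)}[k]),1,t,\ldots;0,t)/P_{\nu(\tau)}(1,t,\ldots;0,t)$ — is circular: this expression is just $g_\tau$ evaluated at the extremal point $|w_i|=t^{-\xi(\tau)}$, i.e.\ the quantity you are trying to bound. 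You then propose to "expand this ratio to first order using the explicit principal specialization formula," but \Cref{thm:hl_principal_formulas} applies to pure-$\alpha$ specializations, not to the mixed $\beta+\alpha$ specialization here, and it is not clear how one would control higher-order terms uniformly in $\nu(\tau)$ this way.

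The missing step, and the crux of the paper's proof, is purely algebraic and eliminates $\nu$ before any estimation: after expanding, bound $(t^{1+\nu_i'(\tau)-\kappa_i'};t)_{m_i(\kappa)} \le 1$, replace $\nu_i'(\tau) \le \nu_1'(\tau)$ in the exponent so that $t^{-\nu_i'(\tau)\kappa_i'} \le t^{-\nu_1'(\tau)\kappa_i'}$, and then \emph{recognize} the resulting power of $t$ as
\[
\prod_{i=1}^k t^{\kappa_i'(\kappa_i'/2+1/2-\nu_1'(\tau)+\eta_k(\tau))} = \left(t^{\eta_k(\tau)-\nu_1'(\tau)}\right)^{|\kappa|} Q_\kappa(1,t,\ldots;0,t).
\]
This removes the $\nu$-dependence entirely, and the Cauchy identity then gives the closed-form bound $\prod_{i=1}^k(-t^{\eta_k(\tau)-\nu_1'(\tau)}|w_i|;t)_\infty - 1$, which is manifestly $O(t^{\eta_k(\tau)-\nu_1'(\tau)-\xi(\tau)})$ on $\Gamma_1(\tau)$. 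Without this identification you have no way to get a tractable $\nu$-free expression, which is exactly the obstacle you flag but do not overcome.
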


Outside of $\Gamma_1(\tau)$ we content ourselves with a cruder bound on $g_\tau$:

\begin{lemma}\label{thm:g_bound_large_w}
In the notation of the above proof, 
\begin{equation}
|g_{\tau}(w_1,\ldots,w_k)| = O(t^{-(k+1)(\nu_1'(\tau))^2})
\end{equation}
as $\tau \to \infty$, with implied constant uniform over $w_1,\ldots,w_k$ in $\Gamma(\tau)$.
\end{lemma}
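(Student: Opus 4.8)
\textbf{Proof proposal for \Cref{thm:g_bound_large_w}.}

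The plan is to treat this algebraically: expand the numerator of the defining ratio of $g_\tau$ by the branching rule and bound the finitely many surviving terms using the explicit quotient formula \eqref{eq:hl_quotient}. Since the target bound is hugely lossy (a factor growing like $\exp(c(\log\tau)^2)$), nothing delicate is needed. First I would record that on $\Gamma(\tau)$ the rescaled variables $u_i := t^{\eta_k(\tau)}w_i$ are uniformly bounded by a constant $C_0$ depending only on $t$: on the circular arc one has $|w_i|=1$, so $|u_i| = t^{\eta_k(\tau)}\to 0$, while on the horizontal and vertical segments $|w_i|^2 \le t^{-2\eta_k(\tau)-1}+1$, so $|u_i|^2 \le t^{-1}+t^{2\eta_k(\tau)}\le 2t^{-1}$ for $\tau$ large.

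Next, writing $\ell := \nu_1'(\tau) = \len(\nu(\tau))$, I would apply the branching rule \eqref{eq:specialization_branch} together with $P_\rho(\beta(u_1,\ldots,u_k);0,t) = Q_{\rho'}(u_1,\ldots,u_k;t,0)$ from \eqref{eq:spec_mac_pol} to get
\begin{equation*}
g_\tau(w_1,\ldots,w_k) = \sum_{\rho} \frac{P_{\nu(\tau)/\rho}(1,t,\ldots;0,t)}{P_{\nu(\tau)}(1,t,\ldots;0,t)}\, Q_{\rho'}(u_1,\ldots,u_k;t,0),
\end{equation*}
and then observe that a term vanishes unless $\rho\subseteq\nu(\tau)$ (so $\len(\rho)\le\ell$) and $\len(\rho')\le k$ (so $\rho_1\le k$, since $Q_{\rho'}$ is a polynomial in $k$ variables). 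Hence every surviving $\rho$ lies in the $\ell\times k$ box; in particular $|\rho|\le k\ell$ and there are at most $(k+1)^{\ell}$ of them.

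For such $\rho$ the coefficient is given explicitly by \eqref{eq:hl_quotient} (legitimate since $\rho'\in\Y_k$) as a product over $i=1,\ldots,k$ of $t^{\binom{\rho_i'+1}{2}-\nu_i'(\tau)\rho_i'}$ times a Pochhammer factor $(t^{1+\nu_i'(\tau)-\rho_i'};t)_{\rho_i'-\rho_{i+1}'}$ which lies in $(0,1]$ because $\rho_i'\le\nu_i'(\tau)$; and since $0\le\rho_i'\le\nu_i'(\tau)\le\ell$ the exponent $\binom{\rho_i'+1}{2}-\nu_i'(\tau)\rho_i'$ is at least $-\binom{\ell}{2}$, so the whole coefficient is bounded by $t^{-k\binom{\ell}{2}}\le t^{-k\ell^2}$. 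For the other factor, $Q_{\rho'}(\cdot;t,0)$ has nonnegative coefficients (by \eqref{eq:skewQ_branch_formula} and \eqref{eq:qw_branch}, or \Cref{thm:complex_hl_bound}) and is homogeneous of degree $|\rho|$, so $|Q_{\rho'}(u_1,\ldots,u_k;t,0)| \le C_0^{|\rho|}\,Q_{\rho'}(1,\ldots,1;t,0)$, and $Q_{\rho'}(1,\ldots,1;t,0)$ is a sum of at most $(\ell+1)^{k^2}$ chain weights each bounded, via \eqref{eq:qw_branch}, by a quantity growing at most exponentially in $\ell$ with rate depending only on $t,k$. Multiplying these bounds and summing over the at most $(k+1)^\ell$ values of $\rho$, $|g_\tau(w_1,\ldots,w_k)|$ is at most $t^{-k\ell^2}$ times a function of $\ell$ that is polynomial-times-exponential with constants depending only on $t,k$; as any such function is $O(t^{-\ell^2})$, this yields $|g_\tau(w_1,\ldots,w_k)| = O(t^{-(k+1)\ell^2}) = O(t^{-(k+1)\nu_1'(\tau)^2})$, uniformly over $w_1,\ldots,w_k\in\Gamma(\tau)$ because every step used only the uniform bound $|u_i|\le C_0$.

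The one place a little care is needed — what I would flag as the crux — is the bookkeeping in the third paragraph: it is essential that the $\rho$ appearing in the branching expansion all lie in a box of \emph{bounded} width $k$, since that is exactly what keeps $|\rho|$, the number of such $\rho$, and all the $q$-binomial factors from \eqref{eq:qw_branch} controllable in terms of $\nu_1'(\tau)$ alone, rather than in terms of the a priori unbounded largest part of $\nu(\tau)$. (Equivalently, one may first use the identity, implicit in the proof of \Cref{thm:use_torus_product}, that the ratio $g_\tau$ depends on $\nu(\tau)$ only through $\nu_1'(\tau),\ldots,\nu_k'(\tau)$, and run the same estimate with $\tnu$ in place of $\nu(\tau)$.)
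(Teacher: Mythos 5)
Your proof is correct and takes essentially the same approach as the paper's: expand $g_\tau$ via the branching rule, apply the explicit quotient formula \eqref{eq:hl_quotient} to the coefficient, bound the nonnegative-coefficient $q$-Whittaker factor by its value at $(C_0,\ldots,C_0)$, and multiply by the number of surviving indices. The only differences are in constants (the paper counts $\kappa\subset\nu(\tau)$ by $\prod_{i=1}^k(\nu_i'+1)$ and bounds each branching coefficient by the $\ell$-independent constant $(t;t)_\infty^{-2}$, where you use the cruder $(k+1)^\ell$ and an exponential-in-$\ell$ chain bound), but as you observe all such factors are $e^{O(\ell)}$ and absorbed by the dominant $t^{-k\ell^2}$ term.
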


We also use the following elementary bounds on $|\frac{1}{(-z;t)_\infty}|$, which will be proven in \Cref{subsec:bounds}.

\begin{lemma}\label{thm:qp_bound_tG}
There exists a constant such that for any $n \in \Z_{\geq 1} \cup \{\infty\}$ and $w \in \tG$,
\begin{equation}
\abs*{\frac{1}{(-tw;t)_n}} \leq C.
\end{equation}
\end{lemma}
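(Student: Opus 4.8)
The plan is to write $\bigl|\tfrac{1}{(-tw;t)_n}\bigr| = \prod_{i=1}^{n}\tfrac{1}{|1+t^{i}w|}$ (absolutely convergent when $n=\infty$, since $\sum_i t^{i}|w|<\infty$) and bound this product uniformly over $w\in\tG$, the only delicate case being the two horizontal rays far from the origin. The bounded part of $\tG$ is easy: whenever $\Re(w)\geq 0$ one has $|1+t^{i}w|^{2}=(1+t^{i}\Re w)^{2}+(t^{i}\Im w)^{2}\geq 1$, so the product is $\leq 1$; and for $w$ in a fixed bounded subset $\{w\in\tG:|w|\leq R\}$, beyond a threshold $i_{0}(R,t)$ the factors satisfy $|1+t^{i}w|\geq 1-Rt^{i}\geq 1/2$ (so $\prod_{i\geq i_{0}}\tfrac{1}{|1+t^{i}w|}$ converges uniformly), while for the finitely many $i<i_{0}$ the continuous function $w\mapsto|1+t^{i}w|$, which is nonvanishing on $\tG$ because the poles $-t^{-i}$ are negative reals $\leq -t^{-1}$ whereas $w=1$ is the only real point of $\tG$, is bounded below on the compact set $\{|w|\leq R\}\cap\tG$.

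For $w=x+\bi$ on the upper horizontal ray with $x<0$ and $|x|$ large (the lower ray being handled by complex conjugation), I will split the index set according to the size of $t^{i}|x|$. In the \emph{large} range $t^{i}|x|\geq 2$ one has $|1+t^{i}w|\geq|1+t^{i}x|=t^{i}|x|-1\geq\tfrac12 t^{i}|x|\geq 1$, so that factor is at most $\tfrac{2}{t^{i}|x|}$; multiplying over the $i_{1}:=\lfloor\log_{1/t}(|x|/2)\rfloor$ indices in this range gives $\dfrac{2^{i_{1}}}{|x|^{i_{1}}\,t^{i_{1}(i_{1}+1)/2}}$, which a short computation shows is of order $t^{\,i_{1}^{2}/2+O(i_{1})}$ and hence $\leq(2|x|)^{-m_{1}}$ once $|x|$ is large, where $m_{1}:=\lceil\log_{1/t}4\rceil+1$. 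In the \emph{transition} range $\tfrac12\leq t^{i}|x|<2$ there are at most $m_{1}$ indices, each with $t^{i}\geq\tfrac{1}{2|x|}$ and so $|1+t^{i}w|\geq|\Im(1+t^{i}w)|=t^{i}\geq\tfrac{1}{2|x|}$; the product over these is at most $(2|x|)^{m_{1}}$. In the \emph{small} range $t^{i}|x|<\tfrac12$ one has $|1+t^{i}w|\geq|\Re(1+t^{i}w)|=1-t^{i}|x|$, so the factor is $\leq(1-t^{i}|x|)^{-1}\leq e^{2t^{i}|x|}$, and since $\sum_{i:\,t^{i}|x|<1/2}t^{i}|x|\leq\tfrac{1}{2(1-t)}$ their product is $\leq e^{1/(1-t)}$. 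For arbitrary finite $n$: if $n<i_{1}$ then every factor in $\prod_{i=1}^{n}$ is a large-range factor and is $\leq 1$, so the product is $\leq 1$; if $n\geq i_{1}$ then $\prod_{i=1}^{n}$ contains the full large-range product (giving $(2|x|)^{-m_{1}}$), at most $m_{1}$ transition factors (giving at most $(2|x|)^{m_{1}}$), and possibly some small-range factors (giving at most $e^{1/(1-t)}$), so $\prod_{i=1}^{n}\tfrac{1}{|1+t^{i}w|}\leq(2|x|)^{-m_{1}}(2|x|)^{m_{1}}e^{1/(1-t)}=e^{1/(1-t)}$. Together with the bounded-region estimate, this yields the uniform constant $C$.

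The main obstacle is precisely this tension in the horizontal-ray estimate: a single factor $\tfrac{1}{|1+t^{i}w|}$ can be as large as $t^{-i}$ when $w$ lies near the pole $-t^{-i}$, and summing such contributions naively over $i$ would diverge. The resolution — and the crux of the computation — is that for a fixed $w$ only $O(1)$ indices (those with $t^{i}|x|$ of order $1$, i.e.\ $i\approx\log_{1/t}|x|$) lie near a pole, producing only polynomial growth $(2|x|)^{m_{1}}$, which is annihilated by the super-polynomially small large-range product $t^{\,i_{1}^{2}/2+O(i_{1})}$ coming from the $\approx\log_{1/t}|x|$ indices where $|t^{i}w|$ is large. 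Verifying that these exponents line up so that the large-range product indeed beats $(2|x|)^{m_{1}}$ is the one point requiring genuine care.
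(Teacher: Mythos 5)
Your proof is correct and takes essentially the same approach as the paper, which (for general $n$) simply defers to ``the same method as in Lemmas~\ref{thm:qpoc_tmoment_bound}--\ref{thm:large_n_large_L_bound} of bounding each term in the $q$-Pochhammer symbol differently based on the location of $w$'' and omits the details. You have supplied those details: splitting the index set according to whether $t^i|w|$ is large, transitional, or small, bounding each factor by its real or imaginary part as appropriate, and checking that the super-polynomial decay of the large-range product dominates the at-most-polynomial growth from the $O(1)$ transitional indices near the poles.
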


\begin{lemma}\label{thm:qpoc_lower_bound}
For all $n \in \Z_{\geq 1} \cup \{\infty\}$, $\delta > 0$, and $z \in \C$ such that $|\Re(z) - (-t^{-i})| > \delta t^{-i}$, we have the bound
\begin{equation} \label{eq:qpoc_lower_bound}
|(z;t)_n| \geq \delta (t^{1/2};t)_\infty^2.
\end{equation}
\end{lemma}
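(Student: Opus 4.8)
The plan is to reduce to a real argument and then estimate $(z;t)_n=\prod_{i=1}^{n}(1-t^{i-1}z)$ one factor at a time. Since $|1-t^{i-1}z|\ge|\Re(1-t^{i-1}z)|=|1-t^{i-1}\Re(z)|$ for every $i$, taking the product over $i$ gives $|(z;t)_n|\ge|(\Re(z);t)_n|$; as the separation hypothesis constrains only $\Re(z)$, it suffices to prove the bound for real $z=r$, where the zeros of $(r;t)_n$ (the points $r=t^{-j}$, $0\le j\le n-1$, with $j\ge0$ when $n=\infty$) are exactly those from which the hypothesis keeps $r$ at relative distance more than $\delta$. The case $r\le 0$ is immediate: each factor satisfies $1-t^{i-1}r\ge 1$, and the factor whose zero is $r=1$ obeys $|1-r|=1-r\ge\max(1,\delta)$ by the hypothesis, so $|(r;t)_n|\ge\max(1,\delta)\ge\delta\,(t^{1/2};t)_\infty^2$ since $(t^{1/2};t)_\infty<1$.

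The heart of the argument is the case $r>0$, and the key observation is that at most one factor can be dangerously small. I would let $i_0$ be the unique integer with $t^{1/2}\le t^{i_0-1}r<t^{-1/2}$. For $i=i_0-k$ with $k\ge 1$ one then has $t^{i-1}r\ge t^{1/2-k}\ge t^{-1/2}$, hence $|1-t^{i-1}r|=t^{i-1}r-1\ge t^{1/2-k}(1-t^{k-1/2})$; multiplying these and using $t^{1/2-k}\ge 1$ together with the fact that a partial product of $\prod_{k\ge1}(1-t^{k-1/2})$ exceeds the full product gives $\prod_{1\le i<i_0}|1-t^{i-1}r|\ge(t^{1/2};t)_\infty$. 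Symmetrically, for $i=i_0+k$ with $k\ge 1$ one has $t^{i-1}r<t^{k-1/2}<1$, so $\prod_{i>i_0}|1-t^{i-1}r|\ge\prod_{k\ge1}(1-t^{k-1/2})=(t^{1/2};t)_\infty$. Finally, when $1\le i_0\le n$ the one remaining factor has its zero at $r=t^{1-i_0}$, a point occurring in the separation hypothesis, so $|1-t^{i_0-1}r|=t^{i_0-1}|r-t^{1-i_0}|>t^{i_0-1}\cdot\delta\,t^{1-i_0}=\delta$. Multiplying the three contributions yields $|(r;t)_n|\ge\delta\,(t^{1/2};t)_\infty^2$.

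It then remains to treat the cases where the critical index falls outside $\{1,\dots,n\}$. If $i_0>n$, then $t^{n-1}r\ge t^{-1/2}$ and the last factor $i=n$, whose zero $r=t^{1-n}$ is again one of the hypothesis points, satisfies $|1-t^{n-1}r|>\delta$; the remaining $n-1$ factors are estimated as before and their product is $\ge(t^{3/2};t)_\infty\ge(t^{1/2};t)_\infty$, so again $|(r;t)_n|\ge\delta\,(t^{1/2};t)_\infty^2$. If $i_0<1$, then $r<t^{1/2}$, every factor is of the second (``$i>i_0$'') type, $|(r;t)_n|\ge(t^{1/2};t)_\infty$, and the hypothesis at the zero $r=1$ forces $\delta<1$ in this regime (since $0<r<1$ makes $|r-1|<1$), whence $(t^{1/2};t)_\infty\ge\delta\,(t^{1/2};t)_\infty^2$. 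I expect no analytic difficulty here; the only real work is this geometric bookkeeping and the handful of boundary cases, and the one point that needs care is comparing the two tails against $\prod_{k\ge1}(1-t^{k-1/2})=(t^{1/2};t)_\infty$ so that the asserted constant $(t^{1/2};t)_\infty^2$, rather than something smaller, genuinely comes out.
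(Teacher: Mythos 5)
Your proof is correct and follows essentially the same strategy as the paper's: reduce to the real part, identify a critical index where the pole of the $q$-Pochhammer symbol might sit, bound that single factor by $\delta$ using the hypothesis, and bound the products to either side by $(t^{1/2};t)_\infty$. The only cosmetic difference is that you handle the boundary cases ($i_0<1$, $i_0>n$, $r\le 0$) somewhat more explicitly than the paper's compressed remark at the end, and both you and the paper implicitly read the hypothesis as $|\Re(z)-t^{-i}|>\delta t^{-i}$ (the sign in the statement appears to be a typo, as your discussion of where the zeros lie makes clear).
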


To prove \eqref{eq:split_integral1} is $0$, we claim that
\begin{align}
\label{eq:prod_diff_O}
\begin{split}
\frac{g_{\tau}(w_1,\ldots,w_k)}{\prod_{i=1}^k(-tw_i;t)_{\eta_k(\tau)}} - \frac{1}{\prod_{i=1}^k(-tw_i;t)_{\infty}}  &= \frac{g_{\tau}(w_1,\ldots,w_k)\prod_{i=1}^k (-t^{\eta_k(\tau)+1}w_i;t)_\infty - 1}{\prod_{i=1}^k(-tw_i;t)_{\infty}} \\ 
&= O(t^{\eta_k(\tau) - \nu_1'(\tau) - \xi(\tau)})
\end{split}
\end{align}
if $w_i \in \Gamma_1(\tau)$ for all $i$. For such $w_i$ we have $|t^{\eta_k(\tau)+1}w_i| \leq t^{\eta_k(\tau)-\xi(\tau)}$ (for $\tau$ large enough so $|-t^{- \xi(\tau)}\pm i| \leq t^{-\xi(\tau) - 1}$). Hence $(-t^{\eta_k(\tau)+1}w_i;t)_{\infty} = 1 + O(t^{\eta_k(\tau) - \xi(\tau)})$, so
\begin{equation}\label{eq:compare_fin_inf_qpoc}
\frac{1}{(-tw_i;t)_{\eta_k(\tau)}} = \frac{1}{(-tw_i;t)_{\infty}}(1+O(t^{\eta_k(\tau) - \xi(\tau)})).
\end{equation}
Combining \Cref{thm:g_bound_small_w} (which has the bigger error term than \eqref{eq:compare_fin_inf_qpoc}, so we neglect the one in \eqref{eq:compare_fin_inf_qpoc}) yields that 
\begin{equation}\label{eq:g_qp_bound}
g_{\tau}(w_1,\ldots,w_k)\prod_{i=1}^k (-t^{\eta_k(\tau)+1}w_i;t)_\infty = 1 + O(t^{\eta_k(\tau) - \nu_1'(\tau) - \xi(\tau)}).
\end{equation}
Furthermore, $|(-tw_i;t)_{\infty}|$ is bounded away from $0$ for $w_i \in \tG$ and $\Gamma_1(\tau) \subset \tG$ for each $\tau$, and combining with \eqref{eq:g_qp_bound} shows \eqref{eq:prod_diff_O}. 

Since $|w_i| \leq t^{-\xi(\tau)-1}$ holds for all $w_i \in \Gamma_1(\tau)$ (for large $\tau$ as above),
\begin{equation}
e^{\frac{k-1}{2}(\log t^{-1}) \floor{\log_t |w_i|}^2} = O(t^{-\frac{k-1}{2}(\xi(\tau)+1)^2}) \quad \quad \text{ uniformly over $w_i \in \Gamma_1(\tau)$.}
\end{equation}
The other terms in the exponent of \eqref{eq:f_bound} are dominated by the $\frac{k-1}{2}(\log t^{-1}) \floor{\log_t |w_i|}^2$ term (using that $\Re(w_i) \leq 1$ on $\Gamma_1(\tau)$), so 
\begin{equation}
\label{eq:cruder_f_bound}
|f(w_1,\ldots,w_k)| = O(t^{-k\frac{k-1}{2}(\xi(\tau)+1)^2}) \quad \quad \text{uniformly over $w_1,\ldots,w_k \in \Gamma_1(\tau)$}.
\end{equation}
Multiplying the bounds \eqref{eq:g_qp_bound} and \eqref{eq:cruder_f_bound} by the length of the contour $\Gamma_1(\tau)$ which is $O(t^{-\xi(\tau)})$, we find that the first line \eqref{eq:split_integral1} is bounded by
\begin{equation}
O(t^{\eta_k(\tau) - \nu_1'(\tau) -  \xi(\tau)}) \cdot O(t^{-k\frac{k-1}{2}(\xi(\tau)+1)^2}) \cdot O(t^{-k\xi(\tau)})
\end{equation}
as $\tau  \to \infty$, and this is $o(1)$ by \eqref{eq:technical_restriction} and \eqref{eq:xi_small}. This shows the vanishing of the first line \eqref{eq:split_integral1}.

For the other two integrals \eqref{eq:split_integral2} and \eqref{eq:split_integral3}, we have by \Cref{thm:qp_bound_tG} that $|(-tw_i;t)_\infty|^{-1}$ and $|(-tw_i;t)_{\eta_k(\tau)}|^{-1}$ are uniformly bounded over $w_i \in \tG$. The only part of $\tG \cup \Gamma(\tau)$ which is not contained in $\tG$ is the vertical segment, and \Cref{thm:qpoc_lower_bound} (applied with any $0<\delta<1/2$) shows that $|(-tw_i;t)_{\eta_k(\tau)}|^{-1}$ is uniformly bounded on these segments as $\tau$ varies; here it is important that we chose the vertical part of $\Gamma(\tau)$ to have real part sufficiently far away from $-t^\Z$. To prove that the limits in \eqref{eq:split_integral2} and \eqref{eq:split_integral3} are $0$ it thus suffices to show 
\begin{equation}
\label{eq:split_2}
\lim_{\substack{\tau \to \infty \\ \log_t(\tau) \equiv \zeta }} \int_{\Gamma(\tau)^k \setminus \Gamma_1(\tau)^k} |f(w_1,\ldots,w_k)| \cdot |g_\tau(w_1,\ldots,w_k)| \prod_{i=1}^k dw_i = 0
\end{equation}
and 
\begin{equation}
\label{eq:split_3}
\lim_{\substack{\tau \to \infty \\ \log_t(\tau) \equiv \zeta }} \int_{\tG^k \setminus \Gamma_1(\tau)^k} |f(w_1,\ldots,w_k)| \prod_{i=1}^k dw_i = 0.
\end{equation}
We first show \eqref{eq:split_2}. Note that
\begin{equation}\label{eq:gamma_union_split}
\Gamma(\tau)^k \setminus \Gamma_1(\tau)^k = \bigcup_{i=1}^k \Gamma(\tau)^{i-1} \times \Gamma_2(\tau) \times \Gamma(\tau)^{k-i}
\end{equation}
(not a disjoint union). Hence by symmetry of the integrand it suffices to show
\begin{equation}
\label{eq:sym_split_2}
\lim_{\substack{\tau \to \infty \\ \log_t(\tau) \equiv \zeta }}  \int_{\Gamma_2(\tau)} \left(\int_{\Gamma(\tau)^{k-1} } |g_{\tau}(w_1,\ldots,w_k)| \cdot |f(w_1,\ldots,w_k)| \prod_{i=1}^{k-1} dw_i\right) dw_k = 0.
\end{equation}
The bound \eqref{eq:f_bound} on $|f|$ factors, so combining \Cref{thm:f_bound} and \Cref{thm:g_bound_large_w} yields
\begin{multline}\label{eq:split2_factor_bound}
\text{LHS\eqref{eq:sym_split_2}} \leq  \lim_{\substack{\tau \to \infty \\ \log_t(\tau) \equiv \zeta }} \left(\int_{\Gamma_2(\tau)}  C  e^{c_1\Re(w) + \frac{k-1}{2}(\log t^{-1}) \floor{\log_t |w|}^2 + c_2 \floor{\log_t |w|}}  dw \right) \\ 
\times \left( \int_{\Gamma(\tau)} C  e^{c_1\Re(w) + \frac{k-1}{2}(\log t^{-1}) \floor{\log_t |w|}^2 + c_2 \floor{\log_t |w|}} dw\right)^{k-1} \times O(t^{-(k+1)\nu_1'(\tau)^2}).
\end{multline}
It is easy to see that 
\begin{equation}
\abs*{\int_{\Gamma(\tau)} C  e^{c_1\Re(w) + \frac{k-1}{2}(\log t^{-1}) \floor{\log_t |w|}^2 + c_2 \floor{\log_t |w|}} dw} < \text{const}
\end{equation}
independent of $\tau$, since the $\Re(w)$ term is negative and dominates because $\Re(w) \approx -|w|$ on the contour. Hence by \eqref{eq:split2_factor_bound} it suffices to show 
\begin{equation}
\label{eq:split2_almost_done}
 \lim_{\substack{\tau \to \infty \\ \log_t(\tau) \equiv \zeta }}  \int_{\Gamma_2(\tau)}  C  e^{c_1\Re(w) + \frac{k-1}{2}(\log t^{-1}) \floor{\log_t |w|}^2 + c_2 \floor{\log_t |w|}}  dw \times O(t^{-(k+1)\nu_1'(\tau)^2}) = 0.
\end{equation}
Since
\begin{equation}
\frac{k-1}{2}(\log t^{-1}) \floor{\log_t |w|}^2 + c_2 \floor{\log_t |w|} = o(\Re(w)) \quad \quad \text{ as $w \to \infty$ along $\bigcup_\tau \Gamma_2(\tau)$,}
\end{equation}
and 
\begin{equation}
\inf_{w \in \Gamma_2(\tau)} |w| \to \infty \quad \quad \text{as $T \to \infty$,}
\end{equation}
there is a constant $0 < c_1' < c_1$ for which 
\begin{equation}
C  e^{c_1\Re(w) + \frac{k-1}{2}(\log t^{-1}) \floor{\log_t |w|}^2 + c_2 \floor{\log_t |w|}} \leq e^{c_1' \Re(w)}
\end{equation}
on $\Gamma_2(\tau)$ for all sufficiently large $\tau$. We may therefore bound
\begin{equation}\label{eq:gamma2_int_bound}
\abs*{\int_{\Gamma_2(\tau)}  C  e^{c_1\Re(w) + \frac{k-1}{2}(\log t^{-1}) \floor{\log_t |w|}^2 + c_2 \floor{\log_t |w|}}  dw } \leq \int_{\Gamma_3(\tau)} e^{c_1' \Re(w)} dw + o(1)
\end{equation}
where the $o(1)$ corresponds to the vertical part of $\Gamma_2(\tau)$. Explicitly,
\begin{equation}
\text{RHS\eqref{eq:gamma2_int_bound}} = 2 \int_{-\infty}^{-t^{-\xi(\tau)}} e^{c_1' x} dx = \frac{2}{c_1'} e^{-t^{-\xi(\tau)}}.
\end{equation}
We have thus shown that the expression inside the limit of \eqref{eq:split2_almost_done} is $O(e^{-t^{-\xi(\tau)}}) \times O(t^{-(k+1)\nu_1'(\tau)^2})$. By the naive bound $\nu_1'(\tau) \leq \log_{t^{-1}}\tau$ (for large enough $\tau$, by \eqref{eq:technical_restriction}), we may rewrite this as 
\begin{equation}
O(e^{-t^{-\xi(\tau)}}) \times O(t^{-(k+1)\nu_1'(\tau)^2}) = O(\exp(-e^{(\log t^{-1})\xi(\tau)} + (k-1)(\log t^{-1}) e^{- 2 \log \log \tau})),
\end{equation} 
which is $o(1)$ by \eqref{eq:xi_large}. This shows \eqref{eq:split_2}, so the limit \eqref{eq:split_integral2} is indeed $0$. The case of \eqref{eq:split_3} is almost exactly the same, except that (a) the analogue of \eqref{eq:split2_factor_bound} yields directly to treating a single integral over $\Gamma_3(\tau)$ rather than having to reduce to this as in \eqref{eq:gamma2_int_bound}, and (b) there is no $g_\tau$ so the final bound is in fact better requires only that $\xi(\tau) \to \infty$ at any rate rather than the growth rate \eqref{eq:xi_large} to finish. This shows \eqref{eq:cont_integral_limit} and completes the proof.
\end{proof}

\subsection{Analytic lemmas on $f$ and $g_\tau$.} We will deduce \Cref{thm:f_bound} from a closely related result \Cref{thm:tf_bound}, which is phrased in such a way as to be useful later in \Cref{sec:alpha} as well.

\begin{defi}\label{def:tf}
We denote by $\tf$ the meromorphic function
\begin{multline}
\label{eq:def_tf}
\tf(w_1,\ldots,w_k) := \frac{(t;t)_\infty^{k-1}}{k!} \prod_{i=1}^{k-1} \frac{t^{\binom{L_i-L_k}{2}}}{(t;t)_{L_i-L_{i+1}}} \frac{\prod_{1 \leq i \neq j \leq k} (w_i/w_j;t)_\infty}{\prod_{i=1}^k w_i(-w_i^{-1};t)_\infty }  \\ 
\times \sum_{j=0}^{L_{k-1}-L_k} t^{\binom{j+1}{2}} \sqbinom{L_{k-1}-L_k}{j}_t  P_{(L_1-L_k,\ldots,L_{k-1}-L_k,j)}(w_1^{-1},\ldots,w_k^{-1};t,0),
\end{multline}
(i.e. $f$ from above but without the exponential factor), which implicitly depends on parameters $t \in (0,1),\vec{L} \in \Sig_k$ as well though we suppress this in the notation.
\end{defi}

\begin{lemma}\label{thm:tf_bound}
For any neighborhood $-1 \in U \subset \C$, there exist positive constants $C,c_2$ such that the bound
\begin{equation}
\label{eq:tf_bound}
|\tf(w_1,\ldots,w_k)| \leq C \prod_{i=1}^k e^{\frac{k-1}{2}(\log t^{-1}) \floor{\log_t |w_i|}^2 + c_2 \floor{\log_t |w_i|}}
\end{equation}
holds for any $w_1,\ldots,w_k \in \C \setminus (\D \cup U)$, where $\D$ is the open unit disc.
\end{lemma}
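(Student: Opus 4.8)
\textbf{Proof plan for Lemma \ref{thm:tf_bound}.}

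The plan is to bound $|\tf|$ by bounding each of its factors separately on the region $w_i \in \C \setminus (\D \cup U)$, i.e.\ for $|w_i| \geq 1$ and $w_i$ bounded away from $-1$. I will organize the factors into three groups: the constant prefactor $\frac{(t;t)_\infty^{k-1}}{k!}\prod_{i=1}^{k-1}\frac{t^{\binom{L_i-L_k}{2}}}{(t;t)_{L_i-L_{i+1}}}$, which is simply an absolute constant $C_0$ depending only on $t$ and $\vec L$; the cross-term product $\prod_{1\le i\neq j\le k}(w_i/w_j;t)_\infty$ together with the Hall-Littlewood sum in the numerator; and the denominator factors $\prod_i w_i(-w_i^{-1};t)_\infty$. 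The key quantitative input is that for $|z| \geq 1$ one has the elementary bound $(z;t)_\infty = \prod_{m\geq 0}(1-zt^m)$ with $\log|(z;t)_\infty| \leq \sum_{m\geq 0}\log(1+|z|t^m)$, and since $|z|t^m \leq 1$ once $t^m \leq |z|^{-1}$, i.e.\ once $m \geq \log_t|z|^{-1} = -\log_t|z|$, the sum splits into at most $\lceil \log_{t^{-1}}|z|\rceil$ terms of size $O(\log|z|)$ plus a convergent tail $O(1)$. This produces a bound of the shape $|(z;t)_\infty| \leq C\exp(\text{const}\cdot (\log_t|z|)^2)$, and more precisely, tracking the leading coefficient, $|(z;t)_\infty| \le C \exp\big(\tfrac12(\log t^{-1})(\log_{t^{-1}}|z|)^2 + O(\log_{t^{-1}}|z|)\big)$; I would state this as a self-contained sublemma (or cite a standard $q$-series estimate).

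Applying this to $(w_i/w_j;t)_\infty$: here $|w_i/w_j|$ need not be $\ge 1$, but $\log_t|w_i/w_j| = \log_t|w_i| - \log_t|w_j|$, and when $|w_i/w_j|\le 1$ the factor is simply bounded by $\prod_m(1+t^m) = O(1)$. Summing the quadratic bound over the pairs $(i,j)$ with $|w_i|\le|w_j|$ (the others contributing $O(1)$), and using $(\log_t|w_i| - \log_t|w_j|)^2 \le 2\log_t|w_i|^2 + 2\log_t|w_j|^2$ or a cleaner direct count, the total exponent coming from the cross terms is at most $\tfrac{k-1}{2}(\log t^{-1})\sum_i \lfloor\log_t|w_i|\rfloor^2 + O(\sum_i \lfloor\log_t|w_i|\rfloor)$ — this is precisely where the constant $\tfrac{k-1}{2}$ in the statement comes from, since each $w_i$ appears in $2(k-1)$ cross factors but each such factor contributes half the quadratic. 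For the Hall-Littlewood sum, I would use Lemma \ref{thm:complex_hl_bound} to bound $|P_{(L_1-L_k,\ldots,j)}(w_1^{-1},\ldots,w_k^{-1};t,0)| \le P_{(L_1-L_k,\ldots,j)}(|w_1|^{-1},\ldots,|w_k|^{-1};t,0)$, and since $|w_i|^{-1}\le 1$ and the signature is fixed (finitely many of them, as $j$ ranges over the finite set $0,\ldots,L_{k-1}-L_k$), each such term is $O(1)$ — in fact $O(\prod_i |w_i|^{-(\text{something})})$, which only helps. So the whole numerator-plus-cross-term block is bounded by $C\prod_i e^{\frac{k-1}{2}(\log t^{-1})\lfloor\log_t|w_i|\rfloor^2 + c_2\lfloor\log_t|w_i|\rfloor}$. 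For the denominator, $|w_i|\ge 1$ gives $|w_i|^{-1}\le 1$, and the real obstacle is lower-bounding $|(-w_i^{-1};t)_\infty|$ away from zero: its zeros are at $w_i^{-1} \in t^{-\Z_{\geq 0}}\cdot(-1)$, i.e.\ $w_i \in -t^{\Z_{\ge 0}}$, all lying in $\overline\D$ except $w_i=-1$; since we have removed a neighborhood $U$ of $-1$ and $|w_i|\ge 1$, we stay a definite distance from all zeros, so Lemma \ref{thm:qpoc_lower_bound} (with $z=-w_i^{-1}$, $n=\infty$, and $\delta$ depending on $U$) gives $|(-w_i^{-1};t)_\infty| \ge \delta(t^{1/2};t)_\infty^2 > 0$. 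This absorbs into the constant $C$.

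The main obstacle I anticipate is not any single estimate but bookkeeping the leading-order constant correctly: the cross-term product has $k(k-1)$ factors, and I must be careful that the pairs with $|w_i| > |w_j|$ versus $|w_i| < |w_j|$ are handled so that the $\lfloor\log_t|w_i|\rfloor^2$ coefficient comes out exactly $\tfrac{k-1}{2}$ rather than, say, $(k-1)$ — the point being that $(w_i/w_j;t)_\infty$ contributes a quadratic in $\log_t|w_i|$ only when $|w_i| \ge |w_j|$, and for each fixed $i$ there are at most $k-1$ such $j$, but one should double-check that cross-terms don't double count by also looking at whether the symmetric reorganization $\sum_{i\ne j}(\log_t|w_i|-\log_t|w_j|)^2$ overcounts. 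A clean way around this is: order $|w_{(1)}| \le \cdots \le |w_{(k)}|$, note $(w_{(a)}/w_{(b)};t)_\infty$ is $O(1)$ when $a\ge b$ and otherwise contributes $\le C\exp(\tfrac12(\log t^{-1})(\log_{t^{-1}}|w_{(a)}|)^2 + O(\log))$ (bounding $|w_{(a)}/w_{(b)}| \le |w_{(a)}|$), and for each index $(a)$ there are exactly $a - 1 \le k-1$ values of $(b)$ with $b < a$ — wait, I need $b > a$ for the factor to be the "large over small" one; in any case each index appears as the numerator in at most $k-1$ factors, giving the $\tfrac{k-1}{2}$ after the factor of $\tfrac12$ from the sublemma. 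Once this combinatorial point is pinned down, the rest is routine, and Lemma \ref{thm:f_bound} follows immediately by multiplying \eqref{eq:tf_bound} by $|e^{\frac{t^{L_k+\zeta}}{1-t}(w_1+\cdots+w_k)}| = e^{\frac{t^{L_k+\zeta}}{1-t}\sum_i\Re(w_i)}$ and noting $\frac{t^{L_k+\zeta}}{1-t} > 0$, so this is exactly the $e^{c_1\Re(w_i)}$ factor, while verifying $\tG$ and $\Gamma(\tau)$ avoid $\D$ and a fixed neighborhood of $-1$ (immediate from their definitions, since both consist of the right half of the unit circle together with horizontal rays at height $\pm 1$).
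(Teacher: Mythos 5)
Your plan is correct and follows essentially the same route as the paper: constant prefactor, a quadratic-in-$\log|w_i|$ bound on each $q$-Pochhammer cross-factor yielding coefficient $\tfrac{k-1}{2}$ from $k-1$ factors per index each contributing $\tfrac12$, boundedness of the Hall--Littlewood sum since it is a polynomial in the $w_i^{-1}$ with $|w_i|^{-1}\le 1$, and lower-bounding $|(-w_i^{-1};t)_\infty|$ away from zero using $|w_i|\ge 1$ and $w_i$ bounded away from $-1$. One point worth noting: the combinatorial bookkeeping you flag as your main worry is sidestepped in the paper by a simpler observation --- since \emph{every} $|w_j|\ge 1$, one has $|w_i/w_j|\le|w_i|$ for all $j\ne i$, hence $|(w_i/w_j;t)_\infty|\le(-|w_i|;t)_\infty\le(-t^{\lfloor\log_t|w_i|\rfloor};t)_\infty\le t^{-\binom{b_i+1}{2}}(-1;t)_\infty^2$ with $b_i=-\lfloor\log_t|w_i|\rfloor$, uniformly over all pairs, so no ordering or case split on $|w_i|\lessgtr|w_j|$ is needed: one simply multiplies the $k-1$ identical bounds for each $i$. (Also, a small slip: you wrote that the pairs with $|w_i|\le|w_j|$ contribute the quadratic bound, but those are exactly the ones where $|w_i/w_j|\le 1$ and the factor is $O(1)$; you notice and correct this yourself later in the paragraph.) For the denominator, the paper uses the elementary estimate $|1/(-w_i^{-1};t)_\infty|\le|1/(1+w_i^{-1})|\cdot(t;t)_\infty^{-1}$ rather than invoking Lemma~\ref{thm:qpoc_lower_bound}, but your approach works equally well.
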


\begin{proof}[Proof of \Cref{thm:tf_bound}]
If $k \geq 2$ then the sum over $j$ in \eqref{eq:def_tf} is a polynomial in $w_1^{-1},\ldots,w_k^{-1}$, hence if $|w_i| \geq 1$ for all $i$ we have
\begin{equation}\label{eq:poly_bounded}
\abs*{\sum_{j=0}^{L_{k-1}-L_k} t^{\binom{j+1}{2}} \sqbinom{L_{k-1}-L_k}{j}_t  P_{(L_1-L_k,\ldots,L_{k-1}-L_k,j)}(w_1^{-1},\ldots,w_k^{-1};t,0)} < \text{const}.
\end{equation}
If $k=1$, then the sum is 
\begin{equation}
\label{eq:poly_bounded_k=1}
\sum_{j=0}^\infty \frac{t^{\binom{j+1}{2}} (w_1^{-1})^j}{(t;t)_j} = (-tw_1^{-1};t)_\infty
\end{equation}
by the $q$-binomial theorem, and this too is clearly bounded by a constant over all $|w_1| \geq 1$.

For the products in the denominator of \eqref{eq:def_tf},
\begin{equation}\label{eq:inverse_qpoc_bound}
\abs*{\frac{1}{(-w_i^{-1};t)_\infty}} \leq \abs*{\frac{1}{1+w_i^{-1}}} \frac{1}{(t;t)_\infty}
\end{equation}
since $|w_i|^{-1} \leq 1$, and $w_i$ is bounded away from $-1$ the above is bounded by a constant. Similarly, $|1/w_i|$ is clearly bounded above by a constant in $\C \setminus (\D \cup U)$. We now treat nonconstant terms. Since $|w_j| \geq 1$, 
\begin{equation}
|(w_i/w_j;t)_\infty| \leq (-|w_i|;t)_\infty \leq (-t^{\floor{\log_t |w_i|}};t)_\infty.
\end{equation}
By writing
\begin{equation}
(-t^{-b};t)_\infty = (-1;t)_\infty t^{-b}(t^b+1)t^{-b+1}(t^{b-1}+1) \cdots t^{-1}(t+1) \leq t^{-\binom{b+1}{2}} (-1;t)_\infty^2,
\end{equation}
we therefore obtain
\begin{equation}
\label{eq:bound_ij_prod}
\abs*{\prod_{1 \leq i \neq j \leq k} (w_i/w_j;t)_\infty} \leq \text{const} \cdot \prod_{i=1}^k e^{(-\log t)\left(\frac{k-1}{2}\floor{\log_t |w_i|}^2+\frac{3(k-1)}{2}\floor{\log_t |w_i|}\right)}.
\end{equation}
Combining \eqref{eq:bound_ij_prod} with the previous constant bounds completes the proof.
\end{proof}

\begin{proof}[Proof of \Cref{thm:f_bound}]
The contours given do not intersect $\D \cup B_{1/2}(-1)$ provided that $\tau$ is large enough that the vertical part of $\Gamma(\tau)$ lies to the left of the set. Hence the result follows from \Cref{thm:tf_bound} together with the elementary inequality
\begin{equation}
\abs*{e^{\frac{t^{L_k+\zeta}}{1-t}w}} = e^{c_1 \Re(w)}.
\end{equation}
\end{proof}

\begin{proof}[Proof of \Cref{thm:g_bound_small_w}]
First note that
\begin{align}\label{eq:g_branch_explicit}
\begin{split}
g_{\tau}(w_1,\ldots,w_k)&= \sum_{\kappa \subset \nu(\tau)} \frac{P_{\nu(\tau)/\kappa}(1,t,\ldots;0,t) }{P_{\nu(\tau)}(1,t,\ldots;0,t)} t^{\eta_k(\tau)|\kappa|} Q_{\kappa'}(w_1,\ldots,w_k; t,0)\\
&= \sum_{\kappa \subset \nu(\tau)} Q_{\kappa'}(w_1,\ldots,w_k; t,0) \prod_{i=1}^k t^{\binom{\nu_i'(\tau) - \kappa_i'}{2} - \binom{\nu_i'(\tau)}{2} + \eta_k(\tau)\kappa_i'} (t^{1+\nu_i'(\tau)-\kappa_i'};t)_{m_i(\kappa)},
\end{split}
\end{align}
where we have used the branching rule, the fact that $Q_{\kappa'}$ is homogeneous of degree $|\kappa|$, and the explicit formula \eqref{eq:hl_quotient}. We wish to bound
\begin{align}\label{eq:g-1}
\begin{split}
&\abs*{g_{\tau}(w_1,\ldots,w_k) - 1} \\ 
&= \abs*{\sum_{\substack{\kappa \subset \nu(\tau) \\ \kappa \neq \emptyset}} Q_{\kappa'}(w_1,\ldots,w_k; t,0) \prod_{i=1}^k t^{\binom{\nu_i'(\tau) - \kappa_i'}{2} - \binom{\nu_i'(\tau)}{2} + \eta_k(\tau)\kappa_i'} (t^{1+\nu_i'(\tau)-\kappa_i'};t)_{m_i(\kappa)}} \\ 
&\leq \sum_{\substack{\kappa \subset \nu(\tau) \\ \kappa \neq \emptyset}} Q_{\kappa'}(|w_1|,\ldots,|w_k|; t,0) \prod_{i=1}^k t^{\binom{\nu_i'(\tau) - \kappa_i'}{2} - \binom{\nu_i'(\tau)}{2} + \eta_k(\tau)\kappa_i'} (t^{1+\nu_i'(\tau)-\kappa_i'};t)_{m_i(\kappa)} \\ 
&\leq \sum_{\kappa \subset \nu(\tau) } Q_{\kappa'}(|w_1|,\ldots,|w_k|; t,0) \prod_{i=1}^k t^{\binom{\nu_i'(\tau) - \kappa_i'}{2} - \binom{\nu_i'(\tau)}{2} + \eta_k(\tau)\kappa_i'} - 1,
\end{split}
\end{align}
where in the last bound we used that $(t^{1+\nu_i'(\tau)-\kappa_i'};t)_{m_i(\kappa)} \in [0,1]$. We rewrite the exponential as 
\begin{equation}\label{eq:t_power_bound}
t^{\binom{\nu_i'(\tau) - \kappa_i'}{2} - \binom{\nu_i'(\tau)}{2} + \eta_k(\tau)\kappa_i'} = t^{ \kappa_i'(\kappa_i'/2+1/2-\nu_i'(\tau) + \eta_k(\tau))} \leq t^{ \kappa_i'(\kappa_i'/2+1/2-\nu_1'(\tau) + \eta_k(\tau))}.
\end{equation}
Then 
\begin{equation}\label{eq:t_to_Q}
\prod_{i=1}^k t^{ \kappa_i'(\kappa_i'/2+1/2-\nu_1'(\tau) + \eta_k(\tau))} = t^{n(\kappa) + (\eta_k(\tau) - \nu_1'(\tau))|\kappa|} = \left(t^{\eta_k(\tau)-\nu_1'(\tau)}\right)^{|\kappa|} Q_\kappa(1,t,\ldots;0,t).
\end{equation}
Substituting \eqref{eq:t_power_bound} and \eqref{eq:t_to_Q} into \eqref{eq:g-1} yields 
\begin{align}
\begin{split}
\abs*{g_{\tau}(w_1,\ldots,w_k) - 1} &\leq \sum_{\kappa \subset \nu(\tau) } \left(t^{\eta_k(\tau)-\nu_1'(\tau)}\right)^{|\kappa|} Q_{\kappa'}(|w_1|,\ldots,|w_k|; t,0) Q_\kappa(1,t,\ldots;0,t) -1 \\ 
&\leq \sum_{\kappa \in \Y} \left(t^{\eta_k(\tau)-\nu_1'(\tau)}\right)^{|\kappa|} Q_{\kappa'}(|w_1|,\ldots,|w_k|; t,0) Q_\kappa(1,t,\ldots;0,t) -1 \\ 
&= \Pi_{0,t}(1,t,\ldots; \beta(t^{\eta_k(\tau)-\nu_1'(\tau)}|w_1|,\ldots,t^{\eta_k(\tau)-\nu_1'(\tau)}|w_k|)) - 1\\ 
&= \prod_{i=1}^k (-t^{\eta_k(\tau)-\nu_1'(\tau)}|w_i|;t)_\infty - 1
\end{split}
\end{align}
by the Cauchy identity. In our setting, $|w_i| \leq t^{-\xi(\tau)-1}$ since $w_i \in \Gamma_1(\tau)$ for all $i$, so by the $q$-binomial theorem 
\begin{equation}
\prod_{i=1}^k (-t^{\eta_k(\tau)-\nu_1'(\tau)}|w_i|;t)_\infty - 1 \leq \prod_{i=1}^k \left(\sum_{j \geq 0} \frac{\left(t^{\eta_k(\tau)-\nu_1'(\tau)}|w_i|\right)^j t^{\binom{j}{2}}}{(t;t)_j} \right) - 1 = O(t^{\eta_k(\tau) - \nu_1'(\tau) - \xi(\tau)-1}),
\end{equation}
completing the proof.
\end{proof}

\begin{proof}[Proof of \Cref{thm:g_bound_large_w}]
By the same manipulations as in the first part of the proof of \Cref{thm:g_bound_small_w}, 
\begin{align}\label{eq:w_large_1}
\abs*{g_{\tau}(w_1,\ldots,w_k)} \leq \sum_{\kappa \subset \nu(\tau) } Q_{\kappa'}(t^{\eta_k(\tau)}|w_1|,\ldots,t^{\eta_k(\tau)}|w_k|; t,0) \prod_{i=1}^k t^{ \kappa_i'(\kappa_i'/2+1/2-\nu_i'(\tau))}.
\end{align}
We bound this as 
\begin{equation}
\label{eq:naive_w_bound}
\text{RHS\eqref{eq:w_large_1}} \leq \#\{\kappa \in \Y: \kappa \subset \nu(\tau)\} \times \sup_{\kappa \subset \nu(\tau)} Q_{\kappa'}(t^{\eta_k(\tau)}|w_1|,\ldots,t^{\eta_k(\tau)}|w_k|; t,0) \times \sup_{\kappa \subset \nu(\tau)} \prod_{i=1}^k t^{ \kappa_i'(\kappa_i'/2+1/2-\nu_i'(\tau))}.
\end{equation}
Since $\kappa_i' \leq \nu_i'(\tau)$ for each $1 \leq i \leq k$ and $\kappa_k'=0$, a naive bound gives 
\begin{equation}
 \#\{\kappa \in \Y: \kappa \subset \nu(\tau)\} \leq (\nu_1'(\tau)+1) \cdots (\nu_k'(\tau)+1).
\end{equation}
By the branching rule,
\begin{align}\label{eq:bound_qW_kappa'}
\begin{split}
&Q_{\kappa'}(t^{\eta_k(\tau)}|w_1|,\ldots,t^{\eta_k(\tau)}|w_k|; t,0) \\
&= \sum_{\emptyset = \rho^{(0)} \prec \rho^{(1)} \prec \ldots \prec \rho^{(k)} = \kappa'} \prod_{i=1}^k \left(t^{\eta_k(\tau)}|w_i|\right)^{|\rho^{(i)}/\rho^{(i-1)}|} \frac{1}{(t;t)_{\rho^{(i)}_1 - \rho^{(i-1)}_1}}  \prod_{j=1}^i \sqbinom{\rho^{(i-1})_j - \rho^{(i-1)}_{j+1}}{\rho^{(i-1})_j - \rho^{(i)}_{j+1}}_t \\ 
& \leq \text{const}^{|\kappa|} \cdot \sum_{\emptyset = \rho^{(0)} \prec \rho^{(1)} \prec \ldots \prec \rho^{(k)} = \kappa'} (t;t)_\infty^{-2k^2}
\end{split}
\end{align}
where we have used that $t^{\eta_k(\tau)}|w_i| \leq \text{const}$ on $\Gamma(\tau)$, and bounded the branching coefficients by $(t;t)_\infty^{-2k^2}$. Similarly to the proof of \Cref{thm:g_bound_small_w}, we bound the number of Gelfand-Tsetlin patterns $\emptyset = \rho^{(0)} \prec \rho^{(1)} \prec \ldots \prec \rho^{(k)} = \kappa'$ by 
\begin{equation}
\prod_{\ell=1}^{k-1} \#\{(\zeta_1,\ldots,\zeta_\ell): 0 \leq \zeta_i \leq \kappa_1' \text{ for all }1 \leq i \leq \ell\} = (\kappa_1'+1)^{\binom{k}{2}},
\end{equation}
so \eqref{eq:bound_qW_kappa'} becomes
\begin{equation}
\label{eq:final_kappa'_bound}
Q_{\kappa'}(|w_1|,\ldots,|w_k|; t,0) \leq \text{const}^{|\kappa|} (t;t)_\infty^{-2k^2} (\kappa_1'+1)^{\binom{k}{2}}.
\end{equation}
We now bound the power of $t$ in \eqref{eq:naive_w_bound} as
\begin{equation}\label{eq:t_power_ineq}
\prod_{i=1}^k t^{ \kappa_i'(\kappa_i'/2+1/2-\nu_i'(\tau))} \leq t^{\frac{k}{8}-\sum_{i=1}^k (\nu_i')^2} \leq \text{const} \cdot t^{-k(\nu_1')^2}.
\end{equation}
Combining \eqref{eq:w_large_1}, \eqref{eq:final_kappa'_bound}, \eqref{eq:t_power_ineq} and the fact that $\kappa_1' \leq \nu_1'$ and so $|\kappa| \leq k\nu_1'$, we have
\begin{equation}
\abs*{g_{\tau}(w_1,\ldots,w_k)} \leq \text{const}' \cdot \text{const}^{k\nu_1'} \cdot (\nu_1'+1)^{\binom{k}{2}} \cdot t^{-k(\nu_1')^2} = O(t^{-(k+1)(\nu_1'(\tau))^2}),
\end{equation}
completing the proof. 
\end{proof}

\begin{rmk}
We initially proved \Cref{thm:hl_stat_dist} and its auxiliary lemmas in the case of trivial initial condition $\nu(\tau) \equiv \emptyset$ only. It was quite unexpected that the addition of an initial condition merely produces a simple multiplicative factor 
\[
\frac{P_\nu(\beta(z_1,\ldots,z_k),1,t,\ldots;0,t)}{P_\nu(1,t,\ldots;0,t)}
\]
in the integrand of \Cref{thm:use_torus_product}, which may be treated asymptotically as above.
\end{rmk}

\begin{rmk} \label{rmk:why_hypotheses_suboptimal}
Our hypothesis $\log_{t^{-1}}\tau - \nu_1'(\tau) \geq 2\pfrac{k}{\log t^{-1}}^2 \log \log \tau $ in \Cref{thm:hl_stat_dist}, which we believe is slightly suboptimal as mentioned in \Cref{rmk:hypotheses_suboptimal}, comes from the need for existence of a function $\xi(\tau)$ to split the contours as above. The requirement \eqref{eq:xi_large} forces $\xi(\tau)$ to be large, while the requirement \eqref{eq:xi_small} forces $\log_{t^{-1}}\tau - \nu_1'(\tau)$ to be larger than $\xi(\tau)^2$, so improving the bounds in either case could lead to improved versions of the technical hypothesis \eqref{eq:technical_restriction}. 

The requirement \eqref{eq:xi_large} in the proof above essentially comes from the need to dominate the error term bounded in \Cref{thm:g_bound_large_w}. This error term came from the main term in the proof of \Cref{thm:g_bound_large_w}, which comes from bounding
\begin{equation}
\frac{P_{\nu(\tau)/\kappa}(1,t,\ldots;0,t)}{P_{\nu(\tau)}(1,t,\ldots;0,t)},
\end{equation}
and our bound on this term is essentially sharp up to unimportant constants for the case $\kappa_i' \approx \nu_i'/2$, $\nu_k' \approx \nu_1'$. This is why it is not clear to us at the moment how the $\log \log \tau$ growth of $\log_{t^{-1}}\tau - \nu_1'(\tau) $ can be improved beyond improving the constant in front of $\log \log \tau$. It seems likely to us that this can be done by manipulating expressions differently before bounding to take advantage of more cancellations.
\end{rmk}

\begin{rmk}
It seems possible that a more involved version of the above manipulations could yield an explicit joint distribution of $\la_1'(\tau),\ldots,\la_k'(\tau)$ where the initial condition $\nu(\tau)$ have parts $\nu_i'(\tau)$ which grow like $\log_{t^{-1}}(\tau) + c_i$. Such a distribution would in particular be different from the one above, since the fact that $\la_i'(\tau) \geq \nu_i'(\tau)$ would make the conjugate parts $\la_i'(\tau)$ bounded below in the above regime. 
\end{rmk}

\section{Residue expansions} \label{sec:residues}

The probability distribution in \Cref{thm:hl_stat_dist}, which is expressed there by a contour integral, may be residue-expanded to yield formulas for the same limiting probability in terms of certain infinite series in $e^{-t^d}, d \leq L_k$, which lead to series formulas for the weights of $\cL_{k,t,\chi}$.

\begin{prop}\label{thm:hl_residue_formula}
Let $\la(\tau)$ be as in \Cref{thm:hl_stat_dist}, with or without the initial condition. Then for any $\vec{L} = (L_1,\ldots,L_k) \in \Sig_k$, we have
\begin{multline}\label{eq:brpw_residue_formula}
\lim_{\substack{\tau \to \infty \\ \log_{t^{-1}}(\tau) + \zeta \in \Z}} \Pr( (\la_i'(\tau) - \log_{t^{-1}}(\tau) - \zeta)_{1 \leq i \leq k} = \vec{L})   = \frac{1}{(t;t)_\infty}\sum_{d \leq L_k} e^{\frac{t^{d+\zeta}}{1-t}} \frac{t^{\sum_{i=1}^k \binom{L_i-d}{2}}}{ (t;t)_{L_k-d} \prod_{i=1}^{k-1}(t;t)_{L_i-L_{i+1}}}  \\ 
\times \sum_{\substack{\mu \in \Sig_{k-1} \\ \mu \prec \vec{L}}} (-1)^{|\vec{L}| - |\mu|-d}  \prod_{i=1}^{k-1} \sqbinom{L_i-L_{i+1}}{L_i-\mu_i}_t Q_{(\mu - (d[k-1]))'}(\gamma(t^{d+\zeta}),\alpha(1);0,t).
\end{multline}
\end{prop}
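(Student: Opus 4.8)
Looking at this proposition, I need to convert the contour integral formula from \Cref{thm:hl_stat_dist} into a residue expansion. Let me think through the approach.

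The plan is to take the contour integral formula \eqref{eq:explicit_hl_stat_dist_formula} and shift each contour $\tG$ to infinity (to the left), picking up residues at the poles of the integrand. The integrand has factors $1/(-w_i^{-1};t)_\infty$ and $1/(-tw_i;t)_\infty$; the poles relevant to leftward contour shifting come from $1/(-tw_i;t)_\infty = 1/\prod_{m\geq 0}(1+t^{m+1}w_i)$, which has simple poles at $w_i = -t^{-m-1}$ for $m \geq 0$, i.e. at $w_i \in -t^{-\Z_{\geq 1}}$. (One should check the factor $1/(-w_i^{-1};t)_\infty$ contributes no poles outside $\tG$ and that the numerator $\prod_{i\neq j}(w_i/w_j;t)_\infty$ together with the $q$-Whittaker polynomial $P_{(\ldots)}(w^{-1};t,0)$ do not create problems; the exponential factor $e^{\frac{t^{L_k+\zeta}}{1-t}w_i}$ decays as $\Re(w_i) \to -\infty$, which justifies closing the contour to the left and guarantees the sum of residues converges). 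So first I would, for each variable separately, apply the residue theorem to write the single-variable integral over $\tG$ as a sum over $m_i \geq 0$ of residues at $w_i = -t^{-m_i-1}$, then iterate over all $k$ variables. Writing $d = L_k - (\text{something})$ will reindex these residue sums; the shift to $d \leq L_k$ in the statement suggests substituting $w_i \mapsto -t^{-(L_k - d)}\cdot(\text{unit})$ or tracking the exponent bookkeeping so that the exponential becomes $e^{t^{d+\zeta}/(1-t)}$.

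Next I would compute the residue at a generic pole configuration. The key is to recognize the resulting expression in terms of Hall-Littlewood / $q$-Whittaker quantities. Evaluating $P_{(L_1-L_k,\ldots,L_{k-1}-L_k,j)}(w_1^{-1},\ldots,w_k^{-1};t,0)$ at $w_i \in -t^{-\Z}$ gives, via the principal-specialization-type formulas, expressions involving $Q$-functions with $\alpha$ and $\gamma$ specializations; the product $\prod_{i\neq j}(w_i/w_j;t)_\infty$ at these points becomes a product of finite $q$-Pochhammers producing $q$-binomial coefficients $\sqbinom{L_i-L_{i+1}}{L_i-\mu_i}_t$ after reindexing the pole locations as an interlacing signature $\mu \prec \vec L$. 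The sum over $j$ from $0$ to $L_{k-1}-L_k$ weighted by $t^{\binom{j+1}{2}}\sqbinom{L_{k-1}-L_k}{j}_t$ should combine with the branching structure to produce the single $Q_{(\mu-(d[k-1]))'}(\gamma(t^{d+\zeta}),\alpha(1);0,t)$ term — essentially running the branching rule \eqref{eq:specialization_branch} and the Cauchy identity in reverse, packaging a $q$-Whittaker branching sum into one skew/non-skew $Q$-function with a combined specialization. The factor $1/(-w_i^{-1};t)_\infty$ evaluated at $w_i = -t^{-a}$ gives $1/(t^a;t)_\infty$-type terms contributing to the $(t;t)_{L_k-d}^{-1}$ and $(t;t)_\infty^{-1}$ denominators.

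The main obstacle I expect is the \textbf{bookkeeping and repackaging step}: matching the multi-residue sum — which a priori is indexed by $k$ independent nonnegative integers (pole locations) plus the index $j$ — against the target form indexed by a single integer $d \leq L_k$ together with an interlacing signature $\mu \in \Sig_{k-1}$, $\mu \prec \vec L$. The interlacing constraint must emerge from the vanishing of residues when pole locations are "out of order" (due to zeros of the $\prod_{i\neq j}(w_i/w_j;t)_\infty$ factor, analogous to how Hall-Littlewood branching coefficients vanish off interlacing), and the collapse of the $j$-sum into a clean $Q$-function requires identifying the right instance of the $q$-Whittaker Cauchy identity or a $q$-binomial summation identity. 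I would organize this by first doing the $k=1$ case explicitly (where $\mu$ is empty and one checks against \eqref{eq:L1} directly), then the $k=2$ case to see the mechanism, and finally the general inductive step. The signs $(-1)^{|\vec L|-|\mu|-d}$ should track cleanly from the residue computation $\Res_{w=-t^{-a}} 1/(-tw;t)_\infty$ and the alternating structure of $q$-Pochhammer expansions. Throughout, convergence of the interchanged sum-and-limit is controlled by the same exponential decay and the bounds already established in \Cref{thm:f_bound}, so the analytic justification is routine given the earlier lemmas; the content is entirely the symmetric-function identity hidden in the residue evaluation.
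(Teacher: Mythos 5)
Your high-level instinct---expand the contour integral from \Cref{thm:hl_stat_dist} as a residue sum---matches the paper, but the actual execution diverges in ways that would make your plan stall.

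First, the contour geometry: $\tG$ already extends to $\Re(w) = -\infty$, so there is no ``shift it further left'' move that collects residues. The poles $w_i = -t^x$, $x \in \Z$, lie \emph{inside} $\tG$ and accumulate at $0$, where the integrand is not meromorphic (it has an essential singularity from the $e^{\chi t^{L_k}w}$ factor and the infinite products). This is the genuinely delicate point, and it is why the paper routes the argument through \Cref{thm:residue_expansion_independence} (residues plus an error integral over a small circle $(t^{n+1/2}\T)^k$) and the separate \Cref{thm:small_contour_negligible}, which shows that the small-circle integral vanishes \emph{identically}. That second lemma is not a routine consequence of the decay bounds in \Cref{thm:f_bound}; its proof uses the theta-transformation identity \Cref{thm:theta_transformations} to rescale the integrand and exploits that the expression is independent of $n$. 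Your ``the analytic justification is routine'' sentence papers over the hardest part of the argument.

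Second, your plan to ``iterate over all $k$ variables'' and thereby produce a $k$-fold residue sum does not work. After taking the residue at $w_k = -t^m$, the Vandermonde-type factor $\prod_{i \neq j}(w_i/w_j;t)_\infty$ has zeros exactly cancelling the remaining denominators, so the resulting function of $w_1,\ldots,w_{k-1}$ is \emph{holomorphic away from $0$}: there are no further residues to collect. The paper's proof therefore keeps the remaining $(k-1)$-fold integral on $\T^{k-1}$ and evaluates it by recognizing the integrand as a pairing in the torus scalar product $\lan \cdot,\cdot\ran'_{t,0;k-1}$, using the orthogonality of Macdonald Laurent polynomials (\Cref{thm:laurent_orthogonality}) together with the Cauchy identity to extract the single term $Q_{(\mu-(d[k-1]))'}(\gamma(t^{d+\zeta}),\alpha(1);0,t)$. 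This orthogonality step---not iterated residues---is how the $\mu \in \Sig_{k-1}$, $\mu \prec \vec L$ index set emerges, and it is also what makes the residues at $w_k = -t^m$ with $m > 0$ vanish (the shifted signature $\mu-(m[k-1])$ would have negative last part, making the Laurent polynomial orthogonal to everything in the Cauchy sum). Finally, a small slip: you list only the poles of $1/(-tw_i;t)_\infty$ at $-t^{-m-1}$, $m\geq 0$, but the pole at $w_i=-1$ comes from $1/(-w_i^{-1};t)_\infty$ and supplies the $d=L_k$ term of the answer.
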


\begin{rmk}\label{rmk:mysterious_hl_formula}
By the branching and principal specialization formulas (\Cref{thm:hl_principal_formulas} and \Cref{thm:hl_qw_branch_formulas}), the formula \eqref{eq:brpw_residue_formula} may also be written as 
\begin{equation}\label{eq:mysterious_hl_formula}
\frac{1}{(t;t)_\infty}\sum_{d \leq L_k} e^{\frac{t^{d+\zeta}}{1-t}} P_{\vec{L}(d)'}(1,t,\ldots;0,t) \sum_{\substack{\mu \in \Sig_{k-1} \\ \mu \prec \vec{L}}} P_{\vec{L}(d)/\mu(d)}(-1;t,0) Q_{\mu(d)'}(\gamma(t^{d+\zeta}),\alpha(1);0,t)
\end{equation}
where $\vec{L}(d) := (L_1-d,\ldots,L_k-d), \mu(d) := (\mu_1-d,\ldots,\mu_k-d)$. The fact that the final answer has such a simple expression in terms of symmetric functions seems in no way justified by the complicated intermediate manipulations we have taken, and it would be very interesting to find a simpler proof of \Cref{thm:hl_residue_formula} which explains this. We remark also that at first glance it might appear that the branching rule (for general specialized Macdonald symmetric functions) would simplify the sum over $\mu$ in \eqref{eq:mysterious_hl_formula}. The issue is that the sum is over $\mu \in \Sig_{k-1}$, which is a smaller index set, c.f. \eqref{eq:almost_cauchy} and the discussion after for a similar sum in an earlier prelimit expression which appears to be responsible for the above.
\end{rmk}

The integrand in our previous contour integral formula \eqref{eq:explicit_hl_stat_dist_formula} has poles at $w_i = -t^x, x \in \Z, 1 \leq i \leq k$, all of which lie within $\tG$. To derive \Cref{thm:hl_residue_formula} we wish to residue expand at these poles to obtain the right hand side of \eqref{eq:brpw_residue_formula}, but because $\tG$ is not a closed contour and furthermore the integrand is not meromorphic in a neighborhood of $0$, justifying this takes some care. To this end we state the following lemmas.  \Cref{thm:residue_expansion_independence} is the main algebraic step of computing the residues which arise from shifting contours. \Cref{thm:small_contour_negligible} shows that the contour integral appearing as an error term in \Cref{thm:residue_expansion_independence} is indeed negligible, and hence should be thought of as the statement that the integral in \eqref{eq:explicit_hl_stat_dist_formula} is indeed equal to its naive residue expansion. Recall the function $f$ from \eqref{eq:integrand_junk}.

\begin{lemma}\label{thm:residue_expansion_independence}
Fix $k \in \Z_{\geq 1}$, and let $h \in \Z_{\geq 0}$ and $\Gamma$ be a simple closed contour with interior containing $\{-t^x: x \in \Z, x \geq -h\}$. Then for any $\vec{L} = (L_1,\ldots,L_k) \in \Sig_k$ and any integer $n \geq L_k$,
\begin{multline}\label{eq:residue_expansion_independence}
\frac{1}{(2 \pi \bi)^k} \int_{\Gamma^k} \frac{f(w_1,\ldots,w_k)}{\prod_{i=1}^k (-tw_i;t)_\infty} \prod_{i=1}^k dw_i = \frac{1}{(t;t)_\infty}\sum_{d=L_k-h}^{L_k}  \frac{e^{-\frac{t^{d+\zeta}}{1-t}}t^{\sum_{i=1}^k \binom{L_i-d}{2}}}{ (t;t)_{L_k-d} \prod_{i=1}^{k-1}(t;t)_{L_i-L_{i+1}}}  \\ 
\times \sum_{\substack{\mu \in \Sig_{k-1} \\ \mu \prec \vec{L}}} (-1)^{|\vec{L}| - |\mu|-d} \prod_{i=1}^{k-1} \sqbinom{L_i-L_{i+1}}{L_i-\mu_i}_t Q_{(\mu - (d[k-1]))'}(\gamma(t^{d+\zeta}),\alpha(1);0,t) \\ 
+   \frac{1}{(2 \pi \bi)^k} \int_{(t^{n+1/2}\T)^k} \frac{f(w_1,\ldots,w_k)}{\prod_{i=1}^k (-tw_i;t)_\infty} \prod_{i=1}^k dw_i
\end{multline}
where $f(w_1,\ldots,w_k)$ is as in \eqref{eq:integrand_junk}. In particular, the right hand side is independent of $n \geq L_k$.
\end{lemma}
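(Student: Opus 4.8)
The plan is to compute the multiple contour integral by iteratively shifting each contour $\Gamma$ inward toward $0$ and collecting residues. First I would fix all but one variable, say $w_1$, on $\Gamma$, and shrink the $w_1$-contour from $\Gamma$ down to the small circle $t^{n+1/2}\T$. The integrand $f(w_1,\ldots,w_k)/\prod_i(-tw_i;t)_\infty$ has simple poles in $w_1$ at $w_1 = -t^x$ for $x \in \Z$; the factor $(-tw_1;t)_\infty$ contributes poles at $w_1 = -t^{-m}$, $m \geq 0$, and the factor $1/(-w_1^{-1};t)_\infty$ hidden in $f$ (see \eqref{eq:integrand_junk}) contributes the poles at $w_1 = -t^x$ with $x \geq 1$, while the $q$-Pochhammer $(w_i/w_1;t)_\infty$ factors are entire in $w_1^{-1}$. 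Between $t^{n+1/2}\T$ and $\Gamma$ the poles enclosed are exactly $w_1 = -t^x$ for $x \geq n+1 > L_k$ (on the side of small $|w_1|$) — wait, more carefully: shrinking picks up poles with $|-t^x|$ between $t^{n+1/2}$ and the radius of $\Gamma$, i.e. the poles at $w_1 = -t^x$ for all $x \geq -h$ down to $x = n$, so one must track which poles survive. The cleanest bookkeeping is to shift all $k$ contours simultaneously and organize the resulting sum of residues by the tuple $(x_1,\ldots,x_k)$ of pole locations, then show that the homogeneity and vanishing properties of $P_{(L_1-L_k,\ldots,L_{k-1}-L_k,j)}(w_1^{-1},\ldots,w_k^{-1};t,0)$ force the surviving residues to be indexed by a single integer $d = L_k, L_k-1, \ldots, L_k-h$ together with an interlacing signature $\mu \in \Sig_{k-1}$, $\mu \prec \vec L$.

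The key computation is the residue itself. At $w_i = -t^{x_i}$ the residue of $1/(-tw_i;t)_\infty$ is elementary and produces a factor $(-1)/(t;t)_\infty \cdot t^{\text{something}}$; I would use the standard identity $\operatorname{Res}_{w=-t^{-m}} \frac{1}{(-tw;t)_\infty} = \frac{(-1)^{m+1} t^{-\binom{m+1}{2}}}{(t;t)_\infty (t;t)_m} \cdot t^{-m}$ or its reformulation, combined with the principal-specialization formula \Cref{thm:hl_principal_formulas} applied to $P_{\vec L(d)'}$ and the Hall--Littlewood branching rule \Cref{thm:hl_qw_branch_formulas} applied to $P_{\vec L(d)/\mu(d)}(-1;t,0)$ — this is exactly how Remark \ref{rmk:mysterious_hl_formula} rewrites the answer. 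The Cauchy identity \eqref{eq:cauchy_by_force}-style manipulation in the proof of \Cref{thm:use_torus_product} shows how the $\sum_j$ with the $q$-binomial weights collapses against the residue of $1/\prod_i(-w_i^{-1};t)_\infty$ into the $(t;t)_{L_k-d}^{-1}$ and the $Q_{(\mu-(d[k-1]))'}(\gamma(t^{d+\zeta}),\alpha(1);0,t)$ factor: the exponential $e^{t^{L_k+\zeta}(w_1+\cdots+w_k)/(1-t)}$ evaluated at $w_i = -t^{x_i-L_k}$ becomes $e^{-t^{d+\zeta}/(1-t)}$ after a change of summation variable $d = L_k + (\text{shift})$, matching the stated formula; the $\gamma$-specialization encodes this exponential and the $\alpha(1)$ encodes the single principal variable left over. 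I would carry this out for $k=1$ first (where the $\sum_j$ is the $q$-binomial series giving $(-tw_1^{-1};t)_\infty$) to pin down all constants, then handle general $k$ by the same residue calculus applied variable-by-variable.

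The main obstacle will be the careful justification of shifting the multiple contour through the region near $0$: the integrand is \emph{not} meromorphic at $w_i = 0$ (the factors $(-w_i^{-1};t)_\infty$ and the Macdonald Laurent polynomials in $w_i^{-1}$ have an essential singularity / are not even single-valued there in the naive sense), so I cannot simply "pick up all residues and let the remaining contour shrink to a point." This is precisely why the statement keeps an explicit remainder integral over $(t^{n+1/2}\T)^k$ rather than asserting a convergent residue series. The resolution is that for \emph{finite} $n$ every pole between $\Gamma$ and $t^{n+1/2}\T$ is a genuine simple pole in each variable (the problematic accumulation is only as $|w_i|\to 0$), so the standard residue theorem applies on each annular region and the identity \eqref{eq:residue_expansion_independence} is exact with the remainder term; the "in particular, the right hand side is independent of $n \geq L_k$" then follows because the residues collected in passing from $t^{n+1/2}\T$ to $t^{(n+1)+1/2}\T$ are indexed by pole tuples with some $x_i \geq n+1 > L_k$, and the vanishing of $P_{(L_1-L_k,\ldots,L_{k-1}-L_k,j)}(\ldots;t,0)$ — a polynomial of bounded degree in the $w_i^{-1}$ — at such scales, together with $\binom{L_i - d}{2}$ growing, kills those terms identically. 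Establishing this vanishing cleanly (equivalently: that no new residue is collected once $d$ drops below $L_k - h$, where $h$ bounds how far the poles reach) is the one genuinely delicate point; the rest is bookkeeping with $q$-Pochhammers.
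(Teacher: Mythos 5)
Your overall strategy — push the contours inward one variable at a time, collect residues at $w_i = -t^m$, and evaluate them via the principal specialization and branching formulas — matches the paper's. But there is a genuine structural gap in how you propose to organize the residue calculus, and it would derail the computation if carried out as described.

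You plan to ``shift all $k$ contours simultaneously and organize the resulting sum of residues by the tuple $(x_1,\ldots,x_k)$ of pole locations,'' and then argue that homogeneity/vanishing of $P_{(L_1-L_k,\ldots,L_{k-1}-L_k,j)}$ forces the surviving residues to be indexed by a single integer $d$ and a signature $\mu$. This is not what happens, and it is not at all obvious that such a multi-index residue sum collapses. The observation that actually makes the computation feasible, and which the paper's proof hinges on, is that once you take the residue at $w_k = -t^m$, the factor $\prod_{1 \leq i \neq j \leq k}(w_i/w_j;t)_\infty$ in the numerator of $f$ vanishes whenever any other $w_i \in -t^{\Z}$. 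Consequently $\Res_{w_k=-t^m}\bigl(f/\prod_i(-tw_i;t)_\infty\bigr)$ is holomorphic in $w_1,\ldots,w_{k-1}$ away from $0$, so the remaining $(k-1)$-fold integral picks up no further residues and its contours may be shrunk to $\T^{k-1}$ (or anything around $0$) at no cost. There are never residues indexed by a $k$-tuple of pole locations; only a single residue variable $m$ ever appears, and by symmetry this accounts for a factor of $k$. Without this observation you are in the position of evaluating a genuinely $k$-dimensional residue sum of a non-factorizable integrand, which is a much harder and structurally different problem.

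Relatedly, your explanation for why the right-hand side is independent of $n$ — that the extra residues as $n$ grows are killed by vanishing of the $P$ polynomial and the growth of $\binom{L_i-d}{2}$ — does not identify the correct mechanism. In the actual computation, after contracting the remaining $(k-1)$-fold integral against the Cauchy kernel and using orthogonality of $q$-Whittaker Laurent polynomials (\Cref{thm:laurent_orthogonality}), one arrives at a product of $q$-Pochhammers that simplifies to $\bbone(m \leq 0)/(t;t)_{-m}$ because $(t^{-m+1};t)_{\mu_{k-1}}$ contains a factor $(1-t^0)=0$ when $0 < m \leq \mu_{k-1}$, and an orthogonality indicator $\bbone(\mu_{k-1}-m \geq 0)$ removes the rest. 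This is why only $d \in \{L_k-h,\ldots,L_k\}$ (equivalently $m \leq 0$) contribute, regardless of $n$. It is an algebraic cancellation that falls out of the residue evaluation, not a crude size estimate on the $P$ polynomial, and it is not visible until one actually carries the computation through. Your proposal as written would not surface it.
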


\begin{lemma}\label{thm:small_contour_negligible}
Fix $k \in \Z_{\geq 1}$ and $\vec{L} = (L_1,\ldots,L_k) \in \Sig_k$. Then for any $n \geq L_k$,
\begin{multline}\label{eq:zero_integral}
 \frac{(t;t)_\infty^{k-1}}{k! (2 \pi \bi)^k} \prod_{i=1}^{k-1} \frac{t^{\binom{L_i-L_k}{2}}}{(t;t)_{L_i-L_{i+1}}} \int_{(t^{n+1/2}\T)^k} e^{\frac{t^{L_k+\zeta}}{1-t}(w_1+\ldots+w_k)} \frac{\prod_{1 \leq i \neq j \leq k} (w_i/w_j;t)_\infty}{\prod_{i=1}^k (-w_i^{-1};t)_\infty (-tw_i;t)_{\infty}} \\ 
\times \sum_{j=0}^{L_{k-1}-L_k} t^{\binom{j+1}{2}} \sqbinom{L_{k-1}-L_k}{j}_t  P_{(L_1-L_k,\ldots,L_{k-1}-L_k,j)}(w_1^{-1},\ldots,w_k^{-1};t,0)  \prod_{i=1}^k \frac{dw_i}{w_i} = 0
\end{multline}
with the sum over $j$ interpreted as in \Cref{thm:hl_stat_dist} when $k=1$.
\end{lemma}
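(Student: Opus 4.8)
The plan is to show the integrand is holomorphic at $w_i = 0$ for each $i$, so that shrinking the radius $t^{n+1/2}$ of the torus to $0$ forces the integral to vanish; the only subtlety is the $\prod_i (-w_i^{-1};t)_\infty$ in the denominator and the $P_{\ldots}(w_1^{-1},\ldots,w_k^{-1};t,0)$ in the numerator, both of which involve the $w_i^{-1}$. First I would expand $1/(-w_i^{-1};t)_\infty$ via the $q$-binomial theorem as $\sum_{m\geq 0} t^{\binom{m}{2}} (-w_i^{-1})^m/(t;t)_m$, which converges for $|w_i| > 1$ but \emph{not} on the small torus; so instead I would rewrite, for each $i$,
\begin{equation}
\frac{1}{(-w_i^{-1};t)_\infty} = \frac{1}{\prod_{\ell \geq 0}(1 + w_i^{-1} t^\ell)} = \prod_{\ell \geq 0}\frac{w_i}{w_i + t^\ell} = \frac{w_i}{1+w_i}\prod_{\ell \geq 1}\frac{w_i}{w_i+t^\ell},
\end{equation}
which makes manifest that near $w_i = 0$ this factor is $O(w_i)$ and holomorphic (its only poles are at $w_i = -t^\ell$, $\ell \geq 0$, all of modulus $\geq t^{n}\gg t^{n+1/2}$ on and inside the torus for $n \geq L_k$, in fact all with $|{-t^\ell}| \leq 1$ so they lie outside $t^{n+1/2}\T$ provided $n+1/2 > 0$, which holds since $n \geq L_k$ and we may assume $n$ large). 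Meanwhile $P_{(L_1-L_k,\ldots,L_{k-1}-L_k,j)}(w_1^{-1},\ldots,w_k^{-1};t,0)$ is a symmetric polynomial in the $w_i^{-1}$ of degree $|\vec L| - kL_k + j \leq |\vec L| - kL_k + (L_{k-1}-L_k)$ in each variable, hence times $\prod_i w_i^{\text{(that degree)}}$ it is a polynomial in the $w_i$. Collecting: the full integrand, after multiplying and dividing by appropriate powers of $w_i$, has the form
\begin{equation}
\prod_{i=1}^k \left(\frac{w_i}{1+w_i}\prod_{\ell\geq 1}\frac{w_i}{w_i+t^\ell}\right)\cdot\frac{1}{w_i}\cdot\big(\text{entire function of }w_1,\ldots,w_k\big),
\end{equation}
where the entire part absorbs the exponential, the $\prod_{i\neq j}(w_i/w_j;t)_\infty$ (note $(w_i/w_j;t)_\infty$ is entire in $w_i$, and the potential $w_j \to 0$ singularities are cancelled because the product over the sum-over-$j$ term supplies enough powers of $w_j$ — this cancellation should be checked by a degree count, using that $P_\lambda$ is homogeneous), and the polynomial from the $P$'s; the $1/(-tw_i;t)_\infty$ factor is likewise holomorphic and nonzero on $t^{n+1/2}\T$.

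The key point is then that each $w_i$-integrand, as a function of $w_i$ with the other variables fixed on their contours, is holomorphic in the closed disc $\{|w_i| \leq t^{n+1/2}\}$: the factor $w_i/(1+w_i)\cdot\prod_{\ell\geq 1} w_i/(w_i+t^\ell)$ contributes a zero of order $k$... actually of order one times the product, which near $0$ is $\sim w_i \cdot \prod_{\ell\geq 1}(w_i/t^\ell)\cdot(\text{unit})$ — here I must be careful: $\prod_{\ell\geq 1} w_i/(w_i+t^\ell)$ near $w_i=0$ behaves like $\prod_{\ell\geq 1}(w_i/t^\ell)$ which has an essential zero, so this factor actually vanishes to infinite order at $0$ and in particular is bounded by $C|w_i|$ on the disc. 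Combined with the $1/w_i$ it is still $O(1)$, and the remaining factors are bounded, so the integrand is bounded on $t^{n+1/2}\T$ by a constant depending on $n$; but more is true — each factor is genuinely holomorphic there (no pole at $0$), so by Cauchy's theorem (integrating in $w_1$ first, say, the inner integral is $0$) the whole multiple integral vanishes. Alternatively, and perhaps cleanest to write, I would bound the integrand on $t^{n+1/2}\T^k$ by $C_n \prod_i |w_i|^{?}$ using the above, multiply by the contour length $(2\pi t^{n+1/2})^k$, and let $n\to\infty$: since the left side of \eqref{eq:zero_integral} was shown independent of $n \geq L_k$ in the course of proving (or rather, it equals the error term in) Lemma \ref{thm:residue_expansion_independence}, and this bound $\to 0$, the common value must be $0$.

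The main obstacle I anticipate is the bookkeeping of zeros and poles at $w_i = 0$ for the factor $\prod_{1\leq i\neq j\leq k}(w_i/w_j;t)_\infty$, which has poles in $w_j$ at $w_j = 0$ (through the $w_i/w_j$ argument for the indices with $j$ in the denominator), and verifying these are exactly cancelled by the powers of $w_j$ coming from clearing denominators in $P_{(L_1-L_k,\ldots,L_{k-1}-L_k,j)}(w_1^{-1},\ldots,w_k^{-1};t,0)$ together with the $\prod_i 1/w_i$ and the $1/(-w_i^{-1};t)_\infty$ contributions. This is a finite but slightly fiddly homogeneity/degree computation; once it is done the vanishing is immediate from Cauchy's theorem or from the $n \to \infty$ limit argument. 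The $k=1$ case is easier: there the sum over $j$ collapses to $(-tw_1^{-1};t)_\infty$ by the $q$-binomial theorem (Remark \ref{rmk:k=1_interp}), and one checks directly that $e^{\frac{t^{L_1+\zeta}}{1-t}w_1}(-tw_1^{-1};t)_\infty/((-w_1^{-1};t)_\infty(-tw_1;t)_\infty)\cdot w_1^{-1}$ is holomorphic at $0$ — indeed $(-tw_1^{-1};t)_\infty/(-w_1^{-1};t)_\infty = 1/(1+w_1^{-1}) = w_1/(1+w_1)$ exactly, so the integrand is $e^{\frac{t^{L_1+\zeta}}{1-t}w_1}/((1+w_1)(-tw_1;t)_\infty)$, manifestly holomorphic in $|w_1| \leq t^{n+1/2} < 1$, whence the integral is $0$.
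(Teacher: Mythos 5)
Your primary argument (holomorphicity plus Cauchy's theorem) has a genuine gap, and your fallback argument is the paper's actual approach but is only sketched.

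The key error is in the claim that $1/(-w_i^{-1};t)_\infty$ has no poles in the small disc $\{|w_i| < t^{n+1/2}\}$. Its poles lie at $w_i=-t^\ell$ for $\ell\geq 0$; these have modulus $t^\ell$, which is $\leq 1$, and they accumulate at the origin, so infinitely many of them (those with $\ell\geq n+1$) lie strictly \emph{inside} $t^{n+1/2}\T$, not outside as you assert. For $k=1$ your explicit computation rescues the argument: the $j$-sum produces $(-tw_1^{-1};t)_\infty$, whose zeros at $-t^\ell$, $\ell\geq 1$, exactly cancel all the interior poles, leaving the holomorphic $e^{\chi w_1}/\bigl((1+w_1)(-tw_1;t)_\infty\bigr)$. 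But for $k\geq 2$ the $j$-sum is a \emph{finite} polynomial in the $w_i^{-1}$, which cannot supply an infinite sequence of zeros; the poles at $-t^\ell$, $\ell\geq n+1$, survive inside the disc, and Cauchy's theorem does not apply. A further unaddressed issue is the essential singularity at $w_i=0$ coming from $\prod_{j\neq i}(w_j/w_i;t)_\infty$: for $k\geq 3$ its growth as $w_i\to 0$ (of order $t^{-(k-1)\binom{m+1}{2}}$ at radius $t^{m+1/2}$) dominates the decay $t^{(m+1)^2/2}$ from $1/(-w_i^{-1};t)_\infty$, so the integrand is unbounded near $w_i=0$ and certainly not holomorphically extendable; a degree count against the finite polynomial from the $j$-sum cannot fix this, since a finite power of $w_i$ cannot absorb an essential singularity.

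Your fallback --- exploit the independence from $n$ furnished by \Cref{thm:residue_expansion_independence}, then show the integral tends to $0$ as $n\to\infty$ --- is exactly what the paper does. There the change of variables $w_i=t^n u_i$ and the theta transformation law (\Cref{thm:theta_transformations}) are used to factor the integrand as $\prod_i u_i^n t^{\binom{n+1}{2}-n(|\vec L|+j-kL_k)}$ times an $n$-independent bounded function on $\T^k$; the first factor goes to $0$ uniformly since the exponent of $t$ grows quadratically in $n$. That argument bypasses all pole-counting inside the small disc. Your sketch of this route is correct in spirit but leaves the crucial uniform bound to be supplied; I would treat the paper's change of variables as the cleanest way to do so, and would drop the holomorphicity argument entirely for $k\geq 2$.
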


We also record a certain computation used several times below in the following lemma. 

\begin{lemma}\label{thm:theta_transformations}
For any $w \in \C^\times$ and $n \in \Z$, 
\begin{equation}
(-t^{-n}w^{-1};t)_\infty (-t^{n+1}w;t)_{\infty} = w^{-n} t^{-\binom{n+1}{2}} (-w^{-1};t)_\infty (-t w; t)_\infty.
\end{equation}
\end{lemma}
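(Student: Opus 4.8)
Looking at Lemma~\ref{thm:theta_transformations}, the claim is the identity
\[
(-t^{-n}w^{-1};t)_\infty (-t^{n+1}w;t)_{\infty} = w^{-n} t^{-\binom{n+1}{2}} (-w^{-1};t)_\infty (-t w; t)_\infty
\]
for $w \in \C^\times$ and $n \in \Z$. This is a quasi-periodicity relation for the Jacobi theta function $(-w^{-1};t)_\infty(-tw;t)_\infty$, and should follow by a direct manipulation of the $q$-Pochhammer products. Let me sketch the plan.

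The plan is to reduce to the case $n \geq 0$ by symmetry, then induct on $n$ (or just directly telescope). First I would observe that the identity is symmetric under $(n,w) \mapsto (-n-1, w^{-1})$: substituting these into the right-hand side of the claimed identity for parameter $-n-1$ reproduces the right-hand side for parameter $n$ up to the explicit prefactor, and similarly for the left-hand side, so it suffices to treat $n \geq 0$. (Alternatively one can just check $n=0$ is trivially the identity $1=1$ and handle negative $n$ by the same telescoping run backwards.) For $n \geq 0$, I would use the elementary shift relations
\[
(-t^{-n}w^{-1};t)_\infty = (1 + t^{-n}w^{-1})(1+t^{-n+1}w^{-1})\cdots(1+t^{-1}w^{-1})\,(-w^{-1};t)_\infty
\]
and
\[
(-tw;t)_\infty = (1+tw)(1+t^2w)\cdots(1+t^n w)\,(-t^{n+1}w;t)_\infty,
\]
so that
\[
\frac{(-t^{-n}w^{-1};t)_\infty (-t^{n+1}w;t)_{\infty}}{(-w^{-1};t)_\infty (-t w; t)_\infty}
= \frac{\prod_{j=1}^{n}(1 + t^{-j}w^{-1})}{\prod_{j=1}^{n}(1+t^{j}w)}.
\]
Now pair the $j$-th factor in the numerator with the $j$-th factor in the denominator: $1 + t^{-j}w^{-1} = t^{-j}w^{-1}(1 + t^{j}w)$, so the ratio of each pair is exactly $t^{-j}w^{-1}$, and multiplying over $j=1,\ldots,n$ gives $\prod_{j=1}^n t^{-j}w^{-1} = w^{-n}t^{-(1+2+\cdots+n)} = w^{-n}t^{-\binom{n+1}{2}}$. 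Rearranging yields the claimed identity.

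I do not anticipate a genuine obstacle here; the only mild care needed is bookkeeping of the finite products when $n$ is negative, which is why I would either invoke the $(n,w)\mapsto(-n-1,w^{-1})$ symmetry or simply note that the telescoping computation above is an identity of rational functions in $w$ (with $t$ a parameter) valid for all $n \in \Z$ once one interprets $\prod_{j=1}^{n}$ for $n<0$ as $\prod_{j=n+1}^{0}(\cdots)^{-1}$ in the standard way, and then verify the exponent $\binom{n+1}{2}$ and the power $w^{-n}$ match in that regime as well. All the series manipulations are justified since $|t|<1$ makes every infinite $q$-Pochhammer product absolutely convergent for $w \in \C^\times$.
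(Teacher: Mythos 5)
Your computation is correct and is exactly the ``simple direct computation'' the paper alludes to: peel off the finite ratio
\[
\frac{(-t^{-n}w^{-1};t)_\infty (-t^{n+1}w;t)_{\infty}}{(-w^{-1};t)_\infty (-t w; t)_\infty}
= \frac{\prod_{j=1}^{n}(1 + t^{-j}w^{-1})}{\prod_{j=1}^{n}(1+t^{j}w)},
\]
then use $1+t^{-j}w^{-1}=t^{-j}w^{-1}(1+t^{j}w)$ to get $\prod_{j=1}^n t^{-j}w^{-1}=w^{-n}t^{-\binom{n+1}{2}}$. The handling of $n<0$ either by the $(n,w)\mapsto(-n-1,w^{-1})$ symmetry or by the analogous telescoping (with $m=-n$, $\binom{-m+1}{2}=\binom{m}{2}$) is also fine; no gap.
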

\begin{proof}
A simple direct computation.
\end{proof}

\begin{rmk}
Since 
\begin{equation}
(-w^{-1};t)_\infty (-t w; t)_\infty = \frac{\theta_3(t^{1/2}w;t)}{(t;t)_\infty}
\end{equation}
may be written in terms of the Jacobi theta function
\begin{equation}
\theta_3(z;t) := (t;t)_\infty \prod_{n \in \Z_{\geq 0}} (1+t^{n+1/2}z)(1+t^{n+1/2}/z),
\end{equation}
the above computation is in fact equivalent to the standard transformation law 
\begin{equation}
\theta_3(tz;t) = t^{-1/2}z^{-1}\theta_3(z;t).
\end{equation}
It is worth mentioning that Jacobi theta functions have appeared in the related context of periodic Schur processes introduced in \cite{Bor07}, used further in e.g. \cite{ahn2022lozenge}, \cite{betea2019periodic}, \cite{imamura2021skew,imamura2021identity,imamura2022solvable}, and their above transformation law has been useful there. We are not aware of any closer connection with the present work, however. See e.g. \cite[Chapter 13]{erdelyi1981higher} for more background on theta functions, though the notation there differs from that of \cite{Bor07} which we use above.
\end{rmk}

\begin{proof}[Proof of \Cref{thm:residue_expansion_independence}]
%idea: just shrink last contour to the small circle first. Then this throws a bunch of residues which are what we compute (using inner product, after pushing contours back to circles and noting that all remaining residues are 0). Shrinking the remaining contours throws residues which are not 0 (they're just more copies of the first ones), and yields error term. 
The integrand on the left hand side of \eqref{eq:residue_expansion_independence} is meromorphic away from $0$ and $\infty$, and for $w_1,\ldots,w_{k-1}$ fixed it has poles at $w_k = -t^m, m \in \Z$. Of these, $-t^{h}, -t^{h+1},\ldots$ lie in the interior of $\Gamma$. Hence by deforming the $w_k$ contour to $t^{n+1/2}\T$ we obtain
\begin{align}\label{eq:push_one_contour}
\text{LHS\eqref{eq:residue_expansion_independence}} &= \frac{1}{(2 \pi \bi)^{k-1}} \int_{\Gamma^{k-1}} \left(\sum_{m=-h}^n \Res_{w_k=-t^m} \frac{f(w_1,\ldots,w_k)}{\prod_{i=1}^k (-tw_i;t)_\infty} \right) \prod_{i=1}^{k-1} dw_i \\ 
&+ \frac{1}{(2 \pi \bi)^k} \int_{\Gamma^{k-1} \times t^{n+1/2}\ \T} \frac{f(w_1,\ldots,w_k)}{\prod_{i=1}^k (-tw_i;t)_\infty} \prod_{i=1}^k dw_i. \label{eq:k-fold}
\end{align}
When $w_k = -t^m$, the factor
\[
\prod_{1 \leq i \neq j \leq k} (w_i/w_j;t)_\infty
\] 
in the numerator of $f$ has zeros at all $w_i \in -t^\Z, 1 \leq i \leq k-1$. Hence
\[
\Res_{w_k=-t^m} \frac{f(w_1,\ldots,w_k)}{\prod_{i=1}^k (-tw_i;t)_\infty}
\]
is in fact holomorphic away from $0$, so the contours of the $(k-1)$-fold integral in \eqref{eq:push_one_contour} may be deformed to any simple closed contours containing $0$ without any additional residue terms, in particular they may all be deformed to $\T$. For the $k$-fold integral in \eqref{eq:k-fold} we similarly deform the $w_{k-1}$-contour to $t^{n+1/2}\T$, yielding a term identical (by symmetry of the variables $w_1,\ldots,w_k$) to the $(k-1)$-fold integral of \eqref{eq:push_one_contour} except that one of the integrals is over $t^{n+1/2}\T$. However, since we may deform to any contours around $0$ this makes no difference, and so by pushing each of the $k$ contours in \eqref{eq:residue_expansion_independence} to $t^{n+1/2}\T$ and using symmetry of the variables to equate the $k$ sums of residues (and commuting the finite sum with the integral) we have 
\begin{align}
\begin{split}
\text{LHS\eqref{eq:residue_expansion_independence}} &= k \sum_{m=-h}^n \frac{1}{(2 \pi \bi)^{k-1}}  \int_{\T^{k-1}} \Res_{w_k=-t^m} \frac{f(w_1,\ldots,w_k)}{\prod_{i=1}^k (-tw_i;t)_\infty} \prod_{i=1}^{k-1} dw_i \\ 
&+ \frac{1}{(2 \pi \bi)^k}  \int_{(t^{n+1/2}\T)^k} \frac{f(w_1,\ldots,w_k)}{\prod_{i=1}^k (-tw_i;t)_\infty} \prod_{i=1}^k dw_i.
\end{split}
\end{align}
Hence to show \eqref{eq:residue_expansion_independence} we must show 
\begin{multline}\label{eq:residue_wts}
\frac{1}{(2 \pi \bi)^{k-1}} \int_{\T^{k-1}} \left(\sum_{m=-h}^n \Res_{w_k=-t^m} \frac{k f(w_1,\ldots,w_k)}{\prod_{i=1}^k (-tw_i;t)_\infty} \right) \prod_{i=1}^{k-1} dw_i =  \frac{1}{(t;t)_\infty }\sum_{d=L_k-h}^{L_k} \frac{e^{-\frac{t^{d+\zeta}}{1-t}}  t^{\sum_{i=1}^k \binom{L_i-d}{2}}}{ \prod_{i=1}^{k-1}(t;t)_{L_i-L_{i+1}}} \\ 
\times \frac{1}{(t;t)_{L_k-d}}\sum_{\substack{\mu \in \Sig_{k-1} \\ \mu \prec \vec{L}}} (-1)^{|\vec{L}| - |\mu|-d} \prod_{i=1}^{k-1} \sqbinom{L_i-L_{i+1}}{L_i-\mu_i}_t Q_{(\mu - (d[k-1]))'}(\gamma(t^{d+\zeta}),\alpha(1);0,t)
\end{multline}
(for $k \geq 2$) and 
\begin{equation}
\label{eq:residue_wts_k=1}
\sum_{m=-h}^n \Res_{w_1=-t^m} \frac{f(w_1)}{ (-tw_1;t)_\infty} =\frac{1}{(t;t)_\infty} \sum_{d=L_1-h}^{L_1} e^{-\frac{t^{d+\zeta}}{1-t}} \frac{t^{\binom{L_1-d}{2}}}{ (t;t)_{L_1-d} } (-1)^{L_1-d}
\end{equation}
(for $k=1$). We begin with \eqref{eq:residue_wts_k=1}, where 
\begin{align}
\begin{split}
\frac{f(w_1)}{(-tw_1;t)_\infty} &= e^{\frac{t^{L_1+\zeta}}{1-t}w_1}\frac{1}{ w_1(-w_1^{-1};t)_\infty (-tw_1;t)_\infty}  \sum_{j=0}^{\infty} \frac{t^{\binom{j+1}{2}}}{(t;t)_j}   P_{(j)}(w_1^{-1};t,0) \\ 
&= e^{\frac{t^{L_1+\zeta}}{1-t}w_1}\frac{1}{ w_1(-w_1^{-1};t)_\infty (-tw_1;t)_\infty} (-tw_1^{-1};t)_\infty \\ 
&= e^{\frac{t^{L_1+\zeta}}{1-t}w_1}\frac{1}{ (-w_1;t)_\infty}
\end{split}
\end{align}
by the $q$-binomial theorem, since $P_{(j)}(w_1^{-1};t,0) = w_1^{-j}$ by the branching rule. This function has simple poles at $w_1 = -t^{-m}, m \in \Z_{\geq 0}$, of which $-t^{-h},-t^{-h+1},\ldots,-1$ are contained in our contour. The pole at $w_1 = -t^{-m}$ contributes 
\begin{equation}\label{eq:k=1_residue}
e^{\frac{-t^{L_1+\zeta+m}}{1-t}}\frac{1}{(1-t^{-m}) \cdots (1-t^{-1}) (t^m) (t;t)_\infty} = \frac{1}{(t;t)_\infty} e^{\frac{-t^{L_1+\zeta+m}}{1-t}} \frac{t^{\binom{m}{2}}}{(t;t)_m}
\end{equation}
Summing \eqref{eq:k=1_residue} over $0 \leq m \leq h$ and making the change of variables $d=L_1+m$ yields \eqref{eq:residue_wts_k=1} and completes the $k=1$ case.

We now show the $k\geq 2$ case, \eqref{eq:residue_wts}. It is not hard to check similarly to above (one may use \Cref{thm:theta_transformations} to simplify the computation, though this is not necessary) that
\begin{equation}
\label{eq:main_residue_term}
\Res_{w_k=-t^m} \frac{1}{w_k(-w_k^{-1};t)_\infty(-tw_k;t)_\infty} = \frac{(-1)^m}{(t;t)_\infty^2 t^{-\binom{m+1}{2}}},
\end{equation}
where we let $\binom{m+1}{2} = (m^2+m)/2$ even when $m$ is negative. \Cref{thm:theta_transformations} also implies that
\begin{equation}
\label{eq:theta_residue}
\frac{(-t^{-m}w_i;t)_\infty (-t^m w_i^{-1};t)_\infty}{w_i(-w_i^{-1};t)_\infty (-tw_i;t)_\infty} = (1+t^{-m}w_i)w_i^m t^{-\binom{m+1}{2}}.
\end{equation}
Using \eqref{eq:main_residue_term}, \eqref{eq:theta_residue} and the explicit formula \eqref{eq:integrand_junk} for $f$ we compute
\begin{align}
\begin{split}
\label{eq:single_residue}
&\Res_{w_k=-t^m} \frac{k f(w_1,\ldots,w_k)}{\prod_{i=1}^k (-tw_i;t)_\infty} = \frac{(t;t)_\infty^{k-1}}{(k-1)!} \prod_{i=1}^{k-1} \frac{t^{\binom{L_i-L_k}{2}}}{(t;t)_{L_i-L_{i+1}}} e^{\frac{t^{L_k+\zeta}}{1-t}(w_1+\ldots+w_{k-1})} \\ 
& \times \prod_{1 \leq i \neq j \leq k-1} (w_i/w_j;t)_\infty \left(\sum_{j=0}^{L_{k-1}-L_k} t^{\binom{j+1}{2}} \sqbinom{L_{k-1}-L_k}{j}_t P_{\tLL + j \vec{e}_k}(w_1^{-1},\ldots,w_{k-1}^{-1},-t^{-m};t,0)\right) \\ 
&\times \frac{\prod_{i=1}^{k-1} (1+t^{m}w_i^{-1})w_i^m t^{-\binom{m+1}{2}}}{(-1)^m t^{-\binom{m+1}{2}}(t;t)_\infty^2}e^{-\frac{t^{L_k+\zeta+m}}{1-t}},
\end{split}
\end{align}
where $\tLL = (L_1-L_k,\ldots,L_{k-1}-L_k,0)$ as before. By the branching rule,
\begin{align}
\begin{split}\label{eq:branch_cancel_j}
&\sum_{j=0}^{L_{k-1}-L_k} t^{\binom{j+1}{2}} \sqbinom{L_{k-1}-L_k}{j}_t P_{\tLL + j \vec{e}_k}(w_1^{-1},\ldots,w_{k-1}^{-1},-t^{-m};t,0) \\ 
&= \sum_{j=0}^{L_{k-1}-L_k} t^{\binom{j+1}{2}} \sqbinom{L_{k-1}-L_k}{j}_t \sum_{\substack{\mu \in \Sig_{k-1} \\ \mu \prec \tLL+j\vec{e}_k}} (-t^{-m})^{|\tLL|+j-|\mu|} \sqbinom{L_1-L_2}{L_1-L_k-\mu_1}_t \cdots \sqbinom{L_{k-2}-L_{k-1}}{L_{k-2}-L_k - \mu_{k-2}}_t \\ 
&\times \sqbinom{L_{k-1}-L_k-j}{L_{k-1}-L_k-\mu_{k-1}}_t P_\mu(w_1^{-1},\ldots,w_{k-1}^{-1};t,0) \\ 
&= \sum_{\substack{\mu \in \Sig_{k-1} \\ \mu \prec \tLL}}(-t^{-m})^{|\tLL|-|\mu|} \sqbinom{L_1-L_2}{L_1-L_k-\mu_1}_t \cdots \sqbinom{L_{k-2}-L_{k-1}}{L_{k-2}-L_k - \mu_{k-2}}_t P_\mu(w_1^{-1},\ldots,w_{k-1}^{-1};t,0) \\ 
&\times \sum_{j=0}^{\mu_{k-1}} \sqbinom{L_{k-1}-L_k-j}{L_{k-1}-L_k-\mu_{k-1}}_t \sqbinom{L_{k-1}-L_k}{j}_t  t^{\binom{j+1}{2}}(-t^{-m})^j \\ 
&= \sum_{\substack{\mu \in \Sig_{k-1} \\ \mu \prec \tLL}}(-t^{-m})^{|\tLL|-|\mu|} \sqbinom{L_1-L_2}{L_1-L_k-\mu_1}_t \cdots \sqbinom{L_{k-2}-L_{k-1}}{L_{k-2}-L_k - \mu_{k-2}}_t P_\mu(w_1^{-1},\ldots,w_{k-1}^{-1};t,0) \\ 
&\times \sqbinom{L_{k-1}-L_k}{L_{k-1}-L_k-\mu_{k-1}}_t \sum_{j=0}^{\mu_{k-1}} \sqbinom{\mu_{k-1}}{j}_t   t^{\binom{j+1}{2}}(-t^{-m})^j \\ 
&= \sum_{\substack{\mu \in \Sig_{k-1} \\ \mu \prec \tLL}}(-t^{-m})^{|\tLL|-|\mu|} \prod_{i=1}^{k-1} \sqbinom{L_i-L_{i+1}}{L_i - L_k - \mu_i}_t P_\mu(w_1^{-1},\ldots,w_{k-1}^{-1};t,0)(t^{-m+1};t)_{\mu_{k-1}} 
\end{split}
\end{align}
where in the penultimate equality we used the identity
\begin{equation}
\label{eq:apply_qbin_swap}
\sqbinom{L_{k-1}-L_k-j}{L_{k-1}-L_k-\mu_{k-1}}_t \sqbinom{L_{k-1}-L_k}{j}_t = \sqbinom{\mu_{k-1}}{j}_t \sqbinom{L_{k-1}-L_k}{L_{k-1}-L_k-\mu_{k-1}}_t
\end{equation}
(which is the same as \eqref{eq:trade_qbinom}) and brought the factor which does not depend on $j$ outside the sum, and in the last equality we applied the $q$-binomial theorem. By \eqref{eq:single_residue}, \eqref{eq:branch_cancel_j}, and the fact that $w_i^{-1}=\bar{w_i}$ on $\T$,
\begin{align}
\begin{split}\label{eq:to_use_k-1_product}
&\frac{1}{(2 \pi \bi)^{k-1}} \int_{\T^{k-1}} \Res_{w_k=-t^m} \frac{k f(w_1,\ldots,w_k)}{\prod_{i=1}^k (-tw_i;t)_\infty}  \prod_{i=1}^{k-1} dw_i = (t;t)_\infty^{k-3} \prod_{i=1}^{k-1} \frac{t^{\binom{L_i-L_k}{2}}}{(t;t)_{L_i-L_{i+1}}} (-1)^m t^{\binom{m+1}{2}} e^{-\frac{t^{L_k+\zeta+m}}{1-t}} \\ 
&\times \sum_{\substack{\mu \in \Sig_{k-1} \\ \mu \prec \tLL}}(-t^{-m})^{|\tLL|-|\mu|} (t^{-m+1};t)_{\mu_{k-1}}\prod_{i=1}^{k-1} \sqbinom{L_i-L_{i+1}}{L_i-L_k-\mu_i}  \frac{1}{(k-1)!(2\pi \bi)^{k-1}} \int_{\T^{k-1}} e^{\frac{t^{L_k+\zeta}}{1-t}(w_1+\ldots+w_{k-1})} \\ 
&\times \prod_{1 \leq i \neq j \leq k-1} (w_i/w_j;t)_\infty P_\mu(\bar{w}_1,\ldots,\bar{w}_{k-1};t,0)   \prod_{i=1}^{k-1} (1+t^{-m}w_i)w_i^{m-1} t^{-\binom{m}{2}} dw_i.
\end{split}
\end{align}
We recognize a factor in the above integrand as
\begin{align}\label{eq:use_spec_cauchy}
\begin{split}
\prod_{i=1}^{k-1} e^{\frac{t^{L_k+\zeta}}{1-t}w_i} (1+t^{-m}w_i) &= \Pi_{0,t}(\gamma(t^{L_k+\zeta}),\alpha(t^{-m});\beta(w_1,\ldots,w_{k-1})) \\ 
&= \sum_{\la \in \Y} Q_\la(\gamma(t^{L_k+\zeta}),\alpha(t^{-m});0,t) Q_{\la'}(w_1,\ldots,w_{k-1};t,0),
\end{split}
\end{align}
by \eqref{eq:specialized_cauchy}. Note also that since $w_i \in \T$, by \Cref{thm:signature_shift} 
\begin{equation}
(w_1^{m} \cdots w_{k-1}^{m}) P_\mu(\bar{w}_1,\ldots,\bar{w}_{k-1};t,0) = P_{\mu-(m[k-1])}(\bar{w}_1,\ldots,\bar{w}_{k-1};t,0).
\end{equation}
If $\mu_{k-1} - m < 0$, then the above is not a polynomial but a Laurent polynomial, and by orthogonality of the $q$-Whittaker Laurent polynomials (\Cref{thm:laurent_orthogonality}) the integral in \eqref{eq:to_use_k-1_product} is $0$. Otherwise, if $\mu_{k-1} - m \geq 0$, orthogonality still implies that only the term $\la' = \mu - (m[k-1])$ of the sum \eqref{eq:use_spec_cauchy} contributes to the integral in \eqref{eq:to_use_k-1_product}. Hence we obtain
\begin{multline}
\label{eq:return_scalar_product}
\frac{t^{-(k-1)\binom{m}{2}} }{(k-1)!(2\pi \bi)^{k-1}} \int_{\T^{k-1}} \Pi_{0,t}(\gamma(t^{L_k+\zeta}),\alpha(t^{-m});\beta(w_1,\ldots,w_{k-1})) P_{\mu-(m[k-1])}(\bar{w}_1,\ldots,\bar{w}_{k-1};t,0) \\ 
\times  \prod_{1 \leq i \neq j \leq k-1} (w_i/w_j;t)_\infty \prod_{i=1}^{k-1} \frac{dw_i}{w_i} = t^{-(k-1)\binom{m}{2}} Q_{(\mu-(m[k-1]))'}(\gamma(t^{L_k+\zeta}),\alpha(t^{-m});0,t) \\ 
\times  \bbone(\mu_{k-1}-m \geq 0) \lan Q_{\mu-(m[k-1])}(w_1,\ldots,w_{k-1};t,0),P_{\mu-(m[k-1])}(w_1,\ldots,w_{k-1};t,0)\ran'_{t,0;k-1}.
\end{multline}
By \eqref{eq:QP_IP},
\begin{multline}\label{eq:apply_QPIP}
\lan Q_{\mu-(m[k-1])}(w_1,\ldots,w_{k-1};t,0),P_{\mu-(m[k-1])}(w_1,\ldots,w_{k-1};t,0)\ran'_{t,0;k-1} 
= \frac{1}{(t;t)_{\mu_{k-1}-m} (t;t)_\infty^{k-2}}.
\end{multline}
Combining \eqref{eq:to_use_k-1_product}, \eqref{eq:return_scalar_product} and \eqref{eq:apply_QPIP} and writing
\begin{equation}
\bbone(\mu_{k-1}-m \geq 0)\frac{(t^{-m+1};t)_{\mu_{k-1}}}{(t;t)_{\mu_{k-1}-m}} = \bbone(m \leq 0) \frac{1}{(t;t)_{-m}}
\end{equation}
yields
\begin{align}
\begin{split}
\label{eq:single_residue_2}
&\frac{1}{(2 \pi \bi)^{k-1}}\int_{\T^{k-1}} \Res_{w_k=-t^m} \frac{k f(w_1,\ldots,w_k)}{\prod_{i=1}^k (-tw_i;t)_\infty} \prod_{i=1}^{k-1} dw_i = \frac{\bbone(m \leq 0) }{(t;t)_\infty} \left(\prod_{i=1}^{k-1} \frac{t^{\binom{L_i-L_k}{2}}}{(t;t)_{L_i-L_{i+1}}}\right)  \\ 
&\times \frac{(-1)^m t^{-(k-2)\binom{m}{2}+m}}{(t;t)_{-m}}e^{-\frac{t^{L_k+\zeta+m}}{1-t}} \sum_{\substack{\mu \in \Sig_{k-1} \\ \mu \prec \tLL}} (-t^{-m})^{|\tLL|-|\mu|} \prod_{i=1}^{k-1} \sqbinom{L_i-L_{i+1}}{L_i-L_k-\mu_i}_t \\ 
&\times Q_{(\mu-(m[k-1]))'}(\gamma(t^{L_k+\zeta}),\alpha(t^{-m}); 0,t).
\end{split}
\end{align}
Set $\vec{L} = (L_1,\ldots,L_k) = \tLL + (L_k[k])$ and relabel $\mu \mapsto \mu + (L_k[k-1])$ so that 
\begin{multline}\label{eq:scale_Q}
\sum_{\substack{\mu \in \Sig_{k-1} \\ \mu \prec \tLL}} (-t^{-m})^{|\tLL|-|\mu|} \prod_{i=1}^{k-1} \sqbinom{L_i-L_{i+1}}{L_i-L_k-\mu_i}_t Q_{(\mu-(m[k-1]))'}(\gamma(t^{L_k+\zeta}),\alpha(t^{-m}); 0,t) = \\ 
\sum_{\substack{\mu \in \Sig_{k-1} \\ \mu \prec \vec{L}}} (-t^{-m})^{|\vec{L}| - |\mu| - L_k} \prod_{i=1}^{k-1} \sqbinom{L_i-L_{i+1}}{L_i-\mu_i}_t t^{-m(|\mu|-(k-1)(L_k+m))}Q_{\mu-((L_k+m)[k-1])'}(\gamma(t^{L_k+\zeta+m}),\alpha(1);0,t),
\end{multline}
where we have also used homogeneity of $Q$ to scale the specializations. Setting $d=L_k+m$, the power of $t$ in \eqref{eq:single_residue_2} (together with the one in \eqref{eq:scale_Q}) is
\begin{multline}
-(k-2)\binom{m}{2}+m - m(|\vec{L}|-|\mu|-L_k) -m(|\mu|-(k-1)(L_k+m)) = \sum_{i=1}^k \binom{L_i-d}{2}
\end{multline}
by a tedious computation using \eqref{eq:favorite_binomial_split}. The sign in \eqref{eq:single_residue_2} is $(-1)^{|\vec{L}| - |\mu| - d}$. Putting this together we have 
\begin{align}
\begin{split}
\label{eq:single_residue_final}
&\frac{1}{(2 \pi \bi)^{k-1}}\int_{\T^{k-1}} \Res_{w_k=-t^m} \frac{k f(w_1,\ldots,w_k)}{\prod_{i=1}^k (-tw_i;t)_\infty} \prod_{i=1}^{k-1} dw_i = e^{-\frac{t^{d+\zeta}}{1-t}} \frac{t^{\sum_{i=1}^k \binom{L_i-d}{2}}}{(t;t)_\infty (t;t)_{L_k-d} \prod_{i=1}^{k-1}(t;t)_{L_i-L_{i+1}}} \\ 
&\times \sum_{\substack{\mu \in \Sig_{k-1} \\ \mu \prec \vec{L}}} (-1)^{|\vec{L}| - |\mu|-d} \prod_{i=1}^{k-1} \sqbinom{L_i-L_{i+1}}{L_i-\mu_i}_t Q_{(\mu - (d[k-1]))'}(\gamma(t^{d+\zeta}),\alpha(1);0,t).
\end{split}
\end{align}
This shows \eqref{eq:residue_wts} and hence completes the proof.
\end{proof}

\begin{proof}[Proof of \Cref{thm:small_contour_negligible}]
%idea: we know that for different n the integrals are equal, so it suffices to look at the limit as n goes to infinity
By \Cref{thm:residue_expansion_independence}, the integral which we wish to compute is independent of $n \geq L_k$. Hence it suffices to show 
\begin{multline}\label{eq:zero_integral_limit}
\lim_{\substack{n \to \infty \\ n \in \Z}}   \frac{(t;t)_\infty^{k-1}}{k! (2 \pi \bi)^k} \prod_{i=1}^{k-1} \frac{t^{\binom{L_i-L_k}{2}}}{(t;t)_{L_i-L_{i+1}}} \int_{(t^{n+1/2}\T)^k} e^{\frac{t^{L_k+\zeta}}{1-t}(w_1+\ldots+w_k)} \frac{\prod_{1 \leq i \neq j \leq k} (w_i/w_j;t)_\infty}{\prod_{i=1}^k (-w_i^{-1};t)_\infty (-tw_i;t)_{\infty}} \\ 
\times \sum_{j=0}^{L_{k-1}-L_k} t^{\binom{j+1}{2}} \sqbinom{L_{k-1}-L_k}{j}_t  P_{(L_1-L_k,\ldots,L_{k-1}-L_k,j)}(w_1^{-1},\ldots,w_k^{-1};t,0)  \prod_{i=1}^k \frac{dw_i}{w_i} = 0.
\end{multline}
To simplify expressions we will show \eqref{eq:zero_integral_limit} by showing
\begin{multline}\label{eq:j_summand_zero}
\lim_{\substack{n \to \infty \\ n \in \Z}}  \int_{(t^{n+1/2}\T)^k} e^{\frac{t^{L_k+\zeta}}{1-t}(w_1+\ldots+w_k)} \frac{\prod_{1 \leq i \neq j \leq k} (w_i/w_j;t)_\infty}{\prod_{i=1}^k (-w_i^{-1};t)_\infty (-tw_i;t)_{\infty}} \\ 
\times  P_{(L_1-L_k,\ldots,L_{k-1}-L_k,j)}(w_1^{-1},\ldots,w_k^{-1};t,0)  \prod_{i=1}^k \frac{dw_i}{w_i} = 0
\end{multline}
for each $j$. Letting $w_i = t^n u_i$ we have 
\begin{multline}\label{eq:zero_int_var_change}
\text{LHS\eqref{eq:j_summand_zero}} = \lim_{\substack{n \to \infty \\ n \in \Z}} \int_{(t^{1/2}\T)^k} e^{\frac{t^{L_k+\zeta+n}}{1-t}(u_1+\ldots+u_k)} \frac{\prod_{1 \leq i \neq j \leq k} (u_i/u_j;t)_\infty}{\prod_{i=1}^k (-t^{-n}u_i^{-1};t)_\infty (-t^{n+1}u_i;t)_{\infty}} \\ 
\times t^{-n(|\vec{L}|+j-kL_k)} P_{(L_1-L_k,\ldots,L_{k-1}-L_k,j)}(u_1^{-1},\ldots,u_k^{-1};t,0)  \prod_{i=1}^k \frac{du_i}{u_i} 
\end{multline}
where we have used homogeneity of $P$. By \Cref{thm:theta_transformations},
\begin{equation}
\label{eq:another_theta_manipulation}
(-t^{-n}u_i^{-1};t)_\infty (-t^{n+1}u_i;t)_{\infty} = u_i^{-n} t^{-\binom{n+1}{2}} (-u_i^{-1};t)_\infty (-t u_i; t)_\infty,
\end{equation}
so
\begin{align}\label{eq:zero_int_simplify_1}
&\text{RHS\eqref{eq:zero_int_var_change}} =  \lim_{\substack{n \to \infty \\ n \in \Z}}  \int_{(t^{1/2}\T)^k} \prod_{i=1}^k \left(u_i^{n} t^{\binom{n+1}{2} -n(|\vec{L}|+j-kL_k)} \right)  e^{\frac{t^{L_k+\zeta+n}}{1-t}(u_1+\ldots+u_k)} \\ 
&\times \frac{\prod_{1 \leq i \neq j \leq k} (u_i/u_j;t)_\infty}{\prod_{i=1}^k (-u_i^{-1};t)_\infty (-tu_i;t)_{\infty}} P_{(L_1-L_k,\ldots,L_{k-1}-L_k,j)}(u_1^{-1},\ldots,u_k^{-1};t,0)  \prod_{i=1}^k \frac{du_i}{u_i}. \label{eq:zero_int_simplify_2}
\end{align}
The part of the integrand on the second line \eqref{eq:zero_int_simplify_2} is bounded on $\T^k$ and is independent of $n$, while the part of the integrand on the first line \eqref{eq:zero_int_simplify_1} goes to $0$ in $n$ uniformly over $\T^k$, showing \eqref{eq:j_summand_zero} and hence completing the proof.
\end{proof}

\begin{proof}[Proof of \Cref{thm:hl_residue_formula}]
%idea of proof: take the contour in first lemma to be Gamma(B) and take B to infinity, and then the estimates I showed earlier yield the contour integral while the first lemma (plus second for error term) yields the sum.
The idea is to take a family of simple closed contours which approach the contour $\tG$ defined in \Cref{thm:hl_stat_dist} and also encircle more and more of the poles $-t^x$. The contours $\Gamma(\tau)$ defined in \eqref{eq:contours} work, though we write them as $\Gamma(B), B \in \Z$ to avoid a clash of notation with the $\tau$ in \Cref{thm:hl_residue_formula}. By \Cref{thm:residue_expansion_independence} together with \Cref{thm:small_contour_negligible},
\begin{multline}
\frac{1}{(2 \pi \bi)^k} \int_{\Gamma(B)^k} \frac{f(w_1,\ldots,w_k)}{\prod_{i=1}^k (-tw_i;t)_\infty} \prod_{i=1}^k dw_i = \sum_{d=L_k-\eta_k(B)}^{L_k} e^{-\frac{t^{d+\zeta}}{1-t}} \frac{t^{\sum_{i=1}^k \binom{L_i-d}{2}}}{(t;t)_\infty (t;t)_{L_k-d} \prod_{i=1}^{k-1}(t;t)_{L_i-L_{i+1}}}  \\ 
\times \sum_{\substack{\mu \in \Sig_{k-1} \\ \mu \prec \vec{L}}} (-1)^{|\vec{L}| - |\mu|-d} \prod_{i=1}^{k-1} \sqbinom{L_i-L_{i+1}}{L_i-\mu_i}_t Q_{(\mu - (d[k-1]))'}(\gamma(t^{d+\zeta}),\alpha(1);0,t) + 0.
\end{multline}
It follows immediately that 
\begin{equation}\label{eq:contour_grow_res_limit}
\lim_{\substack{B \to \infty \\ B \in \Z}} \frac{1}{(2 \pi \bi)^k} \int_{\Gamma(B)^k} \frac{f(w_1,\ldots,w_k)}{\prod_{i=1}^k (-tw_i;t)_\infty} \prod_{i=1}^k dw_i = \text{RHS\eqref{eq:brpw_residue_formula}}.
\end{equation}
The limit
\begin{multline}\label{eq:gammaB_limit}
\lim_{\substack{B \to \infty \\ B \in \Z}} \frac{1}{(2 \pi \bi)^k} \int_{\Gamma(B)^k} \frac{f(w_1,\ldots,w_k)}{\prod_{i=1}^k (-tw_i;t)_\infty} \prod_{i=1}^k dw_i \\ 
= \frac{(t;t)_\infty^{k-1}}{k! (2 \pi \bi)^k} \prod_{i=1}^{k-1} \frac{t^{\binom{L_i-L_k}{2}}}{(t;t)_{L_i-L_{i+1}}} \int_{\tG^k} e^{\frac{t^{L_k+\zeta}}{1-t}(w_1+\ldots+w_k)}  \frac{\prod_{1 \leq i \neq j \leq k} (w_i/w_j;t)_\infty}{\prod_{i=1}^k (-w_i^{-1};t)_\infty (-tw_i;t)_{\infty}} \\ 
\times \sum_{j=0}^{L_{k-1}-L_k} t^{\binom{j+1}{2}} \sqbinom{L_{k-1}-L_k}{j}_t  P_{(L_1-L_k,\ldots,L_{k-1}-L_k,j)}(w_1^{-1},\ldots,w_k^{-1};t,0)  \prod_{i=1}^k \frac{dw_i}{w_i}
\end{multline}
follows by the estimate \Cref{thm:f_bound} exactly as with \eqref{eq:split_2} in the proof of \Cref{thm:hl_stat_dist}. Combining \eqref{eq:contour_grow_res_limit} with \eqref{eq:gammaB_limit} completes the proof.
\end{proof}

\section{Tightness and the limiting random variable}\label{sec:tightness_and_limit}

In the Introduction, we stated that the limiting formulas on the right hand side of \eqref{eq:explicit_hl_stat_dist_formula} and \eqref{eq:brpw_residue_formula} define a $\Sig_k$-valued random variable, but \Cref{thm:hl_stat_dist} and \Cref{thm:hl_residue_formula} do not \emph{a priori} imply this because mass may escape to $\pm \infty$. In this section we show that there is no escape of mass and the formulas indeed define a random variable. 

\begin{thm}\label{thm:stat_dist_1pt}
For any $t \in (0,1),\chi \in \mathbb{R}_{>0}$, there is a unique $\operatorname{Sig}_{\infty}$-valued random variable $\cL_{t, \chi} = (\cL_{t,\chi}^{(1)},\cL_{t,\chi}^{(2)},\ldots)$, measurable with respect to the product $\sigma$-algebra on $\Z^\infty$ where each factor has the discrete $\sigma$-algebra, such that the laws of its truncations $\cL_{k,t,\chi} := (\cL_{t,\chi}^{(1)},\ldots,\cL_{t,\chi}^{(k)})$ satisfy
\begin{multline}\label{eq:limit_rv_res_formula}
\Pr(\cL_{k,t,\chi} = (L_1,\ldots,L_k)) = \frac{1}{(t;t)_\infty}\sum_{d \leq L_k} e^{-\chi t^d} \frac{t^{\sum_{i=1}^k \binom{L_i-d}{2}}}{ (t;t)_{L_k-d} \prod_{i=1}^{k-1}(t;t)_{L_i-L_{i+1}}}  \\ 
\times \sum_{\substack{\mu \in \Sig_{k-1} \\ \mu \prec \vec{L}}} (-1)^{|\vec{L}| - |\mu|-d}  \prod_{i=1}^{k-1} \sqbinom{L_i-L_{i+1}}{L_i-\mu_i}_t Q_{(\mu - (d[k-1]))'}(\gamma((1-t)t^d\chi),\alpha(1);0,t)
\end{multline}
for any $\left(L_{1}, \ldots, L_{k}\right) \in \operatorname{Sig}_{k}$. The probabilities also have a contour integral formula
\begin{multline}
\label{eq:limit_rv_int_formula}
\Pr(\cL_{k,t,\chi} = (L_1,\ldots,L_k)) \\ 
=  \frac{(t;t)_\infty^{k-1}}{k! (2 \pi \bi)^k} \prod_{i=1}^{k-1} \frac{t^{\binom{L_i-L_k}{2}}}{(t;t)_{L_i-L_{i+1}}} \int_{\tG^k} e^{\chi t^{L_k}(w_1+\ldots+w_k)} \frac{\prod_{1 \leq i \neq j \leq k} (w_i/w_j;t)_\infty}{\prod_{i=1}^k (-w_i^{-1};t)_\infty (-tw_i;t)_{\infty}} \\ 
\times \sum_{j=0}^{L_{k-1}-L_k} t^{\binom{j+1}{2}} \sqbinom{L_{k-1}-L_k}{j}_t  P_{(L_1-L_k,\ldots,L_{k-1}-L_k,j)}(w_1^{-1},\ldots,w_k^{-1};t,0)  \prod_{i=1}^k \frac{dw_i}{w_i}
\end{multline}
for $k \geq 2$ and 
\begin{equation}
\label{eq:limit_rv_int_formula_k=1}
\Pr(\cL_{t,\chi}^{(1)} = (L)) = \frac{1}{2 \pi \bi} \int_{\tG} \frac{e^{\chi t^L w}}{(-w;t)_\infty} dw,
\end{equation}
with contour $\tG$ as defined in \Cref{fig:tG}. 
\end{thm}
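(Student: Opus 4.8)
The plan is to establish three things: (i) for each fixed $k$, the formula \eqref{eq:limit_rv_res_formula} defines a genuine probability distribution on $\Sig_k$ (nonnegativity is immediate from \Cref{thm:hl_residue_formula} since probabilities are limits of probabilities; the real content is that the total mass is $1$, i.e. no escape to $\pm\infty$); (ii) the distributions for different $k$ are consistent under the natural projection $\Sig_k \to \Sig_{k-1}$ forgetting the last coordinate, so that Kolmogorov extension produces a unique $\Sig_\infty$-valued random variable; and (iii) the contour integral formulas \eqref{eq:limit_rv_int_formula} and \eqref{eq:limit_rv_int_formula_k=1} agree with \eqref{eq:limit_rv_res_formula}, which is exactly the content of \Cref{thm:hl_residue_formula} together with \Cref{thm:hl_stat_dist} (both sides equal the same prelimit limit), after the cosmetic substitution $\chi = t^\zeta/(1-t)$ absorbing $\zeta$ into $\chi$ via the translation invariance \eqref{eq:obvious_invariance_k=1}/\eqref{eq:obvious_invariance}. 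So the crux is (i), the tightness statement.

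For (i), the clean approach is to use the prelimit probabilities as a source of tightness rather than trying to sum the series \eqref{eq:limit_rv_res_formula} directly. Fix $k$ and recall from \Cref{thm:hl_stat_dist} that $\Pr(\cL_{k,t,\chi} = \vec L)$ is the limit along $\tau \to \infty$, $\log_{t^{-1}}\tau + \zeta \in \Z$, of $\Pr((\la_i'(\tau) - \log_{t^{-1}}\tau - \zeta)_{1 \le i \le k} = \vec L)$, a bona fide probability on $\Sig_k$. By Fatou/Portmanteau it suffices to show the family $\{(\la_i'(\tau) - \log_{t^{-1}}\tau - \zeta)_{1 \le i \le k}\}_\tau$ is tight, i.e. that for every $\eps > 0$ there is a compact (here: finite, since we are on a lattice) $K \subset \Sig_k$ with $\Pr(\cdot \in K) \ge 1 - \eps$ uniformly in $\tau$. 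Since $\la_1'(\tau) \ge \la_2'(\tau) \ge \cdots$, it is enough to control $\la_1'(\tau)$ from above and $\la_k'(\tau)$ from below. Both are one-dimensional estimates: $\la_1'(\tau) = \len(\la(\tau))$ is the number of nonzero parts, and a crude first-moment bound using $\E[t^{-\kappa\la_1'(\tau)}]$ from the known $t$-moment formulas (as in \cite{van2022q}, or directly from the $k=1$ Cauchy-identity computation) shows $\Pr(\la_1'(\tau) - \log_{t^{-1}}\tau \ge M) \to 0$ as $M \to \infty$ uniformly in $\tau$; for the lower tail of $\la_k'(\tau)$ one argues that $\Pr(\la_k'(\tau) \le \log_{t^{-1}}\tau - M)$ is small uniformly, again via a moment or direct combinatorial estimate on the Hall-Littlewood measure (this is where the hypothesis that $N - \log_p s_N \to\infty$, resp. the infinite-principal specialization, is used to guarantee the lattice is all of $\Z$ and nothing accumulates at a finite lower cutoff). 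Granting these two uniform tail bounds, tightness follows and the limiting masses sum to $1$.

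For (ii), consistency: one observes that summing \eqref{eq:limit_rv_res_formula} for $\cL_{k,t,\chi}$ over the last coordinate $L_k$ (with $L_1 \ge \cdots \ge L_{k-1} \ge L_k$ fixed above) should reproduce the $k-1$ formula. At the prelimit level this is automatic because $(\la_1'(\tau), \ldots, \la_{k-1}'(\tau))$ is a marginal of $(\la_1'(\tau), \ldots, \la_k'(\tau))$ — indeed the projection of the Hall-Littlewood process to its first $k-1$ conjugate parts is itself Markovian, a fact already invoked in the proof of \Cref{thm:prob_from_observables}. So once tightness is in hand (no mass lost in either limit), consistency of the limits is inherited from consistency of the prelimits, and Kolmogorov extension applies since $\Sig_\infty \subset \Z^\infty$ is a measurable subset (it is a countable intersection of the measurable sets $\{x_i \ge x_{i+1}\}$) carrying full measure under the projective limit. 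The measurability statement in the theorem is then just the standard fact that the extension lives on the product $\sigma$-algebra.

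The main obstacle is the uniform lower-tail bound on $\la_k'(\tau)$, equivalently ruling out escape of mass to $-\infty$. Escape to $+\infty$ is controlled by a soft first-moment argument, but the lower tail is genuinely the delicate direction: it is the assertion that the process has relaxed enough that $\la_k'(\tau)$ is not pinned near $\log_{t^{-1}}\tau - c$ for any fixed $c$. I expect this to require either a careful second-moment / small-ball estimate on the Hall-Littlewood measure, or an argument exploiting the explicit branching structure and the fact that the $e^{-\chi t^d}$ factors in \eqref{eq:limit_rv_res_formula} decay super-exponentially as $d \to -\infty$ (so that, once one can justify interchanging the sum over $d$ with the sum over $\vec L$, absolute summability is manifest) — but making that interchange rigorous is precisely where uniform control of the tails is needed, so there is no shortcut around the tightness estimate itself.
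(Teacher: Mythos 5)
Your skeleton — Kolmogorov extension on $\Sig_\infty$, reconciliation of the integral and residue formulas via \Cref{thm:hl_stat_dist} and \Cref{thm:hl_residue_formula}, and consistency inherited from the Markovian projection of the prelimit — matches the paper's proof, and those parts are fine. The problem is that you correctly identify tightness as the crux and then do not actually prove it; you end by conceding that the lower-tail bound for $\la_k'(\tau)$ is the genuine obstacle and sketch directions (second-moment / small-ball estimates, or justifying a sum interchange in \eqref{eq:limit_rv_res_formula}) that you acknowledge are circular or unfinished. That is a real gap: without the uniform lower-tail bound there is no Prokhorov argument, and hence no proof that \eqref{eq:limit_rv_res_formula} has total mass $1$.

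The missing idea in the paper is not a moment estimate on the Hall--Littlewood measure at all, but an elementary coupling using the reflecting-Poisson description of $\la^{(\infty)}(\tau) = \Pois^{\emptyset,\infty}((1-t)\tau/t)$ furnished by \Cref{thm:HL_poisson} and \Cref{def:poisson_walks}. For the upper tail, $\la_1'$ alone is a Markov chain on $\Z_{\geq 0}$ whose holding time at state $x$ is $\Exp(t^x/(1-t))$, because $\la_1'$ increments the instant any clock with index $> x$ rings; so
\[
\Pr(\la_1'(\tau) > D) = \Pr\Bigl(\textstyle\sum_{i=0}^{D} E_i \leq \tau\Bigr) \leq 1 - e^{-t^D\tau/(1-t)},
\]
and after centering by $\log_{t^{-1}}\tau + \zeta$ this is $\tau$-independent. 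For the lower tail, the key observation is that whenever $\la_k'(\tau) = x$, the waiting time for $\la_k'$ to reach $x+1$ is stochastically dominated by the time for clock $x+1$ (rate $t^x$) to ring $k$ times — even in the worst case $\la_1' = \cdots = \la_k' = x$, $k$ rings of that single clock force $\la_k'$ up. So $\Pr(\la_k'(\tau) \leq H)$ is bounded by the probability that a sum of $kH$ exponentials (of geometrically increasing mean) exceeds $\tau$, and Markov's inequality then gives a bound of order $t^{D}$ after centering, again uniformly in $\tau$. This is what your proposal lacks: your suggested $t$-moment route for the upper tail is workable, but for the lower tail there is no substitute for exploiting the jump dynamics, since the delicate direction is exactly that $\la_k'$ cannot lag — a statement about minimal speed of the process, which the Poisson-clock picture encodes directly and a one-point moment of the stationary-ish measure does not.
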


We will often write simply $\cL^{(i)}$ for $\cL_{t,\chi}^{(i)}$ when $t$ and $\chi$ are clear from context. For the proof of \Cref{thm:stat_dist_1pt} we require the following.

\begin{prop}\label{thm:tightness}
In the notation of \Cref{thm:hl_stat_dist}, the sequence of $\Sig_k$-valued random variables
\begin{equation}\label{eq:wts_tight}
(\la_i'(\tau) - \log_{t^{-1}}(\tau) - \zeta)_{1 \leq i \leq k}, \tau \in t^{-\N - \zeta}
\end{equation}
is tight.
\end{prop}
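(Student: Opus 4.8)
\textbf{Proof proposal for \Cref{thm:tightness}.}

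The plan is to establish tightness of the $\Sig_k$-valued sequence in \eqref{eq:wts_tight} by controlling each coordinate $\la_i'(\tau) - \log_{t^{-1}}(\tau) - \zeta$ separately, both from above and below, uniformly in $\tau \in t^{-\N-\zeta}$. Since a sequence of $\Sig_k$-valued random variables is tight iff the sequence of each coordinate is tight as a sequence of $\Z$-valued random variables, and since on $\Sig_k$ we have the ordering $\la_1' \geq \la_2' \geq \cdots \geq \la_k'$, it suffices to bound $\la_1'(\tau)$ from above and $\la_k'(\tau)$ from below: the former gives uniform upper tail control for all coordinates, and the latter gives uniform lower tail control. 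So the two things to prove are: (a) for every $\eps > 0$ there is $M$ with $\Pr(\la_1'(\tau) - \log_{t^{-1}}\tau - \zeta > M) < \eps$ for all relevant $\tau$; and (b) similarly $\Pr(\la_k'(\tau) - \log_{t^{-1}}\tau - \zeta < -M) < \eps$.

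For the upper bound (a), I would use the $t$-moment method in the direction where it \emph{does} work: the first moment. By Markov's inequality, $\Pr(\la_1'(\tau) \geq m) = \Pr(t^{-\la_1'(\tau)} \geq t^{-m}) \leq t^m \E[t^{-\la_1'(\tau)}]$, and the expectation $\E[t^{-\kappa_1 \la_1'(\tau)}]$ with $\kappa_1 = 1$ has an explicit contour integral formula from Bufetov-Matveev \cite{bufetov2018hall} (cited in the ``Problems with the moment problem'' subsection), or can be obtained directly as a special case of the skew-Cauchy manipulations used throughout \Cref{sec:stationary_law_hl}; crucially, this single moment was computed in \cite[Section 5]{van2022q} and shown to be $\asymp \tau^{\log_{t^{-1}}t^{-1}} = \tau$ up to bounded multiplicative factors, i.e. $\E[t^{-\la_1'(\tau)}] = \Theta(\tau)$ as $\tau \to \infty$. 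Hence $\Pr(\la_1'(\tau) - \log_{t^{-1}}\tau - \zeta > M) \leq C t^{M + \log_{t^{-1}}\tau + \zeta} \cdot \tau = C' t^{M}$, which is small uniformly in $\tau$ for $M$ large. (An entirely analogous argument using a higher $t$-moment, or just the one-coordinate bound together with $\la_k' \leq \la_1'$, suffices for all coordinates' upper tails.)

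For the lower bound (b) on $\la_k'(\tau)$, the moment method gives nothing (this is precisely the indeterminacy issue), so instead I would argue directly from the prelimit probability formulas already in hand. Using \Cref{thm:hl_residue_formula}'s prelimit counterpart --- more precisely the contour integral in \Cref{thm:use_torus_product}, or equivalently summing the residue expansion \eqref{eq:brpw_residue_formula} before passing to the limit --- one gets, for fixed $k$, an exact expression for $\Pr((\la_i'(\tau))_{1\le i\le k} = \eta)$ valid at finite $\tau$; summing over all $\eta \in \Sig_k$ with $\eta_k < \log_{t^{-1}}\tau + \zeta - M$ and estimating via \Cref{thm:f_bound}, \Cref{thm:g_bound_small_w}, \Cref{thm:qp_bound_tG} gives a bound that decays in $M$ uniformly in $\tau$. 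Concretely, after the change of variables $w_i = \tau t^{-L_k - \zeta} z_i$ as in the proof of \Cref{thm:hl_stat_dist}, the $e^{\chi t^{L_k}(w_1+\cdots+w_k)}$ factor forces the effective integration region to $|w_i| = O(1)$, and the factor $\prod_i (-tw_i;t)_\infty^{-1} \cdot \prod_i w_i^{-\eta_k(\tau)}$-type dependence together with \Cref{thm:theta_transformations} shows the mass at a given $L_k = \eta_k - \log_{t^{-1}}\tau - \zeta$ decays like $e^{-c\,t^{L_k}}$ as $L_k \to -\infty$ (double-exponentially in $-L_k$); summing a convergent geometric-type series over $L_k \le -M$ yields $\Pr(\la_k'(\tau) - \log_{t^{-1}}\tau - \zeta \le -M) = O(e^{-c\,t^{-M}})$ uniformly in $\tau$.

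The main obstacle I expect is (b): making the lower-tail estimate genuinely uniform in $\tau$ rather than pointwise in $\tau$, since the prelimit probability formula from \Cref{thm:use_torus_product} carries $\tau$-dependent contours $c\,t^{-\eta_k(\tau)}\T$ and the auxiliary function $g_\tau$ (here trivial since $\nu = \emptyset$, which helps), and one must control the interchange of the sum over $\eta$ with the contour integral and the resulting infinite sum over $L_k \le -M$. The bounds needed are essentially those already assembled for the proof of \Cref{thm:hl_stat_dist} --- \Cref{thm:f_bound} controls $f$ uniformly on $\tG$ and on $\Gamma(\tau)$, and \Cref{thm:qp_bound_tG}, \Cref{thm:qpoc_lower_bound} control the remaining $q$-Pochhammer factors --- so the work is in reorganizing these into a summable-in-$L_k$, uniform-in-$\tau$ bound rather than in proving anything new; once tightness is in hand, combined with \Cref{thm:hl_residue_formula} identifying all subsequential limits, \Cref{thm:stat_dist_1pt} follows since the limiting marginals are consistent under the projections $\Sig_{k+1} \to \Sig_k$ and Kolmogorov extension applies.
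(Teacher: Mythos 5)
Your approach is genuinely different from the paper's, and it has a real gap in part (b).

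The paper proves \Cref{thm:tightness} not by analyzing contour integrals or moment formulas, but by exploiting the explicit particle-system description of $\la^{(\infty,\emptyset)}(\tau)$ given by \Cref{thm:HL_poisson} and \Cref{def:poisson_walks} (the reflected Poisson walk $\Pois^{(\infty)}$ with clock $i$ ringing at rate $t^i$). For the upper tail, the paper observes that $\la_1'$ is itself Markov with the waiting time to jump from $x$ to $x+1$ being $\Exp(t^x/(1-t))$, so $\Pr(\la_1'(\tau) > D)$ is controlled by the probability that a single $\Exp(t^D/(1-t))$ is at most $\tau$, giving a bound $1-e^{-t^{\zeta+D}/(1-t)}$ after recentering that is uniform in $\tau$. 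For the lower tail, the key observation is a stochastic comparison: if $\la_k'(\tau)=x$, then $\la_k'$ must increase once clock $x+1$ has rung $k$ times, so the waiting time is bounded above by a sum of $k$ iid $\Exp(t^x)$ variables; Markov's inequality applied to the sum then gives the uniform bound $\Pr(\la_k'(\tau)-\log_{t^{-1}}\tau - \zeta < -D) \le kt^{2-\zeta}t^D/(1-t)$. Both bounds are elementary and require no symmetric-function machinery.

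Your part (a) is a viable alternative: the first $t$-moment $\E[t^{-\la_1'(\tau)}]$ is indeed computable (it follows directly from \eqref{eq:E_observable} with $\nu = \emptyset$, $\mu = (1)$, yielding $\E[t^{1-\la_1'(\tau)}] - t = Q_{(1)}(\gamma(\tau);0,t)$, which is linear in $\tau$), and Markov's inequality then gives the uniform upper-tail bound. This is a legitimate reroute, though heavier than what the paper does.

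Your part (b) does not go through as sketched. The plan is to sum the prelimit probability formulas of \Cref{thm:use_torus_product} over $L_k \le -M$ and control the terms via \Cref{thm:f_bound} etc. But the bound in \Cref{thm:f_bound} is of the form $C\prod_i e^{c_1\Re(w_i) + \cdots}$ with $c_1 = t^{L_k+\zeta}/(1-t)$, and on the arc part of the contour $\tG$ where $\Re(w_i) > 0$ this factor is bounded only by $e^{c_1} = e^{t^{L_k+\zeta}/(1-t)}$, which \emph{diverges doubly-exponentially} as $L_k \to -\infty$. The actual probability does decay as $L_k \to -\infty$ — your intuition from the limiting residue formula \eqref{eq:brpw_residue_formula}, whose leading term carries $e^{-t^{L_k+\zeta}/(1-t)}$, is correct — but that decay is produced by cancellation in the oscillatory integral, not by a termwise bound, and the prelimit error estimates assembled for the proof of \Cref{thm:hl_stat_dist} are designed to show convergence at each \emph{fixed} $\vec{L}$, not uniform decay in $L_k$. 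You flag this as the main obstacle, and you are right that it is: the proposal does not contain a mechanism for converting the cancellation-driven decay into a termwise, summable-in-$L_k$, uniform-in-$\tau$ bound. The paper sidesteps the entire issue by not touching the formulas at all in this step.

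Finally, note your closing sentence about deducing \Cref{thm:stat_dist_1pt} restates something that happens later in the paper and is not part of proving \Cref{thm:tightness}; the statement at hand is tightness alone, and the identification of the limit via \Cref{thm:hl_residue_formula} plus Prokhorov and Kolmogorov extension is a separate step.
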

\begin{proof}
We must show that for every $\eps > 0$, there exists $D = D(\eps)$ such that 
\begin{equation}\label{eq:tight_explicit}
\Pr(-D \leq \la_k'(\tau) - \log_{t^{-1}}(\tau) - \zeta \leq \la_1'(\tau) - \log_{t^{-1}}(\tau) - \zeta \leq D) > 1-\eps
\end{equation}
for all $\tau \in t^{-\N - \zeta}$. For the upper bound in \eqref{eq:tight_explicit}, first note that $\la_1'$ is Markov, and if $\la_1'(\tau) = x$ at some $\tau$ then the waiting time before $\la_1'$ jumps to $x+1$ follows an exponential distribution with rate $t^{x}/(1-t)$. This is because $\la_1'$ will increase as soon as one of the clocks $x+1,x+2,\ldots$ rings, and these have rates $t^x, t^{x+1},\ldots$. Hence for $D \in \N$ we have
\begin{equation}
\label{eq:lambda1'_upper}
\Pr(\la_1'(\tau) > D) = \Pr(\sum_{i=0}^D E_i \leq \tau)
\end{equation}
where $E_i \sim \Exp(t^{i}/(1-t))$. Clearly
\begin{equation}
\Pr(\sum_{i=0}^D E_i \leq \tau) \leq \Pr(E_D \leq \tau) = 1-e^{-\frac{t^{D}}{1-t}\tau }.
\end{equation}
Hence
\begin{equation}\label{eq:explicit_lambda1'_bound}
\Pr(\la_1'(\tau) - \log_{t^{-1}}(\tau) - \zeta \leq D) \geq e^{- \frac{t^{\log_{t^{-1}}(\tau)+\zeta+D}}{1-t}\tau} = e^{-\frac{t^{\zeta+D}}{1-t}}.
\end{equation}

Now for the lower bound of \eqref{eq:tight_explicit}. Suppose that $\la_k'(\tau) = x$ at some time $\tau$, and consider the waiting time until $\la_k'$ jumps to $x+1$. If clock $x+1$ rings $k$ times then $\la_k'$ will jump, even in the unfavorable case $\la_k'(\tau) = \ldots = \la_1'(\tau) = x$; note that there are many other ways the clocks can ring to cause $\la_k'$ to jump, we are just choosing this one for the lower bound. So the waiting time until $\la_k'$ jumps to $x+1$ is upper-bounded by a sum of $k$ independent $\Exp(t^x)$ random variables. Denoting this sum of $k$ random variables by $E_{k,x}$, it follows as before that
\begin{equation}
\label{eq:lambdak'_lower}
\Pr(\la_k'(\tau) \leq H) \leq \Pr(\sum_{i=0}^{H-1} E_{k,i} > \tau)
\end{equation}
where in contrast to \eqref{eq:lambda1'_upper} we have an inequality rather than an equality because we have only bounded the waiting time rather than giving it exactly. By Markov's inequality, 
\begin{equation}
\Pr(\sum_{i=0}^{H-1} E_{k,i} > \tau) \leq \frac{\E[\sum_{i=0}^{H-1} E_{k,i}]}{\tau} = \frac{kt^{-H+1}}{\tau} \frac{1-t^H}{1-t} \leq \frac{kt^{-H+1}}{(1-t)\tau}.
\end{equation}
Hence 
\begin{equation}\label{eq:explicit_lambdak'_bound}
\Pr(\la_k'(\tau) - \log_{t^{-1}}(\tau) - \zeta \geq -D) \geq 1 - \frac{kt^{D+2- \log_{t^{-1}}(\tau) - \zeta}}{(1-t)\tau} = 1 - t^D \frac{kt^{2-\zeta}}{1-t}.
\end{equation}
Since the bounds \eqref{eq:explicit_lambda1'_bound} and \eqref{eq:explicit_lambdak'_bound} are both independent of $\tau$ and both go to $1$ as $D \to \infty$, together they show \eqref{eq:tight_explicit}.
\end{proof}

\begin{proof}[Proof of {\Cref{thm:stat_dist_1pt}}]
By the Kolmogorov extension theorem, any family of distributions on $\Sig_k$ for each $k \geq 1$, consistent under the maps $(L_1,\ldots,L_k) \mapsto (L_1,\ldots,L_{k-1})$, defines a distribution on $\Sig_\infty$ (with respect to the product $\sigma$-algebra). Hence to show \Cref{thm:stat_dist_1pt}, it suffices to show that (a) the formulas given for $\Pr(\cL_{k,t,\chi} = \vec{L})$ define valid random variables, and (b) these form a consistent family.

For any $\chi \in \R_{>0}$, taking $\zeta = \log_t((1-t)\chi)$ in \Cref{thm:hl_stat_dist} and combining \Cref{thm:tightness} with Prokhorov's theorem shows that the formula \eqref{eq:limit_rv_int_formula} defines a valid random variable. By \Cref{thm:hl_stat_dist} and \Cref{thm:hl_residue_formula}, the right hand sides of \eqref{eq:limit_rv_res_formula} and \eqref{eq:limit_rv_int_formula} are equal. Consistency follows from consistency of the prelimit random variables, completing the proof.
\end{proof}

% From now on we state limit results in terms of the random variables $\cL_{t,\chi}$ or $\cL_{k,t,\chi}$ rather than limit probabilities, which substantially declutters notation. When stating in terms of $\cL_{t,\chi}$ we will also use the following basic convergence lemma.

% \begin{lemma}\label{thm:check_marginals}
% Let $\vec{X}^{(N)} = (X_1^{(N)},X_2^{(N)},\ldots)$ be any family of $\Sig_\infty$-valued random variables indexed by $N \in \Z_{\geq 1}$ for which 
% \begin{equation}
% (X_1^{(N)},\ldots,X_k^{(N)}) \to \cL_{k,t,\chi}
% \end{equation}
% in distribution as $N \to \infty$, for any $k$. Then $\vec{X}^{(N)} \to \cL_{t,\chi}$ in distribution, with respect to the product topology on $\Z^\infty$ where each factor has the discrete topology.
% \end{lemma}
% \begin{proof}
% Convergence in distribution in the product discrete topology on $\Z^\infty$ exactly means that finite collections of coordinates converge in the discrete topology on $\Z^k$.
% \end{proof}

%As in the Introduction, we will typically abuse notation and identify $\Sig_1$ with $\Z$ to view $\cL_{1,t,\chi}$ as a $\Z$-valued random variable. We prefer this to denoting the random variable by $\cL^{(1)}$, as it keeps the parameters $t,\chi$ explicit in the notation.

\section{An indeterminate moment problem} \label{sec:indeterminate}

In this section we prove the result mentioned in the Introduction, \Cref{thm:k=1_t_moments}, that the random variables $\chi^{-1}t^{-\cL_{t,\chi}^{(1)}}$ solve an indeterminate Stieltjes moment problem, and then discuss the case of $k > 1$. This material is not necessary for reading the rest of the paper.

\subsection{The $k=1$ case.} 

\begin{prop}\label{thm:k=1_t_moments}
In the notation of \Cref{thm:stat_dist_1pt}, for any $t \in (0,1)$ and $\chi \in \R_{>0}$ the $\chi^{-1}t^\Z$-valued random variable $\chi^{-1}t^{-\cL_{t,\chi}^{(1)}}$ has moments
\begin{equation}
\E\left[\left(\chi^{-1}t^{-\cL_{t,\chi}^{(1)}}\right)^m\right] = \frac{t^{-\binom{m+1}{2}}(t;t)_m}{m!}
\end{equation}
for each $m \in \Z_{\geq 0}$. 
\end{prop}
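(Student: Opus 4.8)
The plan is to compute the moments directly from the residue formula \eqref{eq:limit_rv_res_formula} in the case $k=1$, namely
\begin{equation*}
\Pr(\cL_{t,\chi}^{(1)} = L) = \frac{1}{(t;t)_\infty} \sum_{d \leq L} e^{-\chi t^d} \frac{t^{\binom{L-d}{2}}}{(t;t)_{L-d}} (-1)^{L-d},
\end{equation*}
which is the explicit formula in \Cref{def:L1} after the substitution $m = L-d$ and a relabeling. First I would write out
\begin{equation*}
\E\left[\left(\chi^{-1}t^{-\cL_{t,\chi}^{(1)}}\right)^m\right] = \sum_{L \in \Z} \chi^{-m} t^{-mL} \Pr(\cL_{t,\chi}^{(1)} = L) = \frac{\chi^{-m}}{(t;t)_\infty} \sum_{L \in \Z} \sum_{j \geq 0} t^{-mL} e^{-\chi t^{L-j}} \frac{(-1)^j t^{\binom{j}{2}}}{(t;t)_j},
\end{equation*}
writing $j = L - d$. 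The goal is then to interchange the two sums, substitute $x = t^{L-j}$ so that $t^{-mL} = (t^{-j} x^{-1})^{m} \cdot$(power of $t$), and recognize the inner sum over $L$ (equivalently over $x \in t^{\Z}$) as a Riemann-sum/Mellin-transform evaluation of $\int_0^\infty x^{-m} e^{-\chi x}\,\frac{dx}{x}$ up to the usual $q$-analogue corrections. More cleanly, I expect the cleanest route is to use the contour integral formula \eqref{eq:limit_rv_int_formula_k=1}: since $\Pr(\cL^{(1)}_{t,\chi}=L) = \frac{1}{2\pi\bi}\int_{\tG}\frac{e^{\chi t^L w}}{(-w;t)_\infty}\,dw$, we get
\begin{equation*}
\E\left[\left(\chi^{-1}t^{-\cL^{(1)}_{t,\chi}}\right)^m\right] = \frac{\chi^{-m}}{2\pi\bi}\int_{\tG}\frac{1}{(-w;t)_\infty}\sum_{L\in\Z} t^{-mL} e^{\chi t^L w}\,dw,
\end{equation*}
and the inner sum, after the change of variable $u = t^L w$ (so $L$ ranges over a coset and $u$ ranges over $t^{\Z}w$), telescopes into something proportional to a theta-type series whose contour integral against $1/(-w;t)_\infty$ can be evaluated by residues.

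In more detail, the key step is to justify the interchange of sum and integral (dominated convergence, using the exponential decay of the integrand on $\tG$ established via \Cref{thm:qpoc_lower_bound} and the fact that $\Re(w) \to -\infty$ along the two horizontal rays, together with $|e^{\chi t^L w}| = e^{\chi t^L \Re(w)} \le 1$ for $L$ large and controlled growth for $L$ negative), and then to evaluate $\sum_{L \in \Z} t^{-mL} e^{\chi t^L w}$. Setting $\phi_m(v) := \sum_{L\in\Z} t^{-mL} e^{v t^L}$, one checks the functional equation $\phi_m(tv) = t^{m}\phi_m(v)$, so $\phi_m(v) = v^{-m} \psi_m(v)$ with $\psi_m$ invariant under $v \mapsto tv$; but $\psi_m$ is not constant (these are genuinely $q$-periodic, not constant), so instead I would directly substitute $\phi_m$ back into the $w$-integral, change variables $w \mapsto t^{-L}w$ term by term using $\tG$'s (approximate) scale-invariance, and collapse the sum over $L$ against the shifts of the contour — this is exactly the mechanism already used in \Cref{thm:residue_expansion_independence} and \Cref{thm:small_contour_negligible}. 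The upshot should be
\begin{equation*}
\E\left[\left(\chi^{-1}t^{-\cL^{(1)}_{t,\chi}}\right)^m\right] = \frac{\chi^{-m}}{2\pi\bi}\oint \frac{u^{-m}}{(-u;t)_\infty}\, e^{\chi u}\cdot(\text{const})\,du
\end{equation*}
reducing to a single residue computation at the poles $u = -t^{-i}$, $i \ge 0$.

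The residue evaluation is then a $q$-series identity: summing $\Res_{u=-t^{-i}}\frac{u^{-m}e^{\chi u}}{(-u;t)_\infty}$ over $i \ge 0$ produces $\sum_{i\ge 0} \frac{(-1)^i t^{mi} e^{-\chi t^{-i}}}{(t;t)_i}\cdots$, and I expect after the dust settles that the $\chi$-dependence cancels (consistent with the fact, noted in the Introduction, that the left side of \eqref{eq:sum_to_1_identity} is $\chi$-independent, and more generally that the $t$-moments of these random variables are $\chi$-independent as discussed in Section on "Problems with the moment problem"), leaving precisely $\frac{t^{-\binom{m+1}{2}}(t;t)_m}{m!}$. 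Indeed the cleanest sanity check is $m=0$, which is exactly \eqref{eq:sum_to_1_identity} (the probabilities summing to $1$), and the general $m$ should follow from the same identity applied to a translated/weighted version, or from differentiating the generating function $\sum_m \frac{(-s)^m t^{\binom{m}{2}}}{(t;t)_m}$-type $q$-exponential identities. \textbf{The main obstacle} I anticipate is making the formal manipulation $\sum_L t^{-mL} e^{\chi t^L w}$ rigorous and correctly bookkeeping the contour shifts: the series $\sum_L t^{-mL}(\cdots)$ diverges at one end unless one uses the decay of $1/(-w;t)_\infty$ and $e^{\chi t^L w}$ in tandem, so the interchange must be done carefully (likely splitting the $L$-sum at $L \ge 0$ and $L < 0$ and treating the tails separately, as in the error-term analysis of \Cref{sec:stationary_law_hl}), and the final $q$-series identity must be matched to the claimed closed form — a routine but slightly delicate computation with $q$-Pochhammers and the $q$-binomial theorem.
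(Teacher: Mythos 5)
You have the right starting point (the contour integral formula \eqref{eq:limit_rv_int_formula_k=1}) and the right general strategy (interchange a sum with the integral, then residues), and you even flag the right intuition about scaling the contour by $t^{\Z}$. But there are genuine gaps, and the most important one is that you do not identify the key identity that makes the computation close.

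First, the naive interchange you write down,
\[
\frac{\chi^{-m}}{2\pi\bi}\int_{\tG}\frac{1}{(-w;t)_\infty}\sum_{L\in\Z} t^{-mL} e^{\chi t^L w}\,dw,
\]
does not make sense as written: for fixed $w$ with $\Re(w)\le 1$ bounded, the inner sum diverges as $L\to+\infty$ because $t^{-mL}\to\infty$ while $e^{\chi t^L w}\to 1$. The paper avoids this by changing variables $z = t^L w$ \emph{inside each integral} first and then shifting the contour from $t^L\tG$ to $\tG$ (justified by pushing the vertical segments to $-\infty$). After those two moves, the $L$-dependence sits in the denominator, namely
\[
\Pr(\cL^{(1)}_{t,\chi}=L) = \frac{1}{2\pi\bi}\int_{\tG} \frac{t^{-L}e^{\chi z}}{(-t^{-L}z;t)_\infty}\,dz,
\]
and now the sum $\sum_{L\in\Z} t^{-(m+1)L}/(-t^{-L}z;t)_\infty$ converges absolutely for $z\in\tG$ (the super-exponential growth of $|(-t^{-L}z;t)_\infty|$ kills $t^{-(m+1)L}$ as $L\to+\infty$; this is exactly \Cref{thm:can_do_dct_on_tg}). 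The order of operations --- substitute, shift contour, \emph{then} interchange --- is not bookkeeping, it is what makes the interchange legal.

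Second, the crux you defer to ``a $q$-series identity to be matched to the claimed closed form'' is precisely Ramanujan's $_1\psi_1$ summation, and without naming and applying it the argument stalls. The paper computes
\[
\sum_{L\in\Z} \frac{t^{-(m+1)L}}{(-t^{-L}z;t)_\infty} = (t;t)_m\, z^{-m-1}\, t^{-\binom{m+1}{2}},
\]
which is a clean application of the $_1\psi_1$ formula. This evaluation is both where the $\chi$-independence comes from and why your anticipated residue picture is off: after the $_1\psi_1$ sum, the poles of $1/(-u;t)_\infty$ at $u=-t^{-i}$ are gone, and the only singularity left in $e^{\chi z}z^{-m-1}$ is the order-$(m+1)$ pole at $z=0$. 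The contour is then closed around the origin (the horizontal rays contribute nothing by exponential decay), and the residue at $z=0$ gives $\chi^m/m!$, cancelling the prefactor $\chi^{-m}$. So the $\chi$-cancellation is not a miracle to be checked at the end --- it is an immediate algebraic consequence of the $_1\psi_1$ summation, which is the missing ingredient. Your alternate plan of residue-expanding at $-t^{-i}$ first and summing the resulting $q$-series would, in effect, require you to prove the $_1\psi_1$ identity anyway, so you cannot avoid it.
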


\begin{rmk}\label{rmk:k=1_two_rv_proofs}
Note that the $m=0$ case of \Cref{thm:k=1_t_moments} provides a proof the explicit probabilities for $\cL_{t,\chi}^{(1)}$ sum to $1$, which is independent of the previous section. Of course, one would still have to show that they are nonnegative to obtain a completely independent proof that $\cL_{t,\chi}^{(1)}$ is a valid random variable, but luckily the tightness arguments of the previous section have already established this.
\end{rmk}

We will need the following lemma, proven at the end of this section, to apply dominated convergence in the proof of the above.

\begin{lemma}\label{thm:can_do_dct_on_tg}
For any $m \in \Z_{\geq 0}$, there exists a constant $C$ such that for all $z \in \tG$,
\begin{equation}
\sum_{L \in \Z} \abs*{\frac{t^{-(m+1)L}}{(-t^{-L}z;t)_\infty}} < C.
\end{equation}
\end{lemma}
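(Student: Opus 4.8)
\textbf{Proof proposal for \Cref{thm:can_do_dct_on_tg}.}

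The plan is to split the sum over $L \in \Z$ into the tail $L \geq 0$ and the tail $L < 0$, since the two tails are controlled by genuinely different mechanisms. In both regions we will bound $|(-t^{-L}z;t)_\infty|$ below by exploiting the transformation identity of \Cref{thm:theta_transformations}, which lets one pull the factor $t^{-L}$ out of the $q$-Pochhammer at the cost of an explicit monomial prefactor: for $w \in \C^\times$ and $n \in \Z$,
\begin{equation}
(-t^{-n}w^{-1};t)_\infty (-t^{n+1}w;t)_\infty = w^{-n}t^{-\binom{n+1}{2}}(-w^{-1};t)_\infty(-tw;t)_\infty.
\end{equation}
Applying this with $w = z^{-1}$ and $n = L$ rewrites $(-t^{-L}z;t)_\infty = z^{-L}t^{-\binom{L+1}{2}}(-z;t)_\infty(-tz^{-1};t)_\infty / (-t^{L+1}z^{-1};t)_\infty$, so that
\begin{equation}
\abs*{\frac{t^{-(m+1)L}}{(-t^{-L}z;t)_\infty}} = \abs*{\frac{t^{-(m+1)L} |z|^L t^{\binom{L+1}{2}} (-t^{L+1}z^{-1};t)_\infty}{(-z;t)_\infty (-tz^{-1};t)_\infty}}.
\end{equation}
On $\tG$ we have $|z| \geq 1$ bounded below (the contour stays outside the open unit disc), and $z$ stays uniformly away from $-t^{\Z}$, so by \Cref{thm:qpoc_lower_bound} the denominator $|(-z;t)_\infty(-tz^{-1};t)_\infty|$ is bounded below by a positive constant uniformly in $z \in \tG$; the factor $(-t^{L+1}z^{-1};t)_\infty$ is bounded above uniformly for $L \geq -1$ since $|t^{L+1}z^{-1}| \leq 1$. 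Thus for $L \geq 0$ the summand is $O\bigl(t^{\binom{L+1}{2} - (m+1)L}|z|^L\bigr)$. The quadratic gain $t^{\binom{L+1}{2}}$ would win, except that $|z|^L$ grows without bound as $z \to \infty$ on $\tG$; this is the first place care is needed. To handle it, one instead keeps $(-t^{-L}z;t)_\infty$ in its original form when $|z|$ is large: directly, $|(-t^{-L}z;t)_\infty| \geq |1 + t^{-L}z| \cdot |(-t^{1-L}z;t)_\infty|$-type bounds show $|(-t^{-L}z;t)_\infty| \geq c\, t^{-\binom{L}{2}} |z|^L$ for $|z| \geq 1$, $L \geq 0$ (expand the product, the dominant term is the one picking $t^{-j}z$ from each of the first $L$ factors). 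Hence for $L \geq 0$,
\begin{equation}
\abs*{\frac{t^{-(m+1)L}}{(-t^{-L}z;t)_\infty}} \leq \frac{t^{-(m+1)L}}{c\, t^{-\binom{L}{2}}|z|^L} \leq \frac{1}{c} t^{\binom{L}{2} - (m+1)L},
\end{equation}
using $|z| \geq 1$, and $\sum_{L \geq 0} t^{\binom{L}{2} - (m+1)L} < \infty$ since the exponent is eventually positive and grows quadratically.

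For the other tail $L < 0$, write $L = -\ell$ with $\ell \geq 1$, so $t^{-L}z = t^{\ell}z$ and $|t^{\ell}z| \to 0$; here the transformation identity is unnecessary, because $(-t^{\ell}z;t)_\infty \to 1$ as $\ell \to \infty$ uniformly for $z$ on the (unbounded) contour is \emph{false} — but one only needs a lower bound. For $z \in \tG$ with $|z|$ bounded, $(-t^\ell z;t)_\infty$ is bounded away from $0$; for $|z|$ large the relevant factor is that $t^{-(m+1)L} = t^{(m+1)\ell} \to 0$ exponentially, and $|(-t^\ell z;t)_\infty|$ is bounded below by $\prod_{j \geq 1}(1 - t^{j}|z|)^{-1}$-type considerations only where $t^\ell|z| < 1$. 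The clean way: decompose $\tG$ into the compact arc/near-origin part where $|z| \leq R$ (some fixed $R$), on which $\inf_{z,\ell}|(-t^\ell z;t)_\infty| > 0$ and the sum over $\ell \geq 1$ of $t^{(m+1)\ell}$ converges, and the unbounded part $|z| > R$, on which one again invokes the transformation identity: with $n = L = -\ell$ the prefactor becomes $z^{\ell}t^{-\binom{-\ell+1}{2}}$ and $(-t^{1-\ell}z^{-1};t)_\infty$ is bounded since $|t^{1-\ell}z^{-1}| \leq t^{1-\ell}/R$, which is $\leq 1$ only for $\ell$ large relative to $R$ — so this sub-case needs $(-t^{-\ell}z;t)_\infty \geq c\,t^{-\binom{\ell}{2}}|z|^{-\ell}$-style bounds but now with a \emph{positive} power of $|z|$ in the denominator absorbing the growth, while $t^{\binom{\ell}{2}}$ again dominates. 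Summing the geometric/super-geometric series over $\ell$ gives a finite bound uniform in $z$.

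The main obstacle is bookkeeping the interaction between the unboundedness of the contour $\tG$ and the $L$-dependent scaling $t^{-L}$: for each fixed $L$ the summand is a bounded function of $z \in \tG$, but one must extract a bound uniform in both $z$ and $L$ simultaneously, and the dominant contribution to $|(-t^{-L}z;t)_\infty|$ changes character depending on whether $|t^{-L}z|$ is small, near $1$, or large. Carving $\tG$ into a compact piece and an asymptotic piece, and in each treating the $L \geq 0$ and $L < 0$ tails via the theta transformation \Cref{thm:theta_transformations} together with the lower bound \Cref{thm:qpoc_lower_bound}, reduces everything to convergence of sums of the form $\sum_L t^{\binom{L}{2} + cL}$, which is immediate. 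I would then assemble the four (or so) sub-estimates into the single constant $C$.
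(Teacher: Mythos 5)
Your split of the sum at $L=0$ and the use of the theta transformation (\Cref{thm:theta_transformations}) together with the lower bound \Cref{thm:qpoc_lower_bound} is a reasonable plan, and the $L\geq 0$ tail does go through once the prefactor is corrected: from \Cref{thm:theta_transformations} with $w=z^{-1}$, $n=L$ one gets $(-t^{-L}z;t)_\infty = z^{L}t^{-\binom{L+1}{2}}(-z;t)_\infty(-tz^{-1};t)_\infty / (-t^{L+1}z^{-1};t)_\infty$, with $z^{L}$ (not $z^{-L}$) and $\binom{L+1}{2}$ (not $\binom{L}{2}$); since $|z|\geq 1$ and $L\geq 0$, the $|z|^L$ sits in the denominator and can simply be discarded, so there was never a growth problem to sidestep and the detour to the direct product bound is unnecessary. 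The $L<0$, $|z|\leq R$ case is also fine. The genuine gap is in the $L<0$, $|z|$ large regime. Writing $L=-\ell$ with $\ell\geq 1$, your sketched lower bound $|(-t^{\ell}z;t)_\infty|\geq c\,t^{-\binom{\ell}{2}}|z|^{-\ell}$ gives a summand bound $\tfrac{1}{c}\,t^{(m+1)\ell+\binom{\ell}{2}}|z|^{\ell}$; with $|z|\approx t^{-n}$ the exponent $(m+1)\ell+\binom{\ell}{2}-n\ell$ is minimized near $\ell\approx n-m$, where it is $\approx -\binom{n-m}{2}$, so the sum over $\ell$ blows up like $t^{-\binom{n-m}{2}}$ as $n\to\infty$ along $\tG$. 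The assertion that ``$t^{\binom{\ell}{2}}$ again dominates'' is therefore false: $|z|^{\ell}$ beats it once $\ell$ is comparable to $\log_{t^{-1}}|z|$. (Also note that the condition $|t^{1-\ell}z^{-1}|\leq 1$ holds for $\ell$ \emph{small}, not large, relative to the scale of $z$ — the direction in your sketch is reversed.)

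What is missing is that the split in $\ell$ must be at a $z$-dependent threshold, roughly $\ell\approx n:=\log_{t^{-1}}|z|$, rather than interacting only with a fixed cutoff $R$. For $1\leq \ell\leq n$ the transformation is safe: $(-t^{1-\ell}z^{-1};t)_\infty$ stays bounded (since $|t^{1-\ell}z^{-1}|\leq 1$), and the correct sign $|z|^{+\ell}$ in the denominator gives $\sum_{1\leq\ell\leq n}t^{(m+1)\ell+\binom{\ell}{2}}|z|^{-\ell}\leq\sum_{\ell\geq 1}t^{(m+1)\ell+\binom{\ell}{2}}<\infty$. For $\ell>n$, $|t^{\ell}z|<1$ and the Pochhammer is directly bounded below by a constant, so the tail converges geometrically. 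The paper's proof bakes exactly this $z$-dependent pivot in from the start: it locates $z$ at scale $t^{-n}$ (via $\Re(z)$), and its Lemmas \ref{thm:small_L_z_bound} and \ref{thm:large_n_large_L_bound} split the sum at $L=-n$, yielding the quadratic gain $t^{(L+n)^2}$ that encodes the interplay between $L$ and the scale of $z$. Your proposal is salvageable along the same lines, but as written the key estimate has the wrong sign of $|z|$, the threshold in $\ell$ is misplaced, and the resulting bound is not uniform in $z$.
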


\begin{proof}[Proof of \Cref{thm:k=1_t_moments}]
We begin by changing variables to $z=t^L w$ rewrite \eqref{eq:limit_rv_int_formula_k=1} as
\begin{equation}\label{eq:change_to_z}
\Pr(\cL_{t,\chi}^{(1)}=L) = \frac{1}{2 \pi \bi} \int_{t^L\tG} \frac{e^{\chi z}}{(-t^{-L}z;t)_\infty} t^{-L} dz. 
\end{equation}
We first argue that the contour in \eqref{eq:change_to_z} may be shifted to $\tG$. By the residue theorem, for any $n \in \Z_{>0}$
\begin{align}\label{eq:four_integrals}
\begin{split}
&\frac{1}{2 \pi \bi} \left( \int_{t^L\tG} -  \int_{\tG} \right)  \frac{t^{-L}e^{\chi z}}{(-t^{-L}z;t)_\infty}  dz = \frac{1}{2 \pi \bi} \left(\int_{t^L\tG \cap \{z: \Re(z) \leq -t^{-n+1/2}\}} - \int_{\tG \cap \{z: \Re(z) \leq -t^{-n+1/2}\}}\right. \\ 
&\left. - \int_{z=-t^{-n+1/2}+t^L \bi}^{-t^{-n+1/2}+\bi} + \int_{z=-t^{-n+1/2}-t^L \bi}^{-t^{-n+1/2}-\bi}\right) \frac{t^{-L}e^{\chi z}}{(-t^{-L}z;t)_\infty}  dz 
\end{split}
\end{align}
Because the integrals over $\tG$ and $t^L \tG$ are finite (e.g. by \Cref{thm:stat_dist_1pt}), the integrals over $\tG \cap \{z: \Re(z) \leq -t^{-n+1/2}\}$ and $t^L\tG \cap \{z: \Re(z) \leq -t^{-n+1/2}\}$ go to $0$ as $n \to \infty$.

The two vertical contours in \eqref{eq:four_integrals} have lengths $|1-t^L|$, the factor $|1/(-t^{-L}z;t)_\infty|$ in the integrand is uniformly bounded above along them, and the exponential factor becomes $e^{-\chi t^{-n+1/2}}$, hence these integrals also go to $0$ as $n \to \infty$. Hence \eqref{eq:change_to_z} and the above yield 
\begin{equation}\label{eq:shift_to_tG}
\Pr(\cL_{t,\chi}^{(1)}=L) =  \frac{1}{2 \pi \bi} \int_{\tG} \frac{t^{-L}e^{\chi z}}{(-t^{-L}z;t)_\infty}  dz. 
\end{equation}

By \Cref{thm:can_do_dct_on_tg} and the fact that $|e^{\chi z}|$ is integrable on $\tG$, dominated convergence applies and
\begin{align}\label{eq:expectation_integral}
\begin{split}
\E\left[\left(\chi^{-1}t^{-\cL_{t,\chi}^{(1)}}\right)^m\right] &= \chi^{-m}\sum_{L \in \Z} t^{-mL}\Pr(\cL_{t,\chi}^{(1)}=L) \\ 
&= \chi^{-m}\lim_{D \to \infty} \int_{\tG} \sum_{L =-D}^D  \frac{t^{-L-mL}}{(-t^{-L}z;t)_\infty}dz \\ 
&=  \chi^{-m}\frac{1}{2 \pi \bi} \int_{\tG} e^{\chi z} \sum_{L \in \Z} \frac{t^{-L-mL}}{(-t^{-L}z;t)_\infty}dz.
\end{split}
\end{align}
By Ramanujan's $\;_1\psi_1$ summation identity (see e.g. \cite[(5.2.1)]{gasper_rahman_2004}),
\begin{align}
\begin{split}\label{eq:1psi1}
\sum_{L \in \Z} \frac{t^{-L-mL}}{(-t^{-L}z;t)_\infty} &= \frac{1}{(-z;t)_\infty} \;_1\psi_1(-z;0;t,t^{m+1}) \\ 
&= \frac{1}{(-z;t)_\infty} \frac{(t;t)_\infty (0;t)_\infty (-zt^{m+1};t)_\infty (-t^{-m}z^{-1};t)_\infty}{(0;t)_\infty (-tz^{-1};t)_\infty (t^{m+1};t)_\infty (0;t)_\infty} \\ 
&= (t;t)_m z^{-m-1} t^{-\binom{m+1}{2}}.
\end{split}
\end{align}
By closing off the contour $\tG$ to a union of the unit circle and a shifted copy of $\tG$, we have
\begin{align}
\begin{split}\label{eq:bubble_contour}
&\chi^{-m}\frac{1}{2 \pi \bi} \int_{\tG} e^{\chi z} (t;t)_m z^{-m-1} t^{-\binom{m+1}{2}}dz \\ 
&= \chi^{-m}\frac{1}{2 \pi \bi} \int_{\T} e^{\chi z} (t;t)_m z^{-m-1} t^{-\binom{m+1}{2}}dz + \chi^{-m}\frac{1}{2 \pi \bi} \int_{\tG-c} e^{\chi z} (t;t)_m z^{-m-1} t^{-\binom{m+1}{2}}dz \\ 
&= \chi^{-m}\frac{1}{2 \pi \bi} \int_{\T} e^{\chi z} (t;t)_m z^{-m-1} t^{-\binom{m+1}{2}}dz,
\end{split}
\end{align}
where the last equality follows because the second term on the right hand side clearly goes to $0$ as $c \to \infty$ by exponential decay of the integrand as $\Re(z) \to -\infty$, so since it is independent of shifting the contour, it must have been $0$ to start with. Combining \eqref{eq:expectation_integral}, \eqref{eq:1psi1}, and \eqref{eq:bubble_contour}, we have
\begin{equation}
\E\left[\left(\chi^{-1}t^{-\cL_{t,\chi}^{(1)}}\right)^m\right] = \chi^{-m}\frac{1}{2 \pi \bi} \int_{\T} e^{\chi z} (t;t)_m z^{-m-1} t^{-\binom{m+1}{2}}dz = \frac{(t;t)_m t^{-\binom{m+1}{2}}}{m!}
\end{equation}
by the residue formula, completing the proof.
\end{proof}

\subsection{General $k$.} \label{sec:indet_gen_k}For any $\kappa = (\kappa_1,\ldots,\kappa_k) \in \Sig_k$ with $\kappa_k \geq 0$, a formula for limiting joint moments
\begin{equation}\label{eq:limit_in_qtasep}
\lim_{\tau \to \infty} \E[(t^{-(\la_1'(\tau) - \log_{t^{-1}}\tau)})^{\kappa_1} \cdots (t^{-(\la_k'(\tau) - \log_{t^{-1}}\tau)})^{\kappa_k}]
\end{equation}
in terms of a certain multiple contour integral\footnote{The notation differs slightly: the $M$ in \cite[Proposition 5.1]{van2022q} corresponds to our $\kappa_1$, and $r_1,\ldots,r_M$ correspond to $\kappa_1',\ldots,\kappa_{\kappa_1}'$. The notation $\la(\tau)$ is, however, the same in both \cite{van2022q} and the present work. Note also that there is a small typo on the left hand side of \cite[(5.1)]{van2022q}, $j$ should be $i$ (or vice versa).}  was derived in \cite[Proposition 5.1]{van2022q}, and found to be independent of the subsequence along which $\tau$ goes to $\infty$. On this basis, we conjectured directly after the proof of \cite[Proposition 5.1]{van2022q} that the random variables $(\la_i'(\tau) - \log_{t^{-1}}\tau)$ did indeed converge in (joint) distribution to random variables on shifts of the integer lattice, depending on the subsequence along which $\tau \to \infty$. The results \Cref{thm:hl_stat_dist} and \Cref{thm:stat_dist_1pt} show that this is the case, namely that $(\la_i'(\tau) - \log_{t^{-1}}\tau)_{1 \leq i \leq k}$ converges along $\tau \in t^{-\N+\zeta}$ to the $(\Z+\zeta)^k$-valued random variable $\cL_{k,t,t^{\zeta}/(1-t)} + (\zeta[k])$. It is very tempting to conclude that the mixed moments of $\cL_{t,\chi} = (\cL^{(i)})_{i \geq 1}$ 
\begin{equation}
\E\left[(\chi^{-1}t^{-\cL^{(1)}})^{\kappa_1} \cdots (\chi^{-1}t^{-\cL^{(k)}})^{\kappa_k}\right]
\end{equation}
exist and are given by \eqref{eq:limit_in_qtasep} (after suitable overall normalization). Indeed, evaluating the contour integral formula \cite[(5.1)]{van2022q} for the limiting moments \eqref{eq:limit_in_qtasep} gives the correct formula from \Cref{thm:k=1_t_moments} for the moments of $t^{-\cL_{t,\chi}^{(1)}}$ in the $k=1$ case. However, proving that the $t$-moments of the limit random variable $\cL_{k,t,\chi}$ are the same as the limiting $t$-moments of the prelimit random variables $(\la_i'(\tau) - \log_{t^{-1}}\tau)_{1 \leq i \leq k}$ requires tighter control on the tails of the prelimit random variables than we establish in \Cref{sec:tightness_and_limit}. We suspect that generalizing the method of the previous subsection to all $k$ may provide a better route to the mixed $t$-moments of $\cL_{k,t,\chi}$, but the formula \cite[(5.1)]{van2022q} at least provides conjectural answers. We remark that the contour integrals \cite[(5.1)]{van2022q} are not particularly transparent to us, and we found the formula \Cref{thm:k=1_t_moments} by evaluating them on Mathematica in small cases. It is not hard to evaluate the mixed moments in this way for general $k$ to obtain rational functions in $t$, though we have not tried to conjecture a general form for them beyond $k=1$.

\subsection{Proofs of bounds on $q$-Pochhammer symbols.} \label{subsec:bounds} In this subsection we collect proofs of all bounds on $q$-Pochhammer symbols which we used earlier, namely \Cref{thm:qpoc_lower_bound}, \Cref{thm:qp_bound_tG}, and \Cref{thm:can_do_dct_on_tg}, of which the last is the most delicate. The computations are similar in all cases, and we begin with \Cref{thm:qpoc_lower_bound}, which illustrates the method, then show \Cref{thm:can_do_dct_on_tg} and \Cref{thm:qp_bound_tG}.

\begin{proof}[Proof of \Cref{thm:qpoc_lower_bound}]
We first show the case where $n$ is finite and $t^{1/2} < \Re(z) < t^{-n+1/2}$. Let $\ell_0$ be the unique index for which $t^{\ell_0+1/2} < \Re(z) \leq t^{\ell_0-1/2}$. Then we have the bound
\begin{equation}\label{eq:cases_qpoc_1}
|1-t^\ell z| \geq \begin{cases}
t^{\ell-\ell_0+1/2} - 1 & 0 \leq \ell < \ell_0 \\ 
\delta & \ell = \ell_0 \\ 
1-t^{\ell-\ell_0-1/2} & \ell > \ell_0
\end{cases}
\end{equation}
where the first and third cases come by bounding the modulus by the real part, and the second from our hypothesis on $z$. It follows by applying the bounds from \eqref{eq:cases_qpoc_1} to each factor that 
\begin{equation}
|(z;t)_n| \geq \prod_{\ell=0}^{\ell_0-1}(t^{\ell-\ell_0+1/2} - 1) \cdot \delta \cdot (t^{1/2};t)_{n-\ell_0-1} = \delta t^{-\frac{\ell_0^2}{2}} (t^{1/2};t)_{\ell_0} (t^{1/2};t)_{n-\ell_0-1},
\end{equation}
showing \eqref{eq:qpoc_lower_bound} in this case. If $n=\infty$ the proof and the bound are the same. If $\Re(z) \geq t^{-n+1/2}$, we simply apply the first bound in \eqref{eq:cases_qpoc_1} to all but one term in $(z;t)_n$ and apply the second bound in \eqref{eq:cases_qpoc_1} to the remaining term; if $\Re(z) \leq t^{1/2}$ we apply the third bound in \eqref{eq:cases_qpoc_1} to all but one term in $(z;t)_n$ and apply the second bound in \eqref{eq:cases_qpoc_1} to the remaining term. In both cases, we obtain a bound $\delta (t^{1/2};t)_\infty$, and this completes the proof.
\end{proof}

To prove \Cref{thm:can_do_dct_on_tg}, we will bound by separate constants for $\Re(z) \leq -t^{1/2}, -t^{1/2} \leq \Re(z) \leq 0$, and $0 \leq \Re(z) \leq 1$, and take the maximum of these three. We use the following lemmas to establish bounds on various parts of the contour $\tG$. Each one is an easy computation, but we found it helpful to highlight them separately to avoid (our own) confusion between the cases.% because there are so many. 

\begin{lemma}\label{thm:qpoc_tmoment_bound}
For any $z = x \pm \sqrt{1-x^2}\bi$, $0 \leq x \leq 1$, we have
\begin{equation}\label{eq:qpoc_tmoment_bound}
\frac{1}{|(-t^{-L}z;t)_\infty|  } \leq \frac{1}{\sqrt{(-t^{-2L};t^2)_\infty}}.
\end{equation}
\end{lemma}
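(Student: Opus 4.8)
\textbf{Proof proposal for \Cref{thm:qpoc_tmoment_bound}.}

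The plan is to establish the inequality \eqref{eq:qpoc_tmoment_bound} by a term-by-term comparison of the absolute values of the factors in the two infinite products, after first pairing up consecutive factors on the left-hand side. Concretely, for $z = x \pm \sqrt{1-x^2}\,\bi$ with $0 \le x \le 1$, we have $|z| = 1$ and $\Re(z) = x \ge 0$. First I would write out
\begin{equation}
|(-t^{-L}z;t)_\infty|^2 = \prod_{m \ge 0} |1 + t^{m-L}z|^2 = \prod_{m \ge 0} \left(1 + 2 t^{m-L}\Re(z) + t^{2(m-L)}\right),
\end{equation}
using $|z|^2 = 1$ in the last step. Since $\Re(z) = x \ge 0$ and $t > 0$, each factor satisfies $1 + 2t^{m-L}x + t^{2(m-L)} \ge 1 + t^{2(m-L)}$. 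Hence
\begin{equation}
|(-t^{-L}z;t)_\infty|^2 \ge \prod_{m \ge 0} \left(1 + t^{2(m-L)}\right) = (-t^{-2L};t^2)_\infty,
\end{equation}
where the last equality is just the definition of the $q$-Pochhammer symbol with base $t^2$: the exponents $2(m-L)$ for $m \ge 0$ are exactly $-2L, -2L+2, -2L+4, \ldots$. Taking square roots and reciprocals gives \eqref{eq:qpoc_tmoment_bound}.

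The one point that needs a moment of care is the convergence and positivity of the infinite products: $(-t^{-2L};t^2)_\infty$ contains finitely many factors exceeding $1$ (the ones with $m - L < 0$, i.e. $m < L$) and infinitely many factors tending to $1$, so it converges to a finite positive number, and likewise $|(-t^{-L}z;t)_\infty|^2$ is a finite positive number (it cannot vanish since no factor $1 + t^{m-L}z$ is zero when $x \ge 0$, as that would force $\Re(1 + t^{m-L}z) = 1 + t^{m-L}x > 0$). So the term-by-term inequality is legitimate and there is no main obstacle here; this lemma is genuinely an elementary computation, included only to isolate the estimate for the $0 \le \Re(z) \le 1$ arc of $\tG$ in the proof of \Cref{thm:can_do_dct_on_tg}.
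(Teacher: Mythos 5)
Your proof is correct and is essentially the same argument as the paper's: expand each factor's modulus using $|z|=1$, drop the cross term $2t^{m-L}\Re(z)\ge 0$, and recognize the remaining product as $(-t^{-2L};t^2)_\infty$. The only cosmetic difference is that you work with $|(-t^{-L}z;t)_\infty|^2$ and take a square root at the end, while the paper bounds each factor $|1+t^{\ell-L}z|$ directly by $\sqrt{1+t^{2\ell-2L}}$.
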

The following lemmas are refinements of \Cref{thm:qpoc_lower_bound}.
\begin{lemma}\label{thm:small_L_z_bound}
For any $n \in \Z,L \in \Z_{<-n}$ and $z = -t^{-n+u} \pm i, u \in [-1/2,1/2]$,
\begin{equation}
\frac{1}{|(-t^{-L}z;t)_\infty|} \leq \frac{1}{(t^{1/2};t)_\infty}.
\end{equation}
\end{lemma}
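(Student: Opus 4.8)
The plan is to bound the infinite product $(-t^{-L}z;t)_\infty = \prod_{\ell \geq 0}\bigl(1 + t^{\ell - L} z\bigr)$ factor by factor from below by the modulus of the real part of each factor. Writing $z = -t^{-n+u} \pm \bi$, the $\ell$th factor equals $\bigl(1 - t^{\ell - L - n + u}\bigr) \pm t^{\ell - L}\bi$, so its modulus is at least $\bigl|1 - t^{\ell - L - n + u}\bigr|$.

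The point where the hypothesis enters is that $L < -n$ forces $-L \geq n+1$ since $L,n \in \Z$, and combining this with $\ell \geq 0$ and $u \geq -1/2$ yields $\ell - L - n + u \geq 1/2$ for every $\ell \geq 0$. Consequently $t^{\ell - L - n + u} \leq t^{1/2} < 1$, each factor's real part is positive, and $|1 + t^{\ell - L}z| \geq 1 - t^{\ell - L - n + u} \geq 1 - t^{(\ell - L - n) - 1/2}$.

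Substituting $k = \ell - L - n$, which ranges over a subset of $\{1,2,3,\dots\}$ as $\ell$ runs over $\Z_{\geq 0}$, and using that every factor $1 - t^{k - 1/2}$ lies in $(0,1)$ so that dropping the factors with the smallest $k$ only increases the product, we get
\[
|(-t^{-L}z;t)_\infty| \;\geq\; \prod_{k \geq 1}\bigl(1 - t^{k - 1/2}\bigr) \;=\; \prod_{j \geq 0}\bigl(1 - t^{1/2}t^{j}\bigr) \;=\; (t^{1/2};t)_\infty ,
\]
and taking reciprocals gives the stated bound. There is no genuine obstacle in this argument; the only subtlety worth flagging is the integer-valued inequality $-L \geq n+1$, which is precisely what makes the exponent stay above $1/2$ uniformly in $\ell$ and is why the hypothesis reads $L < -n$ rather than $L \le -n$.
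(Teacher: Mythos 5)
Your proof is correct and follows the same route as the paper's: bound each factor's modulus from below by its real part, use $u \geq -1/2$ to replace the exponent by $\ell - L - n - 1/2$, and use the integrality constraint $-L - n \geq 1$ to see that the resulting product is at least $(t^{1/2};t)_\infty$. The only cosmetic difference is that you reindex by $k = \ell - L - n$ and explicitly note that discarding the missing small-$k$ factors only increases the product, whereas the paper passes directly through $(t^{-L-n-1/2};t)_\infty \geq (t^{1/2};t)_\infty$; these are the same observation.
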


\begin{lemma}\label{thm:small_horiz_z_bound}
For any $n \in \Z_{<0}, L \in \Z_{\geq 0}$ and $z = -t^{-n+u} \pm i, u \in [-1/2,1/2]$,
\begin{equation}
\frac{1}{|(-t^{-L}z;t)_\infty|} \leq \frac{t^{\binom{L+1}{2}}}{(t^{1/2};t)_\infty}.
\end{equation}
\end{lemma}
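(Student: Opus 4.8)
\textbf{Proof proposal for \Cref{thm:small_horiz_z_bound}.}

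The plan is to estimate $|(-t^{-L}z;t)_\infty| = \prod_{\ell \geq 0} |1 + t^{-L+\ell} z|$ from below, factor by factor, exploiting that $z = -t^{-n+u} \pm i$ has large modulus $|z| \geq t^{-n-1/2}$ (since $n < 0$, so $-n+u \geq 1 - 1/2 = 1/2$, and actually $|z|^2 = t^{-2n+2u} + 1 \geq t^{-2n+2u}$). Write $w = -t^{-L}z = t^{-L-n+u} \mp t^{-L} i$, so $\Re(w) = t^{-L-n+u} > 0$ and $|w| \geq t^{-L-n+u} \geq t^{-L-n-1/2}$. Since $n \leq -1$ and $L \geq 0$, we have $-L-n-1/2 \geq 1/2 - L \cdot$ wait — more carefully, $-L-n+u$ ranges over an interval around the positive integer-ish value $-L-n \geq -L+1$. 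The point is that $w$ lies roughly at $-t^{-L-n}$ on the positive real axis (up to the bounded imaginary shift $t^{-L}$, which is $\leq 1$), so we are in a regime similar to \Cref{thm:qpoc_lower_bound} but need to track the $L$-dependence sharply.

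First I would pin down the index $\ell_0 \geq 0$ for which $t^{\ell_0 + 1/2} < |w| \le t^{\ell_0 - 1/2}$, equivalently $\ell_0 \approx L+n-u$; since $L \geq 0$ and $n \leq -1$ the relevant factors split into those with $\ell < \ell_0$ (where $|1 + t^{-L+\ell}z| = |1 - t^{\ell} w|$... let me restate: the factors are $1 + t^{-L+\ell}z = 1 - t^{\ell}\tilde w$ with $\tilde w = -t^{-L}z = w$, so factor $\ell$ is $1 - t^{\ell} w$ — no: $1 + t^{-L+\ell} z = 1 - t^{\ell}(-t^{-L}z) = 1 - t^{\ell} w$). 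For $\ell$ small, $|t^\ell w|$ is large so $|1 - t^\ell w| \geq |t^\ell w| - 1 \geq \Re(t^\ell w) - 1 = t^\ell t^{-L-n+u} - 1$, and for $\ell$ large, $|1 - t^\ell w| \geq 1 - |t^\ell w| \geq 1 - t^{\ell - \ell_0 - 1/2}$; the single middle factor near $\ell_0$ is bounded below by the distance of $w$ from the real point, which thanks to the imaginary part $t^{-L}$ of $w$ is at least... here one must be careful since $t^{-L} \geq 1$ could be large, but actually $|1 - t^{\ell_0} w| \geq t^{\ell_0} |\Im(w)| = t^{\ell_0} t^{-L} = t^{\ell_0 - L}$, and $\ell_0 - L \approx n - u$ which is a fixed negative number, so this factor is actually bounded below by a positive constant like $t^{n-u} \geq t^{-1/2}$ after taking $n = -1$ worst case — wait, $n$ can be arbitrarily negative. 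The cleaner route: use $|1 - t^{\ell_0}w| \geq |\,|t^{\ell_0} w| - 1\,|$ and note $|t^{\ell_0}w| \in (t^{1/2}, t^{-1/2}]$, so this is $\geq 1 - t^{1/2}$ unless $|t^{\ell_0}w|$ is near $1$, in which case instead bound by the imaginary part $t^{\ell_0 - L} \geq t^{\ell_0}$; combining, this middle factor is $\geq c_0$ for an absolute constant, OR one absorbs it into the telescoping product. Collecting the small-$\ell$ factors gives $\prod_{\ell=0}^{\ell_0 - 1}(t^{\ell - L - n + u} - 1)$; pulling out $t$-powers, $\prod_{\ell=0}^{\ell_0-1} t^{\ell - L - n + u}(1 - t^{-\ell + L + n - u}) \geq t^{\binom{\ell_0}{2} - \ell_0(L+n-u)}(t^{1/2};t)_\infty$, and since $\ell_0 \approx L + n - u$ this exponent is $\approx -\binom{L+n-u}{2}$. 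The large-$\ell$ factors give $\prod_{\ell > \ell_0}(1 - t^{\ell - \ell_0 - 1/2}) \geq (t^{1/2};t)_\infty$. So $|(-t^{-L}z;t)_\infty| \geq c \cdot t^{-\binom{L+n}{2} + O(L)} (t^{1/2};t)_\infty^2$, and because $n \leq -1$ is fixed on each horizontal segment, $-\binom{L+n}{2} \leq -\binom{L+1}{2} + (\text{const depending on }n$... no — since $n$ varies this is a problem).

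Actually, reading the lemma again, the claimed bound is $t^{\binom{L+1}{2}}/(t^{1/2};t)_\infty$, i.e. the reciprocal is bounded by $t^{\binom{L+1}{2}}$ times a constant, which for $L \geq 0$ is a \emph{decaying} bound — so we need $|(-t^{-L}z;t)_\infty| \geq t^{-\binom{L+1}{2}}(t^{1/2};t)_\infty$, and the $n$-dependence must help us rather than hurt: larger $|n|$ makes $|z|$ larger hence the product larger, consistent with a lower bound uniform in $n \leq -1$. So the plan is: show $|(-t^{-L}z;t)_\infty| \geq t^{-\binom{L+1}{2}}(t^{1/2};t)_\infty$ by the factor-by-factor estimate above, taking the worst case $n = -1$ (since decreasing $n$ only multiplies $|z|$ by a further power of $t^{-1} > 1$, strictly increasing each small-$\ell$ factor and leaving large-$\ell$ ones comparable — a monotonicity argument I would make precise by comparing the products for $n$ and $n-1$ factor-by-factor after an index shift). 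At $n = -1$, one has $\ell_0 = L + 1 + O(1)$, the small-$\ell$ product contributes $t^{-\binom{L+1}{2}}$ up to constants, and the middle and large-$\ell$ factors contribute the $(t^{1/2};t)_\infty$-type constant. \textbf{The main obstacle} will be handling the single "transitional" factor near $\ell = \ell_0$ cleanly and absorbing the $O(L)$ slack in the exponent into the constant without it actually depending on $L$ — i.e. verifying that the precise definition of $\ell_0$ makes the exponent land at exactly $-\binom{L+1}{2}$ (or better) rather than $-\binom{L+1}{2} - cL$; this is the kind of bookkeeping that \Cref{thm:qpoc_lower_bound} sidesteps by only claiming a constant, so here it requires genuine care with the arithmetic of the $t$-power exponents. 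Once that is done, taking the maximum constant over the three regimes $\Re(z) \le -t^{1/2}$, $-t^{1/2} \le \Re(z) \le 0$, $0 \le \Re(z) \le 1$ and over $n \leq -1$ completes the argument.
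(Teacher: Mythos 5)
Your proposal correctly identifies that the imaginary part of $w = -t^{-L}z$ is $\mp t^{-L}$ and observes that this gives a bound $|1 - t^{\ell}w| \geq t^{\ell-L}$ --- but you apply this bound only to a single ``transitional'' factor near $\ell_0 \approx -\log_t|w|$. The paper's key insight, which you come close to but don't quite land on, is to apply this imaginary-part bound to \emph{every} factor with $\ell = 0,\ldots,L$, not just one. Since $|\Im(z)| = 1$, we get $|1 + t^{\ell - L}z| \geq |\Im(t^{\ell-L}z)| = t^{\ell-L}$ for these $\ell$, and the product telescopes exactly: $\prod_{\ell=0}^L t^{\ell-L} = t^{-\binom{L+1}{2}}$, which is precisely the exponent claimed, with no dependence on $n$ or $u$ at all. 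For $\ell \geq L+1$ the real-part bound $\Re(1 + t^{\ell-L}z) = 1 - t^{\ell-L-n+u} \geq 1 - t^{\ell-L-n-1/2}$ gives $\prod_{\ell \geq L+1}(1 - t^{\ell-L-n-1/2}) = (t^{-n+1/2};t)_\infty$, and since $-n \geq 1$ this is $\geq (t^{1/2};t)_\infty$. Multiplying and inverting is the whole proof.

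Your split point at $\ell_0$ (where $|t^{\ell_0}w| \approx 1$) is the source of all the complications you flag: $\ell_0$ depends on both $L$ and $n$, the small-$\ell$ product gives roughly $t^{-\binom{L+n-u}{2}}$ instead of $t^{-\binom{L+1}{2}}$, and you are then forced to chase the difference through the transitional and first several large-$\ell$ factors and then invoke a monotonicity-in-$n$ reduction. Since $\ell_0 \approx L + n - u < L$ (as $n \leq -1$), the factors with $\ell_0 < \ell \leq L$ are exactly the ones you'd have to extract the missing powers of $t^{-1}$ from, and the imaginary-part bound handles them uniformly --- so switching the split point to $\ell = L$ dissolves your ``main obstacle'' entirely rather than forcing you to resolve it. (Your closing remark about taking a maximum over the three regimes of $\Re(z)$ belongs to the proof of \Cref{thm:can_do_dct_on_tg}, not to this lemma, which is confined to the horizontal segments with $-t^{1/2} < \Re(z) < 0$.)
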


\begin{lemma}\label{thm:large_n_large_L_bound}
For any $n \in \Z_{\geq 0},L \in \Z_{\geq -n}$ and $z = -t^{-n+u} \pm i, u \in [-1/2,1/2]$,
\begin{equation}
\frac{1}{|(-t^{-L}z;t)_\infty|} \leq \frac{t^{(L+n)^2-n}}{(t^{1/2};t)_\infty^2}.
\end{equation}
\end{lemma}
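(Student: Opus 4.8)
The plan is to bound $|(-t^{-L}z;t)_\infty|$ from below by a direct factor-by-factor estimate, exactly in the spirit of the proof of \Cref{thm:qpoc_lower_bound}, and then take reciprocals. Write $(-t^{-L}z;t)_\infty = \prod_{\ell \ge -L}(1+t^\ell z)$, and for $z = -t^{-n+u}\pm \bi$ observe that $1 + t^\ell z = (1 - t^{\ell-n+u}) \pm \bi\, t^\ell$, so $\Re(1+t^\ell z) = 1 - t^{\ell-n+u}$ and $\Im(1+t^\ell z) = \pm t^\ell$. The ``pivot'' index is $\ell = n$, where $\ell-n+u = u \in [-1/2,1/2]$ and hence the real part $1-t^u$ can vanish (at $u=0$); there the only available bound is $|1+t^n z|\ge |{\pm\bi t^n}| = t^n$, and this single factor is responsible for the $t^{-n}$ in the claimed bound. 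Every other factor is bounded away from the origin at a scale governed by how far $\ell$ is from $n$.

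Concretely, I would split the product into the three blocks $\{-L\le \ell\le n-1\}$, $\{\ell=n\}$, $\{\ell\ge n+1\}$. For $\ell\ge n+1$ we have $\ell-n+u\ge 1/2>0$, so $|1+t^\ell z|\ge \Re(1+t^\ell z) = 1 - t^{\ell-n+u} \ge 1 - t^{\ell-n-1/2}$, and multiplying over this block gives a factor $\ge \prod_{k\ge 1}(1-t^{k-1/2}) = (t^{1/2};t)_\infty$. For $\ell=n$ we use $|1+t^n z|\ge t^n$. For $-L\le \ell\le n-1$ we have $\ell-n+u\le -1/2<0$, so $|1+t^\ell z| \ge t^{\ell-n+u}-1 = t^{\ell-n+u}\bigl(1-t^{\,n-\ell-u}\bigr) \ge t^{\ell-n+1/2}\bigl(1-t^{\,n-\ell-1/2}\bigr)$, using $u\le 1/2$; multiplying over this block, the $(1-t^{\,n-\ell-1/2})$ factors again collect into a quantity $\ge (t^{1/2};t)_\infty$, while $\prod_{\ell=-L}^{n-1}t^{\ell-n+1/2}$ contributes the dominant negative power of $t$, namely $t^{-c(L+n)^2 + O(L+n)}$ for a constant $c$ coming from the arithmetic sum $\sum_{\ell=-L}^{n-1}(\ell-n+1/2)$. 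Multiplying the three blocks yields $|(-t^{-L}z;t)_\infty| \ge t^{\,n - c(L+n)^2 + O(L+n)}(t^{1/2};t)_\infty^2$, and taking reciprocals gives a bound of the form asserted in \Cref{thm:large_n_large_L_bound}. As with the companion estimates \Cref{thm:qpoc_lower_bound}, \Cref{thm:qp_bound_tG}, \Cref{thm:small_L_z_bound} and \Cref{thm:small_horiz_z_bound}, we do not chase the optimal power of $t$; all that is needed downstream (in \Cref{thm:can_do_dct_on_tg}) is that the exponent is a positive quadratic in $L$ as $L\to+\infty$, so that the geometric factor $t^{-(m+1)L}$ is dominated. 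The estimates are uniform over $u\in[-1/2,1/2]$ and identical for both signs $\pm\bi$, since only moduli enter.

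The one bookkeeping subtlety — and the step I would be most careful with — is the boundary behaviour of the middle block when $L<0$ (permitted since only $L\ge -n$ is assumed): then $-L\ge 1$, the block $\{-L\le \ell\le n-1\}$ may be empty or may begin at a positive index, and one must verify that the power-of-$t$ count still holds with the same formula, the sum $\sum_{\ell=-L}^{n-1}(\ell-n+1/2)$ simply being evaluated over whichever range of $\ell$ actually occurs. (Alternatively, for $L\ge 1$ one can factor $(-t^{-L}z;t)_\infty = (-z;t)_\infty\prod_{m=1}^{L}(1+t^{-m}z)$ and bound the two pieces separately, using \Cref{thm:theta_transformations} to keep the powers of $t$ transparent; this splits the estimate into the $L=0$ case plus $L$ uniformly-large factors, but the arithmetic is the same.) Apart from that, everything is a routine $q$-Pochhammer manipulation of precisely the type already carried out in the excerpt.
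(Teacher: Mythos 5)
Your proposal is essentially the paper's proof: the same three-block decomposition around the pivot index, the same real--imaginary--real bounds on $|1+t^\ell z|$, and the same assembly into $(t^{1/2};t)$-products times a power of $t$ from the below-pivot block; the index shift $\ell\mapsto\ell-L$ is purely cosmetic.

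One remark, since you explicitly leave the quadratic constant unpinned: evaluating $\sum_{\ell=-L}^{n-1}(\ell-n+\tfrac12) = -\tfrac{(L+n)^2}{2}$ shows that both your argument and the paper's factor-by-factor estimate actually yield
\begin{equation*}
\frac{1}{|(-t^{-L}z;t)_\infty|}\le \frac{t^{(L+n)^2/2-n}}{(t^{1/2};t)_\infty^2},
\end{equation*}
that is, with exponent $(L+n)^2/2$ rather than the $(L+n)^2$ stated in \Cref{thm:large_n_large_L_bound} (the displayed equality in the paper's proof has the same slip). The stated bound is strictly stronger than what the factor estimates give and in fact fails numerically at, e.g., $t=1/4$, $L=n=1$, $u=0$, where the left side is about $0.118$ but the claimed right side is about $0.089$. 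This is harmless for \Cref{thm:can_do_dct_on_tg}, which, exactly as you observe, only needs some positive quadratic in $L+n$ and is indifferent to the value of that constant.
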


\begin{proof}[Proof of \Cref{thm:qpoc_tmoment_bound}]
We have 
\begin{equation}
|1+t^\ell(t^{-L}z)| = \sqrt{1+t^{2\ell-2L}+2x t^{\ell-L}} \geq \sqrt{1+t^{2\ell-2L}}.
\end{equation}
Taking a product over all $\ell \in \Z_{\geq 0}$,
\begin{equation}
|(-t^{-L}z;t)_\infty| \geq \sqrt{(-t^{-2L};t^2)_\infty}.
\end{equation}
\end{proof}

\begin{proof}[Proof of \Cref{thm:small_L_z_bound}]
Bounding the modulus by the real part, we have
\begin{equation}
|1+t^\ell (t^{-L}z)| \geq 1-t^{-L-n+\ell+u}  \geq 1-t^{-L-n+\ell-1/2}
\end{equation}
for each $\ell \in \Z_{\geq 0}$. Combining these,
\begin{equation}
|(-t^{-L}z;t)_\infty| \geq (t^{-L-n-1/2})_\infty \geq (t^{1/2};t)_\infty
\end{equation}
since $-L-n > 0$, and inverting the above completes the proof.
\end{proof}

\begin{proof}[Proof of \Cref{thm:small_horiz_z_bound}]
We obtain 
\begin{equation}
|1+t^\ell(t^{-L}z)| \geq \begin{cases}
t^{\ell-L} & \ell=0,\ldots,L \\ 
1-t^{-L-n-1/2+\ell} & \ell=L+1,L+2,\ldots
\end{cases}
\end{equation}
by bounding by the modulus by the imaginary part and real part respectively. Combining these, 
\begin{equation}
|(-t^{-L}z;t)_\infty| \geq \prod_{\ell=0}^L t^{\ell-L} \prod_{\ell=L+1}^\infty 1-t^{-L-n-1/2+\ell} = t^{-\binom{L+1}{2}} (t^{-n+1/2};t)_\infty.
\end{equation}
Bounding $(t^{-n+1/2};t)_\infty \geq (t^{1/2};t)_\infty$ (since $n  < 0$) and inverting the above completes the proof.
\end{proof}

\begin{proof}[Proof of \Cref{thm:large_n_large_L_bound}]
Our hypotheses yield that $L+n \geq 0$. We obtain three bounds
\begin{equation}
|1+t^\ell(t^{-L}z)| \geq \begin{cases}
t^{-L-n+\ell+1/2}-1 &  0 \leq \ell \leq L+n-1 \\ 
t^{-L+\ell} &  \ell = L+n \\ 
1-t^{-L-n+\ell-1/2} & \ell \geq L+n+1
\end{cases}
\end{equation}
by bounding the modulus by the real, imaginary, and real parts respectively. Taking the product of these,
\begin{align}
\begin{split}
|(-t^{-L}z;t)_\infty| &\geq \prod_{\ell=0}^{L+n-1} (t^{-L-n+\ell+1/2}-1) \cdot t^n \cdot \prod_{\ell=L+n+1}^\infty 1-t^{-L-n+\ell-1/2} \\ 
&= t^{-(L+n)^2+n} (t^{1/2};t)_{L+n} (t^{1/2};t)_\infty.
\end{split}
\end{align}
Bounding $(t^{1/2};t)_{L+n} \geq (t^{1/2};t)_\infty$ and inverting the above completes the proof.
\end{proof}

\begin{proof}[Proof of \Cref{thm:can_do_dct_on_tg}]
We bound by separate constants for $\Re(z) \leq -t^{1/2}, -t^{1/2} \leq \Re(z) \leq 0$, and $0 \leq \Re(z) \leq 1$, and take the maximum of these three. For any fixed $z \in \tG$ with $\Re(z) \leq -t^{1/2}$, there exists $n \in \Z_{\geq 0}$ such that $-t^{-n-1/2} \leq \Re(z) \leq -t^{-n+1/2}$. For this $n$, we have by \Cref{thm:small_L_z_bound} and \Cref{thm:large_n_large_L_bound} that 
\begin{align}
\begin{split}\label{eq:large_z_in_proof}
\sum_{L \in \Z} \abs*{\frac{t^{-(m+1)L}}{(-t^{-(m+1)L}z;t)_\infty}} & \leq  \sum_{L \in \Z_{<-n}} \frac{t^{-(m+1)L}}{(t^{1/2};t)_\infty}  +  t^{mn}\sum_{L \in \Z_{\geq -n}} \frac{t^{(L+n)^2-(m+1)(L+n)}}{(t^{1/2};t)_\infty^2} \\ 
&\leq \frac{1}{(t^{1/2};t)_\infty^2}\left(\frac{t^{-(-n-1)(m+1)}}{1-t^{m+1}}(t^{1/2};t)_\infty + t^{mn}\sum_{\ell \geq 0} t^{\ell^2-\ell}\right).
\end{split}
\end{align}
This is uniformly bounded over all $n,m \in \Z_{\geq 0}$.

For fixed $z \in \tG$ with $-t^{1/2} \leq \Re(z) \leq 0$, letting $n \in \Z_{<0}$ be as before, we have by \Cref{thm:small_horiz_z_bound} and \Cref{thm:small_L_z_bound} that 
\begin{align}
\begin{split}\label{eq:small_z_in_proof}
\sum_{L \in \Z} \abs*{\frac{t^{-(m+1)L}}{(-t^{-L}z;t)_\infty}} & \leq \sum_{L \in \Z_{\geq 0}} \frac{t^{\binom{L+1}{2}-(m+1)L}}{(t^{1/2};t)_\infty} + \sum_{L \in \Z_{<0}} \frac{t^{-(m+1)L}}{(t^{1/2};t)_\infty},
\end{split}
\end{align}
which is independent of $n$. 

For $z \in \tG$ with $\Re(z) \geq 0$, i.e. on the circular arc, first note the elementary bounds
\begin{equation}
(-t^{-2L};t^2)_\infty = t^{-2\binom{L+1}{2}} (-1;t^2)_L (-1;t^2)_\infty \geq t^{-2\binom{L+1}{2}}(-1;t^2)_\infty
\end{equation}
for $L \geq 0$ and
\begin{equation}
(-t^{-2L};t^2)_\infty \geq 1
\end{equation}
for all $L$. These, together with \Cref{thm:qpoc_tmoment_bound}, yield that
\begin{align}
\begin{split}\label{eq:arc_z_in_proof}
\sum_{L \in \Z} \abs*{\frac{t^{-(m+1)L}}{(-t^{-L}z;t)_\infty}} & \leq \sum_{L \in \Z_{\geq 0}} \frac{t^{\binom{L+1}{2}-(m+1)L}}{\sqrt{(-1;t^2)_\infty}} + \sum_{L \in \Z_{<0}} t^{-(m+1)L}.
\end{split}
\end{align}
Combining \eqref{eq:large_z_in_proof}, \eqref{eq:small_z_in_proof}, and \eqref{eq:arc_z_in_proof} establishes constant bounds on all of $\tG$, completing the proof.
\end{proof}

\begin{proof}[Proof of \Cref{thm:qp_bound_tG}]
When $n=\infty$, it follows from \Cref{thm:can_do_dct_on_tg} that the $L=-1$ term of the sum is also uniformly bounded over all $w \in \tG$, and this exactly yields \Cref{thm:qp_bound_tG}. For general $n$ it does not directly follow from that result, but the same method as in Lemmas \ref{thm:qpoc_tmoment_bound}-\ref{thm:large_n_large_L_bound} of bounding each term in the $q$-Pochhammer symbol differently based on the location of $w$ works and we omit the details.
\end{proof}

\section{Examples of residue formula for {$\cL_{k,t,\chi}$}} \label{sec:examples}

In this section we compute more explicitly the infinite series formula in \Cref{thm:stat_dist_1pt} for $k=1$, $k=2$, and an example case of $k=3$. This section is not necessary to understand the rest of the paper, but we hope that for the interested reader it helps illustrate how the residue formula \Cref{thm:stat_dist_1pt} looks and how the complexity increases as $k$ increases.

\begin{cor}\label{thm:k=1_case}
In the notation of \Cref{thm:stat_dist_1pt}, for any $\chi \in \R_{>0}$ the $\Z$-valued random variable $\cL_{1,t,\chi}$ is defined by
\begin{equation}
\Pr(\cL_{1,t,\chi} = (L)) = \frac{1}{(t;t)_\infty} \sum_{m \geq 0} e^{-\chi t^{L-m}} \frac{(-1)^m t^{\binom{m}{2}}}{(t;t)_m}
\end{equation}
for all $L \in \Z$.
\end{cor}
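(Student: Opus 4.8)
The plan is to derive Corollary \ref{thm:k=1_case} as the immediate $k=1$ specialization of the residue formula \eqref{eq:limit_rv_res_formula} in Theorem \ref{thm:stat_dist_1pt}, followed by an elementary reindexing of the sum. First I would set $k=1$ in \eqref{eq:limit_rv_res_formula}, writing $\vec L = (L)$, and track which factors degenerate: the product $\prod_{i=1}^{k-1}(t;t)_{L_i-L_{i+1}}$ is an empty product equal to $1$; the exponent $\sum_{i=1}^k \binom{L_i-d}{2}$ reduces to $\binom{L-d}{2}$; and the inner sum runs over $\mu \in \Sig_0$, which contains only the empty signature, automatically interlacing with $(L)$, with $|\mu|=0$, empty $q$-binomial product equal to $1$, and $Q_{(\emptyset)'}(\gamma((1-t)t^d\chi),\alpha(1);0,t) = Q_{\emptyset}(\cdots) = 1$. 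Hence $|\vec L| - |\mu| - d = L - d$ and the double sum collapses to
\begin{equation}
\Pr(\cL_{1,t,\chi} = (L)) = \frac{1}{(t;t)_\infty}\sum_{d \leq L} e^{-\chi t^d}\,\frac{(-1)^{L-d}\,t^{\binom{L-d}{2}}}{(t;t)_{L-d}}.
\end{equation}

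Next I would substitute $m = L-d$, so that the condition $d \leq L$ becomes $m \geq 0$ and $t^d = t^{L-m}$, which turns the right-hand side into $\tfrac{1}{(t;t)_\infty}\sum_{m\ge 0} e^{-\chi t^{L-m}} (-1)^m t^{\binom{m}{2}}/(t;t)_m$, exactly the claimed expression. For readers preferring the analytic route, I would also remark that the identity can be read off directly from the $k=1$ contour integral \eqref{eq:limit_rv_int_formula_k=1} by residue-expanding at the simple poles $w=-t^{-m}$, $m \in \Z_{\geq 0}$, lying inside $\tG$; this is precisely the computation in \eqref{eq:k=1_residue} within the proof of Lemma \ref{thm:residue_expansion_independence}, combined with the contour-shifting justification already given there (and reused in the proof of Proposition \ref{thm:hl_residue_formula} and in \Cref{sec:tightness_and_limit}).

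There is no substantive obstacle: the only points requiring (minor) care are the bookkeeping of the empty products and the convention by which the $k=1$ case of \eqref{eq:limit_rv_res_formula} is to be read, namely confirming that the $\mu$-sum consists of the single term $\mu = \emptyset$ and that the corresponding $Q$-factor is $1$. I expect the writeup to be a few lines.
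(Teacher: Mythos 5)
Your proposal is correct and takes exactly the same route as the paper's proof: specialize \eqref{eq:limit_rv_res_formula} to $k=1$, observe the $\mu$-sum collapses to the single term $\mu=()$ contributing $(-1)^{L-d}$, and change variables $m=L-d$. The extra remark about re-deriving it by residue-expanding \eqref{eq:limit_rv_int_formula_k=1} is a nice cross-check but not part of the paper's (one-line) argument.
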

\begin{proof}
Follows immediately from \Cref{thm:stat_dist_1pt}, by noting that the sum over $\mu \in \Sig_{k-1}$ has only one term $\mu = ()$ and is equal to $(-1)^{L-d}$, and changing variables to $m=L-d$.
\end{proof}

We next show the $k=2$ formula from the Introduction.
\begin{proof}[Proof of \Cref{thm:k=2_case}]
We begin with the formula \Cref{thm:stat_dist_1pt} and make the change of variables $m=L-d$ so
\begin{align}
\label{eq:use_branch_once}
\begin{split}
Q_{(\mu-(d))'}(\gamma(t^d (1-t) \chi),\alpha(1);0,t) &= Q_{(1[m+i])}(\gamma(t^{L-m}(1-t)\chi),\alpha(1);0,t) \\ 
&= Q_{(1[m+i])}(\gamma(t^{L-m}(1-t)\chi);0,t) \\ 
&+ \bbone(i+m \geq 1) (1-t^{m+i}) Q_{(1[m+i-1])}(\gamma(t^{L-m}(1-t)\chi);0,t),
\end{split}
\end{align}
where the second equality follows from the branching rule \eqref{eq:specialization_branch} together with explicit formulas in \Cref{thm:hl_qw_branch_formulas} for the branching coefficients. Using the relation between Plancherel and alpha specializations in \Cref{thm:specialize_mac_poly} together with the branching rule (\Cref{thm:branching_formulas} and \Cref{thm:hl_qw_branch_formulas}), we obtain
\begin{equation}\label{eq:easy_planch_case}
Q_{(1[a])}(\gamma(c(1-t));0,t) = \frac{(t;t)_a}{a!}c^a.
\end{equation}
Substituting \eqref{eq:easy_planch_case} into \eqref{eq:use_branch_once} and then into \Cref{thm:stat_dist_1pt} (with our change of variables) yields the desired formula.
\end{proof}

\begin{example}
In the notation of \Cref{thm:stat_dist_1pt}, 
\begin{align}\label{eq:k3_ex}
\begin{split}
&\Pr(\cL_{3,t,\chi} = (L+2,L,L)) = \frac{1}{(t;t)_\infty (t;t)_2} \left( (e^{-t^{L}\chi}t)\left((t;t)_2\frac{(t^L\chi)^2}{2!}- t(1-t^2)\frac{t^L  \chi}{1!} + t^2\right) \right. \\ 
& -  \frac{e^{-t^{L-1}\chi}t^3}{1-t} \left((1-t)^4(1 + t) (3 + 2 t + t^2) \frac{(t^{L-1} \chi)^4}{4!} + (1 - t)^3 (1 + t)t^3 (1 - 2 t - t^2 - t^3) \frac{(t^{L-1}\chi)^3}{3!} \right. \\ 
&\left. + (1-t)^2t(-1+t+t^2) \frac{(t^{L-1} \chi)^2}{2!} - (1-t)^2 t\frac{t^{L-1} \chi}{1!} + (1-t)\right)  \\ 
& +\frac{e^{-t^{L-2}\chi}t^8}{(t;t)_2}\left( (1 - t)^6 (1 + t)^2 (9 + 13 t + 12 t^2 + 7 t^3 + 3 t^4 + t^5) \frac{(t^{L-2}\chi)^6}{6!}     \right. \\ 
&+  (1 - t)^5 (1 + t)^2 (4 - 2 t - 4 t^2 - 6 t^3 - 4 t^4 - 2 t^5 - t^6) \frac{(t^{L-2}\chi)^5}{5!} \\ 
&+ (1 - t)^4 (1-t) t (-3 + 3 t^2 + 2 t^3 + t^4) \frac{(t^{L-2}\chi)^4}{4!} \\ 
&\left. \left.+ (1 - t)^4 (1+t)t (-2 - t) \frac{(t^L \chi)^3}{3!} + (1-t)^3(1-t) \frac{(t^{L-2}\chi)^2}{2!}\right) \right. \\ 
&\left.- \frac{e^{-t^{L-3}\chi}t^{16}}{(t;t)_3}(\ldots) + \frac{e^{-t^{L-4}\chi}t^{27}}{(t;t)_4}(\ldots) + \ldots \right),
\end{split}
\end{align}
where we have only computed the first three terms in the series. 

We now show the computation. By \Cref{thm:stat_dist_1pt} with a change of variables $L-d=m$, 
\begin{align}\label{eq:expand_k3_ex}
\begin{split}
\text{LHS\eqref{eq:k3_ex}} &= \frac{1}{(t;t)_\infty} \sum_{d \leq L} e^{-\chi t^{d}} P_{(L-d+2,L-d,L-d)'}(1,t,\ldots;0,t) \\ 
&\times \sum_{\substack{\mu \in \Sig_{k-1} \\ \mu \prec (L+2,L,L)}} P_{(L-d+2,L-d,L-d)/(\mu_1-d,\mu_2-d)}(-1;t,0) Q_{(\mu_1-d,\mu_2-d)'}(\gamma(t^{d}(1-t)\chi),\alpha(1);0,t) \\ 
&= \frac{1}{(t;t)_\infty} \sum_{m \geq 0} e^{- \chi t^{L-m}}  \frac{t^{\binom{m+2}{2}+2\binom{m}{2}}}{(t;t)_2(t;t)_0(t;t)_m} \left( \sqbinom{2}{0}_t (-1)^m Q_{(m+2,m)'}(\cdots;0,t) \right. \\ 
&\left.+ \sqbinom{2}{1}_t(-1)^{m+1} Q_{(m+1,m)'}(\cdots;0,t)+ \sqbinom{2}{2}_t (-1)^{m+2} Q_{(m,m)'}(\cdots;0,t)\right) \\ 
&= \frac{1}{(t;t)_\infty(t;t)_2} \sum_{m \geq 0} e^{-t^{L-m}\chi} \frac{t^{\frac{3}{2}m^2+\frac{1}{2}m+1}}{(t;t)_m} (-1)^m \\ 
&\times (Q_{(m+2,m)'}(\cdots;0,t) - (1+t)Q_{(m+1,m)'}(\cdots;0,t) + Q_{(m,m)'}(\cdots;0,t)),
\end{split}
\end{align}
where we have used \Cref{thm:hl_qw_branch_formulas} and \Cref{thm:hl_principal_formulas} and written $\cdots$ for the specialization $\gamma(t^{L-m}(1-t)\chi),\alpha(1)$. It remains to compute the three Hall-Littlewood polynomials in the last line of \eqref{eq:expand_k3_ex}, and since there is not a closed form we compute the first few terms $m=0,1$. 

For any $\la \in \Y$ the branching rule yields
\begin{equation}
\label{eq:Qalphagamma_branch}
Q_\la(\gamma(g),\alpha(a);q,t) = \sum_{c = 0}^{\la_1} \phi_{(c)}(q,t) a^c \sum_{B \in SYT(\la/(c))} \frac{\phi_B(q,t) g^{|\la|-c}}{(|\la|-c)!}.
\end{equation}
Here $SYT(\la/(c))$ is the set of standard Young tableaux corresponding to this skew shape, and for a tableau $B$ identified with a sequence of partitions $(c) = \la^{(0)} \prec \ldots \prec \la^{(|\la|-c)} = \la$, we use the shorthand
\begin{equation}
\phi_B(q,t) := \prod_{i=1}^{|\la|-c} \phi_{\la^{(i)}/\la^{(i-1)}}(q,t)
\end{equation}
where $\phi_{\la^{(i)}/\la^{(i-1)}}$ is as in \Cref{def:psi_varphi}. It follows from \Cref{thm:hl_qw_branch_formulas} that 
\begin{equation}
\phi_{(c)}(0,t) = (1-t^c),
\end{equation}
so it remains to compute the sum in \eqref{eq:Qalphagamma_branch}. For each Hall-Littlewood polynomial appearing in \eqref{eq:expand_k3_ex} with $m=0,1,2$ we will enumerate the pairs $(c,B)$ with $B \in SYT(\la/(c))$ and give their coefficients.

\ytableausetup{notabloids}
\underline{\textbf{m=0:}} We compute the coefficient of $e^{-t^{L}\chi}$ in \eqref{eq:k3_ex}. Trivially $Q_{(0,0)'}(\cdots;0,t)=1$. For $Q_{(1,0)'}(\cdots;0,t)$ we may either take $c=0$ or $c=1$: in each case there is one tableau, and the cases contribute $(1-t)(t^{L}\chi)^1/1!$ and $1-t$ respectively. For $Q_{(2,0)'}(\cdots;0,t)$ we again may either take $c=0$ or $c=1$, and in both cases have one skew tableau\footnote{Here and elsewhere we denote the missing boxes, referred to as $(c)$ above, by colored boxes.},
\begin{equation}
\begin{ytableau}1 \\ 2\end{ytableau} \quad \quad \text{      and      } \quad \quad  \begin{ytableau}*(purple) \\ 1\end{ytableau},
\end{equation}
yielding coefficients $(t;t)_2 (t^{L}\chi)^2/2!$ and $(t;t)_2(t^{L}\chi)/1!$ respectively in \eqref{eq:Qalphagamma_branch}. Summing these yields the desired coefficient.

\underline{\textbf{m=1:}} We obtain
\begin{equation}
Q_{(1,1)'}(\cdots;0,t) = Q_{(2)}(\cdots;0,t) = (1-t)^2 \frac{(t^{L-1}\chi)^2}{2!}+ (1-t)^2\frac{(t^{L-1}\chi)^1}{1!} + (1-t)\frac{(t^{L-1}\chi)^0}{0!}
\end{equation}
with the three summands coming from the three skew tableaux
\begin{equation}
\begin{ytableau}1 & 2\end{ytableau} \quad \quad \text{ and } \quad \quad \begin{ytableau}*(purple) & 1\end{ytableau} \quad \quad \text{ and } \quad \quad  \begin{ytableau}*(purple) & *(purple)\end{ytableau}.
\end{equation}
Similarly
\begin{equation}
Q_{(2,1)'}(\cdots;0,t) = (1-t)^3(2+t) \frac{(t^{L-1}\chi)^3}{3!} + (1-t)^3(2+t) \frac{(t^{L-1}\chi)^2}{2!} + (1-t)^2 \frac{(t^{L-1}\chi)^1}{1!}
\end{equation}
with tableaux
\begin{equation}
\begin{ytableau}1 & 2 \\ 3\end{ytableau} \quad \quad \text{ and } \quad \quad  \begin{ytableau}1 & 3 \\ 2\end{ytableau}
\end{equation}
contributing $(1-t)^3$ and $(1-t)^2(1-t^2)$ respectively to the degree $3$ term,
\begin{equation}
\begin{ytableau}*(purple) & 1 \\ 2\end{ytableau} \quad \quad \text{ and } \quad \quad  \begin{ytableau}*(purple) & 2 \\ 1\end{ytableau}
\end{equation}
also contributing $(1-t)^3$ and $(1-t)^2(1-t^2)$ respectively to the degree $2$ term, and 
\begin{equation}
\begin{ytableau}*(purple) & *(purple) \\ 1 \end{ytableau}
\end{equation}
contributing $(1-t)^2$ to the degree $0$. Finally,
\begin{multline}\label{eq:hardest_Q_ex}
Q_{(3,1)'}(\cdots;0,t) = Q_{(2,1,1)}(\cdots;0,t) = (1-t)^2(1-t^2)\frac{(t^{L-1}\chi)^2}{2!}\\ 
+ ((1-t)^3(1-t^2) +(1-t)^2(1-t^2)^2 + (1-t)^2(1-t^2)(1-t^3)) \left( \frac{(t^{L-1}\chi)^3}{3!} + \frac{(t^{L-1}\chi)^4}{4!}\right).
\end{multline}
\underline{\textbf{m=2:}} Here the three relevant tableaux for the degree-$4$ term are
\begin{equation}
\begin{ytableau}1 & 2 \\ 3 \\ 4\end{ytableau}, \quad \quad  \begin{ytableau}1 & 3 \\ 2 \\ 4\end{ytableau}, \quad \quad  \begin{ytableau}1 & 4 \\ 2 \\ 3\end{ytableau}
\end{equation}
\ytableausetup{smalltableaux}
in the same order as their coefficients in \eqref{eq:hardest_Q_ex}. The skew tableaux for the degree $3$ term are the same as for $m=1$, but with the $\begin{ytableau} 1 \end{ytableau}$ box replaced by $\begin{ytableau} *(purple) \end{ytableau}$ and the other indices shifted down by $1$ to yield a skew standard Young tableau, and their coefficients are exactly the same. Only one tableau contributes to the degree $2$ term, namely
\ytableausetup{nosmalltableaux}
\begin{equation}
\begin{ytableau} *(purple) & *(purple) \\ 1  \\ 2 \end{ytableau}.
\end{equation}
Summing the above and simplifying yields the coefficient of the $e^{-t^{L-2}\chi}$ term in \eqref{eq:k3_ex}. 

One may, either by hand or by computer, generate the coefficient of any given remaining $e^{-t^{L-m}\chi}$ term for $m > 2$, but the number of tableaux grows with $m$ and there is no closed form of which we are aware. However, because $e^{-t^{L-m}\chi}$ shrinks very fast as $m$ increases (particularly if $t$ is not close to $1$), it is in fact quite easy to compute good approximations of this probability for any given values of $t,\chi,L$ by computing a small number of terms.
\end{example}

\section{The case of pure $\alpha$ specializations}\label{sec:alpha}

For concreteness we did the computations in \Cref{sec:stationary_law_hl} for the Plancherel process $\la(\tau)$, but for matrix products it is desirable to compute a similar limit distribution of Hall-Littlewood alpha Cauchy dynamics
\begin{equation}
\Pr(\la \to \nu) = Q_{\nu/\la}(\alpha_1,\alpha_2,\ldots;0,t) \frac{P_\nu(1,t,\ldots;0,t)}{P_\la(1,t,\ldots;0,t) \Pi_{0,t}(\alpha_1,\ldots;1,t,\ldots)},
\end{equation}
in light of \Cref{thm:hl_meas_matrices}. Luckily our computations from \Cref{sec:stationary_law_hl} generalize straightforwardly as in the previous section. We use the power sum symmetric polynomial
\begin{equation}
p_1(\alpha_1,\alpha_2,\ldots) = \sum_{i \geq 1} \alpha_i
\end{equation}
in that result to highlight the similarity with \Cref{thm:hl_stat_dist}, where the Plancherel-specialized $p_1$ also appears in the formulas.

\begin{prop}\label{thm:alpha_hl_stat_dist}
Let $k \in \Z_{\geq 1}$ and $\zeta \in \R$, and let $\phi$ be a pure alpha Hall-Littlewood nonnegative specialization determined by $\alpha$ parameters $\alpha_1 \geq \alpha_2 \geq \ldots$ with $0 < \alpha_1 < 1$. Let $\tl(s), s \in \Z_{\geq 0}$ be distributed by the Hall-Littlewood measure
\begin{equation}
\Pr(\tl(s) = \la) = \frac{Q_\la(\alpha_1[s],\alpha_2[s],\ldots;0,t)P_\la(1,t,\ldots;0,t)}{\Pi_{0,t}(\alpha_1[s],\alpha_2[s],\ldots;1,t,\ldots)}.
\end{equation}
Let $(s_n)_{n \in \N}$ be any sequence with $s_n \xrightarrow{n \to \infty} \infty$ such that $\log_{t} s_n$ converges in $\R/\Z$, and let $\zeta$ be a lift of this limit to $\R$. Then
\begin{equation}
(\tl_i'(s_n) - [\log_{t^{-1}}(s_n)+\zeta])_{1 \leq i \leq k} \to \cL_{k,t,t^\zeta p_1(\alpha_1,\alpha_2,\ldots)}
\end{equation}
where $[\cdot]$ is the nearest integer function. 
\end{prop}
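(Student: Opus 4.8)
The plan is to run the same argument as in \Cref{sec:stationary_law_hl}, replacing the Plancherel specialization $\gamma(\tau)$ everywhere by the $s$-fold pure alpha specialization $(\alpha_1[s],\alpha_2[s],\ldots)$, and then to observe that in the relevant limit the combinatorial data carried by this specialization collapses to exactly the data encoded by $\gamma(t^\zeta p_1(\alpha_1,\alpha_2,\ldots))$. Concretely, I would first establish the analogue of \Cref{thm:prob_from_observables}: the identity
\begin{equation}
\Pr((\tl_1'(s),\ldots,\tl_k'(s)) = \eta) = P_{\eta'}(1,t,\ldots;0,t) \sum_{\mu,\kappa \in \Y_k} \frac{P_{\nu/\kappa}(1,t,\ldots;0,t)}{P_\nu(1,t,\ldots;0,t)} Q_{\mu'/\kappa}(\alpha_1[s],\alpha_2[s],\ldots;0,t) P_{\mu'/\eta'}(\beta(-1);0,t)
\end{equation}
goes through verbatim, since the only structural inputs used in that proof are the skew Cauchy identity, the branching rule, \Cref{thm:specs_cancel}, and the fact that the observable $P_{\kappa/\mu'}/P_\kappa$ depends only on the first $k$ conjugate parts of $\kappa$; none of these references the choice of $\phi$. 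The same is true of \Cref{thm:use_torus_product}: reexpressing $Q_{\mu'/\kappa}$ via the torus scalar product uses only the skew Cauchy identity with the second specialization being $\beta(z_1,\ldots,z_k)$, so one obtains the identical contour integral but with the exponential factor $e^{\frac{\tau}{1-t}(z_1+\cdots+z_k)}$ replaced by $\Pi_{0,t}(\alpha_1[s],\alpha_2[s],\ldots;\beta(z_1,\ldots,z_k)) = \prod_{i,a}(1+\alpha_a z_i)^{s}$ (a finite product since only finitely many $\alpha_a$ are nonzero, or an infinite convergent product otherwise). The Fubini/absolute convergence check in \eqref{eq:sum_int_fubini} survives because $\Pi_{0,t}$ is still bounded on the expanded contours.

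\textbf{The asymptotic analysis.} After the same change of variables $w_i = t^{-\eta_k(s)} z_i$ (now with $\eta_k(s) = L_k + [\log_{t^{-1}} s + \zeta]$ or the appropriate shift), the $s$-dependence is concentrated in the factor
\begin{equation}
\prod_{i=1}^k \prod_{a \geq 1}\left(1 + \alpha_a t^{\eta_k(s)} w_i\right)^{s}.
\end{equation}
The key computation is that as $s \to \infty$ along the chosen subsequence, $t^{\eta_k(s)} = t^{L_k} \cdot t^{[\log_{t^{-1}} s + \zeta]} \sim t^{L_k+\zeta}/s$, so each factor $\left(1 + \alpha_a t^{\eta_k(s)} w_i\right)^{s} \to \exp(\alpha_a t^{L_k+\zeta} w_i)$, and the product over $a$ converges (uniformly on compacta in $w_i$) to $\exp\!\big(t^{L_k+\zeta} p_1(\alpha_1,\alpha_2,\ldots)\, w_i\big)$. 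This is precisely the exponential factor appearing in \Cref{thm:hl_stat_dist} with $t^{L_k+\zeta}/(1-t)$ replaced by $t^{L_k+\zeta} p_1(\alpha_1,\alpha_2,\ldots)$, i.e. with $\chi = t^\zeta p_1(\alpha_1,\ldots)$ after the matching used in \Cref{thm:stat_dist_1pt}. The analytic lemmas \Cref{thm:f_bound}, \Cref{thm:qp_bound_tG}, \Cref{thm:qpoc_lower_bound} on the $s$-independent integrand $\tf$ apply unchanged since $\tf$ does not involve $\phi$; the only new ingredient is a uniform bound on $\big|\prod_{i,a}(1+\alpha_a t^{\eta_k(s)} w_i)^{s}\big|$ on the decomposed contours $\Gamma_1,\Gamma_2,\Gamma_3$ analogous to \Cref{thm:g_bound_small_w}–\Cref{thm:g_bound_large_w}. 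On $\Gamma_1(s)$ one has $|t^{\eta_k(s)} w_i| = O(t^{-\xi(s)}/s) \to 0$, giving the factor $= 1 + o(1)$; outside $\Gamma_1$ the real part of $w_i$ is large and negative so $|1 + \alpha_a t^{\eta_k(s)} w_i| \leq 1$ for $\alpha_a \in (0,1)$ once $|t^{\eta_k(s)} w_i|$ is moderate, and the cruder exponential decay of $\tf$ (via the $\Re(w_i)$ term) dominates; one repeats the contour-splitting of the proof of \Cref{thm:hl_stat_dist} with $\xi(s)$ chosen exactly as in \eqref{eq:def_xi} (with $\tau$ replaced by $s$). This yields convergence of the prelimit probabilities to the contour integral \eqref{eq:limit_rv_int_formula}.

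\textbf{From convergence of probabilities to convergence in distribution.} Having shown $\Pr((\tl_i'(s_n) - [\log_{t^{-1}} s_n + \zeta])_{1\le i\le k} = \vec L) \to \Pr(\cL_{k,t,\chi} = \vec L)$ for every fixed $\vec L \in \Sig_k$, to upgrade to convergence in distribution I would prove tightness of the prelimit sequence, exactly as in \Cref{thm:tightness}: the Hall-Littlewood measure $\tl(s)$ equals $\SN$ of a product of $s$ random matrices by \Cref{thm:hl_meas_matrices}, and $\tl_1'(s) = \corank$ of that product mod $p$, while $\tl_k'(s)$ is controlled below by the waiting-time argument (each conjugate coordinate increases once $k$ independent geometric clocks have fired); alternatively one can argue directly at the level of the Hall-Littlewood process using the Poisson-walk description, since $\tl'(s)$ is itself Markovian in $s$ with transition rates that are the discrete-time analogue of \Cref{def:poisson_walks}. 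Combining pointwise convergence of masses with tightness (no escape of mass) gives convergence in distribution; since the limit masses already sum to $1$ by \Cref{thm:stat_dist_1pt}, this is immediate. The main obstacle I anticipate is bookkeeping the contour-splitting error estimates in the presence of the general alpha product — in particular verifying that on $\Gamma_2(s)$ the factor $\prod_{i,a}(1+\alpha_a t^{\eta_k(s)} w_i)^{s}$ does not grow faster than the exponential gain $e^{c_1'\Re(w)}$ provides, which requires noting that $t^{\eta_k(s)}|w| \le \text{const}$ on $\Gamma_2(s)$ so each factor has modulus bounded by a constant and there are $s$ of them but $|w|$ is of order $s$ on the far part of the contour, so one wants $\log(1+\alpha_a t^{\eta_k(s)} w) \cdot s = O(\alpha_a t^{L_k+\zeta} |w|) = o(\Re(w))$ only when $\alpha_a t^{\eta_k(s)} w$ is small, and a separate crude bound (modulus $\le$ const, raised to the $s$) when it is not; this case split is the one delicate point, but it is entirely parallel to how $g_\tau$ is handled in \Cref{thm:g_bound_large_w}, so I expect it to work with the same $\xi(s)$ growth condition.
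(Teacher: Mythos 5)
Your overall strategy is exactly the paper's: establish the analogues of \Cref{thm:prob_from_observables} and \Cref{thm:use_torus_product} with $\gamma(\tau)$ replaced by $\phi[s]$, make the same change of variables, and run the contour-splitting argument. However, you have a genuine gap in the treatment of the nearest-integer rounding. Writing $\eps(s) := (\log_{t^{-1}} s + \zeta) - [\log_{t^{-1}} s + \zeta]$, the change of variables gives $t^{\eta_k(s_n)} = t^{L_k+\zeta-\eps(s_n)}/s_n$, so the exponential approximation
\begin{equation*}
\prod_{a}\left(1 + \frac{\alpha_a t^{L_k+\zeta-\eps(s_n)} w_i}{s_n}\right)^{s_n} = e^{t^{L_k+\zeta} p_1(\alpha) w_i}\bigl(1 + O(\eps(s_n) w_i) + O(w_i^2/s_n)\bigr)
\end{equation*}
carries an error $O(\eps(s_n) w_i)$ that has no analogue in the Plancherel case. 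On $\Gamma_1(s_n)$, $|w_i|$ is of order $t^{-\xi(s_n)}$, so you need $\eps(s_n) t^{-\xi(s_n)} \to 0$. Since $\eps(s_n)\to 0$ at an unknown, possibly arbitrarily slow rate (it only depends on the choice of subsequence), the fixed $\xi$ of \eqref{eq:def_xi} does not work in general; you must instead pick a (possibly slower) $\xi'(s)$ satisfying the extra constraint $\xi'(s_n) = o(-\log\eps(s_n))$. This is exactly the new condition the paper imposes, and your claim that the "same $\xi(s)$ growth condition" suffices is false.

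A secondary point worth making explicit: to get the absolute bound \eqref{eq:why_new_contour} (i.e., $|1 + c w_i| \leq 1 + c(\Re(w_i)+1)$ for small $c>0$) along the whole contour, one needs $\Re(t^{\zeta+L_k-\eps}\alpha_j w_i/s_n) > -1$ uniformly, which fails for the contour $\Gamma(\tau)$ of \Cref{thm:hl_stat_dist} with vertical part at $-t^{-\eta_k-1/2}$. The paper fixes this by moving the vertical segment to $-(1+\delta)t^{-\eta_k(s)}$ with $\alpha_1 < (1+\delta)^{-1}$; your sketch of "$|1+\alpha_a t^{\eta_k(s)} w_i| \leq 1$ once $|t^{\eta_k(s)} w_i|$ is moderate" is heuristically in the right direction but is not correct as stated (the modulus is not $\leq 1$ in general), and glosses over the need to deform the contour slightly. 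Your concluding remark on tightness is unnecessary here — since the limit probabilities already sum to $1$ by \Cref{thm:stat_dist_1pt}, pointwise convergence of masses on the discrete set $\Sig_k$ is automatically convergence in distribution — but it does no harm.
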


\begin{proof}
Since $\Sig_k$ is a discrete set, by the definition of $\cL$ in \Cref{thm:stat_dist_1pt} it suffices to show convergence of probabilities
\begin{multline}
\label{eq:alpha_hl_stat_dist_formula}
\lim_{\substack{n \to \infty}} \Pr(\tl_i'(s_n) - [\log_{t^{-1}}(s_n)+\zeta]  = L_i  \text{ for all }1 \leq i \leq k) \\ 
=  \frac{(t;t)_\infty^{k-1}}{k! (2 \pi \bi)^k} \prod_{i=1}^{k-1} \frac{t^{\binom{L_i-L_k}{2}}}{(t;t)_{L_i-L_{i+1}}} \int_{\tG^k} e^{p_1(\alpha_1,\alpha_2,\ldots)t^{L_k+\zeta}(w_1+\ldots+w_k)} \frac{\prod_{1 \leq i \neq j \leq k} (w_i/w_j;t)_\infty}{\prod_{i=1}^k (-w_i^{-1};t)_\infty (-tw_i;t)_{\infty}} \\ 
\times \sum_{j=0}^{L_{k-1}-L_k} t^{\binom{j+1}{2}} \sqbinom{L_{k-1}-L_k}{j}_t  P_{(L_1-L_k,\ldots,L_{k-1}-L_k,j)}(w_1^{-1},\ldots,w_k^{-1};t,0)  \prod_{i=1}^k \frac{dw_i}{w_i},
\end{multline}
where if $k=1$ we interpret the sum on the last line as in \Cref{thm:hl_stat_dist}.

\Cref{thm:prob_from_observables} holds with $\la(\tau)$ replaced by $\tl(s)$ on the left hand side and $\gamma(\tau)$ replaced by $\phi[s]$ (i.e. $\phi$ repeated $s$ times, recall \Cref{def:spec_notation}) on the right hand side, by the exact same proof. Similarly, \Cref{thm:use_torus_product} holds with $\la(\tau)$ replaced by $\tl(s)$ on the left hand side and 
\begin{equation}
e^{\frac{\tau}{1-t}(z_1+\ldots+z_k)} = \Pi_{0,t}(\gamma(\tau);\beta(z_1,\ldots,z_k))
\end{equation}
on the right hand side replaced by 
\begin{align}\label{eq:gen_cauchy_kernel_in_proof}
\begin{split}
\Pi_{0,t}(\phi[s];\beta(z_1,\ldots,z_k)) &= \prod_{\substack{1 \leq i \leq k \\ j \geq 1}} (1+\alpha_j z_i).
\end{split}
\end{align}
We now explain how the asymptotic analysis used to prove \Cref{thm:hl_stat_dist} carries over. Let
\begin{equation}
a := \sum_{j \geq 1} \alpha_j,
\end{equation}
and
\begin{equation}
\eta(s) := (L_i + [\log_{t^{-1}}(s) + \zeta])_{1 \leq i \leq k}.
\end{equation}
Let
\begin{equation}
\label{eq:def_eps_error}
\eps(s) := (\log_{t^{-1}} (s) + \zeta) - [\log_{t^{-1}}(s) + \zeta],
\end{equation}
so that $\lim_{n \to \infty} \eps(s_n) = 0$ by our choice of the subsequence $s_n$, and $\eta_k(s) = L_k + \zeta + \log_{t^{-1}} s - \eps(s)$. We then make a change of variables to
\begin{equation}
w_i := t^{-\eta_k(s_n)}z_i = s_n t^{-\zeta - L_k + \eps(s_n)}z_i.
\end{equation}
Note that for $s=s_n$ this is $s_n t^{-\zeta - L_k}z_i (1+o(1))$ as $n \to \infty$, but it will be clearer later to have the $t^{\eps(s_n)} = 1+o(1)$ multiplicative error term written explicitly, as the rate at which $\eps(s_n)$ goes to $0$ influences the decomposition of contours which we now define. First fix any choice of a function $\xi'(s)$ for which
\begin{enumerate}
\item $\xi'(s) \to \infty$ as $s \to \infty$,
\item $\xi'(s) = o( \log_{t^{-1}} s)$ as $s \to \infty$, and
\item $\xi'(s_n) = o( -\log \eps(s_n))$ as $n \to \infty$.
\end{enumerate}
This will play the same role as the function $\xi(\tau)$ in the proof of \Cref{thm:hl_stat_dist}, but may in general have to be slower-growing due to the third condition. Fix a constant $\delta > 0$ with $\alpha_1 < (1+\delta)^{-1} < 1$, and define contours 
\begin{align}\label{eq:contours'}
\begin{split}
\Gamma'(s) &= \Gamma_1'(s) \cup \Gamma_2'(s) \\ 
\Gamma_1'(s) &= \{x + \bi: -t^{-\xi'(s)} < x \leq 0 \} \cup \{x - \bi: -t^{-\xi'(s)}  < x \leq 0\} \cup \{x+\bi y: x^2+y^2=1, x > 0\} \\ 
\Gamma'_2(s) &= \{-(1+\delta)t^{-\eta_k(s)} + \bi y: -1 \leq y \leq 1\} \cup \{x + \bi: -(1+\delta)t^{-\eta_k(s)}< x \leq  -t^{-\xi'(s)}  \} \\ 
&  \cup \{x - \bi: -t^{-\eta_k(s)}< x \leq  -t^{-\xi'(s)} \},
\end{split}
\end{align}
as shown in \Cref{fig:alpha_contour}. $\Gamma'(s)$ is very similar to the contour $\Gamma(\tau)$ from the proof of \Cref{thm:hl_stat_dist} with $\tau=s$ but the vertical part of $\Gamma_2'$ lies slightly to the right of the vertical part of $\Gamma_2$ in general (and this is needed in the analysis).

\begin{figure}[htbp]
\begin{center}
\begin{tikzpicture}[scale=1.5]
\def\b{2};
\def\t{.5};
\def\d{-5.656}; %this is -t^{-2.5} but i was having trouble with pow(...) for some reason...
  % Draw the axes
  \draw[<->] (0,-2) -- (0,2) node[above] {$\Im(w_i)$};
  \draw[<->] (-6,0) -- (2,0) node[above] {$\Re(w_i)$};

  % Draw the half circle
  \draw[thick,blue] (0,-1) arc (-90:90:1);

  % Draw the horizontal lines
  \draw[thick,blue] (-\b,1) node[left,black,yshift=3mm,xshift=10mm] {$-t^{-\xi'(s)}+\bi$} -- (0,1) ;
  \draw[thick,blue] (-\b,-1) node[left,black,yshift=-3mm,xshift=10mm] {$-t^{-\xi'(s)}-\bi$} -- (0,-1) ;

    \draw[thick,red] (\d,1)  -- (-\b,1);% node[right,yshift=3mm] {(0,1)};
    \draw[thick,red] (\d,-1)  -- (-\b,-1);% node[right,yshift=-3mm] {(0,-1)};

    \draw[thick,red] (\d,1) node[above,black] {$-(1+\delta)t^{-\eta_k(s)}+\bi$} -- (\d,-1) node[below,black] {$-(1+\delta)t^{-\eta_k(s)}-\bi$};

\fill (-\b,1) circle (1pt);
\fill (-\b,-1) circle (1pt);

 % to draw poles, loop through powers of t
 % \foreach \n in {-2,...,20} {
    % Calculate x-coordinate
  %  \pgfmathsetmacro\x{-pow(\t,\n)}
    % Draw a dot at coordinate (\x,0)
   % \fill (\x,0) circle (.5pt);
  %}

\end{tikzpicture}
\caption{The contour $\Gamma'(s)$ with the analogous decomposition to \eqref{eq:contours} into $\Gamma_1'(s)$ in blue and $\Gamma'_2(s)$ in red.
}\label{fig:alpha_contour}
\end{center}
\end{figure}
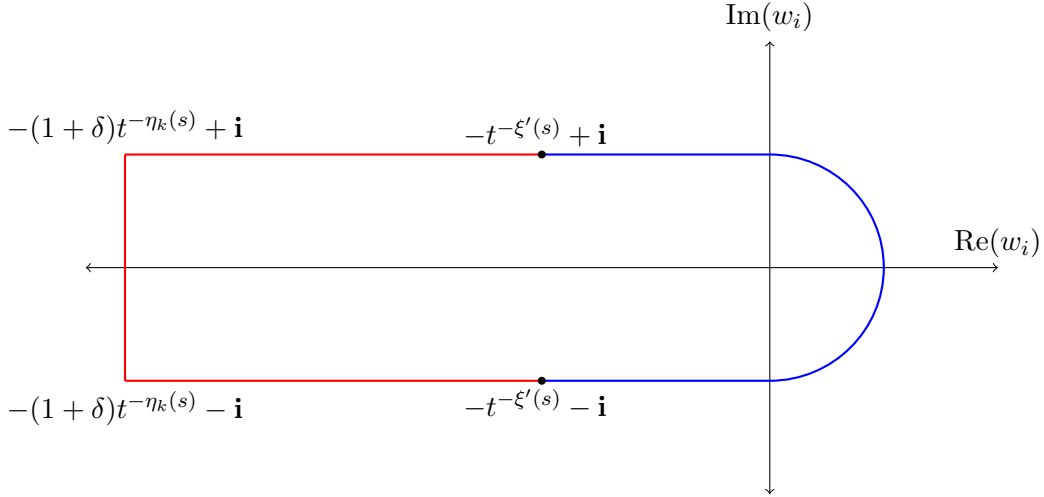

Now, the same manipulations as in the proof of \Cref{thm:hl_stat_dist} to consolidate the powers of $t$ after the change of variables give
\begin{align}
\label{eq:gen_nice_prelim_int}
\begin{split}
&\Pr((\tl(s)'_i)_{1 \leq i \leq k} = \eta(s)) = \frac{(t;t)_\infty^{k-1}}{k! (2 \pi \bi)^k} \prod_{i=1}^{k-1} \frac{t^{\binom{L_i-L_k}{2}}}{(t;t)_{L_i-L_{i+1}}} \int_{\Gamma(s)'^k} \prod_{\substack{1 \leq i \leq k \\ j \geq 1}} (1+t^{\zeta+L_k-\eps(s)}\alpha_j w_i/s)^s  \\ &\times\frac{\prod_{1 \leq i \neq j \leq k} (w_i/w_j;t)_\infty}{\prod_{i=1}^k (-w_i^{-1};t)_\infty (-tw_i;t)_{\eta_k(s)}}
 \sum_{j=0}^{L_{k-1}-L_k} t^{\binom{j+1}{2}} \sqbinom{L_{k-1}-L_k}{j}_t  \\ 
 &\times P_{(L_1-L_k,\ldots,L_{k-1}-L_k,j)}(w_1^{-1},\ldots,w_k^{-1};t,0)  \prod_{i=1}^k \frac{dw_i}{w_i}.
\end{split}
\end{align}

The reason for the slightly different contour from earlier is that now by our choice of $\delta$,
\begin{equation}\label{eq:Rew_big}
\Re(t^{\zeta+L_k-\eps(s_n)}\alpha_j w_i/s_n) \geq -(1+\delta)\alpha_j t^{-\eps(s_n)} > -1
\end{equation}
for all $w_i \in \Gamma'(s_n)$ and $n$ large enough that $t^{-\eps(s_n)}$ is sufficiently close to $1$. This implies that for such large enough $n$ and $w_i \in \Gamma'(s_n)$, 
\begin{align}\label{eq:why_new_contour}
\begin{split}
\abs*{1+\frac{t^{\zeta+L_k-\eps(s_n)}\alpha_j}{s_n} w_i} &= \sqrt{\left(1+\frac{t^{\zeta+L_k-\eps(s_n)}\alpha_j}{s_n}\Re(w_i)\right)^2+\left(\frac{t^{\zeta+L_k-\eps(s_n)}\alpha_j}{s_n}\Im(w_i)\right)^2 } \\
& \leq \abs*{1+\frac{t^{\zeta+L_k-\eps(s_n)}\alpha_j}{s_n}\Re(w_i)} + \abs*{\frac{t^{\zeta+L_k-\eps(s_n)}\alpha_j}{s_n}\Im(w_i)} \\ 
& \leq 1+\frac{t^{\zeta+L_k-\eps(s_n)}\alpha_j}{s_n}(\Re(w_i)+1),
\end{split}
\end{align}
where the last line follows by \eqref{eq:Rew_big} (to remove the absolute value on the first factor) and the bound $\Im(w_i) \leq 1$.

To simplify notation we express the integrand in terms of the function $\tf$ of \Cref{def:tf}, which we recall is given by 
\begin{multline}
\label{eq:def_tf_recall}
\tf(w_1,\ldots,w_k) := \frac{(t;t)_\infty^{k-1}}{k!} \prod_{i=1}^{k-1} \frac{t^{\binom{L_i-L_k}{2}}}{(t;t)_{L_i-L_{i+1}}} \frac{\prod_{1 \leq i \neq j \leq k} (w_i/w_j;t)_\infty}{\prod_{i=1}^k w_i(-w_i^{-1};t)_\infty }  \\ 
\times \sum_{j=0}^{L_{k-1}-L_k} t^{\binom{j+1}{2}} \sqbinom{L_{k-1}-L_k}{j}_t  P_{(L_1-L_k,\ldots,L_{k-1}-L_k,j)}(w_1^{-1},\ldots,w_k^{-1};t,0),
\end{multline}
and is the function $f$ of \eqref{eq:integrand_junk} without the exponential factor. Then similarly to \eqref{eq:split_integral1}, \eqref{eq:split_integral2} and \eqref{eq:split_integral3}, we must show 
\begin{align}
\label{eq:alpha_split_integral1}
&\lim_{n \to \infty} \frac{1}{(2 \pi \bi)^k} \int_{\Gamma_1'(s_n)^k} \tf(w_1,\ldots,w_k)\left(\frac{\prod_{\substack{1 \leq i \leq k, j \geq 1}} (1+\frac{t^{\zeta+L_k-\eps(s_n)}\alpha_j w_i}{s_n})^{s_n}}{\prod_{i=1}^k(-tw_i;t)_{\eta_k(s_n)}}- \frac{e^{t^{\zeta+L_k}\sum_{i=1}^k w_i}}{\prod_{i=1}^k(-tw_i;t)_{\infty}}   \right)\prod_{i=1}^k dw_i \\  
&+ \lim_{n \to \infty}\frac{1}{(2 \pi \bi)^k} \int_{\Gamma'(s_n)^k \setminus \Gamma_1'(s_n)^k} \frac{\tf(w_1,\ldots,w_k)\prod_{\substack{1 \leq i \leq k \\ j \geq 1}} (1+\frac{t^{\zeta+L_k-\eps(s_n)}\alpha_j w_i}{s_n})^{s_n}}{\prod_{i=1}^k(-tw_i;t)_{\eta_k(s_n)}} \prod_{i=1}^k dw_i \label{eq:alpha_split_integral2} \\
&-\lim_{n \to \infty} \frac{1}{(2 \pi \bi)^k} \int_{\tG^k \setminus \Gamma_1'(s_n)^k} \frac{\tf(w_1,\ldots,w_k)e^{t^{\zeta+L_k}\sum_{i=1}^k w_i}}{\prod_{i=1}^k(-tw_i;t)_{\infty}}\prod_{i=1}^k dw_i=0.\label{eq:alpha_split_integral3}
\end{align}
We will show each line is $0$ separately. The third line \eqref{eq:alpha_split_integral3} is exactly the same, up to changing the value of $\zeta$ and renaming the parameter in the limit, as \eqref{eq:split_integral3}, which was shown to be $0$ in the proof of \Cref{thm:hl_stat_dist}. For the first line \eqref{eq:alpha_split_integral1}, we have by writing $(1+x)^{s_n} = e^{s_n \log(1+x)}$ and Taylor expanding the logarithm that
\begin{align}\label{eq:gamma1_prod_bound}
\begin{split}
\prod_{\substack{1 \leq i \leq k \\ j \geq 1}} \left(1+\frac{t^{\zeta+L_k-\eps(s_n)}\alpha_j w_i}{s_n}\right)^{s_n} &= e^{at^{\zeta+L_k-\eps(s_n)}\sum_{i=1}^k w_i + O(w_i^2/s_n)} \\ 
&= e^{at^{\zeta+L_k}\sum_{i=1}^k w_i}\left(1+O\left(\eps(n)\sum_i w_i\right) + O\left(\sum_i w_i^2/s_n\right)\right)
\end{split}
\end{align}
as $n \to \infty$. Because $|w_i| \leq \const \cdot t^{-\xi'(s_n)}$ on $\Gamma_1'(s_n)$, and the above yields
\begin{multline}\label{eq:gamma1_prod_bound_2}
\prod_{\substack{1 \leq i \leq k \\ j \geq 1}} \left(1+\frac{t^{\zeta+L_k-\eps(s_n)}\alpha_j w_i}{s_n}\right)^{s_n}  = e^{at^{\zeta+L_k}\sum_{i=1}^k w_i}(1+O(t^{-\xi'(s_n)} \eps(s_n)) + O(t^{-2\xi'(s_n)}/s_n)).
\end{multline}
For $w_i \in \Gamma_1'(s_n)$ we have $|t^{\eta_k(s_n)+1}w_i| \leq t^{\eta_k(s_n)-\xi'(s_n)}$ (for $n$ large enough so $|-t^{- \xi'(s_n)}\pm i| \leq t^{-\xi'(s_n) - 1}$). Hence $(-t^{\eta_k(s_n)+1}w_i;t)_{\infty} = 1 + O(t^{\eta_k(s_n) - \xi'(s_n)})$ and so
\begin{equation}
\label{eq:qp_inf_fin_bound}
\frac{1}{(-tw_i;t)_{\eta_k(s_n)}} = \frac{1}{(-tw_i;t)_\infty}(1+O(t^{\eta_k(s_n) - \xi'(s_n)})).
\end{equation}
Since $\Gamma_1'(s_n) \subset \tG$, \Cref{thm:qp_bound_tG} shows that $|1/(-tw_i;t)_\infty|$ is bounded above by a constant. By combining \eqref{eq:gamma1_prod_bound_2}, the bound \Cref{thm:tf_bound} applied to $\tf$, \eqref{eq:qp_inf_fin_bound}, and this constant bound, the integrand in \eqref{eq:alpha_split_integral1} is 
\begin{equation}\label{eq:big_O}
O\left(e^{at^{\zeta+L_k}\sum_{i=1}^k \Re(w_i)  }(t^{\eta_k(s_n) - \xi'(s_n)} + t^{-\xi'(s_n)} \eps(s_n) + t^{-2\xi'(s_n)}/s_n\right).
\end{equation}
Since $\Re(w_i)$ is bounded above, the exponential factor in \eqref{eq:big_O} is bounded above by a constant, so multiplying by the volume of $\Gamma_1'(s_n)^k$ we have that the integral \eqref{eq:split_integral1} is 
\begin{equation}
O(t^{-k\xi'(s_n)}(t^{\eta_k(s_n) - \xi'(s_n)} + t^{-\xi'(s_n)} \eps(s_n) + t^{-2\xi'(s_n)}/s_n)).
\end{equation}
This is $o(1)$ by our choice of $\xi'$, i.e. the limit \eqref{eq:split_integral1} is indeed $0$.

For the second line \eqref{eq:alpha_split_integral2}, we need an analogue of \Cref{thm:f_bound} for
\begin{equation}
\tf(w_1,\ldots,w_k)\prod_{\substack{1 \leq i \leq k, j \geq 1}} (1+t^{\zeta+L_k-\eps(s_n)}\alpha_j w_i/s_n)^{s_n}
\end{equation}
by combining \Cref{thm:tf_bound} with a bound on the product. By \eqref{eq:why_new_contour} and the elementary inequality
\begin{equation}
\left(1+\frac{x}{n}\right)^n \leq e^{x} \quad \quad \quad \text{for $x> -n$},
\end{equation}
we have
\begin{equation}
\left|\prod_{\substack{j \geq 1}} \left(1+\frac{t^{\zeta+L_k-\eps(s_n)}\alpha_j w_i}{s_n}\right)^s\right| \leq \prod_{\substack{j \geq 1}} e^{t^{\zeta+L_k-\eps(s_n)}\alpha_j(\Re(w_i)+1)} = e^{at^{\zeta+L_k-\eps_n}(\Re(w_i)+1)},
\end{equation}
for all large enough $n$ and $w_i \in \Gamma'(s_n)$. Combining with \Cref{thm:tf_bound} and dropping the unimportant $t^{-\eps(s_n)} = 1+o(1)$ yields
\begin{multline}\label{eq:new_f_bound}
\abs*{\tf(w_1,\ldots,w_k)\prod_{\substack{1 \leq i \leq k \\ j \geq 1}} (1+t^{\zeta+L_k-\eps(s_n)}\alpha_j w_i/s_n)^{s_n}} \\ 
\leq C\prod_{1 \leq i \leq k} e^{at^{\zeta+L_k}(\Re(w_i)+1) + \frac{k-1}{2}(\log t^{-1})\floor{\log_t |w_i|}^2 + c_2 \floor{\log_t |w_i|}}
\end{multline}
for all sufficiently large $n$. The rest of the proof of the vanishing of \eqref{eq:alpha_split_integral2} is the same as for \eqref{eq:split_integral2}, with \eqref{eq:new_f_bound} in place of \Cref{thm:f_bound}. The only other difference, which is very slight, is that above we were forced to choose contours with vertical portion passing closer to the poles $-t^\Z$ than before. However, the hypothesis of \Cref{thm:qpoc_lower_bound} (with $\delta$ as above) is still satisfied along the vertical portions.
\end{proof}

\begin{rmk}\label{rmk:what_goes_wrong_for_beta}
One may try to carry through the above argument with dual $\beta$ parameters in the specialization, but the Cauchy kernel 
\begin{equation}
\Pi_{0,t}(\beta(b_1,b_2,\ldots);\beta(z_1,\ldots,z_k)) = \Pi_{t,0}(b_1,\ldots;z_1,\ldots,z_k) = \frac{1}{\prod_{\substack{1 \leq i \leq k \\ j \geq 1}} (b_jz_i;t)_\infty}
\end{equation}
creates extra poles in the integrand, in contrast to the alpha and Plancherel cases treated in \Cref{thm:hl_stat_dist} and \Cref{thm:alpha_hl_stat_dist}. These probably can be dealt with, but we did not attempt to carry this out since we only need the alpha and Plancherel cases for our random matrix results.
\end{rmk}

We note a technical extension of \Cref{thm:alpha_hl_stat_dist} which we need for \Cref{thm:corner_product_bulk}. 

\begin{prop}\label{thm:alpha_hl_stat_dist_gen}
Let $k \in \Z_{\geq 1}$ and $\zeta \in \R$, and fix $\delta > 0$. Let $(\phi^{(n)})_{n \in \N}$ be sequence of pure alpha Hall-Littlewood nonnegative specializations, with $\phi^{(n)}$ having parameters $\alpha_1^{(n)} \geq \alpha_2^{(n)} \geq \ldots$, with $0 < \alpha_1^{(n)} < (1+\delta)^{-1}$. Let $(s_n)_{n \in \N}$ be any sequence with $s_n \xrightarrow{n \to \infty} \infty$ such that $\log_{t} (s_n p_1(\alpha_1^{(n)},\alpha_2^{(n)},\ldots))$ converges in $\R/\Z$, and let $\zeta$ be a lift of this limit to $\R$. Let $\tl(s_n)$ be distributed by the Hall-Littlewood measure
\begin{equation}
\Pr(\tl(s_n) = \la) = \frac{Q_\la(\alpha_1^{(n)}[s_n],\alpha_2^{(n)}[s_n],\ldots;0,t)P_\la(1,t,\ldots;0,t)}{\Pi_{0,t}(\alpha_1^{(n)}[s_n],\alpha_2^{(n)}[s_n],\ldots;1,t,\ldots)}.
\end{equation}
Then
\begin{equation}
(\tl_i'(s_n) - [\log_{t^{-1}}(s_n p_1(\alpha_1^{(n)},\alpha_2^{(n)},\ldots))+\zeta])_{1 \leq i \leq k} \to \cL_{k,t,t^\zeta}
\end{equation} 
where $[\cdot]$ is the nearest integer function. 
\end{prop}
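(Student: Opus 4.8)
\textbf{Proof plan for \Cref{thm:alpha_hl_stat_dist_gen}.} The plan is to rerun the proof of \Cref{thm:alpha_hl_stat_dist} essentially verbatim, tracking where the uniformity hypothesis $\alpha_1^{(n)} < (1+\delta)^{-1}$ (now built into the statement rather than derived) is used, and where the normalization $\log_{t^{-1}}(s_n p_1(\alpha^{(n)}))$ replaces $\log_{t^{-1}}(s_n) + \zeta$. First I would record the analogues of \Cref{thm:prob_from_observables} and \Cref{thm:use_torus_product} for the measure $\tl(s_n)$: these hold verbatim with $\gamma(\tau)$ replaced by $\phi^{(n)}[s_n]$ and the Cauchy kernel $e^{\frac{\tau}{1-t}(z_1+\ldots+z_k)}$ replaced by $\Pi_{0,t}(\phi^{(n)}[s_n];\beta(z_1,\ldots,z_k)) = \prod_{1\le i\le k,\, j\ge 1}(1+\alpha_j^{(n)} z_i)^{s_n}$, exactly as in the proof of \Cref{thm:alpha_hl_stat_dist}. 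Then I would set $a_n := p_1(\alpha_1^{(n)},\alpha_2^{(n)},\ldots) = \sum_j \alpha_j^{(n)}$, define $\eta(s_n) := (L_i + [\log_{t^{-1}}(s_n a_n) + \zeta])_{1\le i\le k}$ and $\eps(s_n) := (\log_{t^{-1}}(s_n a_n)+\zeta) - [\log_{t^{-1}}(s_n a_n)+\zeta]$ (so $\eps(s_n)\to 0$ by the choice of subsequence), and change variables $w_i = t^{-\eta_k(s_n)} z_i = s_n a_n t^{-\zeta - L_k + \eps(s_n)} z_i / a_n \cdot a_n$ — more precisely $w_i = s_n t^{-\zeta-L_k+\eps(s_n)} a_n^{-1}\cdot a_n z_i$; the cleanest bookkeeping is to note that the product becomes $\prod_{i,j}(1 + \tfrac{t^{\zeta+L_k-\eps(s_n)}}{s_n a_n}\alpha_j^{(n)} w_i)^{s_n}$, so that the exponent of the limiting exponential is $a_n^{-1}\sum_j \alpha_j^{(n)} \cdot t^{\zeta+L_k}\sum_i w_i = t^{\zeta+L_k}\sum_i w_i$, i.e. the $\chi$ in the limit is $t^\zeta$ uniformly in $n$, as claimed.

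Next I would introduce the decomposed contour $\Gamma'(s_n) = \Gamma_1'(s_n)\cup\Gamma_2'(s_n)$ exactly as in \eqref{eq:contours'}, with the vertical part of $\Gamma_2'$ at real part $-(1+\delta)t^{-\eta_k(s_n)}$ and with an auxiliary function $\xi'(s_n)$ satisfying $\xi'(s_n)\to\infty$, $\xi'(s_n)=o(\log_{t^{-1}}(s_n a_n))$, and $\xi'(s_n)=o(-\log\eps(s_n))$. The key point is that $\Re(\tfrac{t^{\zeta+L_k-\eps(s_n)}}{s_n a_n}\alpha_j^{(n)} w_i) \ge -(1+\delta)\tfrac{\alpha_j^{(n)}}{a_n}t^{-\eps(s_n)} \ge -(1+\delta)t^{-\eps(s_n)}\cdot\tfrac{\alpha_j^{(n)}}{a_n}$; since $\alpha_j^{(n)}/a_n \in [0,1]$ and $\alpha_1^{(n)} < (1+\delta)^{-1}$ is no longer literally what one needs here — instead one just needs $\sum_j \alpha_j^{(n)}/a_n = 1$ — actually the relevant bound is the one used to prove \eqref{eq:why_new_contour}, namely $|1 + \tfrac{t^{\zeta+L_k-\eps(s_n)}}{s_n a_n}\alpha_j^{(n)} w_i| \le 1 + \tfrac{t^{\zeta+L_k-\eps(s_n)}}{s_n a_n}\alpha_j^{(n)}(\Re(w_i)+1)$, which holds once $\Re(\tfrac{t^{\zeta+L_k-\eps(s_n)}}{s_n a_n}\alpha_j^{(n)}w_i) > -1$; this is guaranteed for $w_i\in\Gamma'(s_n)$ and $n$ large because that real part is $\ge -(1+\delta)t^{-\eps(s_n)}\alpha_j^{(n)}/a_n \ge -(1+\delta)t^{-\eps(s_n)}$, and we need this $> -1$, which forces us to take $\delta$ small enough — but $\delta$ is given, so instead one chooses the constant in the definition of $\Gamma_2'$ adapted to $\delta$, exactly as the parameter $\delta$ in \eqref{eq:contours'} is chosen with $\alpha_1 < (1+\delta)^{-1}$. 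With the hypothesis $\alpha_1^{(n)} < (1+\delta)^{-1}$ in hand, the same choice $(1+\delta)$ of prefactor works uniformly in $n$, which is the whole reason this uniform version is available.

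With the contour and the product bound \eqref{eq:why_new_contour} in place, the three-way split \eqref{eq:alpha_split_integral1}--\eqref{eq:alpha_split_integral3} goes through unchanged: line \eqref{eq:alpha_split_integral3} is identical to \eqref{eq:split_integral3}; line \eqref{eq:alpha_split_integral1} is handled by Taylor-expanding $(1+x)^{s_n}=e^{s_n\log(1+x)}$ to get $\prod_{i,j}(1+\tfrac{t^{\zeta+L_k-\eps(s_n)}\alpha_j^{(n)}}{s_n a_n}w_i)^{s_n} = e^{t^{\zeta+L_k}\sum_i w_i}(1 + O(t^{-\xi'(s_n)}\eps(s_n)) + O(t^{-2\xi'(s_n)}/s_n))$ — here one uses $\sum_j\alpha_j^{(n)}/a_n = 1$ so the main term has no $n$-dependence, and the error from the second-order term of $\log$ is $O(\sum_i w_i^2 \sum_j (\alpha_j^{(n)})^2/(s_n a_n^2)) = O(\sum_i w_i^2/s_n)$ since $\sum_j(\alpha_j^{(n)})^2 \le (\sum_j\alpha_j^{(n)})^2 = a_n^2$ — combined with \Cref{thm:tf_bound}, \Cref{thm:qp_bound_tG}, \eqref{eq:qp_inf_fin_bound}, and the volume bound $O(t^{-k\xi'(s_n)})$; line \eqref{eq:alpha_split_integral2} is handled by the uniform bound \eqref{eq:new_f_bound}, which follows from \Cref{thm:tf_bound} together with $|\prod_j(1+\tfrac{t^{\zeta+L_k-\eps(s_n)}\alpha_j^{(n)}}{s_n a_n}w_i)^{s_n}| \le e^{t^{\zeta+L_k-\eps(s_n)}(\Re(w_i)+1)\sum_j\alpha_j^{(n)}/a_n} = e^{t^{\zeta+L_k-\eps(s_n)}(\Re(w_i)+1)}$ via the elementary inequality $(1+x/n)^n\le e^x$ for $x>-n$, and then the argument bounding the integral over $\Gamma_2'$ (equivalently $\Gamma_3'$) by an exponentially small quantity proceeds exactly as in \eqref{eq:gamma2_int_bound}--\eqref{eq:split_2}. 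Finally, convergence of the probabilities $\Pr((\tl_i'(s_n))_{1\le i\le k} = \eta(s_n))$ to the contour-integral formula \eqref{eq:limit_rv_int_formula} for $\chi = t^\zeta$, together with the fact that $\Sig_k$ is discrete and $\cL_{k,t,t^\zeta}$ is a genuine random variable by \Cref{thm:stat_dist_1pt}, gives convergence in distribution. The main obstacle is purely bookkeeping: making sure the normalization $a_n$ cancels correctly so that the limiting exponential parameter is genuinely $t^\zeta$ independent of $n$, and checking that every error estimate in the proof of \Cref{thm:alpha_hl_stat_dist} that was "$O(\cdot)$ in $n$" remains uniform in $n$ given only $\alpha_1^{(n)} < (1+\delta)^{-1}$ and $\sum_j\alpha_j^{(n)}/a_n = 1$ — no genuinely new analytic input is needed.
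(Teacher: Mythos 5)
Your proposal matches the paper's approach. The paper's proof is the one-paragraph remark that the proof of \Cref{thm:alpha_hl_stat_dist} carries over verbatim---``the only place the values of the $\alpha_i$ mattered there was the choice of $\delta$, which we have packaged into the hypotheses above''---together with the translation $\zeta \leftrightarrow \zeta + \log_t p_1(\alpha_1,\ldots)$. You have unpacked that remark in detail, and the two places you flag as needing uniformity in $n$ (the real-part bound underlying \eqref{eq:why_new_contour}, and the second-order Taylor error controlled via $\sum_j(\alpha_j^{(n)})^2 \le a_n^2$) are indeed the right ones to check.

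There is, however, an algebraic slip in your contour bound that sent you on an unnecessary detour. With $z_i = \frac{t^{L_k+\zeta-\eps(s_n)}}{s_n a_n}\,w_i$ and $\eta_k(s_n) = L_k + \log_{t^{-1}}(s_n a_n) + \zeta - \eps(s_n)$, the exponents collapse: $t^{L_k+\zeta-\eps(s_n)-\eta_k(s_n)} = t^{-\log_{t^{-1}}(s_n a_n)} = s_n a_n$. Hence on $\Gamma'(s_n)$, where $\Re(w_i) \ge -(1+\delta)t^{-\eta_k(s_n)}$, one has
\begin{equation*}
\Re\!\left(\frac{t^{L_k+\zeta-\eps(s_n)}\alpha_j^{(n)}}{s_n a_n}\,w_i\right) \;\ge\; -(1+\delta)\,\alpha_j^{(n)}\cdot \frac{s_n a_n}{s_n a_n} \;=\; -(1+\delta)\alpha_j^{(n)},
\end{equation*}
with both $s_n$ and $a_n$ cancelling exactly. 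Your stated lower bound carries a spurious factor $1/a_n$, and after crudely estimating $\alpha_j^{(n)}/a_n \le 1$ you arrived at $\ge -(1+\delta)t^{-\eps(s_n)}$, which is of course \emph{not} $>-1$; this is what produced the digression about ``taking $\delta$ small enough'' and ``adapting the constant in $\Gamma_2'$.'' None of that is needed: the correct bound $-(1+\delta)\alpha_j^{(n)} > -1$ follows immediately and uniformly in $n$ and $j$ from $\alpha_j^{(n)} \le \alpha_1^{(n)} < (1+\delta)^{-1}$, which is---as you correctly conclude in your last sentence---the whole reason the uniform version holds. With that computation corrected, the plan is complete and sound.
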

\begin{proof}
The proof is essentially the same as that of \Cref{thm:alpha_hl_stat_dist}, which is a special case. The only place the values of the $\alpha_i$ mattered there was the choice of $\delta$ in the proof, which we have packaged into the hypotheses above. Note that $\zeta$ above corresponds to $\zeta + \log_t p_1(\alpha_1,\ldots)$ in \Cref{thm:alpha_hl_stat_dist}.
\end{proof}

\section{From processes of infinitely many particles to finite matrix bulk limits}\label{sec:pois_mat_prod}

In this section we finally prove the main results \Cref{thm:matrix_product_bulk_metric_intro} and \Cref{thm:corner_product_bulk_metric_intro}, as well as stating the main result \Cref{thm:matrix_stat_dist}. We will deduce \Cref{thm:matrix_product_bulk_metric_intro} and \Cref{thm:corner_product_bulk_metric_intro} from the following equivalent versions, which treat the issue of nonuniqueness of the limit random variable by passing to subsequences rather than considering a metric on probability measures as was done in \Cref{thm:matrix_product_bulk_metric_intro} and \Cref{thm:corner_product_bulk_metric_intro}.

\begin{thm}\label{thm:matrix_product_bulk}%this was the old main theorem, and most proofs will refer to it
Fix $p$ prime, and for each $N \in \mathbb{Z}_{\geq 1}$ let $A_{i}^{(N)}, i \geq 1$ be iid matrices with iid entries distributed by the additive Haar measure on $\mathbb{Z}_{p}$. Let $(s_{N})_{N \geq 1}$ be a sequence of natural numbers such that $s_{N}$ and $N-\log _{p} s_{N}$ both go to $\infty$ as $N \rightarrow \infty$. Let $(s_{N_{j}})_{j \geq 1}$ be any subsequence for which $-\log _{p} s_{N_{j}}$ converges in $\mathbb{R} / \mathbb{Z}$, and let $\zeta$ be any preimage in $\mathbb{R}$ of this limit. Then for any $k \in \Z_{\geq 1}$,
\begin{equation}\label{eq:rmt_bulk}
(\mathrm{SN}(A_{s_{N_{j}}}^{(N_{j})} \cdots A_{1}^{(N_{j})})_{i}'-[\log _{p}(s_{N_{j}})+\zeta])_{1 \leq i \leq k} \rightarrow \cL_{k,p^{-1}, p^{-\zeta}/(p-1)}
\end{equation}
in distribution as $j \rightarrow \infty$, where $[\cdot]$ is the nearest integer function and $\cL_{k,t,\chi}$ is as in \Cref{thm:stat_dist_1pt}.
\end{thm}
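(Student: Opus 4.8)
The plan is to reduce \Cref{thm:matrix_product_bulk} to \Cref{thm:alpha_hl_stat_dist}. Set $t = p^{-1}$. By \Cref{thm:hl_meas_matrices}, the random partition $\SN(A_{s}^{(N)} \cdots A_1^{(N)})$ is distributed by a Hall-Littlewood measure built from the specialization of $P_\la$ at $(1,t,\ldots,t^{N-1})$ (finitely many variables) and of $Q_\la$ at $s$ copies each of $t,t^2,\ldots$. If instead we first let the matrix size go to infinity, the relevant $P$-specialization becomes the principal specialization $(1,t,t^2,\ldots)$ in infinitely many variables, which is exactly the setting of \Cref{thm:alpha_hl_stat_dist} with pure alpha parameters $\alpha_j = t^j$. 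Note $p_1(t,t^2,\ldots) = t/(1-t) = 1/(p-1)$, matching the $\chi = p^{-\zeta}/(p-1)$ appearing in the target. So \Cref{thm:alpha_hl_stat_dist} already gives the conclusion for the $N=\infty$ model; the real work is the interchange of the two limits.

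First I would make precise the $N \to \infty$ comparison. Using the explicit Hall-Littlewood process sampling algorithm from \cite[Proposition 5.3]{van2020limits}, one can couple the finite-$N$ process $\SN(A_s^{(N)} \cdots A_1^{(N)})$ with the infinite-variable Hall-Littlewood measure $\tl(s)$ of \Cref{thm:alpha_hl_stat_dist} (with $\alpha_j = t^j$) so that the two agree in their first $k$ conjugate parts with high probability, provided $N$ is large relative to $s$. The point is that $\tl_i'(s)$ for $i \leq k$ only depends on the reduction of the relevant matrices modulo $p^k$, and the discrepancy between the $N$-variable and $\infty$-variable principal specialization only affects the tail of the partition; since $\tl_1'(s) \approx \log_{t^{-1}} s$ with exponential tails (as in the tightness estimate \Cref{thm:tightness}, which bounds $\la_1'$ above via a sum of exponentials), the event that $\tl_1'(s_N) \leq N$ — equivalently, that the finite and infinite models have not yet ``felt'' the cutoff at $N$ rows — has probability $\to 1$ as long as $N - \log_p s_N \to \infty$. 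On this event the first $k$ conjugate parts coincide under the coupling. Quantitatively one needs that the probability $\tl_1'(s_N) > N - c$ is $o(1)$, which follows from $\Pr(\tl_1'(s_N) - \log_{t^{-1}} s_N > x) \to \Pr(\cL^{(1)}_{t,\chi_N} > x)$ together with tightness uniform in the subsequence, so for any $\eps$ one can choose the truncation level so the error is below $\eps$ once $N$ is large.

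Then I would assemble the argument: fix a subsequence $(s_{N_j})$ along which $-\log_p s_{N_j}$ converges in $\R/\Z$ with lift $\zeta$, so that $\log_{t^{-1}}(s_{N_j} p_1(t,t^2,\ldots)) = \log_{t^{-1}} s_{N_j} + \log_{t^{-1}}(1/(p-1))$ converges in $\R/\Z$ as well, pinning down the $\chi$ in \Cref{thm:alpha_hl_stat_dist}. Apply \Cref{thm:alpha_hl_stat_dist} to get $(\tl_i'(s_{N_j}) - [\log_{t^{-1}} s_{N_j} + \zeta'])_{1 \leq i \leq k} \to \cL_{k,t,\chi}$ for the appropriate $\zeta'$, and check that the integer shift used there agrees with $[\log_p s_{N_j} + \zeta]$ up to an eventually-constant offset, which can be absorbed by the translation covariance \eqref{eq:obvious_invariance} of $\cL_{t,\chi}$ (equivalently, by adjusting the representative $\zeta$); this bookkeeping of which lift gives which $\chi$ is where one must be careful, using $\cL_{t,t\chi} = \cL_{t,\chi} - 1$. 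Finally combine with the coupling: the finite-$N_j$ conjugate parts agree with $\tl_i'(s_{N_j})$ for $i \leq k$ with probability $\to 1$, so they have the same distributional limit. The case of \Cref{thm:corner_product_bulk} is identical except one uses the $\GL_{N+D_N}$ line of \Cref{thm:hl_meas_matrices} (principal $Q$-specialization $t,\ldots,t^{D_N}$, giving $p_1 = (t - t^{D_N+1})/(1-t)$, hence the $(1-p^{-D_N})$ correction) and invokes \Cref{thm:alpha_hl_stat_dist_gen} to handle the $n$-dependent alpha parameters.

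The main obstacle I expect is the uniformity of the finite-to-infinite coupling: one needs the probability that the first $k$ conjugate parts of the size-$N_j$ model differ from those of the infinite-variable model to go to $0$ along the subsequence, and this requires a tail bound on $\tl_1'(s_{N_j})$ that is uniform in $j$ and strong enough to beat the growth of $N_j - \log_p s_{N_j}$ — essentially the statement that escape of mass to the right, past row $N_j$, is negligible under the hypothesis $N_j - \log_p s_{N_j} \to \infty$. The tightness in \Cref{thm:tightness} plus the explicit exponential-clock description of $\la_1'$ should supply exactly the needed estimate, but translating it through the sampling algorithm of \cite[Proposition 5.3]{van2020limits} and verifying the coupling respects the first $k$ columns is the technical heart of the section.
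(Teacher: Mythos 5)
Your high-level plan is the same as the paper's: convert via \Cref{thm:hl_meas_matrices} to a Hall-Littlewood process, use the sampling algorithm of \cite[Proposition 5.3]{van2020limits} (i.e.\ \Cref{thm:alpha_sampling_alg} with insertion map $\iota$) to couple the finite-$N$ process $\tl^{(N)}$ with the infinite-variable process $\tl$, and invoke \Cref{thm:alpha_hl_stat_dist} for the latter, keeping track of the shift $\zeta$ and $\chi = p^{-\zeta}/(p-1)$. The bookkeeping of $\chi$ via $p_1(t,t^2,\ldots)=1/(p-1)$ is right.

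However there is a genuine gap in the identification of the coupling event. You claim that $\{\tl_1'(s_N) \le N\}$ is ``equivalently, that the finite and infinite models have not yet felt the cutoff at $N$ rows,'' and that on this event the first $k$ conjugate parts coincide under the coupling. This is false. In the insertion dynamics of \Cref{def:interacting_insertion}, an impulse $X_j>0$ with $j>N$ propagates \emph{upward} by donation through the stack of zero rows and can move a low-index particle even while $\tl_1'$ stays well below $N$. A concrete instance: with $N=10$, start from $\emptyset$, and at the first step take $X_{11}=1$ and $X_j=0$ for all $j\ne 11$. In the infinite model the impulse donates up through particles $10,9,\ldots,1$, and particle $1$ moves, so $\tl(1)=(1,0,\ldots)$ with $\tl_1'(1)=1$. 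In the finite model $X_1=\cdots=X_{10}=0$, so $\tl^{(10)}(1)=\emptyset$. The first column lengths already differ even though $\tl_1'(1)=1\le 10$, so the discrepancy between finite and infinite specializations does \emph{not} only ``affect the tail of the partition.'' Consequently, a tail bound on $\tl_1'(s_N)$ (via \Cref{thm:tightness}) is the wrong estimate and does not yield the needed coupling.

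The correct coupling event, which the paper uses, is that at every step $s\le s_N$ and every substep (indexed by the alpha variables) the extra impulse variables $X_{N+1},X_{N+2},\ldots$ are all zero; the paper encapsulates this with the stopping time $\Xi_N$. On this event, \Cref{thm:iotas_consistent} gives pathwise agreement $\tl(s)=\tl^{(N)}(s)$ for all $s\le s_N$, and the probability of this event goes to $1$ under $N-\log_p s_N\to\infty$ by a direct union bound on the truncated geometric variables $G_{t^{\ell+j-1}}$ from \Cref{def:geom_diff_rvs} (roughly $O(s_N t^{N+1})$), not by tightness of $\tl_1'$. Once you replace your event by this one, the deconditioning and the application of \Cref{thm:alpha_hl_stat_dist} go through as you outline, matching the paper's argument (which proceeds exactly as in the proof of \Cref{thm:matrix_stat_dist}).
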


\begin{thm}\label{thm:corner_product_bulk} %old main theorem, most proofs still refer to it
Fix $p$ prime, and for each $N \in \mathbb{Z}_{\geq 1}$ let $D_N \in \Z_{\geq 1}$ be an integer and $A_{i}^{(N)}, i \geq 1$ be iid $N \times N$ corners of matrices distributed by the Haar probability measure on $\GL_{N+D_N}(\Z_p)$. Let $(s_{N})_{N \geq 1}$ be a sequence of natural numbers such that $s_{N}$ and $N-\log _{p} s_{N}$ both go to $\infty$ as $N \rightarrow \infty$. Let $(s_{N_{j}})_{j \geq 1}$ be any subsequence for which $-\log _{p} ((1-p^{-D_{N_j}})s_{N_{j}})$ converges in $\mathbb{R} / \mathbb{Z}$, and let $\zeta$ be any preimage in $\mathbb{R}$ of this limit. Then for any $k \in \Z_{\geq 1}$,
\begin{equation}\label{eq:corner_bulk_intro}
(\mathrm{SN}(A_{s_{N_{j}}}^{(N_{j})} \cdots A_{1}^{(N_{j})})_{i}'-[\log _{p}((1-p^{-D_{N_j}})s_{N_{j}})+\zeta])_{1 \leq i \leq k} \rightarrow \cL_{k,p^{-1}, p^{-\zeta}/(p-1)}
\end{equation}
in distribution as $j \rightarrow \infty$, where $[\cdot]$ is the nearest integer function and $\cL_{k,t,\chi}$ is as in \Cref{thm:stat_dist_1pt}.
\end{thm}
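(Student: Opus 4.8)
\textbf{Proof proposal for \Cref{thm:corner_product_bulk}.} The plan is to reduce the statement about finite $N \times N$ matrix corners, where the number of products $s$ grows together with $N$, to the purely symmetric-function asymptotics of \Cref{thm:alpha_hl_stat_dist_gen}, in which the matrix size is already infinite. The key point is that \Cref{thm:hl_meas_matrices} identifies $\SN(A_s^{(N)} \cdots A_1^{(N)})$ as the time-$s$ marginal of a Hall-Littlewood process with per-step specialization $\phi^{(N)}$ having alpha parameters $(t,t^2,\ldots,t^{D_N})$ and with final principal specialization $(1,t,\ldots,t^{N-1})$, whereas the process $\tl(s)$ in \Cref{thm:alpha_hl_stat_dist_gen} uses the infinite principal specialization $(1,t,t^2,\ldots)$. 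So the bulk of the work is to control the difference between the finite principal specialization in $N$ variables and the infinite one, uniformly as $s_N \to \infty$ with $N - \log_p s_N \to \infty$.

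First I would observe that $p_1(t,\ldots,t^{D_N}) = (t - t^{D_N+1})/(1-t) = (1 - p^{-D_N})/(p-1) \cdot$ (constant), so that the centering $\log_{t^{-1}}(s_N p_1(\alpha_1^{(N)},\ldots))$ in \Cref{thm:alpha_hl_stat_dist_gen} matches $\log_p((1-p^{-D_N})s_N)$ up to an additive constant absorbed into $\zeta$; and that $\alpha_1^{(N)} = t < 1$ is bounded away from $1$ uniformly, so the hypotheses of \Cref{thm:alpha_hl_stat_dist_gen} hold with a fixed $\delta$. This gives the target random variable $\cL_{k,t,t^\zeta}$, and one checks $t^\zeta$ corresponds to $p^{-\zeta}/(p-1)$ after tracking the constant, matching the statement. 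Thus \Cref{thm:alpha_hl_stat_dist_gen} yields the conclusion for the infinite-dimensional process $\tl(s_{N_j})$ along the chosen subsequence; it remains to transfer this to the genuinely finite-$N$ process.

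For the transfer, I would use the coupling/sampling-algorithm input cited in the Techniques section, namely \cite[Proposition 5.3]{van2020limits}, which (as used throughout \Cref{sec:pois_mat_prod}) lets one realize the finite-$N$ Hall-Littlewood process and the $N=\infty$ one on the same probability space so that they agree except on an event whose probability is controlled. Concretely, conditioned on the first $k$ conjugate parts being of order $\log_p s_N + O(1)$ — which by the tightness estimates (\Cref{thm:tightness}, or rather its analogue for these measures, exactly as in the proof of \Cref{thm:stat_dist_1pt}) happens with probability $\to 1$ — the restriction of the process to its first few rows does not "see" the truncation of the principal specialization from $\infty$ variables down to $N$, precisely because $N - \log_p s_N \to \infty$ provides a growing buffer between the observation scale and the matrix size. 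I would make this quantitative by bounding $\Pr(\SN(\cdots)_1' > N - M)$ for fixed large $M$ using the same exponential-waiting-time argument as in \Cref{thm:tightness} (the length of the first column is dominated by a sum of exponentials), showing it is $o(1)$, and then invoking the sampling-algorithm coupling to conclude that on the complementary event the finite and infinite processes have the same law for $(\la_i')_{1\le i\le k}$. Combining with \Cref{thm:alpha_hl_stat_dist_gen} and the subsequence hypothesis gives \eqref{eq:corner_bulk_intro}. The proof of \Cref{thm:matrix_product_bulk} is identical with $\phi^{(N)}$ replaced by the specialization with alpha parameters $(t,t^2,t^3,\ldots)$ (so $p_1 = t/(1-t) = 1/(p-1)$, no $D_N$), using the second formula of \Cref{thm:hl_meas_matrices}; and the metric versions \Cref{thm:matrix_product_bulk_metric_intro}, \Cref{thm:corner_product_bulk_metric_intro} follow from these by the standard fact that convergence along every convergent subsequence of $\{\log_p s_N\}$ to the corresponding $\cL$ is equivalent to the $D_\infty$-statement, together with continuity of $\chi \mapsto \Pr(\cL_{k,t,\chi} = \vec L)$.

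The main obstacle I expect is the transfer step: making rigorous the claim that truncating the principal specialization to $N$ variables is invisible to the first $k$ conjugate parts, with the error controlled uniformly over the joint limit $s_N, N - \log_p s_N \to \infty$. This requires either a clean monotone/stochastic-domination statement relating the $N$-variable and $\infty$-variable Hall-Littlewood processes (which the sampling algorithm of \cite{van2020limits} should supply) or an explicit estimate on how much mass of $\SN(A_s^{(N)}\cdots A_1^{(N)})_1'$ sits near $N$; the exponential-clock heuristic strongly suggests this mass is doubly-exponentially small in $N - \log_p s_N$, but converting the heuristic into a uniform bound that plugs into the coupling is the delicate part. Everything else is bookkeeping: matching $p_1$ values to identify $\zeta$, checking the $\delta$-uniformity hypothesis of \Cref{thm:alpha_hl_stat_dist_gen}, and the routine passage from subsequential convergence to the $D_\infty$ formulation.
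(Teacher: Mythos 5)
Your overall route is the same as the paper's: translate the matrix products into a Hall--Littlewood process via \Cref{thm:hl_meas_matrices}, use \Cref{thm:alpha_hl_stat_dist_gen} (with the correct $p_1(\alpha^{(N)}) = (t-t^{D_N+1})/(1-t)$ bookkeeping, which you handle correctly) for the $N=\infty$ process, and then transfer back to the genuine $N$-particle process via the sampling-algorithm coupling of \Cref{thm:alpha_sampling_alg}. The $\delta$-uniformity check and the subsequence-to-$D_\infty$ reduction are also fine.

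However, the specific event you propose to control in the transfer step does not do what you claim. You propose to bound $\Pr(\la_1' > N-M)$ and then assert that, on the complementary event $\{\la_1' \leq N-M\}$, the coupled finite-$N$ and infinite processes produce the same $(\la_i')_{1\le i\le k}$. This is false: the coupled realizations can disagree on $\{\la_1'\le N-M\}$. In the sampling algorithm, the difference between the $N$-variable and $\infty$-variable processes is exactly the presence of the extra impulse variables $X_{N+1},X_{N+2},\ldots$ in the latter. If, at some substep, $X_{N+1}>0$ while the partition has $\la_1'=L\le N-M$, then particle $N+1$'s impulse cascades up through particles $N, N-1,\ldots$ (all at $0$) until it reaches particle $L+1$, which then moves; this changes $\la_1'$ (and possibly $\la_2',\ldots,\la_k'$) relative to the $N$-particle process, where that impulse simply does not exist. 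So $\{\la_1'\le N-M\}$ is not the event on which the coupling produces agreement. The event the coupling actually requires --- and the one the paper controls --- is $\{X_j^{(\text{substep})}=0 \text{ for all } j>N \text{ and all } s_N \text{ steps}\}$. That event has probability $\to 1$ directly from the explicit truncated-geometric law $X_j\sim G_{xt^{j-1}}$ (so $\Pr(X_j>0)=O(t^j)$) together with $N-\log_p s_N\to\infty$; on it, \Cref{thm:iotas_consistent} gives $\tl^{(N)}(s_N)=\tl(s_N)$ pointwise. Your proposed bound on $\la_1'$ is in fact established in the paper as well (it is the $T_N^{(N)}$ stopping time), but it plays an auxiliary role and is not the hypothesis under which the coupled realizations coincide; replacing the impulse bound by the $\la_1'$ bound, as you do, leaves the transfer step open. (Alternatively, one could salvage an argument using your $\la_1'$ bound by comparing the two Hall--Littlewood measures directly via the explicit ratio $(t;t)_N/(t;t)_{N-\la_1'}$ and the $\Pi$-normalizations, showing the densities converge to $1$ uniformly over $\{\la_1'\le N-M\}$; but that is a different argument from the coupling you invoke, and you would need to spell it out.)
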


However, before proving these, we state the result \Cref{thm:matrix_stat_dist} mentioned in the Introduction, and deduce it from \Cref{thm:hl_stat_dist}. Recall the definitions of $\Pois^{(N)}$ and $\Pois^{(\infty)}$ from \Cref{def:poisson_walks}. As mentioned previously, the process $\Pois^{(N)}$ is the natural $p$-adic analogue of (multiplicative) Dyson Brownian motion in the complex case \cite{van2023p}, so the appearance of the same limit in \Cref{thm:matrix_product_bulk}, \Cref{thm:corner_product_bulk} and \Cref{thm:matrix_stat_dist} is not surprising from this perspective.
%For the next result, we work instead with $\Pois^{(N)}$, and we must give probabilistic arguments that $\Pois^{(N)}$ and $\Pois^{(\infty)}$ behave similarly enough in the limit in order to apply previous results as above.

\begin{thm}\label{thm:matrix_stat_dist}
Fix $\zeta \in \R$, and let $\tau_N \in t^{\zeta+\Z}, N \geq 1$ be a sequence of real numbers such that 
\begin{enumerate} 
\item $\tau_N \to \infty$ as $N \to \infty$, and
\item $N - \log_{t^{-1}} \tau_N \to \infty$.
\end{enumerate}
Then for any $k \in \Z_{\geq 1}$,
\begin{equation}
(\Pois^{(N)}(\tau_N)'_i - \log_{t^{-1}} \tau_N - \zeta)_{1 \leq i \leq k} \to \cL_{k,t,t^{\zeta+1}/(1-t)}
\end{equation}
in distribution, where $\Pois^{(N)}$ is as in \Cref{def:poisson_walks} and $\cL_{k,t,\chi}$ is as in \Cref{thm:stat_dist_1pt}.
\end{thm}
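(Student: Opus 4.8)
The plan is to reduce the statement about the finite-particle process $\Pois^{(N)}$ to the already-established infinite-particle result \Cref{thm:hl_stat_dist} (equivalently \Cref{thm:hl_residue_formula} and \Cref{thm:stat_dist_1pt}), via two ingredients: the time change of \Cref{thm:HL_poisson} and a comparison, valid with high probability, between the finite-$N$ and infinite Hall-Littlewood processes. First I would invoke \Cref{thm:HL_poisson} to rewrite $\Pois^{(N)}(\tau_N)$ as $\la^{(N,\emptyset)}(\frac{t}{1-t^N}\tau_N)$ in distribution, and likewise $\Pois^{(\infty)}$ as $\la^{(\infty,\emptyset)}$; since $t^N \to 0$ and $\log_{t^{-1}}\tau_N$ changes by only $o(1)$ under this time change (because $N - \log_{t^{-1}}\tau_N \to \infty$ forces $\frac{t}{1-t^N} = t(1+o(1))$ with the $o(1)$ not affecting the relevant integer-lattice shift after the $\zeta$ bookkeeping), it suffices to prove the corresponding statement for $\la^{(N,\emptyset)}(\tilde\tau_N)$ where $\tilde\tau_N \asymp \tau_N$. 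The explicit constant $t^{\zeta+1}/(1-t)$ versus $t^\zeta/(1-t)$ in the two theorems is exactly the shift coming from this time change, so I would track it carefully; concretely, comparing with \Cref{thm:hl_stat_dist}, whose limit is $\cL_{k,t,t^\zeta/(1-t)}$ for $\la^{(\infty,\emptyset)}(\tau)$ along $\tau \in t^{-\N-\zeta}$, the factor of $t$ absorbs into the $\chi$ parameter.

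The core step is then to show that the finite-$N$ conjugate parts $\la^{(N,\emptyset)}(\tilde\tau_N)'_i$ for $1 \le i \le k$ have the same limiting joint distribution as the infinite-$N$ ones $\la^{(\infty,\emptyset)}(\tilde\tau_N)'_i$. The key probabilistic input is the Hall-Littlewood process sampling algorithm and its monotonicity properties established in \cite[Proposition 5.3]{van2020limits} (as referenced in the Techniques subsection): the process $\la^{(N,\emptyset)}$ can be coupled with $\la^{(\infty,\emptyset)}$ so that, until the first time the $N$-th row $\la^{(\infty,\emptyset)}_N$ becomes nonzero (equivalently $\la^{(\infty,\emptyset)}_1{}' \ge N$), the two processes agree on all columns. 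Thus on the event $\{\la^{(\infty,\emptyset)}(\tilde\tau_N)_1{}' < N\}$ the vectors $(\la^{(N,\emptyset)}(\tilde\tau_N)'_i)_{i \le k}$ and $(\la^{(\infty,\emptyset)}(\tilde\tau_N)'_i)_{i\le k}$ coincide. I would bound the probability of the complementary event using the tightness estimate \Cref{thm:tightness} (specifically the explicit bound \eqref{eq:explicit_lambda1'_bound} on $\la_1'(\tau)$): since $\la^{(\infty,\emptyset)}(\tilde\tau_N)_1{}' - \log_{t^{-1}}\tilde\tau_N$ is tight and $N - \log_{t^{-1}}\tilde\tau_N \to \infty$, we get $\Pr(\la^{(\infty,\emptyset)}(\tilde\tau_N)_1{}' \ge N) \to 0$. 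Therefore the total variation distance between the two $\Sig_k$-valued laws tends to $0$, and the finite-$N$ limit equals the infinite-$N$ limit.

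Finally, I would conclude by applying \Cref{thm:hl_stat_dist} (or \Cref{thm:stat_dist_1pt}) along the sequence $\tilde\tau_N$: since $\tilde\tau_N \in t^{\zeta'+\Z}$ for the appropriate $\zeta'$ obtained from $\zeta$ after the time change, the limit of $(\la^{(\infty,\emptyset)}(\tilde\tau_N)'_i - \log_{t^{-1}}\tilde\tau_N - \zeta')_{i\le k}$ is $\cL_{k,t,t^{\zeta'}/(1-t)}$, and unwinding the substitution $\tilde\tau_N = \frac{t}{1-t^N}\tau_N$ together with $\log_{t^{-1}}\frac{t}{1-t^N} = -1 + o(1)$ converts $\zeta'$ back to $\zeta$ and $t^{\zeta'}/(1-t)$ into $t^{\zeta+1}/(1-t)$. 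The main obstacle I anticipate is not any single estimate but getting the bookkeeping of the two lattice-shift parameters exactly right: the time change by $\frac{t}{1-t^N}$ is not an integer power of $t$, so one must check that its non-integer part is absorbed harmlessly (it contributes a multiplicative $1+o(1)$ to the effective $\chi$, which is continuous in $\chi$ by \Cref{thm:stat_dist_1pt}) while its leading $t^{-1}$ part produces precisely the documented shift from $t^\zeta$ to $t^{\zeta+1}$. A secondary point requiring care is confirming that the coupling of \cite[Proposition 5.3]{van2020limits} indeed preserves all conjugate parts $\la'_1,\ldots,\la'_k$ (not merely $\la'_1$) up to the stopping time, which follows because the Markovian projection to the first $k$ columns depends only on the first $k$ columns and these are unaffected by the reflection at row $N$ as long as row $N$ stays empty.
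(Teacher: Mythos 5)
Your overall architecture matches the paper's: time change via \Cref{thm:HL_poisson} to pass from $\Pois^{(N)}$ to $\la^{(N)}$, a coupling between $\la^{(N)}$ and the infinite process $\la^{(\infty,\emptyset)}$, and then \Cref{thm:hl_stat_dist}. The time-change bookkeeping and the $\chi$-parameter shift from $t^\zeta$ to $t^{\zeta+1}$ are handled correctly. However, your coupling claim has a genuine gap.

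You assert that $\la^{(N)}$ and $\la^{(\infty)}$, coupled via shared Poisson clocks, agree on the first $N$ coordinates until the first time $\la^{(\infty)}_N$ becomes nonzero. This is false. The processes can diverge strictly before that time, because a clock with index $j>N$ ringing can push a particle at position $m+1\le N$ (whenever $\la_m>0=\la_{m+1}=\cdots=\la_j$), and that clock does not exist in the $N$-particle process. Concretely, with $N=5$ and $\la^{(\infty)}=(3,2,2,0,0,\ldots)$, a ring of clock $6$ increments $\la^{(\infty)}_4$ from $0$ to $1$ while $\la^{(N)}_4$ stays at $0$, even though $\la^{(\infty)}_5=0$ throughout. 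Thus your stopping time (and the corresponding assertion that ``these are unaffected by the reflection at row $N$ as long as row $N$ stays empty'') does not control the divergence, and the event $\{\la^{(\infty)}(\tilde\tau_N)_1'<N\}$ does not imply equality of the two $\Sig_k$-valued vectors.

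What the paper does instead is track the separate stopping time $\Xi_N$, the first time any clock $j\ge N+1$ rings, and conditions on both $\{\Xi_N>\tau_N\}$ and $\{T_N>\tau_N\}$. The bound $\Pr(\Xi_N\le\tau_N)\to 0$ is the crucial one (it follows from $\sum_{j>N}t^j\,\tau_N\to 0$, equivalent to $N-\log_{t^{-1}}\tau_N\to\infty$), and is an ingredient you are missing. Your bound $\Pr(\la_1'(\tilde\tau_N)\ge N)\to 0$ is correct but does not by itself control the coupling failure, since divergence can occur on $\{\la_1'<N\}$.

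Two smaller points. First, citing \cite[Proposition 5.3]{van2020limits} (the discrete-time insertion-map sampling algorithm $\iota$) is not the right tool here; that algorithm underlies \Cref{thm:matrix_product_bulk}, whereas for $\Pois^{(N)}$ the relevant explicit mechanism is the Poisson-clock description of \Cref{def:poisson_walks} together with \Cref{thm:HL_poisson}. Second, the time change $\tau\mapsto\frac{t}{1-t^N}\tau$ moves you \emph{off} the lattice $t^{\zeta+1+\Z}$ by a multiplicative $1+O(t^N)$ factor; you should make explicit that a bounded jump rate plus $t^N\tau_N\to 0$ lets you replace $\la^{(N)}(\tfrac{t}{1-t^N}\tau_N)$ by $\la^{(N)}(t\tau_N)$ up to an event of vanishing probability, rather than invoking continuity of $\chi\mapsto\cL_{k,t,\chi}$, since the prelimit random variable lives on a shifted lattice that the continuity argument does not directly address.
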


\begin{rmk}
A version of \Cref{thm:matrix_stat_dist} stated in terms of $D_\infty$-convergence in the sense of \Cref{thm:matrix_product_bulk_metric_intro}, rather than passing to subsequences, can be proven from \Cref{thm:matrix_stat_dist} exactly as \Cref{thm:matrix_product_bulk_metric_intro} is deduced from \Cref{thm:matrix_product_bulk} below. We omit the details.
\end{rmk}

%In this section we state the result \Cref{thm:matrix_stat_dist} mentioned in the Introduction, and deduce it from \Cref{thm:hl_stat_dist}. We then similarly use \Cref{thm:alpha_hl_stat_dist_gen} to deduce Theorems \ref{thm:matrix_product_bulk} and \ref{thm:corner_product_bulk}. We begin by stating the first theorem and making the first deduction, which is slightly easier technically but uses the same method as the argument for Theorems \ref{thm:matrix_product_bulk} and \ref{thm:corner_product_bulk}.

\subsection{Bulk limits for $\Pois^{(N)}(\tau)$: proof of \Cref{thm:matrix_stat_dist}.} The convergence of $\Pois^{(\infty)}$ to $\cL_{t,\chi}$ has essentially already been established, but let us show it anyway so that constant factors are kept track of and so that the differences in \Cref{thm:matrix_stat_dist} are more apparent.

\begin{prop}\label{thm:pois_infty_cvg}
Fix $\zeta \in \R$, and let $\tau_N \in t^{\zeta+\Z}, N \geq 1$ be a sequence of real numbers such that $\tau_N \to \infty$ as $N \to \infty$. Then for any $k \in \Z_{\geq 1}$,
\begin{equation}\label{eq:pois_infty_cvg}
(\Pois^{(\infty)}(\tau_N)'_i - \log_{t^{-1}} \tau_N - \zeta)_{1 \leq i \leq k} \to \cL_{k,t,t^{\zeta+1}/(1-t)}
\end{equation}
in distribution, where $\Pois^{(\infty)}$ is as in \Cref{def:poisson_walks} and $\cL_{k,t,\chi}$ is as in \Cref{thm:stat_dist_1pt}.
\end{prop}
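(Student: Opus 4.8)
\textbf{Proof proposal for \Cref{thm:pois_infty_cvg}.}

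The plan is to reduce this to the already-established convergence of the Plancherel/principal Hall-Littlewood measure in \Cref{thm:hl_stat_dist} (or rather \Cref{thm:stat_dist_1pt}), using the identification of $\Pois^{(\infty)}$ with the process $\la^{(\infty,\emptyset)}$ up to a time change. By \Cref{thm:HL_poisson} applied with $n = \infty$ and $\nu = \emptyset$ (so $1 - t^n = 1$ in that formula), we have $\la^{(\infty,\emptyset)}(\tau) = \Pois^{(\infty)}(\tau/t)$ in distribution, equivalently $\Pois^{(\infty)}(\sigma) = \la^{(\infty,\emptyset)}(t\sigma)$. Substituting $\sigma = \tau_N$, the left-hand side of \eqref{eq:pois_infty_cvg} equals in distribution
\begin{equation}
(\la^{(\infty,\emptyset)}(t\tau_N)'_i - \log_{t^{-1}}\tau_N - \zeta)_{1 \leq i \leq k}.
\end{equation}
Now $\log_{t^{-1}}(t\tau_N) = \log_{t^{-1}}\tau_N - 1$, so writing $\tau_N' := t\tau_N$ we have $\log_{t^{-1}}\tau_N + \zeta = \log_{t^{-1}}\tau_N' + (\zeta + 1)$, and the displayed vector is $(\la^{(\infty,\emptyset)}(\tau_N')'_i - \log_{t^{-1}}\tau_N' - (\zeta+1))_{1 \leq i \leq k}$. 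Since $\tau_N \in t^{\zeta + \Z}$ we have $\tau_N' \in t^{(\zeta+1) + \Z}$, so $\log_{t^{-1}}\tau_N' + (\zeta+1) \in \Z$ for every $N$; moreover $\tau_N' \to \infty$ as $N \to \infty$.

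The next step is to invoke the $\tau \to \infty$ limit along the lattice shift $\Z + (\zeta+1)$. By \Cref{thm:hl_stat_dist} with the shift parameter $\zeta+1$ in place of $\zeta$ (and trivial initial condition, so no constraint like \eqref{eq:technical_restriction} is needed), together with \Cref{thm:hl_residue_formula} and the definition of $\cL_{k,t,\chi}$ in \Cref{thm:stat_dist_1pt}, for every fixed $\vec{L} \in \Sig_k$ the prelimit probabilities
\begin{equation}
\Pr\big((\la^{(\infty,\emptyset)}_i(\tau)'-\log_{t^{-1}}(\tau)-(\zeta+1))_{1 \leq i \leq k} = \vec{L}\big)
\end{equation}
converge as $\tau \to \infty$ along $\tau \in t^{-\N - (\zeta+1)}$ to $\Pr(\cL_{k,t,\chi} = \vec{L})$, where the parameter $\chi$ is read off by matching $e^{\tfrac{t^{d+(\zeta+1)}}{1-t}}$ in \eqref{eq:brpw_residue_formula} against $e^{-\chi t^d}$ in \eqref{eq:limit_rv_res_formula}; this gives $\chi = -t^{\zeta+1}/(1-t)$... wait, the sign: in \eqref{eq:brpw_residue_formula} the exponential is $e^{\tfrac{t^{d+\zeta}}{1-t}}$ whereas \Cref{thm:stat_dist_1pt} is stated with $e^{-\chi t^d}$, and the two are reconciled (as in the proof of \Cref{thm:stat_dist_1pt}) by taking $\zeta = \log_t((1-t)\chi)$, i.e. $(1-t)\chi = t^{\zeta}$ up to sign conventions; applying this with $\zeta$ replaced by $\zeta + 1$ yields $\chi = t^{\zeta+1}/(1-t)$. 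Thus the limiting object is $\cL_{k,t,\,t^{\zeta+1}/(1-t)}$, matching the right-hand side of \eqref{eq:pois_infty_cvg}. Since $\Sig_k$ is discrete, pointwise convergence of probabilities is convergence in distribution, and since $\tau_N'$ ranges over $t^{(\zeta+1)+\Z}$ and tends to infinity it runs through exactly the subsequence covered by \Cref{thm:hl_stat_dist}.

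The only genuine subtlety — and it is the main point to be careful about — is bookkeeping the constant $\chi$ correctly through the two time shifts (the factor $t$ from \Cref{thm:HL_poisson} and the implicit normalization $\chi \leftrightarrow \zeta$ in \Cref{thm:stat_dist_1pt}); I would double-check by specializing to $k=1$ and comparing against \Cref{thm:k=1_case}. There is no analytic obstacle: tightness and the fact that the limit is a bona fide random variable are already supplied by \Cref{thm:stat_dist_1pt}, so no escape of mass can occur. Finally, for the full \Cref{thm:matrix_stat_dist} one replaces $\Pois^{(\infty)}$ by $\Pois^{(N)}$ with $N - \log_{t^{-1}}\tau_N \to \infty$; the point is that the conjugate parts $\Pois^{(N)}(\tau)'_1,\ldots,\Pois^{(N)}(\tau)'_k$ agree with those of $\Pois^{(\infty)}(\tau)$ with probability tending to $1$ under this hypothesis — intuitively, finitely many particles near the observation window $\approx \log_{t^{-1}}\tau_N$ cannot feel the cutoff at index $N$ when $N$ is much larger — and this coupling argument, together with \Cref{thm:pois_infty_cvg}, gives \Cref{thm:matrix_stat_dist}.
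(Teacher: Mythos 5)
Your proof is correct and follows essentially the same route as the paper's: apply \Cref{thm:HL_poisson} to convert the statement into one about $\la^{(\infty,\emptyset)}$, track the time change $\tau \mapsto t\tau$ to see that the shift parameter becomes $\zeta+1$, then invoke \Cref{thm:hl_stat_dist} (trivial initial condition) and identify the result with $\cL_{k,t,\chi}$ via \Cref{thm:stat_dist_1pt}. The paper's own proof is terser and matches the resulting contour integral directly against \eqref{eq:limit_rv_int_formula}, from which $\chi t^{L_k} = t^{L_k+\zeta+1}/(1-t)$ gives $\chi = t^{\zeta+1}/(1-t)$ cleanly, whereas you detoured through the residue formula \eqref{eq:brpw_residue_formula}. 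The sign you flagged there is a genuine typo in the paper: \eqref{eq:brpw_residue_formula} should read $e^{-\tfrac{t^{d+\zeta}}{1-t}}$, as is clear from \eqref{eq:residue_expansion_independence} in its proof and from the comparison with \eqref{eq:limit_rv_res_formula}; with the correct sign, matching $e^{-\tfrac{t^{d+\zeta+1}}{1-t}}$ to $e^{-\chi t^d}$ gives $\chi = t^{\zeta+1}/(1-t)$ directly and your hedge "up to sign conventions" can be removed.
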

\begin{proof}
By \Cref{thm:stat_dist_1pt} it suffices to show 
\begin{multline}\label{eq:pois_infty_suffices_cvg}
\lim_{N \to \infty} \Pr((\Pois^{(\infty)}(\tau_N)'_i - \log_{t^{-1}} \tau_N - \zeta)_{1 \leq i \leq k}  = (L_1,\ldots,L_k))  \\ 
=  \frac{(t;t)_\infty^{k-1}}{k! (2 \pi \bi)^k} \prod_{i=1}^{k-1} \frac{t^{\binom{L_i-L_k}{2}}}{(t;t)_{L_i-L_{i+1}}} \int_{\tG^k} e^{\frac{t^{L_k+\zeta+1}}{1-t}(w_1+\ldots+w_k)} \frac{\prod_{1 \leq i \neq j \leq k} (w_i/w_j;t)_\infty}{\prod_{i=1}^k (-w_i^{-1};t)_\infty (-tw_i;t)_{\infty}} \\ 
\times \sum_{j=0}^{L_{k-1}-L_k} t^{\binom{j+1}{2}} \sqbinom{L_{k-1}-L_k}{j}_t  P_{(L_1-L_k,\ldots,L_{k-1}-L_k,j)}(w_1^{-1},\ldots,w_k^{-1};t,0)  \prod_{i=1}^k \frac{dw_i}{w_i},
\end{multline}
where if $k=1$ we interpret the sum on the last line as in \Cref{thm:hl_stat_dist}. By \Cref{thm:HL_poisson}, $\Pois^{(\infty)}(\tau/t) = \la(\tau)$ where $\la(\tau)$ is as in \Cref{thm:hl_stat_dist}, and so \eqref{eq:pois_infty_cvg} is exactly the case of trivial initial condition in \Cref{thm:hl_stat_dist}.
\end{proof}

\begin{proof}[{Proof of \Cref{thm:matrix_stat_dist}}]
By \Cref{thm:stat_dist_1pt} it suffices to show that for any $k \in \Z_{\geq 1}$ and integers $L_1 \geq \ldots \geq L_k$,
\begin{multline}\label{eq:matrix_stat_dist}
\lim_{N \to \infty} \Pr((\Pois^{(N)}(\tau_N)'_i - \log_{t^{-1}} \tau_N - \zeta)_{1 \leq i \leq k}  = (L_1,\ldots,L_k))  \\ 
=  \frac{(t;t)_\infty^{k-1}}{k! (2 \pi \bi)^k} \prod_{i=1}^{k-1} \frac{t^{\binom{L_i-L_k}{2}}}{(t;t)_{L_i-L_{i+1}}} \int_{\tG^k} e^{\frac{t^{L_k+\zeta+1}}{1-t}(w_1+\ldots+w_k)} \frac{\prod_{1 \leq i \neq j \leq k} (w_i/w_j;t)_\infty}{\prod_{i=1}^k (-w_i^{-1};t)_\infty (-tw_i;t)_{\infty}} \\ 
\times \sum_{j=0}^{L_{k-1}-L_k} t^{\binom{j+1}{2}} \sqbinom{L_{k-1}-L_k}{j}_t  P_{(L_1-L_k,\ldots,L_{k-1}-L_k,j)}(w_1^{-1},\ldots,w_k^{-1};t,0)  \prod_{i=1}^k \frac{dw_i}{w_i},
\end{multline}
where if $k=1$ we interpret the sum on the last line as in \Cref{thm:hl_stat_dist}. We claim that it suffices to show 
\begin{multline}\label{eq:matrix_dbm_wts}
\lim_{N \to \infty} \Pr((\la^{(N)}(\tau_N)_i'- \log_{t^{-1}} \tau_N - \zeta)_{1 \leq i \leq k} = (L_1,\ldots,L_k))  \\ 
=  \frac{(t;t)_\infty^{k-1}}{k! (2 \pi \bi)^k} \prod_{i=1}^{k-1} \frac{t^{\binom{L_i-L_k}{2}}}{(t;t)_{L_i-L_{i+1}}} \int_{\tG^k} e^{\frac{t^{L_k+\zeta}}{1-t}(w_1+\ldots+w_k)} \frac{\prod_{1 \leq i \neq j \leq k} (w_i/w_j;t)_\infty}{\prod_{i=1}^k (-w_i^{-1};t)_\infty (-tw_i;t)_{\infty}} \\ 
\times \sum_{j=0}^{L_{k-1}-L_k} t^{\binom{j+1}{2}} \sqbinom{L_{k-1}-L_k}{j}_t  P_{(L_1-L_k,\ldots,L_{k-1}-L_k,j)}(w_1^{-1},\ldots,w_k^{-1};t,0)  \prod_{i=1}^k \frac{dw_i}{w_i}
\end{multline}
where $\la^{(N)}$ is as in \Cref{def:lambda_hl_planch}. By \Cref{thm:HL_poisson} 
\begin{equation}
\Pois^{(N)}(\tau) = \la^{(N)}\left(\frac{1-t^N}{t}\tau\right)
\end{equation}
in (multi-time) distribution, Since $N - \log_{t^{-1}} \tau_N \to \infty$ as $N \to \infty$, we have that 
\begin{equation}
\frac{1-t^N}{t}\tau_N = \frac{1}{t}\tau_N + o(1).
\end{equation}
Since $\la^{(N)}$ is a Poisson jump process with the exit rate from any state bounded above, if $(\la^{(N)}(\tau_N)_i'- \log_{t^{-1}}(\tau_N) - \zeta)_{1 \leq i \leq k}$ has a limiting distribution then 
\begin{equation}
(\la^{(N)}((1-t^N)\tau_N)_i'- \log_{t^{-1}}(\tau_N) - \zeta)_{1 \leq i \leq k} = (\Pois^{(N)}(\tau_N/t)_i'- \log_{t^{-1}}(\tau_N/t) - \zeta + 1)_{1 \leq i \leq k}
\end{equation}
must have the same limiting distribution. Hence \eqref{eq:matrix_stat_dist} (with $\tau_N$ replaced by $\tau_N/t$ and $\zeta$ by $\zeta+1$) follows from \eqref{eq:matrix_dbm_wts}, so it suffices to show the latter. The remainder of the proof consists of showing \eqref{eq:matrix_dbm_wts} by arguing that the Hall-Littlewood processes $\la^{(N)}(\tau)$ and $\la(\tau)$ (the latter of which was analyzed in \Cref{thm:hl_stat_dist}) are not so different on the timescale we consider.

Define stopping times
\begin{align}
\begin{split}
T^{(N)}_N &:= \inf(\{\tau \in \R_{\geq 0}: \la^{(N)}(\tau)_1' = N\}) \\ 
T_N &:= \inf(\{\tau \in \R_{\geq 0}: \la(\tau)_1' = N\}) \\ 
\Xi_N &:= \inf_{j \geq N+1} (\text{time at which clock }j\text{ rings for $\la(\tau)$}).
\end{split}
\end{align}
Conditionally on the event that clocks $N+1,N+2,\ldots$ do not ring on a given time interval, both $\la^{(N)}_i(\tau), 1 \leq i \leq N$ and $\la_i(\tau), 1 \leq i \leq N$ have the same local dynamics controlled by $N$ Poisson clocks on that interval, by \Cref{thm:HL_poisson}. Taking the time interval $[0,\tau_N]$, since $\min(\tau_N,T_N^{(N)})$ and $\min(\tau_N,T_N)$ are measurable with respect to the $\sigma$-algebras generated by $\la^{(N)}([0,\tau_N])$ and $\la([0,\tau_N])$ respectively, this implies that
\begin{equation}\label{eq:min_law_eq}
\Law(\min(\tau_N,T_N^{(N)})) = \Law(\min(\tau_N,T_N)| \Xi_N > \tau_N)
\end{equation}
and 
\begin{equation}\label{eq:dist_condl_eq}
\Law((\la^{(N)}(\tau_N)_1',\ldots,\la^{(N)}(\tau_N)_k')|T^{(N)}_N > \tau_N) = \Law((\la(\tau_N)_1',\ldots,\la(\tau_N)_k')|T_N > \tau_N \text{ and }\Xi_N > \tau_N).
\end{equation}
The explicit description of our dynamics implies the distributional equality
\begin{equation}
\Law(T_N) = \Law\left(\sum_{i=0}^{N-1} Y_{t^i/(1-t)}\right)
\end{equation}
where $Y_{r}$ is an exponential distribution with rate $r$. Because $N - \log_{t^{-1}} \tau_N \to \infty$, 
\begin{equation}
\E[Y_{t^{N-1}/(1-t)}] = (1-t)t^{1-N} \gg \tau_N,
\end{equation}
and the fluctuations of $Y_{t^{N-1}/(1-t)}$ are of lower order than its mean, hence
\begin{equation}\label{eq:stop_time_big}
 \lim_{N \to \infty} \Pr(T_N > \tau_N) = 1.
\end{equation}
Furthermore, since the first time one of the clocks $N+1,N+2,\ldots$ rings follows an exponential distribution with rate $t^{N+1}/(1-t)$, the hypothesis $N - \log_{t^{-1}} \tau_N \to \infty$ is exactly what is needed to guarantee that the probability that any of clocks $N+1,N+2,\ldots$ rings on the interval we are concerned with is asymptotically negligible, i.e.
\begin{equation}\label{eq:Xi_big}
\lim_{N \to \infty} \Pr(\Xi_N \leq \tau_N) = 0.
\end{equation}
From \eqref{eq:stop_time_big} and \eqref{eq:Xi_big} it follows that
\begin{equation}\label{eq:tau_N_small_enough}
\lim_{N \to \infty} \Pr(T_N > \tau_N \text{ and }\Xi_N > \tau_N) = 1.
\end{equation}
We thus finally have
\begin{align}\label{eq:TNN_big}
\begin{split}
\lim_{N \to \infty} \Pr(T^{(N)}_N > \tau_N) &= \lim_{N \to \infty} \Pr(T_N > \tau_N | \Xi_N > \tau_N) \\ 
&= \lim_{N \to \infty} \frac{\Pr(T_N > \tau_N \text{ and }\Xi_N > \tau_N)}{\Pr(\Xi_N > \tau_N)} \\ 
&= 1,
\end{split}
\end{align}
where the first equality follows from \eqref{eq:min_law_eq}, the second is trivial, and the third follows by applying \eqref{eq:Xi_big} and \eqref{eq:TNN_big} to the denominator and numerator respectively. From \eqref{eq:TNN_big} it follows that 
\begin{align}
\begin{split}\label{eq:reduce_to_cond_exp}
\text{LHS\eqref{eq:matrix_dbm_wts}} &= \lim_{N \to \infty} \Pr((\la^{(N)}(\tau_N)'_i - \log_{t^{-1}}(\tau_N))_{1 \leq i \leq k}  = (L_i+\zeta)_{1 \leq i \leq k}| T_N^{(N)} > \tau_N) \cdot \Pr(T_N^{(N)} > \tau_N) \\ 
&+ \lim_{N \to \infty} \Pr((\la^{(N)}(\tau_N)'_i - \log_{t^{-1}}(\tau_N))_{1 \leq i \leq k}  = (L_i+\zeta)_{1 \leq i \leq k}| T_N^{(N)} \leq \tau_N) \cdot \Pr(T_N^{(N)} \leq \tau_N) \\ 
&= \lim_{N \to \infty} \Pr((\la^{(N)}(\tau_N)'_i - \log_{t^{-1}}(\tau_N))_{1 \leq i \leq k}  = (L_i+\zeta)_{1 \leq i \leq k}| T_N^{(N)} > \tau_N).
\end{split}
\end{align}
By \eqref{eq:min_law_eq},
\begin{align}
\begin{split}\label{eq:another_condl_split}
\text{RHS\eqref{eq:reduce_to_cond_exp}}& = \lim_{N \to \infty} \Pr((\la(\tau_N)'_i - \log_{t^{-1}}(\tau_N))_{1 \leq i \leq k}  = (L_i+\zeta)_{1 \leq i \leq k}| T_N > \tau_N \text{ and }\Xi_N > \tau_N) \\ 
&= \lim_{N \to \infty} \frac{1}{\Pr(T_N > \tau_N \text{ and }\Xi_N > \tau_N)} \left(\Pr((\la(\tau_N)'_i - \log_{t^{-1}}(\tau_N))_{1 \leq i \leq k}  = (L_i+\zeta)_{1 \leq i \leq k} ) \right.\\ 
&\left.- \Pr((\la(\tau_N)'_i - \log_{t^{-1}}(\tau_N))_{1 \leq i \leq k}  = (L_i+\zeta)_{1 \leq i \leq k}\text{ and } (T_N \leq \tau_N \text{ or }\Xi_N \leq \tau_N)\right).
\end{split}
\end{align}
Since $\Pr(T_N > \tau_N \text{ and }\Xi_N > \tau_N) = 1-o(1)$ by \eqref{eq:tau_N_small_enough}, we have 
\begin{equation}
\text{RHS\eqref{eq:another_condl_split}} = \lim_{N \to \infty}\Pr((\la(\tau_N)'_i - \log_{t^{-1}}(\tau_N))_{1 \leq i \leq k}  = (L_i+\zeta)_{1 \leq i \leq k} ).
\end{equation}
By \Cref{thm:hl_stat_dist}, the above is equal to the right hand side of \eqref{eq:matrix_stat_dist}, and this completes the proof.
\end{proof}

\subsection{Products of random matrices: proofs of \Cref{thm:matrix_product_bulk} and \Cref{thm:corner_product_bulk}.} Before proving \Cref{thm:matrix_product_bulk} we must also give a more explicit description of the Hall-Littlewood processes arising from matrix products via \Cref{thm:hl_meas_matrices}. This description plays the role of the explicit dynamics of \Cref{def:poisson_walks} in the previous proof. It was given previously in \cite[Proposition 5.3]{van2020limits} for finite principal specializations (and slightly different conventions on Hall-Littlewood polynomials); the background and statement below have been only slightly modified from \cite{van2020limits} to include the infinite principal specialization case.

\begin{defi}\label{def:interacting_insertion}
For $n \in \N$, define the insertion map
\begin{align}
\begin{split}
\iota^{(n)}: \Z_{\geq 0}^n \times \Sig_n & \to \Sig_n \\ 
((a_1,\ldots,a_n),\la) &\mapsto (\iota^{(n)}(a_1,\ldots,a_n;\la)_i)_{1 \leq i \leq n}
\end{split}
\end{align}
 where the parts of $\iota^{(n)}(a_1,\ldots,a_n; \la)$ are given explicitly by
\begin{align}\label{eq:iota_uniform_def}
\iota^{(n)}(a_1,\ldots,a_n; \la)_{i} &:= \min(\la_{i-1}, \max(\la_{i}+a_{i},\la_{i+1}+a_{i}+a_{i+1},\ldots,\la_n+a_{i}+\ldots+a_n))
\end{align}
for each $i=1,\ldots,n$, where we formally take $\la_{i-1} = \la_0 = \infty$ in the edge case $i=1$. For $\vec{a} = (a_i)_{i \in \N} \in \Z_{\geq 0}^\N$ with $\sum_i a_i < \infty$ and $\la \in \Y$, we further define $\iota^{(\infty)}$ by
\begin{equation}
\iota^{(\infty)}(\vec{a};\la)_i = \min(\la_{i-1}, \max_{j \geq i}(\la_j + \sum_{\ell=i}^j a_i))
\end{equation}
for any $i \in \Z_{\geq 1}$, with the same convention $\la_0=\infty$. We will sometimes write $\iota$ without the superscript when it is clear from context.
\end{defi}

The following algorithm is equivalent and was stated as the definition of $\iota$ in \cite{van2020limits}. Here and in the rest of this subsection, we will identify signatures $\la \in \Sig_n$ (resp. $\la \in \Y$) with configurations of $n$ (resp. $\infty$) particles on $\Z$ by placing $m_i(\la)$ particles at each position $i \in \Z$. Each particle corresponds to a part of $\la$, and we will refer to them as the $1^{\text{st}},\ldots,n\tth$ particle or `particle $1,\ldots,$ particle $n$' to reflect this, even when some are in the same location. In this numbering, particle $j$ will correspond to a particle at position $\la_j$. 

\begin{prop}\label{thm:sampling_alg_def_iota}
For any $n \in \N \cup \{\infty\}$, the above definition of $\iota^{(n)}$ is equivalent to the following. First assign to each particle $j$ an `impulse' $a_j$, and let $\ell = \max(i \in \Z_{\geq 1}: a_i > 0)$ (which is finite by hypothesis even if $n=\infty$). Particle $\ell$ then moves to the right until it has either moved $a_\ell$ steps or encountered particle $\ell-1$. If it encounters particle $\ell-1$, then it is `blocked' by particle $\ell-1$ and donates the remainder $a_\ell - (\la_{\ell-1}-\la_\ell)$ of its impulse to particle $\ell-1$. Particle $\ell-1$ now has impulse $a_{\ell-1} + \max(0, a_\ell - (\la_{\ell-1}-\la_\ell))$, and moves in the same manner, possibly donating some of its impulse to particle $\ell-2$; all further particle evolve in the same manner.
\end{prop}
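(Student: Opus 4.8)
\textbf{Proof plan for Proposition \ref{thm:sampling_alg_def_iota}.} The statement is a purely combinatorial equivalence between the closed formula \eqref{eq:iota_uniform_def} for the parts of $\iota^{(n)}(a_1,\ldots,a_n;\la)$ and the sequential ``push-and-donate'' dynamics. The plan is to show that the sequential procedure, read off particle by particle from the highest-indexed active particle down to particle $1$, produces exactly the values
\[
\iota^{(n)}(\vec a;\la)_i = \min\!\bigl(\la_{i-1},\ \max_{i \le j \le n}(\la_j + a_i + a_{i+1} + \cdots + a_j)\bigr),
\]
with the convention $\la_0 = \infty$. I would first treat the case $n < \infty$; the case $n = \infty$ follows immediately since only finitely many $a_i$ are nonzero (so all particles with index $> \ell := \max\{i : a_i > 0\}$ are unaffected, and the finite formula applies verbatim to the rest).

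The key steps, in order. First, I would make precise what ``the impulse of particle $m$ at the moment it moves'' is in the sequential procedure: define $b_\ell := a_\ell$ and recursively $b_{m} := a_m + \max\bigl(0,\ b_{m+1} - (\la_{m} - \la_{m+1}^{\mathrm{new}})\bigr)$, where $\la_{m+1}^{\mathrm{new}}$ is the final position of particle $m+1$. Then I would prove by downward induction on $m$ (from $m = \ell$ to $m = 1$) the two simultaneous claims: (i) the final position of particle $m$ equals $\min(\la_{m-1},\ \la_m + b_m)$ — note $\la_{m-1}$ here is the \emph{original} position of particle $m-1$, which is legitimate because particle $m-1$ has not yet moved when particle $m$ moves, and moreover $\la_{m-1}^{\mathrm{new}} \ge \la_{m-1}$ so the interlacing/weak-decrease constraint is automatically respected; and (ii) $b_m = \max_{m \le j \le \ell}(\la_j + a_m + \cdots + a_j)$, equivalently $b_m = \max_{m \le j \le n}(\la_j + a_m + \cdots + a_j)$ since $a_j = 0$ for $j > \ell$ makes those extra terms no larger. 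The base case $m = \ell$ is immediate. For the inductive step, the donated amount $\max(0, b_{m+1} - (\la_m - \la_{m+1}^{\mathrm{new}}))$ must be rewritten: using the inductive description of $\la_{m+1}^{\mathrm{new}} = \min(\la_m, \la_{m+1} + b_{m+1})$, one checks that this donation equals $\max(0,\ \la_{m+1} + b_{m+1} - \la_m)$, so $b_m = a_m + \max(0, \la_{m+1}+b_{m+1}-\la_m)$; plugging in (ii) for $b_{m+1}$ and using the elementary identity $a_m + \max(0, X - \la_m + \la_{m+1}) $ distributed over the max-of-sums gives exactly (ii) for $b_m$ (the ``$0$'' branch corresponds to the $j=m$ term $\la_m + a_m$). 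Finally, combining (i) and (ii) yields the claimed formula for $\iota^{(n)}(\vec a;\la)_i$ for all $i \le \ell$, and for $i > \ell$ both the formula and the dynamics leave particle $i$ fixed at $\la_i$ (the formula's max is attained at $j=i$ since all $a$'s involved vanish), completing the induction and the proof. One should also remark that particles with $a_i = 0$ but index $< \ell$ are handled uniformly: they simply have $a_i = 0$ in the recursion, and the ``move until blocked or exhausted'' rule with a possibly-positive donated impulse is exactly what the formula encodes.

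The main obstacle I anticipate is purely bookkeeping rather than conceptual: correctly tracking that when particle $m$ moves it sees particle $m-1$ at its \emph{original} location $\la_{m-1}$ (so that $\min(\la_{m-1}, \cdot)$ in \eqref{eq:iota_uniform_def} is the right clamp), while the already-moved particles $m+1, m+2, \ldots$ are at their \emph{new} locations — and verifying that the output is still a genuine signature, i.e. weakly decreasing, which amounts to checking $\la_{m-1} \ge \la_m^{\mathrm{new}} = \min(\la_{m-1}, \la_m + b_m) $ (trivial) together with the fact established inductively that a blocked particle ends exactly at its right-neighbour's original position, so no particle ever overtakes another. I would also double-check the edge convention $\la_0 = \infty$ makes particle $1$ never blocked, matching the ``$i=1$'' clause of Definition \ref{def:interacting_insertion}. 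Since \cite{van2020limits} proves the analogous equivalence in the finite-principal-specialization setting, the argument here is essentially a transcription; the only genuinely new point is the $n=\infty$ reduction, which as noted is immediate from $\sum_i a_i < \infty$.
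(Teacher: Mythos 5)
The paper itself disposes of this proposition with a single sentence, ``Follows from the definitions,'' so you are supplying the details that the paper leaves implicit. Your strategy --- downward induction on the particle index $m$, simultaneously tracking the accumulated impulse $b_m$ and the final position --- is the natural way to unpack the claim, and the central computation (absorbing the ``$0$'' branch of $a_m+\max(0,\,b_{m+1}-(\la_m-\la_{m+1}))$ as the $j=m$ term of a nested maximum) is exactly right. So the approach matches the paper's intent and is sound.

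That said, two bookkeeping slips should be fixed before this is a correct proof. First, the recursion for $b_m$ should use the \emph{original} position $\la_{m+1}$, not $\la_{m+1}^{\mathrm{new}}$: particle $m+1$ starts at $\la_{m+1}$ and is blocked at $\la_m$, so the surplus is $b_{m+1}-(\la_m-\la_{m+1})$, matching the ``$a_\ell-(\la_{\ell-1}-\la_\ell)$'' in the proposition. Your assertion that $\max\bigl(0,\,b_{m+1}-(\la_m-\la_{m+1}^{\mathrm{new}})\bigr)=\max(0,\,\la_{m+1}+b_{m+1}-\la_m)$ is in fact \emph{false} when $0<b_{m+1}\le\la_m-\la_{m+1}$: for instance $b_{m+1}=\la_m-\la_{m+1}>0$ gives $\la_{m+1}^{\mathrm{new}}=\la_m$, so the left side is $b_{m+1}>0$ while the donated impulse should be $0$. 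With the original $\la_{m+1}$ the identity is immediate and no extra verification is needed. Second, claim (ii) as stated, ``$b_m=\max_{m\le j\le\ell}(\la_j+a_m+\cdots+a_j)$,'' drops a $\la_m$; the correct invariant is $\la_m+b_m=\max_{m\le j\le n}(\la_j+a_m+\cdots+a_j)$, i.e.\ $b_m=\max_{m\le j\le n}(\la_j-\la_m+a_m+\cdots+a_j)$, which is what step (i) and the target formula \eqref{eq:iota_uniform_def} actually require. Your inductive computation does in fact establish this corrected identity. With these two repairs the argument is complete, and the $n=\infty$ reduction via $\sum_i a_i<\infty$ is handled correctly.
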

\begin{proof}
Follows from the definitions.
\end{proof}

\begin{example}\label{ex:particle_evolution}
To compute $\iota(1,4,2; (5,3,-1)) = (8,5,1)$ the particles jump as above. The numbers above the particles represent their impulses; note that impulse-donation from particle $2$ to particle $1$ occurs at the third step shown.
\vspace{3ex}
\begin{center}
\begin{tikzpicture}[
dot/.style = {circle, fill, minimum size=#1,
              inner sep=0pt, outer sep=0pt},
dot/.default = 6pt  % size of the circle diameter 
                    ] %dot commands taken from https://tex.stackexchange.com/questions/445946/how-set-tikz-circle-radius-in-nodecircle
\def\h{0}
\draw[latex-latex] (-3.5,\h) -- (9.5,\h) ; %edit here for the axis
\foreach \x in  {-3,-2,-1,0,1,2,3,4,5,6,7,8,9} % edit here for the vertical lines
\draw[shift={(\x,\h)},color=black] (0pt,3pt) -- (0pt,-3pt);
\foreach \x in {-3,-2,-1,0,1,2,3,4,5,6,7,8,9} % edit here for the numbers
\draw[shift={(\x,\h)},color=black] (0pt,0pt) -- (0pt,-3pt) node[below] 
{$\x$};
\node[dot,label=above:$2$] at (-1,\h) {};
\node[dot,label=above:$4$] at (3,\h) {};
\node[dot,label=above:$1$] at (5,\h) {};

\def\h{-2}
\draw[latex-latex] (-3.5,\h) -- (9.5,\h) ; %edit here for the axis
\foreach \x in  {-3,-2,-1,0,1,2,3,4,5,6,7,8,9} % edit here for the vertical lines
\draw[shift={(\x,\h)},color=black] (0pt,3pt) -- (0pt,-3pt);
\foreach \x in {-3,-2,-1,0,1,2,3,4,5,6,7,8,9} % edit here for the numbers
\draw[shift={(\x,\h)},color=black] (0pt,0pt) -- (0pt,-3pt) node[below] 
{$\x$};
\node[dot,label=above:$0$] at (1,\h) {};
\node[dot,label=above:$4$] at (3,\h) {};
\node[dot,label=above:$1$] at (5,\h) {};
\draw [->] (-1,\h) to [out=30,in=150] ((1-.15,\h+.15);

\def\h{-4}
\draw[latex-latex] (-3.5,\h) -- (9.5,\h) ; %edit here for the axis
\foreach \x in  {-3,-2,-1,0,1,2,3,4,5,6,7,8,9} % edit here for the vertical lines
\draw[shift={(\x,\h)},color=black] (0pt,3pt) -- (0pt,-3pt);
\foreach \x in {-3,-2,-1,0,1,2,3,4,5,6,7,8,9} % edit here for the numbers
\draw[shift={(\x,\h)},color=black] (0pt,0pt) -- (0pt,-3pt) node[below] 
{$\x$};
\node[dot,label=above:$0$] at (1,\h) {};
\node[dot,label=above:$0$] at (5-.25,\h) {};
\node[dot,label=above:$3$] at (5,\h) {};
\draw [->] (3,\h) to [out=30,in=150] ((5-.15-.25,\h+.15);

\def\h{-6}
\draw[latex-latex] (-3.5,\h) -- (9.5,\h) ; %edit here for the axis
\foreach \x in  {-3,-2,-1,0,1,2,3,4,5,6,7,8,9} % edit here for the vertical lines
\draw[shift={(\x,\h)},color=black] (0pt,3pt) -- (0pt,-3pt);
\foreach \x in {-3,-2,-1,0,1,2,3,4,5,6,7,8,9} % edit here for the numbers
\draw[shift={(\x,\h)},color=black] (0pt,0pt) -- (0pt,-3pt) node[below] 
{$\x$};
\node[dot,label=above:$0$] at (1,\h) {};
\node[dot,label=above:$0$] at (5,\h) {};
\node[dot,label=above:$0$] at (8,\h) {};
\draw [->] (5+.25,\h) to [out=30,in=150] ((8-.15,\h+.15);
\end{tikzpicture}
\end{center}
\vspace{3ex}
\end{example}

The following fact, related to the consistency property of Hall-Littlewood polynomials under setting variables to $0$, is useful for comparing different $\iota^{(n)}$.

\begin{prop}\label{thm:iotas_consistent}
For any $\la \in \Y_n$, 
\begin{equation}
\iota^{(n)}(a_1,\ldots,a_n;\la) = \iota^{(n+1)}(a_1,\ldots,a_n,0;\la) = \iota^{(\infty)}(a_1,\ldots,a_n,0,0,\ldots;\la).
\end{equation}
\end{prop}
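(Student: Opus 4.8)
\textbf{Proof proposal for \Cref{thm:iotas_consistent}.} The plan is to reduce the claim to the case of extending by a single zero impulse, and then to verify that directly from the explicit formula \eqref{eq:iota_uniform_def}, iterating to get the infinite case. Concretely, the first reduction: once we know $\iota^{(n)}(a_1,\ldots,a_n;\la) = \iota^{(n+1)}(a_1,\ldots,a_n,0;\la)$ for all $n$ and all $\la \in \Y_n$ (noting that $\la \in \Y_n$ forces $\la_{n+1}=0$, so $(a_1,\ldots,a_n,0)$ is a legitimate input with the $(n+1)$st particle at position $0$), we may append zeros one at a time to conclude $\iota^{(n)}(a_1,\ldots,a_n;\la) = \iota^{(m)}(a_1,\ldots,a_n,0,\ldots,0;\la)$ for every $m \geq n$; the infinite statement then follows since for $\vec a$ with finitely many nonzero entries the defining formula for $\iota^{(\infty)}$ agrees with $\iota^{(m)}$ for $m$ large (both the $\min$ over $j \geq i$ and the edge convention $\la_0 = \infty$ are literally the same expressions once $a_j = 0$ for $j > n$ makes the $j$-sum stabilize).

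For the single-zero step I would argue directly from \eqref{eq:iota_uniform_def}. Fix $\la \in \Y_n$, set $a_{n+1} = 0$, and compare $\iota^{(n)}(\vec a;\la)_i$ with $\iota^{(n+1)}(\vec a, 0;\la)_i$ for $1 \leq i \leq n+1$. For $i = n+1$: $\iota^{(n+1)}(\vec a,0;\la)_{n+1} = \min(\la_n, \la_{n+1} + a_{n+1}) = \min(\la_n, 0) = 0$ since $\la_{n+1} = 0$ and $\la_n \geq 0$, which matches the $(n+1)$st part of a length-$n$ signature padded with a zero. For $1 \leq i \leq n$: in the $(n+1)$-variable formula the inner $\max$ runs over $j$ from $i$ to $n+1$ and includes the extra term $\la_{n+1} + a_i + \cdots + a_{n+1} = a_i + \cdots + a_n$ (using $\la_{n+1}=0$, $a_{n+1}=0$), but this is exactly the $j=n$ term $\la_n + a_i + \cdots + a_n$ minus $\la_n \geq 0$, hence $\leq$ the $j=n$ term already present in the $n$-variable formula. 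So the extra term never affects the $\max$, the two $\max$ expressions coincide, the outer $\min(\la_{i-1}, \cdot)$ is identical, and therefore $\iota^{(n)}(\vec a;\la)_i = \iota^{(n+1)}(\vec a, 0;\la)_i$. This completes the single-zero step, and with it the whole statement.

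Alternatively — and perhaps more cleanly — one can invoke \Cref{thm:sampling_alg_def_iota} and run the particle dynamics description: appending a zero impulse adds a particle at position $0$ with impulse $0$, and since $\ell = \max(i : a_i > 0) \leq n$ is unchanged, the dynamics of particles $1, \ldots, n$ proceed identically and the new particle never moves (it has impulse $0$ and receives nothing, as particle $n$ is blocked only by particle $n-1$); thus the output on the first $n$ coordinates is unchanged and the last coordinate is $0$. I would probably present the direct computation as the proof and mention the particle picture as a remark, since the former is self-contained given only \Cref{def:interacting_insertion}.

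I do not anticipate a genuine obstacle here — this is a bookkeeping lemma whose content is entirely contained in the observation that the term introduced by padding with a zero is dominated by a term already present. The only mild care needed is the edge convention $\la_0 = \infty$ (which is untouched, as $i=1$ is not the appended index) and making sure the infinite case is phrased so that the "eventually zero" hypothesis on $\vec a$ lets the $j \geq i$ maximum and the comparison with finite $\iota^{(m)}$ both stabilize; both are immediate.
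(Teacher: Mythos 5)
Your proof is correct and matches the paper's approach: the paper dismisses this as ``Obvious from either \Cref{def:interacting_insertion} or \Cref{thm:sampling_alg_def_iota},'' and you have simply written out both of those routes in full, correctly. One small slip: in the parenthetical about the infinite case you write ``the $\min$ over $j \geq i$'' where you mean the $\max$ over $j \geq i$ (the $\min$ is the outer one against $\la_{i-1}$), but the intent is clear and the argument is unaffected.
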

\begin{proof}
Obvious from either \Cref{def:interacting_insertion} or \Cref{thm:sampling_alg_def_iota}.
\end{proof}

It is obvious from \Cref{thm:sampling_alg_def_iota} that the interlacing 
\begin{equation}
 \iota(a_1,\ldots,a_n; \la) \geq \la_1 \geq \iota(a_1,\ldots,a_n; \la)_2 \geq \ldots \geq \iota(a_1,\ldots,a_n; \la)_n \geq \la_n
\end{equation}
holds for any $\ba \in \Z_{\geq 0}^n$. We now use the insertion $\iota$ with random inputs $a_i$ to define random signatures, which we will show in \Cref{thm:alpha_sampling_alg} yields the `Cauchy' Markov transition dynamics of \Cref{def:cauchy_dynamics}. First we define the measures which will be the distributions of the $a_i$. 

\begin{defi}\label{def:geom_diff_rvs}
Let $G_x$ be the measure on $\Z_{\geq 0}$ which is the distribution of $\max(X-T,0)$ where $X \sim \Geom(x), T \sim \Geom(t)$. Explicitly,
\begin{equation}\label{eq:trunc_geom_formula}
    G_x(\ell) = \frac{1-x}{1-t x}(1-t)^{\bbone(\ell>0)}x^\ell.
\end{equation}
Equivalently $G_x$ is defined by the generating function 
\begin{equation}\label{eq:trunc_geom_pgf}
    \sum_{\ell \geq 0} G_x(\ell) z^\ell = \frac{1-x}{1-t x} \frac{1-t x z}{1-x z} = \frac{\Pi_{0,t}(z;x)}{\Pi_{0,t}(1;x)}.
\end{equation}
\end{defi}

\begin{prop}\label{thm:alpha_sampling_alg}
Fix $n \in \N \cup \{\infty\}$ and $x \in (0,1)$, and let $X_i, 1 \leq i \leq n$ be independent with $X_i \sim G_{xt^{i-1}}$. Let $\la,\nu \in \Y_n$ with $\la \prec \nu$. Then
\begin{align}\label{eq:sampling_alg}
    \Pr(\iota(X_1,\ldots,X_n;\la) = \nu) &= \frac{1-x}{1-t^nx} \prod_{j: m_j(\la)=m_j(\nu)+1}(1-t^{m_j(\la)}) \prod_{i=1}^n (xt^{i-1})^{\nu_i-\la_i} \\
    &=  \frac{Q_{\nu/\la}(x;0,t)P_\nu(1,\ldots,t^{n-1};0,t)}{P_\la(1,\ldots,t^{n-1};0,t) \Pi_{0,t}(x;1,\ldots,t^{n-1})},\label{eq:sampling_alg2}
\end{align}
where we take $t^nx = 0$ in the $n=\infty$ case.
\end{prop}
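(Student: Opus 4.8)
The plan is to prove the two equalities in \Cref{thm:alpha_sampling_alg} by first reducing the computation of $\Pr(\iota(X_1,\ldots,X_n;\la)=\nu)$ to an explicit sum over the intermediate quantities (the ``impulses that survive each blocking''), then recognizing the resulting expression.

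First I would handle the equivalence of \eqref{eq:sampling_alg} and \eqref{eq:sampling_alg2}. Using \Cref{thm:hl_principal_formulas} we have $P_\nu(1,\ldots,t^{n-1};0,t)/P_\la(1,\ldots,t^{n-1};0,t) = t^{n(\nu)-n(\la)} \frac{(t;t)_{n-\len(\nu)}^{-1}\prod_i (t;t)_{m_i(\nu)}^{-1}}{(t;t)_{n-\len(\la)}^{-1}\prod_i (t;t)_{m_i(\la)}^{-1}}$, and by \Cref{thm:hl_qw_branch_formulas} (Hall-Littlewood case), $Q_{\nu/\la}(x;0,t) = b_\nu(0,t)/b_\la(0,t) \cdot \psi_{\nu/\la}(0,t) x^{|\nu|-|\la|}$, where $\psi_{\nu/\la}(0,t) = \prod_{i:\, m_i(\la)=m_i(\nu)+1}(1-t^{m_i(\la)})$. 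Combined with the explicit product formula $\Pi_{0,t}(x;1,\ldots,t^{n-1}) = \prod_{i=1}^n \frac{1}{1-xt^{i-1}} \cdot \prod_{i=1}^n (1-xt^{i-1}) / \cdots$ — more precisely $\Pi_{0,t}(x;1,\ldots,t^{n-1}) = \prod_{i=1}^{n}\frac{(t\cdot xt^{i-1};t)_\infty}{(xt^{i-1};t)_\infty}$, which telescopes to $\frac{1}{(x;t)_n}\cdot(xt^n;t)_\infty/(xt^n;t)_\infty$; the key point is that it simplifies to $\frac{1-t^n x}{1-x}$ after using $n(\nu)-n(\la) = \sum_i \binom{\nu_i'}{2} - \binom{\la_i'}{2}$ and matching $t$-powers with $\prod_{i=1}^n (t^{i-1})^{\nu_i-\la_i}$. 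This is a routine but somewhat intricate bookkeeping of $t$-powers and $(t;t)$-factors, exactly the kind of computation performed in \cite[Proposition 5.3]{van2020limits}, so I would cite that structure and do the infinite case by the consistency \Cref{thm:iotas_consistent} and \Cref{prop:hl_principal_formulas}.

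The substantive step is proving \eqref{eq:sampling_alg}, that the insertion map $\iota$ with the $G_{xt^{i-1}}$-distributed impulses reproduces this formula. My approach: condition on the event $\iota(X_1,\ldots,X_n;\la)=\nu$ and describe it combinatorially via \Cref{thm:sampling_alg_def_iota}. Running the algorithm from the largest-indexed particle inward, at each particle $i$ one can record the total impulse $b_i$ arriving at particle $i$ (its own $X_i$ plus the donated overflow from particle $i+1$), and then particle $i$ moves $\min(b_i, \la_{i-1}-\la_i)$ steps, i.e. to position $\la_i + \min(b_i,\la_{i-1}-\la_i)$, donating $\max(0,b_i-(\la_{i-1}-\la_i))$ to particle $i-1$. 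For the outcome to be $\nu$ we need $\nu_i = \la_i + \min(b_i,\la_{i-1}-\la_i)$ for all $i$, with $b_i = X_i + \max(0, b_{i+1}-(\la_i-\la_{i+1}))$. The event thus breaks into: for each $i$ with $\nu_i < \la_{i-1}$ (``not blocked''), we need $b_i = \nu_i - \la_i$ exactly and no overflow, forcing $X_i = \nu_i-\la_i - \max(0,b_{i+1}-(\la_i-\la_{i+1}))$; for each $i$ with $\nu_i = \la_{i-1}$ (``blocked''), we need $b_i \geq \la_{i-1}-\la_i$, i.e. $X_i \geq (\la_{i-1}-\la_i) - \max(0,b_{i+1}-\cdots)$, and the overflow is $b_i - (\la_{i-1}-\la_i)$. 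One then multiplies the probabilities $G_{xt^{i-1}}(X_i = \cdot)$ along the chain, summing a geometric series over the overflow amount at each blocked site. Using \eqref{eq:trunc_geom_formula}, each ``blocked'' site $i$ (where necessarily $m_{\la_i}(\la) = m_{\nu_i}(\nu)+1$ for the relevant coordinate, matching the factor $(1-t^{m_j(\la)})$) contributes a factor that after the geometric summation produces exactly $(1-t^{m})$ for the appropriate multiplicity $m$; the $x$-powers assemble into $\prod_i (xt^{i-1})^{\nu_i-\la_i}$ because $\sum_i (\text{net displacement of particle }i) = |\nu|-|\la|$ and the $t^{i-1}$ weighting tracks the particle index; and the leading constant $\frac{1-x}{1-t^n x}$ comes from the normalization $\frac{1-xt^{i-1}}{1-t\cdot xt^{i-1}}$ in \eqref{eq:trunc_geom_formula} telescoping over $i=1,\ldots,n$.

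I expect the main obstacle to be bookkeeping the geometric-series summations over overflow amounts at consecutive blocked sites, since a run of equal parts in $\la$ means several particles can be mutually blocked and the donated impulses couple them; one must verify that summing over all admissible impulse profiles consistent with the final configuration $\nu$ collapses to the single product $\prod_{j: m_j(\la)=m_j(\nu)+1}(1-t^{m_j(\la)})$ rather than something more complicated. The cleanest route is probably induction on $n$ (peeling off particle $n$ and using \Cref{thm:iotas_consistent}), handling separately the case $\nu_n > \la_n$ vs $\nu_n = \la_n$, and within a maximal run of equal $\la$-parts performing the telescoping sum explicitly — this mirrors \cite[Proposition 5.3]{van2020limits}, with the only genuinely new point being that the infinite-$n$ statement follows formally from the finite ones because $\sum_i X_i < \infty$ almost surely and $\iota^{(\infty)}$ agrees with $\iota^{(n)}$ on configurations supported on the first $n$ particles.
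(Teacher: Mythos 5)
Your $n=\infty$ reduction is essentially the paper's: use consistency of $\iota^{(n)}$ under appending zeros (\Cref{thm:iotas_consistent}), observe that with probability one all but finitely many $X_i$ vanish (the paper phrases this as $\Pr(0=X_n=X_{n+1}=\cdots)\to 1$; your ``$\sum_i X_i<\infty$ a.s.'' is equivalent), and pass to the limit on both sides using the explicit principal-specialization and Cauchy-kernel formulas. That part is fine, though you should spell out the limiting step a bit more carefully as the paper does in \eqref{eq:split_prob}--\eqref{eq:hl_convergence}, and you should explicitly verify that the right-hand side formulas converge as $n\to\infty$.

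Where you diverge from the paper is the finite-$n$ case: the paper simply observes that for $n\in\N$ the statement \emph{is} \cite[Proposition 5.3]{van2020limits} and cites it, whereas you propose to re-derive it from scratch. Your combinatorial sketch correctly identifies the relevant mechanisms --- the telescoping $\prod_{i=1}^n\frac{1-xt^{i-1}}{1-xt^i}=\frac{1-x}{1-t^nx}$ for the normalization constant, and geometric-series summation at blocked sites producing $(1-t^m)$ factors. However, you explicitly flag the coupled overflow summation at a run of equal parts of $\la$ as an unresolved obstacle, and that is precisely the nontrivial content of the cited proposition. As written your proposal therefore has a genuine gap at the heart of the finite-$n$ argument; it is closeable (the cited proposition does exactly this by induction on $n$), but you should either fill in that telescoping computation or, more economically, just cite \cite[Proposition 5.3]{van2020limits} as the paper does and reserve the work for the passage to $n=\infty$. (Also, your reduction of \eqref{eq:sampling_alg} to \eqref{eq:sampling_alg2} is routine and correct in outline, but note that this equivalence is already absorbed into the cited proposition and need not be redone.)
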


\begin{proof}
For $n \in \N$ this is exactly \cite[Proposition 5.3]{van2020limits}, and we will deduce the $n=\infty$ case from this. It is easy to check from \Cref{def:geom_diff_rvs} that 
\begin{equation}\label{eq:likely_high_X_0}
\lim_{n \to \infty} \Pr(0=X_n=X_{n+1}=\ldots) = 1.
\end{equation}
Hence with probability $1$, all but finitely many of the $X_i$ are $0$, so indeed $\iota(X_1,X_2,\ldots;\la)$ is well-defined with probability $1$. By \Cref{thm:iotas_consistent},
\begin{align}\label{eq:split_prob}
\begin{split}
&\Pr(\iota^{(\infty)}(X_1,X_2,\ldots;\la) = \nu)  \\ 
&= \Pr(\iota^{(\infty)}(X_1,X_2,\ldots;\la) = \nu| 0 = X_{n+1} = X_{n+2} = \ldots) \cdot \Pr(0=X_{n+1}=X_{n+2}=\ldots) \\ 
&+ \Pr\left(\iota^{(\infty)}(X_1,X_2,\ldots;\la) = \nu \left| \sum_{i=1}^\infty X_{n+i} > 0\right.\right) \cdot \Pr\left(\sum_{i=1}^\infty X_{n+i} > 0\right)\\ 
&= \Pr(\iota^{(n)}(X_1,\ldots,X_n;\la) = \nu)\cdot  \Pr(0=X_{n+1}=X_{n+2}=\ldots)  \\ 
&+ \Pr\left(\iota^{(\infty)}(X_1,X_2,\ldots;\la) = \nu\left| \sum_{i=1}^\infty X_{n+i} > 0\right.\right) \cdot \Pr\left(\sum_{i=1}^\infty X_{n+i} > 0\right)
\end{split}
\end{align}
By \eqref{eq:likely_high_X_0} and \eqref{eq:split_prob}, 
\begin{equation}\label{eq:n_infty_sampling}
\lim_{n \to \infty}\Pr(\iota^{(n)}(X_1,\ldots,X_n;\la) = \nu) = \Pr(\iota^{(\infty)}(X_1,X_2,\ldots;\la) = \nu).
\end{equation}
But applying the finite $n$ case, we have
\begin{align}\label{eq:hl_convergence}
\begin{split}
\lim_{n \to \infty}\Pr(\iota^{(n)}(X_1,\ldots,X_n;\la) = \nu) &=\lim_{n \to \infty} \frac{Q_{\nu/\la}(x;0,t)P_\nu(1,\ldots,t^{n-1};0,t)}{P_\la(1,\ldots,t^{n-1};0,t) \Pi_{0,t}(x;1,\ldots,t^{n-1})} \\ 
&= \frac{Q_{\nu/\la}(x;0,t)P_\nu(1,t,\ldots;0,t)}{P_\la(1,t,\ldots;0,t) \Pi_{0,t}(x;1,t,\ldots)},
\end{split}
\end{align}
where the second line follows for instance by the explicit formulas for principal specializations (\Cref{thm:hl_principal_formulas}) and for the Cauchy kernel \eqref{eq:def_cauchy_kernel}. Combining \eqref{eq:n_infty_sampling} and \eqref{eq:hl_convergence} completes the proof.
\end{proof}

\begin{proof}[Proof of {\Cref{thm:matrix_product_bulk}}]
To control subscripts we abuse notation and write $N$ for $N_j$ below, so all limits should be interpreted as along our subsequence $(N_j)_{j \geq 1}$. By \Cref{thm:stat_dist_1pt} it suffices to show 
\begin{multline}\label{eq:unif_matrix_products}
\lim_{N \to \infty} \Pr((\SN(A^{(N)}_{s_N} \cdots A^{(N)}_1)_i' - [\log_{t^{-1}}(s)+\zeta])_{1 \leq i \leq k} = (L_1,\ldots,L_k)) \\ 
=  \frac{(t;t)_\infty^{k-1}}{k! (2 \pi \bi)^k} \prod_{i=1}^{k-1} \frac{t^{\binom{L_i-L_k}{2}}}{(t;t)_{L_i-L_{i+1}}} \int_{\tG^k} e^{\frac{t^{L_k+\zeta+1}}{1-t}(w_1+\ldots+w_k)} \frac{\prod_{1 \leq i \neq j \leq k} (w_i/w_j;t)_\infty}{\prod_{i=1}^k (-w_i^{-1};t)_\infty (-tw_i;t)_{\infty}} \\ 
\times \sum_{j=0}^{L_{k-1}-L_k} t^{\binom{j+1}{2}} \sqbinom{L_{k-1}-L_k}{j}_t  P_{(L_1-L_k,\ldots,L_{k-1}-L_k,j)}(w_1^{-1},\ldots,w_k^{-1};t,0)  \prod_{i=1}^k \frac{dw_i}{w_i},
\end{multline}
where if $k=1$ we interpret the sum on the last line as in \Cref{thm:hl_stat_dist}. For any $s \in \N$, let $\tl(s)$ be a Hall-Littlewood measure with one specialization $1,t,\ldots$ and one $\alpha(t,t^2,\ldots)[s]$. Then \Cref{thm:alpha_hl_stat_dist} applies with $a=t+t^2+\ldots=t/(1-t)$, yielding
\begin{multline}\label{eq:infinite_matrix_product_works}
\lim_{\substack{N \to \infty}} \Pr(\tl_i'(s_N) - [\log_{t^{-1}}(s_N)+\zeta]  = L_i \text{ for all }1 \leq i \leq k) \\ 
=  \frac{(t;t)_\infty^{k-1}}{k! (2 \pi \bi)^k} \prod_{i=1}^{k-1} \frac{t^{\binom{L_i-L_k}{2}}}{(t;t)_{L_i-L_{i+1}}} \int_{\tG^k} e^{\frac{t^{L_k+\zeta+1}}{1-t}(w_1+\ldots+w_k)} \frac{\prod_{1 \leq i \neq j \leq k} (w_i/w_j;t)_\infty}{\prod_{i=1}^k (-w_i^{-1};t)_\infty (-tw_i;t)_{\infty}} \\ 
\times \sum_{j=0}^{L_{k-1}-L_k} t^{\binom{j+1}{2}} \sqbinom{L_{k-1}-L_k}{j}_t  P_{(L_1-L_k,\ldots,L_{k-1}-L_k,j)}(w_1^{-1},\ldots,w_k^{-1};t,0)  \prod_{i=1}^k \frac{dw_i}{w_i}.
\end{multline}
By \Cref{thm:hl_meas_matrices}, $\SN(A_s^{(N)} \cdots A_1^{(N)}), s \geq 0$ is a Hall-Littlewood process $\tl^{(N)}(s), s \in \Z_{\geq 0}$ with transition probabilities
\begin{equation}
\Pr(\tl^{(N)}(s+1) = \nu|\tl^{(N)}(s)=\kappa) = Q_{\nu/\kappa}(t,t^2,\ldots;0,t) \frac{P_\nu(1,\ldots,t^{N-1};0,t)}{\Pi_{0,t}(t,t^2,\ldots;1,\ldots,t^{N-1})P_\kappa(1,\ldots,t^{N-1};0,t)}
\end{equation}
(and initial condition $\emptyset \in \Y$). By \Cref{thm:alpha_sampling_alg}, both $\tl$ and $\tl^{(N)}$ have a sampling algorithm for which we briefly recall the important points. First, the random step $\tl(s) \mapsto \tl(s+1)$ involves an infinite number of substeps, indexed by the alpha variables $t,t^2,\ldots$, of which with probability $1$ only finitely many are nontrivial. Second, each such substep involves sampling random variables $X_1,\ldots,X_N$ (for $\tl^{(N)}$) or $X_1,X_2,\ldots$ (for $\tl$) and applying an `insertion map' 
\begin{equation}
\tl^{(N)}(s+1) = \iota^{(N)}(X_1,\ldots,X_N;\tl^{(N)}(s))
\end{equation}
(for $\tl^{(N)}$) or 
\begin{equation}
\tl(s+1) = \iota^{(\infty)}(X_1,X_2,\ldots;\tl(s))
\end{equation}
(for $\tl$). Now define stopping times
\begin{align}
\begin{split}
T^{(N)}_N &:= \min(\{s \in \Z_{\geq 0}: \tl^{(N)}(s)_1' = N\}) \\ 
T_N &:= \min(\{s \in \Z_{\geq 0}: \tl(s)_1' = N\}) \\ 
\Xi_N &:= \min\{s \in \Z_{\geq 0}: \text{at some substep of $\tl(s) \mapsto \tl(s+1)$, $\max_{j \geq N+1} X_j \geq 1$}\}
\end{split}
\end{align}
The rest of the proof proceeds exactly as for \Cref{thm:matrix_stat_dist} by first showing that with probability converging to $1$, the variables $X_{N+1},X_{N+2},\ldots$ will all be $0$ with high probability for all steps $s=1,2,\ldots,s_N$. Hence by \Cref{thm:iotas_consistent}, $\tl(s_N) = \tl^{(N)}(s_N)$ with probability going to $1$. The proof then finishes as before, using \eqref{eq:infinite_matrix_product_works} in place of \eqref{eq:matrix_dbm_wts}. 
\end{proof}

\begin{proof}[Proof of \Cref{thm:corner_product_bulk}]
The proof is essentially the same as the previous one. \Cref{thm:hl_meas_matrices} yields that $\SN(A_s^{(N)} \cdots A_1^{(N)}), s \geq 0$ is a Hall-Littlewood process (which we abuse notation and also denote by $\tl^{(N)}$) with transition probabilities
\begin{equation}
\Pr(\tl^{(N)}(s+1) = \nu|\tl^{(N)}(s)=\kappa) = Q_{\nu/\kappa}(t,\ldots,t^{D_N};0,t) \frac{P_\nu(1,\ldots,t^{N-1};0,t)}{\Pi_{0,t}(t,\ldots,t^{D_N};1,\ldots,t^{N-1})P_\kappa(1,\ldots,t^{N-1};0,t)}.
\end{equation}
Now, $D_N$ may vary with $N$, but we apply the more general \Cref{thm:alpha_hl_stat_dist_gen} with
\begin{equation}
\alpha_i^{(N)} = \begin{cases}
t^i & 1 \leq i \leq D_N \\ 
0 & i > D_N 
\end{cases}
\end{equation}
and 
\begin{equation}\label{eq:p1_corners}
p_1(\alpha_1^{(N)},\ldots) = \frac{t-t^{D_N+1}}{1-t}.
\end{equation}
By hypothesis we have some $\zeta \in \R$ for which
\begin{equation}
\log_t((1-t^{D_{N_j}})s_{N_j}) \to \zeta \quad \quad \quad \text{in $\R/\Z$, as $j \to \infty$.}
\end{equation}
Hence letting $\tzeta = \zeta + 1 - \log_t(1-t)$, by \eqref{eq:p1_corners} we have
\begin{equation}
\log_t (p_1(\alpha_1^{(N_j)},\ldots)s_{N_j}) \to \tzeta \quad \quad \quad \text{in $\R/\Z$, as $j \to \infty$.}
\end{equation}
By \Cref{thm:alpha_hl_stat_dist_gen} with $\tzeta$ playing the role of $\zeta$ in that theorem, we therefore have
\begin{multline}\label{eq:infinite_matrix_product_works_corners}
\lim_{\substack{N \to \infty}} \Pr(\tl_i'(s_N) - [\log_{t^{-1}}(s_N (t-t^{D_N+1})/(1-t))+\tzeta]  = L_i \text{ for all }1 \leq i \leq k) \\ 
=  \frac{(t;t)_\infty^{k-1}}{k! (2 \pi \bi)^k} \prod_{i=1}^{k-1} \frac{t^{\binom{L_i-L_k}{2}}}{(t;t)_{L_i-L_{i+1}}} \int_{\tG^k} e^{t^{L_k+\tzeta}(w_1+\ldots+w_k)} \frac{\prod_{1 \leq i \neq j \leq k} (w_i/w_j;t)_\infty}{\prod_{i=1}^k (-w_i^{-1};t)_\infty (-tw_i;t)_{\infty}} \\ 
\times \sum_{j=0}^{L_{k-1}-L_k} t^{\binom{j+1}{2}} \sqbinom{L_{k-1}-L_k}{j}_t  P_{(L_1-L_k,\ldots,L_{k-1}-L_k,j)}(w_1^{-1},\ldots,w_k^{-1};t,0)  \prod_{i=1}^k \frac{dw_i}{w_i},
\end{multline}
where as in the proof of \Cref{thm:matrix_product_bulk} we interpret the limit as along our subsequence of $N_j$'s to avoid writing more subscripts. Rewriting \eqref{eq:infinite_matrix_product_works_corners} in terms of $\zeta$ completes the proof.
\end{proof}

\subsection{From subsequential convergence to $D_\infty$-convergence: proofs of \Cref{thm:matrix_product_bulk_metric_intro} and \Cref{thm:corner_product_bulk_metric_intro}.}
\begin{proof}[Proof of {\Cref{thm:matrix_product_bulk_metric_intro}}]
Suppose for the sake of contradiction that \eqref{eq:haar_metric_cvg} does not hold. Then there exists some $\eps > 0$, $k \in \Z_{\geq 1}$ and some subsequence $(N_j)_{j \geq 1}$ of $\Z_{\geq 1}$ such that 
\begin{equation}\label{eq:distances_too_large}
D_\infty\left((X_{N_j}^{(i)})_{1 \leq i \leq k}, (\cL^{(i)}_{p^{-1},p^{r(j)}/(p-1)})_{1 \leq i \leq k}\right) > \eps
\end{equation}
for all $j$, where we use notation $r(j) := \{\log_p s_{N_j}\}$ to control subscripts. Since the fractional parts $\{\log_p s_{N_j}\}$ always lie in the compact set $[0,1]$, there is some $\zeta\in [-1,0]$ and further subsequence $(\tN_j)_{j \geq 1}$ of $(N_j)_{j \geq 1}$ such that 
\begin{equation}\label{eq:frac_zeta_cvg}
\lim_{j \to \infty} \{\log_p s_{\tN_j}\} = -\zeta,
\end{equation}
and in particular $-\log_p s_{\tN_j}$ converges to $\zeta$ in $\R/\Z$. Hence by \Cref{thm:matrix_product_bulk}, for all $k \geq 1$ we have
\begin{equation}\label{eq:apply_matrix_bulk}
((\mathrm{SN}(A_{s_{\tN_{j}}}^{(\tN_{j})} \cdots A_{1}^{(\tN_{j})})_{i}'-[\log _{p}(s_{\tN_{j}})+\zeta])_{1 \leq i \leq k}  \to (\cL^{(i)}_{p^{-1},p^{-\zeta}/(p-1)})_{1 \leq i \leq k}
\end{equation}
in distribution as $j \to \infty$,. By \eqref{eq:frac_zeta_cvg}, $[\log _{p}(s_{\tN_{j}})+\zeta] = \floor{\log _{p}(s_{\tN_{j}})}$ for all $j$ sufficiently large, hence \eqref{eq:apply_matrix_bulk} implies that for all $k \geq 1$,
\begin{equation}\label{eq:apply_matrix_bulk2}
(X_{\tN_j}^{(i)})_{1 \leq i \leq k} \to (\cL^{(i)}_{p^{-1},p^{-\zeta}/(p-1)})_{1 \leq i \leq k}
\end{equation}
in distribution as $j \to \infty$, where $X_N^{(i)}$ is as in the statement of the theorem being proven. Equivalently,
\begin{equation}\label{eq:D_to_zero_zeta}
\lim_{j \to \infty} D_\infty\left((X_{\tN_j}^{(i)})_{1 \leq i \leq k},(\cL^{(i)}_{p^{-1},p^{-\zeta}/(p-1)})_{1 \leq i \leq k}\right) = 0.
\end{equation}
The integral representation in \Cref{thm:stat_dist_1pt} and the integrand bound \Cref{thm:f_bound} together imply that for each $\vec{L} \in \Sig_k$, the probability
\begin{equation}
\Pr((\cL^{(i)}_{p^{-1},p^{-\zeta}/(p-1)})_{1 \leq i \leq k} = \vec{L})
\end{equation}
depends continuously on $\zeta$. Hence by \eqref{eq:frac_zeta_cvg}, writing $\tilde{r}(j) := \{\log_p s_{\tN_j}\}$ similarly to before, we have
\begin{equation}\label{eq:probs_unif_cont}
\lim_{j \to \infty} D_\infty\left((\cL^{(i)}_{p^{-1},p^{\tilde{r}(j)}/(p-1)})_{1 \leq i \leq k},(\cL^{(i)}_{p^{-1},p^{-\zeta}/(p-1)})_{1 \leq i \leq k}\right) = 0
\end{equation}
(this requires uniform continuity of the probabilities over all $\vec{L}$, but this follows from the stated continuity of each individual probability since the sum of probabilities is $1$). The triangle inequality for $D_\infty$ and the equations \eqref{eq:D_to_zero_zeta}, \eqref{eq:probs_unif_cont} thus imply
\begin{align}
\begin{split}
\lim_{j \to \infty} D_\infty\left((X_{\tN_j}^{(i)})_{1 \leq i \leq k}, (\cL^{(i)}_{p^{-1},p^{\tilde{r}(j)}/(p-1)})_{1 \leq i \leq k}\right) = 0,
\end{split}
\end{align}
but this contradicts our assumption \eqref{eq:distances_too_large}. Hence this assumption is false, i.e. the conclusion \eqref{eq:haar_metric_cvg} of \Cref{thm:matrix_product_bulk_metric_intro} holds, and this completes the proof.
\end{proof}

\begin{proof}[Proof of {\Cref{thm:corner_product_bulk_metric_intro}}]
Follows from \Cref{thm:corner_product_bulk} exactly as \Cref{thm:matrix_product_bulk_metric_intro} followed from \Cref{thm:matrix_product_bulk}.
\end{proof}

\appendix

\section{Parallels with complex matrix products}\label{appendix:existing_work} 

This appendix is a longer continuation of the discussion in the Introduction concerning analogies with the complex matrix product literature, which is not necessary to understand the results but which we feel is somewhat illuminating.

At a structural level, singular value decomposition and Smith normal form are identical. It was further shown in \cite{van2020limits} that many exact formulas for the joint distribution of singular numbers of $p$-adic random matrix ensembles are structurally identical to formulas for their complex analogues, by exchanging the relevant special functions in the $p$-adic case (Hall-Littlewood polynomials) for their complex analogues (type $A$ Heckman-Opdam hypergeometric functions). The probabilistic behaviors, on the contrary, can be quite different: as was observed in \cite{van2020limits}, the additive Haar matrices in the $p$-adic setting are analogous to iid Gaussian (Ginibre) matrices in the complex setting, but the fact that almost all singular numbers are $0$ in the $N \to \infty$ limit is quite unlike anything in complex random matrix theory. Nonetheless, we find that there are close probabilistic parallels between the two settings regarding the `amounts of universality' between the hard edge on the one hand and the bulk/soft edge on the other.

Let us first reiterate: The limit we take does \emph{not} correspond to the hard edge in classical random matrix theory. Though \Cref{fig:cplx_svs} and \Cref{fig:1matrix_sns} might naively suggest this comparison, the points in \Cref{fig:1matrix_sns} are singular numbers while those in \Cref{fig:cplx_svs} are singular values. Because the $p$-adic norm $|p^m|_p = p^{-m}$, we in fact have that $p^{-\SN(A)_1}$ corresponds to the smallest singular value, while $p^{-\SN(A)_N}$ corresponds to the largest singular value. 

For a single $N \times N$ complex matrix, say with iid Gaussian entries, if one takes $N \to \infty$ and zooms in near the expected location of the largest singular value one obtains an infinite collection of points with a rightmost point (corresponding to the largest singular value). This random point configuration is the Airy point process, the correlation functions of which were computed earlier by Forrester \cite{forrester1993spectrum}. The distribution of this rightmost point, the limit of the largest singular value, is the eponymous distribution studied by Tracy and Widom \cite{tracy1994level}. 

These limits are distinct from those obtained in the bulk, which are governed by the sine kernel. However, for a matrix in $\Mat_N(\Z_p)$ with e.g. additive Haar measure, almost all singular numbers are $0$. Hence, for instance, $\SN(A)_{\floor{N/2}}$ and $\SN(A)_N$ will both be equal with probability going to $1$ in the $N \to \infty$ limit. For a complex random matrix, a local limit at the expected location of the $(N/2)\tth$ singular value will yield the sine kernel, while a local limit near the $N\tth$ singular value will yield the Airy kernel; at the risk of making an obvious point, these limits are allowed to be distinct because the $(N/2)\tth$ singular value is far away from the $N\tth$ singular value. In the $p$-adic setting this is not the case: looking at the asymptotic distribution of the singular numbers in an $[0,b]$ containing $0$ will not only completely characterize the behavior of the singular numbers $\SN(A)_N,\SN(A)_{N-1},\ldots$ close to the edge, but also the behavior of $\ldots,\SN(A)_{\floor{cN}-1},\SN(A)_{\floor{cN}},\SN(A)_{\floor{cN}+1},\ldots$ for $c \in (0,1)$, which might reasonably be termed the bulk. The limit for matrix products in \Cref{thm:matrix_product_bulk} characterizes the joint distribution of all singular numbers $\leq k-1$, i.e. the smallest $\approx N-\log_p s_N$ singular numbers. Hence it is reasonable to consider \Cref{thm:matrix_product_bulk}, \Cref{thm:corner_product_bulk} and \Cref{thm:matrix_stat_dist} as giving both the bulk and soft edge limits in the sense of classical random matrix theory.

In the setting of complex matrix products, the bulk and soft edge are much more universal than the hard edge. For products of a fixed finite number of Ginibre matrices, the local limits at the soft edge and bulk are the same Airy and sine kernels as for a single matrix, as shown by Liu-Wang-Zhang \cite{liu2016bulk}. The hard edge limit was computed by Kuijlaars-Zhang \cite{kuijlaars-zhang2014singular} for a product of $\tau$ Ginibre matrices, yielding a kernel introduced earlier by Bertola-Gekhtman-Szmigielski \cite{bertola2014cauchy}. Unlike the bulk and soft edge, this kernel depended on the number $\tau$ of products. Replacing one Ginibre in the product by a corner of a unitary matrix, Kuijlaars-Stivigny \cite{kuijlaars-stivigny2014singular} found the same limit, while Kieburg-Kuijlaars-Stivigny \cite{kieburg2016singular} found limits not agreeing with this one for products of truncated unitary matrices in a slightly different limit on the size of the ambient unitary matrix. In other words, the basic picture is that the hard edge limit depends both on the matrix distribution and the number of products, while the soft edge and bulk are insensitive to these.

The same pattern is visible for products of a fixed number of $p$-adic matrices. It follows directly from the exact formula \cite[Corollary 3.4]{van2020limits} that for a product of $\tau$ additive Haar matrices in $\Mat_N(\Z)$, the $N \to \infty$ limiting distribution of singular numbers depends on $\tau$; this limit was subsequently shown for products of $\tau$ matrices with generic iid entries in \cite{nguyen2022universality} (the $\tau=1$ case was shown earlier by Wood \cite{wood2015random}). However, for each $\tau$ the singular numbers $\SN(A)_{\floor{cN}},\SN(A)_{\floor{cN}+1},\ldots,\SN(A)_N$ will all be $0$ with probability going to $1$ as $N \to \infty$ for any $c \in (0,1)$ by these results, so the bulk/soft edge behavior is insensitive to $\tau$ exactly as in the complex case. 

In the interpolating regime $N \to \infty, \tau/N \to \text{const}$, the story is similar. We have already mentioned that in the complex setting, the bulk limits of singular values is governed by the interpolating kernel of \cite{akemann2019integrable} rather than the sine kernel. The same is true on the soft edge, namely there is another limit (also introduced in \cite{akemann2019integrable} and also treated in the mathematical literature in \cite{liu2018lyapunov}) which interpolates between the Airy process and independent Gaussian statistics; we mention also work of Berezin-Strahov \cite{berezin2022gap}, which computed the gap probabilities of the limit. This soft edge limit was further shown by Ahn for truncated unitary matrices \cite{ahn2019fluctuations} and later for a broad class of invariant ensembles \cite{ahn2022extremal}. In this regime, again, the soft edge is universal, and the bulk is expected to be so. The hard edge yields a deterministic limit \cite[Section V]{akemann2020universality} and the fluctuations of each log singular value around its expected location are expected to yield independent Gaussians; this limit is additionally nonuniversal in the sense that the limiting rescaled locations of singular values depend on the ensemble.

\Cref{thm:matrix_product_bulk} and \Cref{thm:corner_product_bulk} complement this picture on the $p$-adic side, showing that in the $N,\tau \to \infty$ for $p$-adic matrix products, the bulk/soft edge limit agrees for those examples. We mention also that in the related regime where first $\tau \to \infty$ and then $N \to \infty$, \cite[Theorem 1.2]{van2020limits} computed the limits of the smallest singular numbers for the ensembles treated in \Cref{thm:matrix_product_bulk} and \Cref{thm:corner_product_bulk}, and similarly found that the answer was independent of the ensemble. At the hard edge (largest few singular numbers), the exact formulas for the fixed $N$, $\tau \to \infty$ limits computed in \cite[Theorem 1.1]{van2020limits} do not become universal as $N \to \infty$.

In summary:
\begin{enumerate}
\item The limit of \Cref{thm:matrix_product_bulk}, \Cref{thm:corner_product_bulk} and \Cref{thm:matrix_stat_dist} should be viewed as determining both bulk and soft edge local limits, but not the hard edge.
\item The examples known so far indicate the following picture: For both complex and $p$-adic matrix products, the bulk and soft edge are universal in both the finite-$\tau$ and interpolating regime, while the hard edge is nonuniversal and depends on the number of products in the finite-$\tau$ regime.
\end{enumerate}
While nonuniversality at the hard edge has been established by the non-matching of examples mentioned above, the claimed universality at the bulk/soft edge is in general simply indicated by agreement of multiple examples rather than rigorously shown in the literature. Except for \cite{ahn2022extremal} (complex soft edge, interpolating regime) and \cite{nguyen2022universality} ($p$-adic bulk/soft edge, finite $\tau$ regime), we are not aware of true universality results. 

\section{A Hall-Littlewood proof of \cite[Theorem 1.2]{van2023p}}\label{sec:dbm_pois_appendix}

\Cref{thm:dbm_is_hl} was proven in \cite[Theorem 1.2]{van2023p} by elementary means, motivated by justifying that $\Pois^{(N)}$ is a natural $p$-adic analogue of multiplicative Dyson Brownian motion; we refer to that work for details on this. The purpose of this appendix is to give an independent proof using Hall-Littlewood machinery. We believe the direct proof in \cite{van2023p} gives somewhat more explanation why the result is true, which is why we chose to give it in that work. However, the proof here is shorter once the relevant Hall-Littlewood background is established, and indeed this is the first proof of \cite[Theorem 1.2]{van2023p} which we found originally.

\begin{defi}\label{def:mat_prod_dbm_proc}
We define the process $X^{(N)}(\tau), \tau \in \R_{\geq 0}$ on $\GL_N(\Q_p)$ by 
\begin{equation}
X^{(N)}(\tau) := U_{P(\tau)} \diag(p,1[N-1]) V_{P(\tau)}  \cdots U_1 \diag(p,1[N-1]) V_1 U_0
\end{equation}
where $P(\tau)$ is a Poisson process on $\Z_{\geq 0}$ with rate $1$, and $U_i,V_i \in \GL_N(\Z_p)$ are iid and Haar-distributed.
\end{defi}

\begin{thm}[{\cite[Theorem 1.2]{van2023p}}]\label{thm:dbm_is_hl}
For any $N \in \Z_{\geq 1}$, 
\begin{equation}
\SN(X^{(N)}(\tau)) = \Pois^{(N)}\left(\left(\frac{1}{t}\frac{1-t}{1-t^N}\right)\tau\right)
\end{equation}
in multi-time distribution.
\end{thm}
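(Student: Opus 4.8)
The plan is to identify the process $\SN(X^{(N)}(\tau))$ as a Hall-Littlewood process in continuous time and then match it with the known description of $\Pois^{(N)}$. First I would observe that by \Cref{prop:smith} and the invariance properties of the Haar measure, the transition from $X^{(N)}$ just before a jump of $P(\tau)$ to just after corresponds to multiplying on the left and right by independent Haar-distributed $\GL_N(\Z_p)$ matrices and by $\diag(p,1[N-1])$ in between. The resulting map on singular numbers $\SN(X^{(N)}) = \kappa \mapsto \nu$ has a distribution which, by bi-$\GL_N(\Z_p)$-invariance, depends only on $\kappa$ and is exactly the transition kernel for multiplying $\diag(p^\kappa)$ by a single Haar matrix from $\GL_N(\Z_p) \diag(p,1[N-1]) \GL_N(\Z_p)$. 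This is precisely the single-step Cauchy Markov kernel of \Cref{def:cauchy_dynamics} with $\theta$ the principal specialization $(1,t,\ldots,t^{N-1})$ and $\phi$ the specialization corresponding to a $\GL_N(\Z_p)$-corner of size one more, or more directly: it is the $s \mapsto s+1$ transition of the Hall-Littlewood process in \Cref{thm:hl_meas_matrices} with $D-N=1$, i.e. with $\phi$ the single alpha variable $t$. Indeed, a single matrix $A \in \GL_N(\Q_p)$ with $\SN(A) = (1,0,\ldots,0)$ (up to Haar conjugation) is exactly the $N \times N$ corner of a Haar element of $\GL_{N+1}(\Z_p)$ after clearing denominators appropriately; this is standard and follows from \Cref{thm:hl_meas_matrices} with $D = N+1$.

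The second step is to pass from discrete steps to continuous time. Since $P(\tau)$ is a rate-$1$ Poisson process, the process $\SN(X^{(N)}(\tau))$ is a continuous-time Markov jump process on $\Sig_N$ (equivalently on $\cP_N$ once we note all singular numbers are nonnegative) whose jump kernel at rate $1$ is the discrete kernel above. By \Cref{thm:alpha_sampling_alg} with the single alpha variable $x = t$ (and $n = N$), this discrete kernel is realized by the insertion map $\iota^{(N)}$ applied to independent inputs $X_i \sim G_{t \cdot t^{i-1}} = G_{t^i}$. The continuous-time version therefore Poissonizes each substep: I would argue that $\SN(X^{(N)}(\tau))$ has the same generator as the Hall-Littlewood process $\la^{(N)}(\tau)$ of \Cref{def:lambda_hl_planch}, up to a time change. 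Concretely, one checks that the infinitesimal generator of $\SN(X^{(N)})$ at state $\la$ sends $\la$ to $\nu \succ \la$ at rate equal to the $Q_{\nu/\la}(t;0,t)$-weighted rate coming from $G_{t^i}$ for small $t$-deformation; comparing with the generator of $\la^{(N)}$, which was computed in \cite{van2022q} (cited in the proof of \Cref{thm:HL_poisson}) to be that of the Poisson walk $\Pois^{(N)}$ after the time change $\tau \mapsto \frac{1-t^N}{t}\tau$, yields the identification. The cleanest route is: the single-step discrete kernel of $\SN(X^{(N)})$ equals the Cauchy Markov kernel with specialization $\alpha(t)$ and principal specialization $(1,\ldots,t^{N-1})$, so the rate-$1$ Poissonization of this discrete chain is a continuous-time Hall-Littlewood process which, by the Markov generator computation of \Cref{thm:HL_poisson} (noting $\Pi_{0,t}(\alpha(t);1,\ldots,t^{N-1}) = \prod_{i=1}^N (1-t^i)^{-1} \cdot (\cdots)$ contributes only a normalization), agrees with $\Pois^{(N)}$ run at speed $\frac{1}{t}\frac{1-t}{1-t^N}$.

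The main obstacle I anticipate is the bookkeeping in the last step: carefully matching the \emph{rate} of the Poissonized discrete chain with the rate of $\Pois^{(N)}$. The subtlety is that a single discrete step $\kappa \mapsto \nu$ of the matrix-product chain already involves the full insertion map $\iota^{(N)}(X_1,\ldots,X_N;\kappa)$ with \emph{all} $X_i$ potentially nonzero, whereas a single jump of $\Pois^{(N)}$ moves only one part by one. So one cannot match them step-for-step; instead one must verify that the two continuous-time processes have the same generator by computing, for each $\nu \succ \la$, the infinitesimal rate $\lim_{\eps \to 0} \eps^{-1}\Pr(\SN(X^{(N)}(\eps)) = \nu \mid \SN(X^{(N)}(0)) = \la)$ from \Cref{thm:alpha_sampling_alg} (only the terms linear in the small alpha variable survive, and $G_x(\ell) \approx (1-t) x^\ell$ for small $x$ contributes a factor $t^i$ for moving particle $i$), and checking it equals the corresponding rate in \Cref{def:poisson_walks} multiplied by $\frac{1}{t}\frac{1-t}{1-t^N}$. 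This is a finite computation using the explicit formula \eqref{eq:sampling_alg} for $\Pr(\iota = \nu)$ together with \Cref{thm:HL_poisson}; once done, equality of generators on the finite state space $\cP_N$ (with bounded exit rates, since $\SN$ is bounded on any compact time interval with high probability and the generator is genuinely a bounded operator after noting the process can only increase) gives equality in multi-time distribution, completing the proof.
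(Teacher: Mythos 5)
Your high-level strategy---identify the one-step transition kernel of $\SN(X^{(N)})$, pass to the continuous-time Markov generator, and match it with the generator of $\Pois^{(N)}$---is exactly the paper's, and \Cref{thm:cite_feller} together with the explicit generator formula \Cref{thm:hl_gen_computation} are indeed the right ingredients. However, the identification of the one-step kernel in your first paragraph is incorrect, and this is a real gap rather than a detail.

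You claim that multiplying by $U\diag(p,1,\ldots,1)V$ gives ``the $s\mapsto s+1$ transition of the Hall-Littlewood process in \Cref{thm:hl_meas_matrices} with $D-N=1$,'' i.e.\ the Cauchy kernel with $\phi = \alpha(t)$, justified by the claim that a bi-invariant matrix with $\SN = (1,0,\ldots,0)$ ``is exactly the $N\times N$ corner of a Haar element of $\GL_{N+1}(\Z_p)$.'' This is false: the corner of a Haar matrix from $\GL_{N+1}(\Z_p)$ has \emph{random} singular numbers distributed by a Hall-Littlewood measure $\propto Q_\la(t;0,t)P_\la(1,\ldots,t^{N-1};0,t)$, not the deterministic value $(1,0,\ldots,0)$. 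The two kernels even have different supports: multiplying by $\diag(p,1,\ldots,1)$ forces $|\nu| = |\mu|+1$ (since $\det$ multiplies by $p$), so exactly one box is added, while the $\alpha(t)$ Cauchy kernel has nonzero weight on all horizontal strips $\nu/\mu$ with $|\nu|\geq|\mu|$, including $\nu=\mu$ and multi-box moves. So the kernels disagree already on where they are nonzero.

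Your attempted repair in the last paragraph---extracting ``terms linear in the small alpha variable''---does not address this, because there is no small alpha parameter in $X^{(N)}$: the Poisson clock of $P(\tau)$ triggers a full (not infinitesimal) application of the $\diag(p,1,\ldots,1)$ kernel, so the generator is simply $-I + K$ with $K$ that exact kernel, no linearization in an alpha variable. You seem to be conflating the time parameter $\eps$ with the alpha parameter in the sampling algorithm \Cref{thm:alpha_sampling_alg}, probably because $\la^{(N)}(\tau)$ (the Plancherel process) \emph{is} a small-alpha limit of repeated alpha Cauchy steps via \eqref{eq:alpha_gamma_limit}---but $X^{(N)}$ is not set up that way. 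The correct tool, and the one the paper uses, is the formula (\Cref{thm:product_conv_cite}, from \cite[Theorem~1.3, Part~3]{van2020limits}) for the transition probability of multiplying by a bi-invariant matrix with \emph{fixed} singular numbers $(1,0,\ldots,0)$, which gives the kernel in terms of Hall-Littlewood structure constants $c_{\mu,(1,0,\ldots,0)}^\nu(0,t)$. Relating these to $\varphi_{\nu/\mu}$ via \Cref{thm:q_coproduct_coefs} and comparing directly with \Cref{thm:hl_gen_computation} yields $B_{SN} = \frac{1-t}{1-t^N}B_{HL}$, and the rest follows as you anticipate.
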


The proof goes by comparing Markov generators, so we must know that this suffices. This comes from the following standard result which is essentially due to Feller \cite{feller2015integro}; at the referee's request we provide a proof of the version we will use.

\begin{prop}\label{thm:cite_feller}
Let $Y_\tau, \tau \in \R_{\geq 0}$ be a Markov process on a countable state space $\mathcal{X}$ with well-defined generator $Q$ satisfying
\begin{equation}\label{eq:diag_gen_finite}
\sup_{a \in \mathcal{X}} |Q(a,a)| < \infty.
\end{equation}
Then the law of $Y_\tau, \tau \in \R_{\geq 0}$ is uniquely determined by $Q$ and its initial condition $Y_0$. 
\end{prop}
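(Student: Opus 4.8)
The plan is to realize $Y_\tau$ as a jump process and show that condition \eqref{eq:diag_gen_finite} rules out the pathologies (explosion, non-uniqueness) that can occur for general countable-state Markov processes. First I would recall that for a continuous-time Markov process on a countable state space with generator $Q$, the process can be constructed from a \emph{jump chain} and \emph{holding times}: if the process is at state $a$, it waits an exponential time with rate $q(a) := -Q(a,a) = \sum_{b \neq a} Q(a,b)$ and then jumps to $b \neq a$ with probability $Q(a,b)/q(a)$ (if $q(a) = 0$, the state $a$ is absorbing). This construction is entirely determined by $Q$, so the only way uniqueness of the law can fail is if the process can make infinitely many jumps in finite time (explosion), after which its behavior is not pinned down by $Q$.

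The key step is then to show that \eqref{eq:diag_gen_finite} precludes explosion. Let $C := \sup_{a \in \mathcal{X}} |Q(a,a)| < \infty$. Starting from any state, the successive holding times $T_1, T_2, \ldots$ are conditionally exponential with rates $q(a_0), q(a_1), \ldots \leq C$, where $a_0, a_1, \ldots$ is the jump chain. Hence each $T_i$ stochastically dominates an independent $\Exp(C)$ random variable $S_i$, and the explosion time satisfies $\sum_{i \geq 1} T_i \geq \sum_{i \geq 1} S_i = \infty$ almost surely, since a sum of infinitely many iid $\Exp(C)$ variables diverges by the strong law of large numbers (or by $\E[\sum_i S_i] = \infty$ together with monotone convergence). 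Therefore the process makes only finitely many jumps in any bounded time interval almost surely, the construction via jump chain and holding times is well-defined for all $\tau \in \R_{\geq 0}$, and it is the unique Markov process with generator $Q$ and the given initial condition. One should phrase this carefully: any Markov process with generator $Q$ must have jump chain and holding-time structure agreeing with the above (this is what it means for $Q$ to be the generator), so any two such processes with the same initial law have the same finite-dimensional distributions.

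The main obstacle is a bookkeeping one rather than a conceptual one: making precise the passage from ``the process has generator $Q$'' to ``the process is the minimal jump process built from $Q$,'' i.e. ensuring that the generator determines the embedded jump chain and the exponential holding times. This is standard (see Feller \cite{feller2015integro} or Norris's textbook treatment of continuous-time Markov chains), so I would cite it and give only the short explosion argument above in detail, since that is the part that uses the specific hypothesis \eqref{eq:diag_gen_finite}. For the application to \Cref{thm:dbm_is_hl}, one only needs this uniqueness statement for the two processes $\SN(X^{(N)}(\tau))$ and $\Pois^{(N)}((\tfrac{1}{t}\tfrac{1-t}{1-t^N})\tau)$, both of which have uniformly bounded exit rates, so \eqref{eq:diag_gen_finite} holds and the comparison of generators suffices.
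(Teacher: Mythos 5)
Your proof is correct and takes a genuinely different route from the paper's. The paper argues analytically via the Kolmogorov backward equation: it cites Liggett for the existence and uniqueness of a Markov chain solving the backward equation, and then the real content of the proof is showing that \emph{any} Markov semigroup with generator $Q$ must also satisfy the backward equation; this is done by a tail-bounding argument in which \eqref{eq:diag_gen_finite} together with $\sum_z Q(x,z) = 0$ makes the tail sum $\sum_{z \notin T} Q(x,z)$ small for large finite $T \subset \mathcal{X}$, permitting termwise passage to the $\eps \to 0$ limit. You instead argue probabilistically by realizing the process through its embedded jump chain and exponential holding times, with \eqref{eq:diag_gen_finite} playing the role of a uniform bound $C$ on exit rates that rules out explosion by stochastic domination against iid $\Exp(C)$ variables. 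Both proofs lean on citations for the ``base'' facts (you to Norris/Feller that the generator determines jump-chain structure up to explosion, the paper to Liggett's KBE theorems), and both correctly pinpoint \eqref{eq:diag_gen_finite} as the hypothesis that eliminates the non-uniqueness that can otherwise occur. The paper's route is somewhat more self-contained in that it exhibits the key semigroup estimate directly, while yours makes the probabilistic mechanism---bounded rates imply no explosion---more transparent, which is arguably closer in spirit to how the hypothesis enters in the application to $\Pois^{(N)}$ and $\SN(X^{(N)}(\tau))$.
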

\begin{proof}
Combining \cite[Corollary 2.34 (a)]{liggett2010continuous} with \cite[Theorem 2.37]{liggett2010continuous}, there exists a unique Markov chain $X_\tau$ with transition probabilities $p_\tau(x,y) = \Pr(X_\tau=y|X_0=x)$ satisfying the Kolmogorov backward equation 
\begin{equation}\label{eq:kol_back}
\frac{d}{d\tau} p_\tau(x,y) = \sum_{z \in \mathcal{X}} Q(x,z)p_\tau(z,y)
\end{equation}
for all $x,y \in \mc{X}$. Now we let $\tilde{p}_\tau$ be any Markov semigroup with generator $Q$, and we wish to show it satisfies the equivalent form
\begin{equation}\label{eq:massage_kol_back}
\lim_{\eps \searrow 0} \sum_{z \in \mc{X}} \left(\frac{\tilde{p}_\eps(x,z) - \tilde{p}_0(x,z)}{\eps} - Q(x,z)\right) \tilde{p}_\tau(z,y) = 0
\end{equation}
of \eqref{eq:kol_back}; here the equivalence comes from writing
\begin{equation}
\tilde{p}_{\tau+\eps}(x,y) = \sum_{z \in \mc{X}} \tilde{p}_\eps(x,z) \tilde{p}_\tau(z,y).
\end{equation}
We will argue similarly to \cite[Theorem 2.14(c)]{liggett2010continuous}.
%Note that each term in the sum in \eqref{eq:massage_kol_back} goes to $0$ by the definition of generator as $\eps \searrow 0$, so it suffices to show

Let $T \subset \mc{X}$ be a finite subset containing $x$. Then by triangle inequality and the fact that $\sum_{z \in \mc{X}} Q(x,z) = 0$, we have
\begin{align}\label{eq:from_liggett}
\begin{split}
\abs{ \sum_{z \in \mc{X} \setminus T} \left(\frac{\tilde{p}_\eps(x,z) - \tilde{p}_0(x,z)}{\eps} - Q(x,z)\right) \tilde{p}_\tau(z,y)} &\leq \sum_{z \in \mc{X} \setminus T} \frac{\tilde{p}_\eps(x,z)}{\eps} + \sum_{z \in \mc{X} \setminus T} Q(x,z) \\ 
&= \eps^{-1}\left(1-\sum_{z \in T}\frac{\tilde{p}_\eps(x,z)}{\eps} \right) - \sum_{z \in T} Q(x,z).
\end{split}
\end{align}
Then
\begin{equation}
\lim_{\eps \searrow 0} \text{RHS\eqref{eq:from_liggett}} = -2 \sum_{z \in T} Q(x,z)
\end{equation}
by interchanging the limit with the sum. Because the $\eps \searrow 0$ limit of each term in the sum \eqref{eq:massage_kol_back} is $0$, 
\begin{equation}
\text{LHS\eqref{eq:massage_kol_back}} = \sum_{z \in \mc{X} \setminus T} \left(\frac{\tilde{p}_\eps(x,z) - \tilde{p}_0(x,z)}{\eps} - Q(x,z)\right) \tilde{p}_\tau(z,y) 
\end{equation}
for any finite $T$. Choosing $T$ to be large, $\sum_{z \in T} Q(x,z)$ may be made arbitrarily small, and \eqref{eq:massage_kol_back} (and hence the proof) follows.
\end{proof}

The next result requires Hall-Littlewood structure constants $c_{\la,\mu}^\nu(0,t)$. We note that the set $\{P_\l(\bx;q,t): \l \in \Sig_n\}$ forms a basis for the ring of symmetric Laurent polynomials $\L_n[(x_1 \cdots x_n)^{-1}]$. Hence for any $\la,\mu \in \Sig_n$ one has
\begin{equation}\label{eq:mac_func_struct_coefs}
    P_\la(\bx;q,t) \cdot P_\mu(\bx;q,t) = \sum_{\nu \in \Sig_n} c_{\la,\mu}^\nu(q,t) P_\nu(\bx;q,t)
\end{equation}
for some structure coefficients $c_{\la,\mu}^\nu(q,t)$. By matching degrees it is clear that these coefficients are nonzero only if $|\l|+|\mu|=|\nu|$. These multiplicative structure coefficients for the $P$ polynomials are related to the `comultiplicative' structure constants of the $Q$ polynomials.

\begin{prop}[{\cite[Proposition 2.4]{van2020limits}}]\label{thm:q_coproduct_coefs}
Let $m,n \in \N$ and $\l,\nu \in \Y_n$. Then
\begin{equation*}
    Q_{\l/\nu}(x_1,\ldots,x_m;q,t) = \sum_{\mu \in \Y_m} c_{\nu,\mu}^\la(q,t) Q_\mu(x_1,\ldots,x_m;q,t).
\end{equation*}
\end{prop}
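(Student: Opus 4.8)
\textbf{Proof plan for \Cref{thm:q_coproduct_coefs}.} The plan is to compare two different ways of expanding the skew $Q$-polynomial against the two-variable-set Cauchy identity. First I would recall the definition \eqref{eq:def_skewP} of skew $Q$ functions: for $\la \in \Y_{n}$ and variable sets $\bx = (x_1,\ldots,x_m)$, $\by = (y_1,\ldots,y_n)$, one has
\begin{equation}
Q_\la(\bx,\by;q,t) = \sum_{\nu \in \Y_n} Q_{\la/\nu}(\bx;q,t) Q_\nu(\by;q,t).
\end{equation}
On the other hand, the symmetric function $Q_\la$ in the combined alphabet $\bx \cup \by$ can be factored differently: writing $Q_\la$ as a symmetric function and using that $Q_\la$ restricted to finitely many variables is genuinely a Laurent (indeed ordinary) symmetric polynomial, I want to first expand $Q_\la$ in the $\by$-variables alone against the $P_\mu(\bx)$ basis in the $\bx$-variables. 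The cleanest route is via the skew Cauchy identity \Cref{thm:finite_cauchy} (equivalently \eqref{eq:infinite_cauchy}), pairing one set of auxiliary variables $\bz = (z_1,\ldots,z_n)$ with $\by$ and using orthogonality / duality of $P$ and $Q$.

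The key steps, in order, are as follows. (1) Fix $n$; since $\{P_\nu(z_1,\ldots,z_n;q,t): \nu \in \Sig_n\}$ is a basis of symmetric Laurent polynomials and the pairing $\langle P_\la, Q_\mu\rangle = \bbone(\la=\mu)$ holds (this is exactly the defining duality of the $Q$'s, and extends to signatures by \Cref{thm:laurent_orthogonality} via the shift argument), the coefficient extraction "$[Q_\nu] \, f$" is a well-defined linear functional. (2) Apply it to $f = P_\la(\bx;q,t) = P_\la(x_1,\ldots,x_m;q,t)$ viewed inside the larger alphabet, but the honest manoeuvre is instead: take the product $P_\nu(\bx;q,t) P_\mu(\bx;q,t) = \sum_\la c_{\nu,\mu}^\la(q,t) P_\la(\bx;q,t)$ from \eqref{eq:mac_func_struct_coefs}, multiply both sides by $Q_\la(\by;q,t)$ and sum over $\la \in \Y_n$; the right side becomes $\sum_{\mu} \left(\sum_\la c_{\nu,\mu}^\la(q,t) Q_\la(\by;q,t)\right) P_\mu(\bx;q,t)$ after interchanging the (locally finite, by the degree constraint $|\la|=|\nu|+|\mu|$) sums. (3) Identify the left side with $\sum_\la Q_\la(\by;q,t) \sum_{\mu} c_{\nu,\mu}^\la(q,t) P_\mu(\bx;q,t)$ and, independently, show via \eqref{eq:def_skewP} and the Cauchy identity that $\sum_\la Q_\la(\by;q,t)\, [\text{coefficient of }P_\nu\text{-type expansion in }\bx]$ collapses to $\sum_{\mu \in \Y_m} Q_{\la/\nu}(\bx;q,t) Q_\mu(\by;q,t)$-type data — more precisely, pair $\sum_\la Q_{\la/\nu}(\bx;q,t)\,Q_{\la}(\by;q,t)$-style sums using the skew Cauchy identity \eqref{eq:infinite_cauchy} with $\nu$ the distinguished partition and one empty partition, so that the "finite-reservoir" term on the right of \eqref{eq:infinite_cauchy} picks out exactly $Q_{\la/\nu}$. (4) Equate the two expressions and use linear independence of $\{Q_\la(\by;q,t)\}$ over $\la \in \Y_n$ to read off, for each $\la$, the identity $Q_{\la/\nu}(x_1,\ldots,x_m;q,t) = \sum_{\mu \in \Y_m} c_{\nu,\mu}^\la(q,t) Q_\mu(x_1,\ldots,x_m;q,t)$.

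Concretely the slick proof is: apply the linear functional $\langle \,\cdot\, , \prod_j Q_\mu(y) \rangle$-style extraction directly to the defining relation $Q_\la(x,y;q,t) = \sum_{\nu} Q_{\la/\nu}(x;q,t)Q_\nu(y;q,t)$, and compare with the expansion obtained by first writing $Q_\la(x,y;q,t)$ as a sum over $\nu$ of $Q_\nu(x;q,t)$ times its coefficient, then re-expanding $Q_\nu(x;q,t)Q_\mu(x;q,t)$... — but the neatest is to use that $c_{\nu,\mu}^\la(q,t) = \langle Q_\nu \cdot Q_\mu, P_\la\rangle b_\la / (b_\nu b_\mu)$-type symmetry and the general Hopf-algebra fact that skewing is adjoint to multiplication. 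In the present self-contained setting I would spell this out as: $Q_{\la/\nu}$ is defined so that $\langle Q_{\la/\nu}, P_\mu\rangle_{\bx} = \langle Q_\la, P_\nu P_\mu\rangle$ (adjointness of skewing and multiplication, which follows from \eqref{eq:def_skewP} by testing against products $P_\nu(\bx)P_\mu(\bx)$ and using the Cauchy pairing), and then $\langle Q_\la, P_\nu P_\mu\rangle = \langle Q_\la, \sum_\rho c_{\nu,\mu}^\rho P_\rho\rangle = c_{\nu,\mu}^\la$, while also $\langle Q_{\la/\nu}, P_\mu\rangle = $ the $P_\mu$-coefficient of the $Q$-expansion of $Q_{\la/\nu}$ (using $\langle Q_\mu, P_\mu\rangle = 1$). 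Matching coefficients for all $\mu \in \Y_m$ gives the claim.

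\textbf{Main obstacle.} The substantive point is justifying the adjointness identity $\langle Q_{\la/\nu}, P_\mu\rangle_{\bx} = \langle Q_\la, P_\nu P_\mu\rangle$ in the finite-variable ($\bx = (x_1,\ldots,x_m)$) regime and the interchange of the two infinite sums in step (2)/(3); these are formal-power-series manipulations that are legitimate because of the bigrading constraint $|\la| = |\nu| + |\mu|$ forcing local finiteness, but they must be stated carefully — exactly the kind of interchange already invoked in the proof of \Cref{thm:prob_from_observables}. Once the Hopf-pairing bookkeeping is in place, the identity is immediate; the remaining work is purely the standard symmetric-function algebra of \cite[Ch. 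VI]{mac}, and since the paper is willing to cite \cite{van2020limits} for \Cref{thm:q_coproduct_coefs} I would keep this argument brief, referencing \eqref{eq:def_skewP}, \eqref{eq:mac_func_struct_coefs}, and \Cref{thm:finite_cauchy}.
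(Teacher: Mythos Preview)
The paper does not give its own proof of \Cref{thm:q_coproduct_coefs}; it simply cites \cite[Proposition~2.4]{van2020limits} and uses the result as a black box. So there is no ``paper's proof'' to compare against.

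Your approach is the standard one and is correct: the skewing operation is adjoint to multiplication with respect to the Hall inner product on $\Lambda$, i.e.\ $\langle Q_{\la/\nu}, P_\mu\rangle = \langle Q_\la, P_\nu P_\mu\rangle$, and combining this with $P_\nu P_\mu = \sum_\rho c_{\nu,\mu}^\rho P_\rho$ and the duality $\langle Q_\rho, P_\sigma\rangle = \bbone(\rho=\sigma)$ immediately gives the coefficient of $Q_\mu$ in $Q_{\la/\nu}$ as $c_{\nu,\mu}^\la$. This is essentially \cite[VI.7, (7.6) and surrounding discussion]{mac}. Your ``slick proof'' paragraph already contains the entire argument; the earlier plan with steps (1)--(4) and talk of two-variable Cauchy identities is a much longer detour to the same place and could be cut.

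One small caution on the write-up: the pairing you need is the Hall inner product on $\Lambda$ (with respect to which $P$ and $Q$ are dual bases by construction), not the torus product $\lan\cdot,\cdot\ran'_{q,t;n}$ of \Cref{def:torus_product}. Your invocation of \Cref{thm:laurent_orthogonality} and the extension to signatures is unnecessary here, since $\la,\nu,\mu$ are all partitions and the whole computation lives in $\Lambda$; the restriction to $\mu\in\Y_m$ then follows automatically from $Q_\mu(x_1,\ldots,x_m;q,t)=0$ for $\len(\mu)>m$. The adjointness identity itself follows directly from the definition \eqref{eq:def_skewP} of skew functions together with the Cauchy identity, exactly as you indicate.
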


\begin{prop}\label{thm:product_conv_cite}
Let $\la,\mu \in \Sig_N$ and let $U$ be a Haar-distributed element of $\GL_N(\Z_p)$. Then 
\begin{equation}\label{eq:hl_mat_prod}
\Pr(\SN(\diag(p^\mu)U\diag(p^\la)) = \nu) = c_{\la,\mu}^\nu(0,t) \frac{P_\nu(1,\ldots,t^{N-1};0,t)}{P_\mu(1,\ldots,t^{N-1};0,t)P_\la(1,\ldots,t^{N-1};0,t)}.
\end{equation}
\end{prop}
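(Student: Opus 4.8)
\textbf{Proof proposal for \Cref{thm:product_conv_cite}.}

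The plan is to realize the left-hand side of \eqref{eq:hl_mat_prod} as a Hall-Littlewood measure arising from an ascending Macdonald (here Hall-Littlewood) process, in the spirit of \Cref{thm:hl_meas_matrices}, and then to read off the structure constants via \Cref{thm:q_coproduct_coefs}. First I would recall the basic $p$-adic fact (\Cref{prop:smith} and the surrounding discussion) that $U\diag(p^\la)V$, with $U,V$ independent Haar on $\GL_N(\Z_p)$, is the unique bi-$\GL_N(\Z_p)$-invariant measure with singular numbers $\la$; hence $\diag(p^\mu)U\diag(p^\la)$ has the same distribution of singular numbers as $U_1 \diag(p^\mu) U_2 \diag(p^\la) U_3$ for independent Haar $U_i$, and in particular the law of $\SN(\diag(p^\mu)U\diag(p^\la))$ depends only on $\la,\mu$, not on finer data. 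The key input is the exact transition formula of \cite[Corollary 3.4]{van2020limits}, quoted here as \Cref{thm:hl_meas_matrices} in the iid case and valid more generally: for $A$ a suitable random matrix with $\SN(A)=\mu$ independent of $B$ with $\SN(B)=\la$, the singular numbers of the product $AB$ are governed by a skew Hall-Littlewood transition kernel with the relevant principal specialization $(1,t,\ldots,t^{N-1})$. Concretely, I would use that
\begin{equation}
\Pr(\SN(AB)=\nu \mid \SN(B)=\la) = \frac{Q_{\nu/\la}(\text{spec};0,t) \, P_\nu(1,\ldots,t^{N-1};0,t)}{P_\la(1,\ldots,t^{N-1};0,t)\, \Pi_{0,t}(\text{spec};1,\ldots,t^{N-1})},
\end{equation}
where the specialization is chosen so that, after integrating over the Haar randomness in $U$, it corresponds to a single matrix with singular numbers exactly $\mu$ — i.e. the specialization is the one for which $Q_\mu(\text{spec};0,t)/\Pi_{0,t}(\text{spec};1,\ldots,t^{N-1}) \cdot P_\mu(1,\ldots,t^{N-1};0,t)$ is the one-step weight, deterministically giving $\mu$.

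With this in hand, the computation is short: one expands the skew polynomial $Q_{\nu/\la}$ using \Cref{thm:q_coproduct_coefs}, which states $Q_{\nu/\la}(\bx;0,t) = \sum_{\rho} c_{\la,\rho}^\nu(0,t) Q_\rho(\bx;0,t)$, and then one uses the fact that among all $\rho$, the specialization is concentrated (as a matrix-singular-number distribution) on $\rho = \mu$. More precisely, since $\SN(\diag(p^\mu)U\diag(p^\la))$ is, conditionally on the Haar variable, a \emph{deterministic} intermediate step equal to $\mu$, the skew branching structure of the Hall-Littlewood process collapses: the relevant weight picks out exactly the $Q_\mu$ term, and the normalization $\Pi_{0,t}$ together with $P_\mu(1,\ldots,t^{N-1};0,t)$ cancels against the one-step weight for producing $\mu$, leaving precisely
\begin{equation}
c_{\la,\mu}^\nu(0,t)\, \frac{P_\nu(1,\ldots,t^{N-1};0,t)}{P_\mu(1,\ldots,t^{N-1};0,t)\, P_\la(1,\ldots,t^{N-1};0,t)}.
\end{equation}
An alternative and perhaps cleaner route, which I would present if the collapsing argument above feels delicate, is to write the two-step ascending Hall-Littlewood process with specializations $\phi_1,\phi_2$ chosen dual to $\mu$ and to the product, use the Cauchy identity \eqref{eq:specialized_cauchy} to compute the normalization, and then take the degenerate limit of the specializations in which each step becomes deterministic; in that limit $Q_{\la}(\phi_i;0,t)$ becomes (a constant multiple of) the indicator that the partition equals the prescribed one, and the structure constant emerges from $Q_{\nu/\la} = \sum c_{\la,\mu}^\nu Q_\mu$ exactly as in \Cref{thm:q_coproduct_coefs}.

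The main obstacle I anticipate is making the degeneration argument rigorous: one must justify that the skew Hall-Littlewood transition kernel of \Cref{thm:hl_meas_matrices} — which is stated for genuinely random matrices with a fixed nondegenerate entry distribution — can be specialized to (or has as a limit) the case where the intermediate matrix has deterministic singular numbers $\mu$, so that the `$Q_{\nu/\la}(\text{spec})$' genuinely equals $\sum_\rho c_{\la,\rho}^\nu(0,t) Q_\rho(\text{spec})$ with the specialization concentrated on $\rho=\mu$. This is essentially bookkeeping with specializations and the relation between multiplicative structure constants for $P$ and comultiplicative structure constants for $Q$ (\Cref{thm:q_coproduct_coefs}), combined with the principal specialization formula \Cref{thm:hl_principal_formulas} for the normalizations, but it requires care to confirm that no spurious factors depending on $N$ survive. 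Once that identification is pinned down, matching $\Pi_{0,t}$ and the $P(1,\ldots,t^{N-1};0,t)$ factors against the degenerate one-step weights is a direct application of the Cauchy identity and the principal specialization formula, and the stated formula \eqref{eq:hl_mat_prod} follows.
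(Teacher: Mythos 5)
Your proposal attempts to derive the Proposition from \Cref{thm:hl_meas_matrices} by a degeneration argument, but this runs in the wrong logical direction and the degeneration you describe does not exist. The paper's proof is simply a citation: $\diag(p^\mu)U\diag(p^\la)$ has the same distribution as $AB$ for independent bi-$\GL_N(\Z_p)$-invariant $A,B$ with $\SN(A)=\mu$, $\SN(B)=\la$, and then \cite[Theorem 1.3 Part 3]{van2020limits} gives the formula directly (that result lives at the level of the spherical Hecke algebra of $(\GL_N(\Q_p),\GL_N(\Z_p))$, where the Hall--Littlewood structure constants $c_{\la,\mu}^\nu$ appear as convolution coefficients of characteristic functions of double cosets).

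The specific gap in your approach is the step where you claim the skew kernel ``picks out exactly the $Q_\mu$ term.'' This is false: for any Macdonald-nonnegative specialization $\phi$, the singular-number distribution $\Pr(\SN(A)=\rho) \propto Q_\rho(\phi)P_\rho(1,\ldots,t^{N-1};0,t)$ is supported on infinitely many $\rho$, and there is no specialization (nor limiting family of specializations) for which it degenerates to a delta at $\mu$. A specialization is a ring homomorphism $\La\to\C$, so one cannot have $Q_\rho(\phi)\neq 0$ only for $\rho=\mu$; and your alternative route ``take the degenerate limit of the specializations in which each step becomes deterministic'' founders on the same obstruction. What is actually true is the converse: \Cref{thm:hl_meas_matrices} is \emph{derived from} the present Proposition by summing
\[
\Pr(\SN(AU\diag(p^\la))=\nu) \;=\; \sum_\mu \Pr(\SN(A)=\mu)\,\Pr(\SN(\diag(p^\mu)U\diag(p^\la))=\nu)
\]
and invoking \Cref{thm:q_coproduct_coefs} to resum $\sum_\mu c_{\la,\mu}^\nu Q_\mu(\phi) = Q_{\nu/\la}(\phi)$. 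If you wanted to recover the Proposition \emph{from} \Cref{thm:hl_meas_matrices}, you would have to invert this relation by comparing coefficients of $Q_\mu(\phi)$ across a sufficiently rich family of specializations $\phi$ (a linear-independence / ``Fourier inversion'' argument over the pure alpha specializations), which is a genuinely different and more delicate argument than the concentration you describe, and which you neither carry out nor flag as the actual content. As written, the proof has a hole precisely at the point you yourself identify as ``the main obstacle.''
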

\begin{proof}
This is essentially \cite[Theorem 1.3 Part 3]{van2020limits}, though let us remark on slight differences in setup. That result was stated for two matrices $A,B \in \GL_N(\Q_p)$ with fixed singular numbers $\la,\mu$ respectively, and distribution invariant under left- and right-multiplication by $\GL_N(\Z_p)$. Such matrices are given by $U \diag(p^\la)V$ and $U' \diag(p^\mu)V'$ for $U,V,U',V'$ independent Haar-distributed elements of $\GL_N(\Z_p)$. Hence 
\begin{equation}
\SN(AB) = \SN(U\diag(p^\la)VU' \diag(p^\mu)V') = \SN(\diag(p^\la)VU' \diag(p^\mu)),
\end{equation}
and $VU'$ has Haar distribution. Hence 
\begin{equation}
\Pr(\SN(\diag(p^\mu)U\diag(p^\la)) = \nu) = \Pr(\SN(AB) = \nu),
\end{equation}
and \eqref{eq:hl_mat_prod} now follows by \cite[Theorem 1.3 Part 3]{van2020limits}.
\end{proof}

\begin{lemma}\label{thm:hl_gen_computation}
The generator of the Markov process $\la^{(n)}(\tau)$ defined in \Cref{def:lambda_hl_planch} exists and is given by 
\begin{equation}
B_{HL}(\mu,\nu) = \begin{cases} 
-\frac{1-t^n}{1-t} & \mu = \nu \\ 
\frac{\varphi_{\nu/\mu}(0,t)}{1-t}\frac{P_\nu(1,\ldots,t^{n-1};0,t)}{P_\mu(1,\ldots,t^{n-1};0,t)} & \mu \prec \nu \text{ and }|\nu| = |\mu|+1 \\
0 & \text{otherwise}
\end{cases}
\end{equation}
for $\mu,\nu \in \Y_N$. 
\end{lemma}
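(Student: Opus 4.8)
\textbf{Proof plan for \Cref{thm:hl_gen_computation}.}

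The plan is to compute the generator of $\la^{(n)}(\tau)$ directly from the definition of the process in \Cref{def:lambda_hl_planch}, i.e. from its transition probabilities over an infinitesimal time increment $d\tau$. Recall that by \Cref{def:lambda_hl_planch}, the one-step transition probability from $\mu$ to $\nu$ over a time interval of length $\eps$ is
\begin{equation}
p_\eps(\mu,\nu) = Q_{\nu/\mu}(\gamma(\eps);0,t) \frac{P_\nu(1,\ldots,t^{n-1};0,t)}{P_\mu(1,\ldots,t^{n-1};0,t)} \exp\left(-\frac{\eps(1-t^n)}{1-t}\right).
\end{equation}
So the first step is to understand the small-$\eps$ behavior of $Q_{\nu/\mu}(\gamma(\eps);0,t)$. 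Since $\gamma(\eps)$ is the Plancherel specialization with parameter $\eps$, we have $\gamma(\eps)(p_1) = \eps \frac{1-q}{1-t} = \frac{\eps}{1-t}$ (at $q=0$) and $\gamma(\eps)(p_k) = 0$ for $k \geq 2$. Because $Q_{\nu/\mu}$ is a polynomial in the power sums $p_1, p_2, \ldots$ with no constant term unless $\nu = \mu$ (and lowest-degree part picking out $p_1^{|\nu|-|\mu|}$ up to the branching rule), the leading behavior as $\eps \to 0$ is: $Q_{\mu/\mu}(\gamma(\eps);0,t) = 1$; for $|\nu| = |\mu|+1$ with $\mu \prec \nu$, $Q_{\nu/\mu}(\gamma(\eps);0,t) = \varphi_{\nu/\mu}(0,t)\frac{\eps}{1-t} + O(\eps^2)$ using the branching rule \Cref{thm:branching_formulas} specialized via $x_1 \mapsto \frac{\eps}{1-t}$ in the single-variable skew function; and for $|\nu| \geq |\mu| + 2$, $Q_{\nu/\mu}(\gamma(\eps);0,t) = O(\eps^2)$. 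Feeding this into $p_\eps(\mu,\nu)$ and expanding the exponential $\exp(-\eps(1-t^n)/(1-t)) = 1 - \eps(1-t^n)/(1-t) + O(\eps^2)$, one reads off the off-diagonal entries $B_{HL}(\mu,\nu) = \lim_{\eps \to 0} \frac{p_\eps(\mu,\nu)}{\eps}$ as stated, and the diagonal entry from $B_{HL}(\mu,\mu) = \lim_{\eps \to 0}\frac{p_\eps(\mu,\mu) - 1}{\eps} = -\frac{1-t^n}{1-t}$ (equivalently from the row-sum-zero property, which itself follows from $\sum_\nu p_\eps(\mu,\nu) = 1$ and the Cauchy identity \eqref{eq:specialized_cauchy} with $\Pi_{0,t}(\gamma(\eps);1,\ldots,t^{n-1}) = \exp(\eps(1-t^n)/(1-t))$).

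The one genuinely substantive point is to justify that the generator \emph{exists} in the sense required, i.e. that the limits above are uniform enough to define a bona fide Markov generator and that the process is determined by it — this is where \Cref{thm:cite_feller} enters. For this I would verify the hypothesis \eqref{eq:diag_gen_finite}: here $|B_{HL}(\mu,\mu)| = \frac{1-t^n}{1-t}$ is a constant independent of $\mu$ (and when $n = \infty$ it equals $\frac{1}{1-t}$, still finite), so the supremum over the state space is trivially finite. Together with the fact that the finite-dimensional distributions in \Cref{def:lambda_hl_planch} are consistent (being built from the Cauchy Markov kernel of \Cref{def:cauchy_dynamics}) and hence define an honest Markov semigroup, \Cref{thm:cite_feller} shows the law is pinned down by $B_{HL}$ and the initial condition. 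The $O(\eps^2)$ control on multi-step transitions $|\nu|\geq|\mu|+2$ is needed to know these do not contribute to the generator; this is immediate from the degree count on $Q_{\nu/\mu}$ in the $p_i$'s noted above.

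The main obstacle — really the only place care is needed — is bookkeeping the $\eps$-expansion of $Q_{\nu/\mu}(\gamma(\eps);0,t)$ cleanly: one must confirm that the $O(\eps)$ coefficient is exactly $\varphi_{\nu/\mu}(0,t)/(1-t)$ and not some rescaled version, which amounts to checking that in the branching-rule expansion \eqref{eq:skewQ_branch_formula} for a single variable the only contributing chain is $\mu = \la^{(0)} \prec \la^{(1)} = \nu$ with weight $x_1^{|\nu|-|\mu|}\varphi_{\nu/\mu} = x_1 \varphi_{\nu/\mu}$, and then substituting $x_1 = \eps/(1-t)$ via the $q=0$ Plancherel specialization formula \eqref{eq:p_specs}. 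Everything else is a routine first-order Taylor expansion, and the statement then follows by assembling the pieces. I would also note in passing that, since $\varphi_{\nu/\mu}(0,t)$ is given explicitly by \Cref{thm:hl_qw_branch_formulas} and the principal specializations by \Cref{thm:hl_principal_formulas}, one can if desired write $B_{HL}(\mu,\nu)$ in fully explicit product form, but this is not needed for the applications.
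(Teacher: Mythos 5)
Your proof is correct and takes the expected route: differentiate the Plancherel Cauchy kernel $p_\eps(\mu,\nu) = Q_{\nu/\mu}(\gamma(\eps);0,t)\frac{P_\nu(1,\ldots,t^{n-1};0,t)}{P_\mu(1,\ldots,t^{n-1};0,t)}e^{-\eps(1-t^n)/(1-t)}$ at $\eps=0$, using that $\gamma(\eps)(p_1)=\eps/(1-t)$ and $\gamma(\eps)(p_k)=0$ for $k\geq 2$ to see that $Q_{\nu/\mu}(\gamma(\eps))$ is a monomial of order $\eps^{|\nu|-|\mu|}$ with leading coefficient $\varphi_{\nu/\mu}(0,t)/(1-t)$ when $|\nu|-|\mu|=1$, and then invoke the bounded-diagonal criterion of \Cref{thm:cite_feller}. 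The paper simply cites \cite[Proof of Theorem 3.4]{van2022q} for this computation, so you have supplied the argument the paper outsources; one small stylistic nit is that $Q_{\nu/\mu}$ is homogeneous of degree $|\nu|-|\mu|$ in the $p_i$'s, so there is no "lowest-degree part"---the point is that the Plancherel specialization kills every monomial except $p_1^{|\nu|-|\mu|}$.
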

\begin{proof}
This is computed in \cite[Proof of Theorem 3.4]{van2022q}.
\end{proof}

\begin{proof}[Proof of \Cref{thm:dbm_is_hl}]
By \Cref{def:mat_prod_dbm_proc}
\begin{multline}\label{eq:small_time_jumps}
\Pr(\SN(X^{(N)}(\tau+\eps) = \nu | \SN(X^{(N)}(\tau)) = \mu) \\ 
= \bbone(\nu=\mu) \cdot (1-\eps) + \eps \Pr(\SN(\diag(p^\mu)U\diag(p,1,\dots,1)) = \nu) + O(\eps^2)
\end{multline}
as $\eps \to 0$. Hence the generator of the process $\SN(X^{(N)}(\tau))$ on $\Sig_N$ is given by 
\begin{equation}\label{eq:BSN}
B_{SN}(\mu,\nu) := -\bbone(\mu=\nu) + \Pr(\SN(\diag(p^\mu)U\diag(p,1,\dots,1)) = \nu)
\end{equation}
for any $\mu,\nu \in \Sig_N$.

By \Cref{thm:product_conv_cite},
\begin{multline}\label{eq:apply_prod_conv}
\Pr(\SN(\diag(p^\mu)U\diag(p,1,\dots,1)) = \nu) \\ 
=  c_{\mu,(1,0[N-1])}^\nu(0,t) \frac{P_\nu(1,\ldots,t^{N-1};0,t)}{P_\mu(1,\ldots,t^{N-1};0,t)P_{(1,0[N-1])}(1,\ldots,t^{N-1};0,t)}.
\end{multline}
By \Cref{thm:q_coproduct_coefs}, when $\nu \succ \mu$ and $|\nu|-|\mu|=1$ we have
\begin{equation}\label{eq:Q_LR_coefs}
\varphi_{\nu/\mu}(0,t) = Q_{\nu/\mu}(1;0,t) = c_{\mu,(1,0[N-1])}^\nu(0,t) Q_{(1,0[N-1])}(1;0,t) = c_{\mu,(1,0[N-1])}^\nu(0,t)(1-t).
\end{equation}
Additionally, 
\begin{equation}\label{eq:simple_hl_princ_case}
P_{(1,0[N-1])}(1,\ldots,t^{N-1};0,t) = \frac{1-t^N}{1-t}
\end{equation}
by \Cref{thm:hl_principal_formulas} (one may also simply use that this Hall-Littlewood polynomial is the elementary symmetric polynomial $e_1$). Substituting \eqref{eq:simple_hl_princ_case} and \eqref{eq:Q_LR_coefs} into \eqref{eq:apply_prod_conv} yields
\begin{equation}\label{eq:compute_X_gen}
\text{RHS\eqref{eq:apply_prod_conv}}  = \left(\frac{1-t}{1-t^N}\right) \frac{\varphi_{\nu/\mu}}{1-t} \frac{P_\nu(1,\ldots,t^{N-1};0,t)}{P_\mu(1,\ldots,t^{N-1};0,t)}.
\end{equation}
Combining with \eqref{eq:BSN} and \Cref{thm:hl_gen_computation} yields that
\begin{equation}
B_{SN}(\mu,\nu) = \left(\frac{1-t}{1-t^N}\right)B_{HL}(\mu,\nu)
\end{equation}
for all $\mu,\nu \in \Sig_N^+$. Since the diagonal entries $B_{HL}(\mu,\mu)$ are bounded independent of $\mu$ and similarly for $B_{SN}$, it follows by \Cref{thm:cite_feller} that $\SN(X^{(N)}(\tau))$ and $\la^{(N)}\left(\frac{1-t}{1-t^N}\tau\right)$ are uniquely determined by their (equal) generators, hence are equal in multi-time distribution. The result follows by combining this with \Cref{thm:HL_poisson} to relate $\la^{(N)}$ to $\Pois^{(N)}$.
\end{proof}

% \bibliographystyle{plain}
% \bibliography{references.bib}

\begin{thebibliography}{10}

\bibitem{ahn2022extremal}
Andrew Ahn.
\newblock Extremal singular values of random matrix products and {B}rownian
  motion on $\mathrm{GL}_n(\mathbb{C})$.
\newblock {\em arXiv preprint arXiv:2201.11809}, 2022.

\bibitem{ahn2019fluctuations}
Andrew Ahn.
\newblock Fluctuations of $\beta$-{J}acobi product processes.
\newblock {\em Probability Theory and Related Fields}, 183(1-2):57--123, 2022.

\bibitem{ahn2022lozenge}
Andrew Ahn, Marianna Russkikh, and Roger Van~Peski.
\newblock Lozenge tilings and the {G}aussian free field on a cylinder.
\newblock {\em Communications in Mathematical Physics}, 396(3):1221--1275,
  2022.

\bibitem{ahn2020product}
Andrew Ahn and Eugene Strahov.
\newblock Product matrix processes with symplectic and orthogonal invariance
  via symmetric functions.
\newblock {\em International Mathematics Research Notices},
  2022(14):10767--10821, 2022.

\bibitem{ahn2023lyapunov}
Andrew Ahn and Roger Van~Peski.
\newblock Lyapunov exponents for truncated unitary and {G}inibre matrices.
\newblock {\em Ann. Inst. Henri Poincar\'{e} Probab. Stat.}, 59(2):1029--1039,
  2023.

\bibitem{akemann2019integrable}
Gernot Akemann, Zdzislaw Burda, and Mario Kieburg.
\newblock From integrable to chaotic systems: {U}niversal local statistics of
  {L}yapunov exponents.
\newblock {\em EPL (Europhysics Letters)}, 126(4):40001, 2019.

\bibitem{akemann2020universality}
Gernot Akemann, Zdzislaw Burda, and Mario Kieburg.
\newblock Universality of local spectral statistics of products of random
  matrices.
\newblock {\em arXiv preprint arXiv:2008.11470}, 2020.

\bibitem{akemann2013singular}
Gernot Akemann, Mario Kieburg, and Lu~Wei.
\newblock Singular value correlation functions for products of {W}ishart random
  matrices.
\newblock {\em Journal of Physics A: Mathematical and Theoretical},
  46(27):275205, 2013.

\bibitem{bellman1954limit}
Richard Bellman.
\newblock Limit theorems for non-commutative operations. {I}.
\newblock {\em Duke Mathematical Journal}, 21(3):491--500, 1954.

\bibitem{berezin2022gap}
Sergey Berezin and Eugene Strahov.
\newblock Gap probability for products of random matrices in the critical
  regime.
\newblock {\em Journal of Approximation Theory}, 274:105687, 2022.

\bibitem{bertola2014cauchy}
Marco Bertola, M~Gekhtman, and J~Szmigielski.
\newblock Cauchy--{L}aguerre two-matrix model and the {Meijer-G} random point
  field.
\newblock {\em Communications in Mathematical Physics}, 326:111--144, 2014.

\bibitem{betea2019periodic}
Dan Betea and J{\'e}r{\'e}mie Bouttier.
\newblock The periodic {S}chur process and free fermions at finite temperature.
\newblock {\em Mathematical Physics, Analysis and Geometry}, 22(1):3, 2019.

\bibitem{bhargava2013modeling}
Manjul Bhargava, Daniel~M Kane, Hendrik~W Lenstra, Bjorn Poonen, and Eric
  Rains.
\newblock Modeling the distribution of ranks, {S}elmer groups, and
  {S}hafarevich--{T}ate groups of elliptic curves.
\newblock {\em Cambridge Journal of Mathematics}, 3(3):275--321, 2015.

\bibitem{borodin1995limit}
Alexei Borodin.
\newblock Limit {J}ordan normal form of large triangular matrices over a finite
  field.
\newblock {\em Funktsional. Anal. i Prilozhen.}, 29(4):72--75, 1995.

\bibitem{borodin1999lln}
Alexei Borodin.
\newblock The law of large numbers and the central limit theorem for the
  {J}ordan normal form of large triangular matrices over a finite field.
\newblock {\em Journal of Mathematical Sciences}, 96(5):3455--3471, 1999.

\bibitem{Bor07}
Alexei Borodin.
\newblock Periodic {S}chur process and cylindric partitions.
\newblock {\em Duke Math. J.}, 140(3):391--468, 2007.

\bibitem{borodin2016between}
Alexei Borodin, Alexey Bufetov, and Michael Wheeler. 
\newblock Between the stochastic six vertex model and Hall-Littlewood processes.
\newblock {\em arXiv preprint arXiv:1611.09486}, 2016.


\bibitem{borodin2014macdonald}
Alexei Borodin and Ivan Corwin.
\newblock Macdonald processes.
\newblock {\em Probability Theory and Related Fields}, 158(1-2):225--400, 2014.

\bibitem{borodin2020dynamic}
Alexei Borodin and Ivan Corwin.
\newblock Dynamic {ASEP}, duality, and continuous $q^{-1}$-{H}ermite
  polynomials.
\newblock {\em International Mathematics Research Notices}, 2020(3):641--668,
  2020.

\bibitem{borodin2014free}
Alexei Borodin, Ivan Corwin, and Patrik Ferrari.
\newblock Free energy fluctuations for directed polymers in random media in 1+
  1 dimension.
\newblock {\em Communications on Pure and Applied Mathematics},
  67(7):1129--1214, 2014.

\bibitem{borodin2018anisotropic}
Alexei Borodin, Ivan Corwin, and Patrik~L Ferrari.
\newblock {Anisotropic (2+1){D} growth and {G}aussian limits of q-{W}hittaker
  processes}.
\newblock {\em Probability Theory and Related Fields}, 172(1-2):245--321, 2018.

\bibitem{borodin2015general}
Alexei Borodin and Vadim Gorin.
\newblock {General $\beta$-{J}acobi Corners Process and the {G}aussian Free
  Field}.
\newblock {\em Communications on Pure and Applied Mathematics},
  68(10):1774--1844, 2015.

\bibitem{borodin2018product}
Alexei Borodin, Vadim Gorin, and Eugene Strahov.
\newblock Product matrix processes as limits of random plane partitions.
\newblock {\em International Mathematics Research Notices}, 2018.

\bibitem{borodin2017representations}
Alexei Borodin and Grigori Olshanski.
\newblock {\em Representations of the infinite symmetric group}, volume 160.
\newblock Cambridge University Press, 2017.

\bibitem{borodin2020observables}
Alexei Borodin and Michael Wheeler.
\newblock Observables of coloured stochastic vertex models and their polymer
  limits.
\newblock {\em Probability and Mathematical Physics}, 1(1):205--265, 2020.

\bibitem{bufetov2018hall}
Alexey Bufetov and Konstantin Matveev.
\newblock {Hall--{L}ittlewood {RSK} field}.
\newblock {\em Selecta Mathematica}, 24(5):4839--4884, 2018.

\bibitem{cheong2021cohen}
Gilyoung Cheong and Yifeng Huang.
\newblock Cohen--lenstra distributions via random matrices over complete
  discrete valuation rings with finite residue fields.
\newblock {\em Illinois Journal of Mathematics}, 65(2):385--415, 2021.

\bibitem{cheong2022generalizations}
Gilyoung Cheong and Nathan Kaplan.
\newblock {Generalizations of results of Friedman and Washington on cokernels
  of random p-adic matrices}.
\newblock {\em Journal of Algebra}, 604:636--663, 2022.

\bibitem{cheong2023cokernel}
Gilyoung Cheong and Myungjun Yu.
\newblock The cokernel of a polynomial of a random integral matrix.
\newblock {\em arXiv preprint arXiv:2303.09125}, 2023.

\bibitem{clancy2015cohen}
Julien Clancy, Nathan Kaplan, Timothy Leake, Sam Payne, and Melanie~Matchett
  Wood.
\newblock On a {C}ohen--{L}enstra heuristic for {J}acobians of random graphs.
\newblock {\em Journal of Algebraic Combinatorics}, 42(3):701--723, 2015.

\bibitem{cohen-lenstra}
Henri Cohen and Hendrik~W Lenstra.
\newblock Heuristics on class groups of number fields.
\newblock In {\em Number Theory Noordwijkerhout 1983}, pages 33--62. Springer,
  1984.

\bibitem{cohen1984stability}
Joel~E Cohen and Charles~M Newman.
\newblock The stability of large random matrices and their products.
\newblock {\em The Annals of Probability}, pages 283--310, 1984.

\bibitem{crisanti2012products}
Andrea Crisanti, Giovanni Paladin, and Angelo Vulpiani.
\newblock {\em Products of random matrices: in Statistical Physics}, volume
  104.
\newblock Springer Science \& Business Media, 2012.

\bibitem{dimitrov2018kpz}
Evgeni Dimitrov.
\newblock {KPZ} and {A}iry limits of {H}all--{L}ittlewood random plane
  partitions.
\newblock In {\em Annales de l'Institut Henri Poincar{\'e}, Probabilit{\'e}s et
  Statistiques}, volume~54, pages 640--693. Institut Henri Poincar{\'e}, 2018.

\bibitem{dyson1962statistical}
Freeman~J Dyson.
\newblock Statistical theory of the energy levels of complex systems {I}, {II}
  and {III}.
\newblock {\em Journal of Mathematical Physics}, 3(1):140--156, 157--165,
  166--175, 1962.

\bibitem{erdelyi1981higher}
Arthur Erd\'{e}lyi, Wilhelm Magnus, Fritz Oberhettinger, and Francesco~G.
  Tricomi.
\newblock {\em Higher transcendental functions. {V}ol. {II}}.
\newblock Robert E. Krieger Publishing Co., Inc., Melbourne, Fla., 1981.
\newblock Based on notes left by Harry Bateman, Reprint of the 1953 original.

\bibitem{erdHos2017dynamical}
L{\'a}szl{\'o} Erd{\H{o}}s and Horng-Tzer Yau.
\newblock {\em A dynamical approach to random matrix theory}, volume~28.
\newblock American Mathematical Soc., 2017.

\bibitem{evans2002elementary}
Steven~N Evans.
\newblock Elementary divisors and determinants of random matrices over a local
  field.
\newblock {\em Stochastic processes and their applications}, 102(1):89--102,
  2002.

\bibitem{feller2015integro}
William Feller.
\newblock On the integro-differential equations of purely discontinuous
  {M}arkoff processes.
\newblock In {\em Selected Papers I}, pages 539--566. Springer, 2015.

\bibitem{forrester1993spectrum}
Peter~J Forrester.
\newblock The spectrum edge of random matrix ensembles.
\newblock {\em Nuclear Physics B}, 402(3):709--728, 1993.

\bibitem{friedman-washington}
Eduardo Friedman and Lawrence~C Washington.
\newblock On the distribution of divisor class groups of curves over a finite
  field.
\newblock {\em Th{\'e}orie des Nombres/Number Theory Laval}, 1987.

\bibitem{furstenberg1960products}
Harry Furstenberg and Harry Kesten.
\newblock Products of random matrices.
\newblock {\em The Annals of Mathematical Statistics}, 31(2):457--469, 1960.

\bibitem{gasper_rahman_2004}
George Gasper and Mizan Rahman.
\newblock {\em Basic Hypergeometric Series}.
\newblock Encyclopedia of Mathematics and its Applications. Cambridge
  University Press, 2 edition, 2004.

\bibitem{gorin2018gaussian}
Vadim Gorin and Yi~Sun.
\newblock Gaussian fluctuations for products of random matrices.
\newblock {\em American Journal of Mathematics}, 144(2):287--393, 2022.

\bibitem{hanin2020products}
Boris Hanin and Mihai Nica.
\newblock Products of many large random matrices and gradients in deep neural
  networks.
\newblock {\em Communications in Mathematical Physics}, 376(1):287--322, 2020.

\bibitem{imamura2021identity}
Takashi Imamura, Matteo Mucciconi, and Tomohiro Sasamoto.
\newblock Identity between restricted {C}auchy sums for the $ q $-{W}hittaker
  and skew {S}chur polynomials.
\newblock {\em arXiv preprint arXiv:2106.11913}, 2021.

\bibitem{imamura2021skew}
Takashi Imamura, Matteo Mucciconi, and Tomohiro Sasamoto.
\newblock Skew {RSK} dynamics: Greene invariants, affine crystals and
  applications to $ q $-{W}hittaker polynomials.
\newblock {\em arXiv preprint arXiv:2106.11922}, 2021.

\bibitem{imamura2022solvable}
Takashi Imamura, Matteo Mucciconi, and Tomohiro Sasamoto.
\newblock Solvable models in the {KPZ} class: approach through periodic and
  free boundary {S}chur measures.
\newblock {\em arXiv preprint arXiv:2204.08420}, 2022.

\bibitem{ismail2005classical}
Mourad Ismail.
\newblock {\em Classical and quantum orthogonal polynomials in one variable},
  volume~13.
\newblock Cambridge university press, 2005.

\bibitem{ismail1994q}
Mourad Ismail and David Masson.
\newblock $q$-{H}ermite polynomials, biorthogonal rational functions, and
  $q$-beta integrals.
\newblock {\em Transactions of the American Mathematical Society},
  346(1):63--116, 1994.

\bibitem{kerov1992generalized}
SV~Kerov.
\newblock Generalized {H}all-{L}ittlewood symmetric functions and orthogonal
  polynomials.
\newblock {\em Representation theory and dynamical systems Advances in Soviet
  Math}, 9:67--94, 1992.

\bibitem{kieburg2016singular}
Mario Kieburg, Arno~BJ Kuijlaars, and Dries Stivigny.
\newblock Singular value statistics of matrix products with truncated unitary
  matrices.
\newblock {\em International Mathematics Research Notices},
  2016(11):3392--3424, 2016.

\bibitem{kirillov1995variations}
A.~A. Kirillov.
\newblock Variations on the triangular theme.
\newblock In {\em Lie groups and {L}ie algebras: {E}. {B}. {D}ynkin's
  {S}eminar}, volume 169 of {\em Amer. Math. Soc. Transl. Ser. 2}, pages
  43--73. Amer. Math. Soc., Providence, RI, 1995.

\bibitem{kirillov1998new}
Anatol~N Kirillov.
\newblock New combinatorial formula for modified {H}all-{L}ittlewood
  polynomials.
\newblock {\em arXiv preprint math/9803006}, 1998.

\bibitem{kovaleva2020note}
Valeriya Kovaleva.
\newblock A note on the distribution of equivalence classes of random symmetric
  p-adic matrices.
\newblock {\em arXiv preprint arXiv:2008.10732}, 2020.

\bibitem{kuijlaars-stivigny2014singular}
Arno~BJ Kuijlaars and Dries Stivigny.
\newblock Singular values of products of random matrices and polynomial
  ensembles.
\newblock {\em Random Matrices: Theory and Applications}, 3(03):1450011, 2014.

\bibitem{kuijlaars-zhang2014singular}
Arno~BJ Kuijlaars and Lun Zhang.
\newblock Singular values of products of {G}inibre random matrices, multiple
  orthogonal polynomials and hard edge scaling limits.
\newblock {\em Communications in Mathematical Physics}, 332:759--781, 2014.

\bibitem{lee2022universality}
Jungin Lee.
\newblock Universality of the cokernels of random $ p $-adic hermitian
  matrices.
\newblock {\em arXiv preprint arXiv:2205.09368}, 2022.

\bibitem{lee2023joint}
Jungin Lee.
\newblock Joint distribution of the cokernels of random p-adic matrices.
\newblock {\em Forum Mathematicum}, 35(4):1005--1020, 2023.

\bibitem{liggett2010continuous}
Thomas M. Liggett.
\newblock Continuous time Markov processes: an introduction. 
\newblock {\em American Mathematical Soc.}, 2010.


\bibitem{lipnowski2020cohen}
Michael Lipnowski, Will Sawin, and Jacob Tsimerman.
\newblock Cohen-lenstra heuristics and bilinear pairings in the presence of
  roots of unity.
\newblock {\em arXiv preprint arXiv:2007.12533}, 2020.

\bibitem{liu2018lyapunov}
Dang-Zheng Liu, Dong Wang, and Yanhui Wang.
\newblock Lyapunov exponent, universality and phase transition for products of
  random matrices.
\newblock {\em Communications in Mathematical Physics}, 399(3):1811--1855,
  2023.

\bibitem{liu2016bulk}
Dang-Zheng Liu, Dong Wang, and Lun Zhang.
\newblock Bulk and soft-edge universality for singular values of products of
  {G}inibre random matrices.
\newblock {\em Annales de L'Institut Henri Poincare Section (B) Probability and
  Statistics}, 52(4):1734--1762, 2016.

\bibitem{mac}
Ian~Grant Macdonald.
\newblock {\em Symmetric functions and Hall polynomials}.
\newblock Oxford university press, 1998.

\bibitem{matveev2019macdonald}
Konstantin Matveev.
\newblock Macdonald-positive specializations of the algebra of symmetric
  functions: Proof of the {K}erov conjecture.
\newblock {\em Annals of Mathematics}, 189(1):277--316, 2019.

\bibitem{mehta1960density}
Madan~Lal Mehta and Michel Gaudin.
\newblock On the density of eigenvalues of a random matrix.
\newblock {\em Nuclear Physics}, 18:420--427, 1960.

\bibitem{meszaros2020distribution}
Andr{\'a}s M{\'e}sz{\'a}ros.
\newblock The distribution of sandpile groups of random regular graphs.
\newblock {\em Transactions of the American Mathematical Society},
  373(9):6529--6594, 2020.

\bibitem{meszaros2023cohen}
Andr{\'a}s M{\'e}sz{\'a}ros.
\newblock Cohen-{L}enstra distribution for sparse matrices with determinantal
  biasing.
\newblock {\em arXiv preprint arXiv:2307.04741}, 2023.

\bibitem{nguyen2022universality}
Hoi~H Nguyen and Roger Van~Peski.
\newblock Universality for cokernels of random matrix products.
\newblock {\em Advances in Mathematics}, 438(109451), 2024.

\bibitem{nguyen2022local}
Hoi~H Nguyen and Melanie~Matchett Wood.
\newblock Local and global universality of random matrix cokernels.
\newblock {\em arXiv preprint arXiv:2210.08526}, 2022.

\bibitem{nguyen2022random}
Hoi~H Nguyen and Melanie~Matchett Wood.
\newblock Random integral matrices: universality of surjectivity and the
  cokernel.
\newblock {\em Inventiones mathematicae}, 228(1):1--76, 2022.

\bibitem{sawin2022moment}
Will Sawin and Melanie~Matchett Wood.
\newblock The moment problem for random objects in a category.
\newblock {\em arXiv preprint arXiv:2210.06279}, 2022.

\bibitem{tracy1994level}
Craig~A Tracy and Harold Widom.
\newblock Level-spacing distributions and the {A}iry kernel.
\newblock {\em Communications in Mathematical Physics}, 159:151--174, 1994.

\bibitem{van2023+dynamical}
Roger Van~Peski.
\newblock Reflecting {P}oisson walks and dynamical universality in $p$-adic
  random matrix theory.
\newblock \textit{In preparation}. [Appeared after first version of present text, see \url{https://arxiv.org/abs/2312.11702}.]

\bibitem{van2023+rank}
Roger Van~Peski.
\newblock The rank of a random triangular matrix over $\F_q$.
\newblock In preparation.

\bibitem{van2023+symmetric}
Roger Van~Peski.
\newblock Symmetric functions and the inverse moment problem for abelian
  groups.
\newblock In preparation. [Appeared after first version of present text, see \url{https://arxiv.org/abs/2402.16625}.] 

\bibitem{van2020limits}
Roger Van~Peski.
\newblock Limits and fluctuations of $p$-adic random matrix products.
\newblock {\em Selecta Mathematica}, 27(5):1--71, 2021.

\bibitem{vanpeski2021halllittlewood}
Roger Van~Peski.
\newblock Hall–{L}ittlewood polynomials, boundaries, and $p$-adic random
  matrices.
\newblock {\em International Mathematics Research Notices},
  2023(13):11217--11275, 2022.

\bibitem{van2022q}
Roger Van~Peski.
\newblock q-{TASEP} with position-dependent slowing.
\newblock {\em Electronic Journal of Probability}, 27:1--35, 2022.

\bibitem{van2023asymptotics}
Roger Van~Peski.
\newblock {\em Asymptotics, exact results, and analogies in $p$-adic random
  matrix theory}.
\newblock PhD thesis, Massachusetts Institute of Technology, 2023.

\bibitem{van2023p}
Roger Van~Peski.
\newblock What is a $ p $-adic {Dyson B}rownian motion?
\newblock {\em arXiv preprint arXiv:2309.02865}, 2023.

\bibitem{wig1}
Eugene~P Wigner.
\newblock On the statistical distribution of the widths and spacings of nuclear
  resonance levels.
\newblock In {\em Mathematical Proceedings of the Cambridge Philosophical
  Society}, volume~47, pages 790--798. Cambridge University Press, 1951.

\bibitem{wig3}
Eugene~Paul Wigner.
\newblock {\em Statistical properties of real symmetric matrices with many
  dimensions}.
\newblock Princeton University, 1957.

\bibitem{wood2017distribution}
Melanie~Matchett Wood.
\newblock The distribution of sandpile groups of random graphs.
\newblock {\em Journal of the American Mathematical Society}, 30(4):915--958,
  2017.

\bibitem{wood2018cohen}
Melanie~Matchett Wood.
\newblock Cohen-{L}enstra heuristics and local conditions.
\newblock {\em Research in Number Theory}, 4(4):41, 2018.

\bibitem{wood2015random}
Melanie~Matchett Wood.
\newblock Random integral matrices and the {Cohen-L}enstra heuristics.
\newblock {\em American Journal of Mathematics}, 141(2):383--398, 2019.

\end{thebibliography}

\end{document}